\title{\huge\textbf{New insights into non-central beta distributions}}
\author{
Carlo Orsi\footnote{\mail{c.orsi@campus.unimib.it}, \mail{orsi.carlo@gmail.com}}
}
\newcommand{\mail}[1]{\href{mailto:#1}{\texttt{#1}}}
\newcolumntype{C}[1]{>{\centering\let\newline\\\arraybackslash\hspace{0pt}}m{#1}}
\newcolumntype{L}[1]{>{\raggedright\let\newline\\\arraybackslash\hspace{0pt}}m{#1}}
\newcolumntype{R}[1]{>{\raggedleft\let\newline\\\arraybackslash\hspace{0pt}}m{#1}}
\newtheoremstyle{plain}  
  {\topsep}   
  {\topsep}   
  {\itshape}  
  {}       
  {\bfseries} 
  {}         
  {\newline}  
  {}          
\theoremstyle{plain}
\newtheorem{property}{Property}[section]
\newtheorem{proposition}{\textbf{Proposition}}[section]
\newtheoremstyle{mystyle}  
  {\topsep}   
  {\topsep}   
  {\normalfont}  
  {0pt}       
  {\bfseries} 
  {.}         
  {\newline}  
  {}          
\theoremstyle{mystyle}
\newtheorem{myproof}{Proof}[section]
\newtheorem{funct}{Function}[section]
\begin{document}
\baselineskip24pt
\maketitle

\begin{abstract}
\baselineskip24pt
The beta family owes its privileged status within unit interval distributions to several relevant features such as, for example, easyness of interpretation and versatility in modeling different types of data. However, its flexibility at the unit interval endpoints is poor enough to prevent from properly modeling the portions of data having values next to zero and one. Such a drawback can be overcome by resorting to the class of the non-central beta distributions. Indeed, the latter allows the density to take on arbitrary positive and finite limits which have a really simple form. That said, new insights into such class are provided in this paper. In particular, new representations and moments expressions are derived. Moreover, its potential with respect to alternative models is highlighted through applications to real data.

\vspace*{0.2cm}
\noindent \textit{Keywords}: generalizations of beta distribution, unit interval limits, non-centrality.
\end{abstract}

\section{Introduction}
\label{sec:introduc}

The beta distribution plays a prominent role in the analysis of random phenomena which take on values with lower and upper bounds. Indeed, allowing its probability density function to have a great variety of shapes, such a distribution is versatile enough to model data arisen from a wide range of fields. In this regard, for example see \cite{EttMef02}, \cite{HasPac95}, \cite{MalRos59}, \cite{WilHer89}.

However, the density of the latter shows poor flexibility at the unit interval endpoints. In fact, its limiting values are equal to one if the shape parameters are unitary (in this case it reduces to the uniform one) and are equal to zero or infinity otherwise. As a consequence of this, the beta distribution prevents from properly modeling the portions of data having values next to zero and one.

In this regard, in the literature there exists some generalizations of the beta model that enable to overcome this limitation thanks to a richer parametrization. For instance, we recall the Libby and Novick's generalized beta \cite{LibNov82}, the Gauss hypergeometric \cite{ArmBay94} and the confluent hypergeometric \cite{Gor98} distributions. Indeed, the densities of the aforementioned models can take on positive and finite values at zero and one when the shape parameters are unitary. See for example \cite{NadGup04} to get an overview of such distributions. 

That said, the present paper aims at getting an insight into the class of the non-central beta distributions. The latter is another extension of the beta model that exhibits the aforementioned peculiarity. Indeed, its density shows positive and finite limits that, interestingly, have a really simple form \cite{OngOrs15}. Hence, such class is considered to be worthy of further investigating. More specifically, our intent is to provide a valid point of reference for the study of such distributions. In this regard, we are supported by the fact that in recent years the non-central beta distributions have attracted many applications. For example, \cite{KimSch98} pointed out that the semblance of a single wave propagating across a receiver array with added Gaussian noise is distributed according to a special case of non-central beta, called type 1. In the setting of magnetic resonance image reconstruction, \cite{Sta15} introduced a new estimating method for coil sensitivity profiles that uses spatial smoothing and additional body coil data for phase normalization. Upon providing detailed information on the statistical distribution of this estimator, they showed that the square of the random variable $\mathcal{R}_k\left(\mbox{\textbf{x}}\right)$, which plays a relevant role in the definition of such a method, follows a doubly non-central beta distribution, the latter being the most general non-central extension of the beta one. In order to analyze the bias and the variance of this estimator, the calculation of the first two raw moments of the aforementioned distribution was needed.

Finally, the present paper is organized as follows. In Section \ref{sec:ncchidistr}, in order to go into the matter of interest in due depth, we shall focus on the non-central chi-squared distribution. Indeed, the latter covers a crucial role in the study of the family of generalizations of the beta distribution we are interested in. More precisely, its definition and some useful properties are briefly recalled and a new general expression for its moments about zero is derived. In Section~\ref{subsec:dnc.beta.def.repres} the definition and various representations of the doubly non-central beta distribution are provided. In particular, a new representation of a random variable distributed as previously said is here obtained in terms of a convex linear combination of a central component and a purely non-central one. In Section~\ref{subsec:dnc.beta.dens.plots} some significant plots of the density are shown. In this regard, a special focus is given to the case in which both the shape parameters are unitary; in fact, in this case the density shows the attractive feature of taking on arbitrary finite and positive limits at zero and one. Section~\ref{subsec:dnc.beta.dens.pat.approx} presents how a simple approximation for the doubly non-central beta distribution can be determined by applying the Patnaik's approximation for the non-central chi-squared one \cite{Pat49}. Section~\ref{subsec:dnc.beta.mom} sheds new light on the issue of moments expression. As a matter of fact, a more straightforward general formula for the moments about zero of such distribution is derived. Last but not the least interesting, in Section~\ref{subsec:dnc.beta.examples} the potential of the doubly non-central beta distribution is highlighted through applications to real data with respect to the above mentioned alternative models on the real interval $\left(0,1\right)$. The issue of the parameters estimation is here addressed using both the moments and the maximum-likelihood methods. Some concluding remarks are provided in Section~\ref{sec:concl}.

For clarity of exposition, the proofs of all the following results are given in the Appendix~\ref{sec:app.proofs}, while in the Appendix~\ref{sec:app.r.func} the implementation of the major issues dealt with in this paper is provided in \texttt{R} programming language.


\section{Preliminaries on the non-central chi-squared distribution}
\label{sec:ncchidistr}


\subsection{Definition, representations and properties}
\label{sec:ncchidistr.defreprprop}

In this Section we shall recall the definition and some useful properties of the non-central chi-squared distribution. The latter represents the main ingredient for the study of the class of distributions on the real interval $(0,1)$ we are interested in. In fact, some results included in the remainder of this paper, such as Propositions~\ref{propo:rappr.clc.beta.dnc} and~\ref{propo:dens.beta.dnc.lims}, ensue from analogous results regarding the present distribution (Properties~\ref{prope:sumrepres.ncchisq} and~\ref{prope:limit.0.nc.chisq}, respectively), while others, such as Propositions~\ref{propo:dnc.beta.pat.approx} and~\ref{propo:momr.beta.dnc.rapp}, are strongly implied by some of its properties (Properties~\ref{prope:pat.approx.ncchisq} and~\ref{prope:mixrepres.ncchisq},~\ref{prope:repr.prop.ncchisq}, respectively).

That said, the non-central extension of the chi-squared distribution is defined as follows.

Let $W_k$, $k=1,\ldots,g$, be independent and normally distributed random variables with expectations $\mu_k$ and unitary variances. Then, a random variable is said to have a non-central chi-squared distribution with $g>0$ degrees of freedom and non-centrality parameter $\lambda=\sum_{k=1}^{g}\mu^2_k\ge 0$, denoted by $\chi'^{\,2}_g \left(\lambda \right)$, if it is distributed as $Y'=\sum_{k=1}^{g}W^2_k$ \cite{JohKotBal95}. The case $\lambda=0$ clearly corresponds to the $\chi^2_g$ distribution.

The density function $f_{Y'}$ of $Y' \sim \chi'^{\,2}_g \left(\lambda \right)$ can be expressed as:
\begin{equation}
f_{Y'}\left(y;g,\lambda\right)=\sum_{i=0}^{+\infty}\frac{e^{-\frac{\lambda}{2}}\left(\frac{\lambda}{2}\right)^i}{i!}
\frac{y^{\frac{g+2i}{2}-1} \, e^{-\frac{y}{2}}}{\Gamma\left(\frac{g+2i}{2}\right)2^{\frac{g+2i}{2}}}, \quad y>0,
\label{eq:dens.chi.nc.gn}
\end{equation}
i.e. as the series of the $\chi^2_{g+2i}$ densities, $i \in \mathbb{N}\cup \{0\}$, weighted by the probabilities of a Poisson random variable with mean $\lambda/2$, $\lambda\ge0$ (the case $\lambda=0$ corresponding to a random variable degenerate at zero).

In view of Eq.~(\ref{eq:dens.chi.nc.gn}), the $\chi'^{\,2}_g \left(\lambda \right)$ distribution admits the following mixture representation.
\begin{property}[Mixture representation of $\chi'^{\,2}_g \left(\lambda \right)$]
\label{prope:mixrepres.ncchisq}
Let $Y'$ have a $\chi'^{\,2}_g \left(\lambda \right)$ distribution and $M$ be a Poisson random variable with mean $\lambda/2$. Then, $Y'$ admits the following representation:
\begin{equation}
Y'\,| \, M \sim \chi^2_{g+2M} \; .
\label{eq:mixrepres.ncchisq}
\end{equation}
\end{property}

Interestingly, a non-central chi-squared random variable with $g$ degrees of freedom and non-centrality parameter $\lambda$ can be additively decomposed into two components, a central one with $g$ degrees of freedom and a purely non-central one with non-centrality parameter $\lambda$ \cite{Hjo88}. The latter can be easily obtained from Property~\ref{prope:mixrepres.ncchisq} by making use of the reproductive property of the chi-squared distribution with respect to degrees of freedom. Such representation turns out to be as follows.

\begin{property}[Sum of a central part and a purely non-central part]
\label{prope:sumrepres.ncchisq}
Let $Y' \sim \chi'^{\,2}_g \left(\lambda \right)$. Then:
\begin{equation}
Y'=Y+\sum_{j=1}^{M}F_j,
\label{eq:sumrepres.ncchisq}
\end{equation}
where:
\begin{itemize}
\item[i)] $Y$, $M$, $\left\{F_j\right\}$ are mutually independent,
\item[ii)] $Y \sim \chi^2_g$, $M \sim \mbox{\normalfont{Poisson}}\left(\lambda/2\right)$ and $\left\{F_j\right\}$ is a sequence of independent random variables with $\chi^2_2$ distribution.  
\end{itemize}
\end{property}

In the notation of Property~\ref{prope:sumrepres.ncchisq}, the random variable $Y'_{pnc}=\sum_{j=1}^{M}F_j$ is said to have a purely non-central chi-squared distribution with non-centrality parameter $\lambda$. Indeed, it is denoted by $\chi'^{\,2}_0 \left(\lambda \right)$, the degrees of freedom being equal to zero.

The case $g=2$ is of prominent interest in the present setting; in fact, in such case the limit at $0$ of the non-central chi-squared density is decreasing in $\lambda$.
\begin{property}[Limit at $0$ of the $\chi'^{\,2}_g \left(\lambda \right)$ density when $g=2$]
\label{prope:limit.0.nc.chisq}
Let $Y'$ be a $\chi'^{\,2}_2 \left(\lambda \right)$ random variable and $f_{Y'}\left(y; 2,\lambda\right)$ denote its density function. Then $\lim_{y \rightarrow 0^+}f_{Y'}\left(y; 2,\lambda\right)=\frac{1}{2} \, e^{-\frac{\lambda}{2}}.
$
\end{property}  

Plots of the latter are displayed in Figure~\ref{fig:NCCHISQ} for selected values of the non-centrality parameter.

\begin{figure}[ht]
  \centering
  \includegraphics[width=8.5cm,keepaspectratio=true]{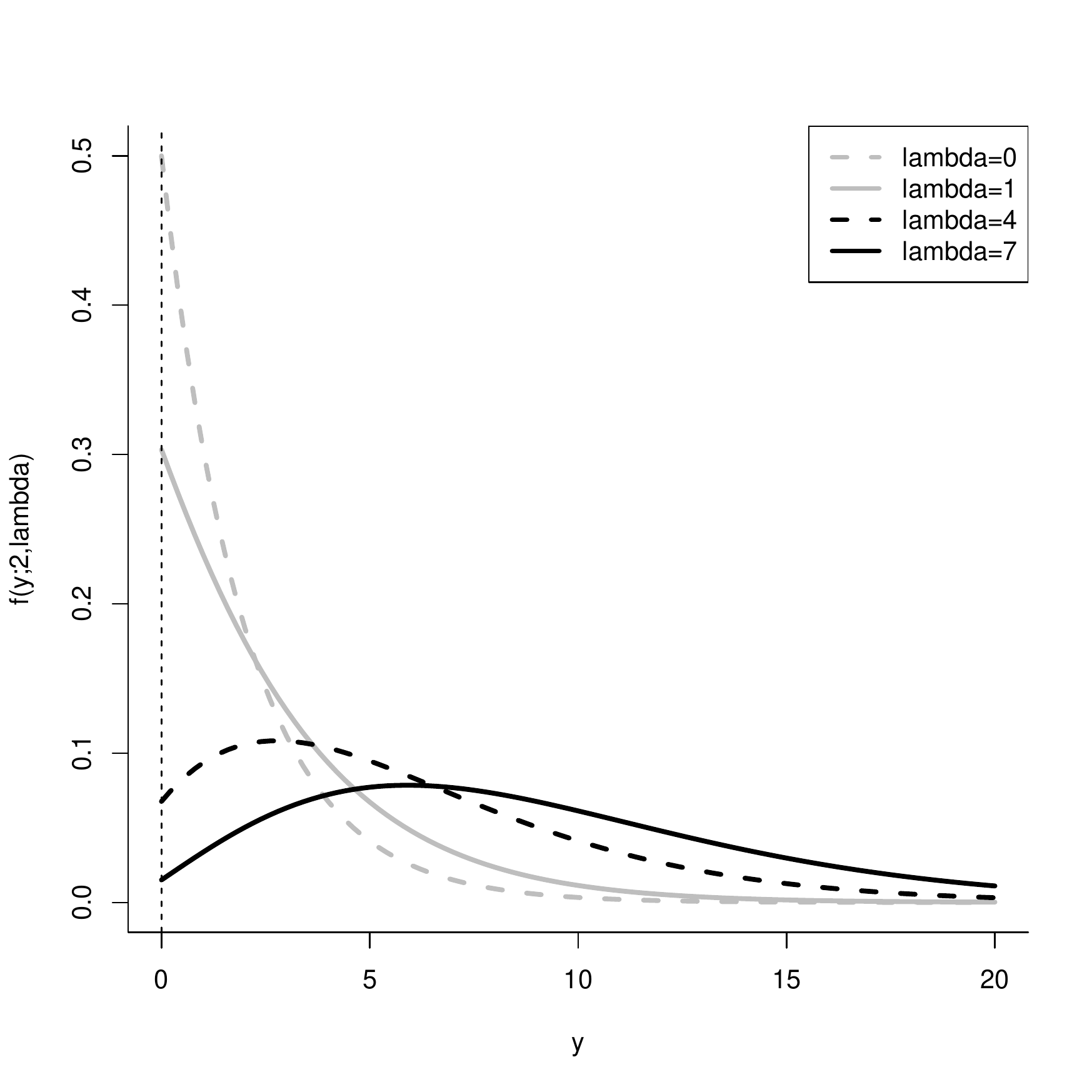}\\
\caption{Plots of the density of $Y' \sim \chi'^{\,2}_2 \left(\lambda \right)$ for selected values of $\lambda$.}\label{fig:NCCHISQ}
\end{figure}

The non-central chi-squared distribution is reproductive with respect to both degrees of freedom and non-centrality parameter. The latter can be easily derived from its characteristic function \cite{JohKotBal95}.

\begin{property}[Reproductive property of $\chi'^{\,2}_g \left(\lambda \right)$]
\label{prope:repr.prop.ncchisq}
If $Y'_j$, $j=1,\ldots,m$, are independent with $\chi'^2_{g_j}(\lambda_j)$ distributions, then $Y'^{+}=\sum_{j=1}^m Y'_j \sim \chi'^2_{g^+}(\lambda^+)$, with $g^+=\sum_{j=1}^m g_j$ and $\lambda^+=\sum_{j=1}^m \lambda_j$. 
\end{property}

Finally, we recall the simple approximation for the non-central chi-squared distribution suggested by Patnaik \cite{Pat49}. 


\begin{property}[Patnaik's approximation for $\chi'^{\,2}_g \left(\lambda \right)$]
\label{prope:pat.approx.ncchisq}
Let $Y'$ have a $\chi'^2_{g}(\lambda)$ distribution with $g>0$ and $\lambda>0$ and $Y$ have a $\chi^2_{\nu}$ distribution with $\nu=\frac{\left(g+\lambda\right)^2}{g+2\lambda}$. Furthermore, let $Y'_P=\rho \, Y \sim \mbox{\normalfont{Gamma}}\left(\frac{\nu}{2},\frac{1}{2 \rho}\right)$, with $\rho=\frac{g+2\lambda}{g+\lambda}$. Then, one can approximate $Y' \stackrel{d}{\approx} Y'_P$.
\end{property}

In the notation of Property~\ref{prope:pat.approx.ncchisq}, observe that as $\lambda$ tends to $0^+$, $\nu$ tends to $g$ and $\rho$ tends to 1; therefore, the distributions of both $Y'$ and $Y'_P$ tend to the $\chi^2_g$ one.

\subsection{A note on the moments about zero}
\label{sec:ncchidistr.mom}

The $r$-th moment about zero of $Y' \sim \chi'^{\,2}_g \left(\lambda \right)$, $g>0$, can be evaluated according to the following formula set out by \cite{JohKotBal95}:
\begin{equation}
\mathbb{E}\left[\left(Y' \right)^r \right]=2^r \, \Gamma\left(r+\frac{g}{2}\right) \sum_{j=0}^{r} {r \choose j} \frac{\left(\frac{\lambda}{2}\right)^j}{\Gamma\left(j+\frac{g}{2}\right)}.
\label{eq:mom.literat.ncchisq}
\end{equation}

A new moment formula for the non-central chi-squared distribution can be derived regardless of Eq.~(\ref{eq:mom.literat.ncchisq}) by means of the following simple expansion of the ascending factorial of a binomial, which, as far as we know, has never been discussed in the literature.

In this regard, we recall that:
\begin{equation}
\left(a\right)_0=1, \quad \left(a\right)_l=a\left(a+1\right)\ldots\left(a+l-1\right), \qquad l \in \mathbb{N}
\label{eq:poch.symb}
\end{equation}
is the ascending factorial or Pochhammer's symbol of $a \in \mathbb{R}$ \cite{JohKemKot05}. Observe that for every $a \in \mathbb{R}-\{0\}$ Eq.~(\ref{eq:poch.symb}) is tantamount to:
\begin{equation}
\left(a\right)_l=\frac{\Gamma\left(a+l\right)}{\Gamma\left(a\right)}, \qquad l \in \mathbb{N} \cup \{0\}.
\label{eq:poch.symb2}
\end{equation}
Furthermore, in light of Eq.~(\ref{eq:poch.symb2}), one has:
\begin{equation}
\left(a\right)_{l+m}=\frac{\Gamma\left(a+l+m\right)}{\Gamma\left(a\right)}=\left\{\begin{array}{l} \frac{\Gamma\left(a+l\right)}{\Gamma\left(a\right)} \, \frac{\Gamma\left(a+l+m\right)}{\Gamma\left(a+l\right)}=\left(a\right)_l \, \left(a+l\right)_m \\ \\ \frac{\Gamma\left(a+m\right)}{\Gamma\left(a\right)} \, \frac{\Gamma\left(a+m+l\right)}{\Gamma\left(a+m\right)}=\left(a\right)_m \, \left(a+m\right)_l\end{array}\right.
\label{eq:poch.symb.sum}
\end{equation}
for every $l,m \in \mathbb{N}\cup \{0\}$.

That said, the aforementioned expansion follows.
\begin{proposition}[Expansion of the ascending factorial of a binomial]
\label{propo:expans.poch.symb.binom}
Let $a$, $b \in \mathbb{R}-\{0\}$. Then, for every $l \in \mathbb{N} \cup \{0\}$:
\begin{equation}
\left(a+b\right)_l=\sum_{i=0}^{l}\frac{1}{i!}\left[\frac{d^i}{d a^i} \left(a\right)_l\right] \, b^i,
\label{eq:expans.poch.symb.binom}
\end{equation}
where $d^i f/d a^i$ denotes the $i$-th derivative of $f$ with respect to $a$ (the case $i=0$ corresponding to $f$) and $\left(a\right)_l$ is defined as in Eq.~(\ref{eq:poch.symb}).
\end{proposition}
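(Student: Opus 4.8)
The plan is to exploit a single structural fact: by Eq.~(\ref{eq:poch.symb}), for fixed $l$ the map $a\mapsto\left(a\right)_l$ is a polynomial in $a$ of degree exactly $l$, being the product of the $l$ linear factors $a,a+1,\dots,a+l-1$. In particular $\frac{d^i}{da^i}\left(a\right)_l\equiv 0$ for every $i>l$, which is already the reason why the sum on the right-hand side of Eq.~(\ref{eq:expans.poch.symb.binom}) may be stopped at $i=l$. Granting this, the identity (\ref{eq:expans.poch.symb.binom}) is nothing but the finite Taylor expansion of that polynomial, and recognising it as such is essentially the whole proof.

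In detail, I would fix $b$ and view $\left(a+b\right)_l$ as a function of the single variable $a$. Setting $h(x):=\left(x\right)_l$, one has $\left(a+b\right)_l=h(a+b)$, and since $h$ is a polynomial of degree $l$ its Taylor expansion about the point $a$ is \emph{exact with no remainder}: for every real $b$,
\begin{equation*}
h(a+b)=\sum_{i=0}^{l}\frac{h^{(i)}(a)}{i!}\,b^i .
\end{equation*}
Replacing $h^{(i)}(a)$ by $\frac{d^i}{da^i}\left(a\right)_l$ gives precisely Eq.~(\ref{eq:expans.poch.symb.binom}); equivalently, one may apply the shift operator $e^{b\,d/da}$, which sends any polynomial $P(a)$ to $P(a+b)$, to $\left(a\right)_l$ and note that the series $\sum_{i\ge 0}\frac{b^i}{i!}\frac{d^i}{da^i}\left(a\right)_l$ terminates at $i=l$. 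Incidentally, this argument never uses $a,b\neq 0$ (that hypothesis only serves to pass to the $\Gamma$-function form (\ref{eq:poch.symb2}) elsewhere in the paper), so (\ref{eq:expans.poch.symb.binom}) in fact holds for all real $a$ and $b$.

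A fully self-contained alternative is induction on $l$, which has the merit of using the identity (\ref{eq:poch.symb.sum}) recalled just above. The base case $l=0$ is trivial, both sides being $1$. For the inductive step I would write $\left(a+b\right)_{l+1}=\left(a+b\right)_l\,(a+b+l)$ — the case $m=1$ of (\ref{eq:poch.symb.sum}) — split $a+b+l=(a+l)+b$, insert the inductive hypothesis for $\left(a+b\right)_l$, and compare the coefficient of $b^i$ on both sides. Applying Leibniz's rule to $\frac{d^i}{da^i}\big[\left(a\right)_l(a+l)\big]$ — where only the terms that differentiate the linear factor $a+l$ zero or one time survive — one obtains $\frac{1}{i!}\frac{d^i}{da^i}\left(a\right)_{l+1}=(a+l)\,\frac{1}{i!}\frac{d^i}{da^i}\left(a\right)_l+\frac{1}{(i-1)!}\frac{d^{i-1}}{da^{i-1}}\left(a\right)_l$, which is exactly the recurrence that the coefficients produced by the splitting $a+b+l=(a+l)+b$ must satisfy.

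The main obstacle is conceptual rather than computational: once one observes that $\left(a\right)_l$ is polynomial in $a$, the first proof is immediate. In the inductive version the only point requiring a little care is the bookkeeping at the boundary indices $i=0$ and $i=l+1$ in the Leibniz step — the latter corresponding to the vanishing of $\frac{d^{l+1}}{da^{l+1}}\left(a\right)_l$, consistent with $\deg\left(a\right)_l=l$ — but this is routine.
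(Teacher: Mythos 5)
Your proof is correct, and it takes a cleaner route than the paper's. The paper proves Eq.~(\ref{eq:expans.poch.symb.binom}) by multiplying out $\left(a+b\right)_l=\left(a+b\right)\left(a+b+1\right)\cdots\left(a+b+l-1\right)$, observing that collecting powers of $b$ yields $\sum_{i=0}^{l}P_i\left(a\right)b^i$ with $\deg P_i=l-i$, and then asserting that each $P_i\left(a\right)$ coincides with $\frac{1}{i!}\frac{d^i}{da^i}\left(a\right)_l$ — a step it leaves at the level of ``it's easy to see''. You instead notice that $a\mapsto\left(a\right)_l$ is a polynomial of degree $l$, so the claimed identity is nothing but its exact, remainder-free Taylor expansion at $a$ with increment $b$; this single observation supplies precisely the coefficient identification that the paper's argument glosses over, and your inductive alternative via Leibniz's rule gives a fully self-contained verification of the same recurrence. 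Your remark that the hypothesis $a,b\neq 0$ is not needed is also accurate: both sides are polynomials in $a$ and $b$, and the restriction only matters elsewhere in the paper where the $\Gamma$-function form (\ref{eq:poch.symb2}) is used. In short, what the paper obtains by direct expansion and an unargued comparison of coefficients, you obtain rigorously in one stroke from Taylor's theorem for polynomials, at no extra cost in generality.
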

\begin{proof}
For the proof see~\ref{proof:expans.poch.symb.binom} in the Appendix. 
\end{proof}

The latter result and the mixture representation in Eq.~(\ref{eq:mixrepres.ncchisq}) lead to the following new general formula for the moments of the non-central chi-squared distribution.

\begin{proposition}[Moments about zero of $\chi'^{\,2}_g \left(\lambda \right)$]
\label{propo:mom.ncchisq}
Let $Y'$ have a $\chi'^{\,2}_g \left(\lambda \right)$ distribution with $g>0$. Then, for every $r \in \mathbb{N}$, the $r$-th moment about zero of $Y'$ can be written as:
\begin{equation}
\mathbb{E}\left[\left(Y' \right)^r \right]=2^r \, \sum_{i=0}^{r}\sum_{j=0}^{i} \mathcal{S}\left(i,j\right) \frac{1}{i!}\left[\frac{d^i}{d h^i} \left(h\right)_r\right] \left(\frac{\lambda}{2}\right)^j,
\label{eq:mom.ncchisq}
\end{equation}
where $\mathcal{S}\left(i,j\right)$ is a Stirling number of the second kind, $h=g/2$ and $\left(h\right)_r$ is defined as in Eq.~(\ref{eq:poch.symb}).
\end{proposition}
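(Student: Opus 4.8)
The plan is to derive Eq.~(\ref{eq:mom.ncchisq}) directly from the mixture representation of Property~\ref{prope:mixrepres.ncchisq}, combined with the expansion of Proposition~\ref{propo:expans.poch.symb.binom} and the classical formula giving the raw moments of a Poisson variable through the Stirling numbers of the second kind.

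First I would condition on $M$. By Eq.~(\ref{eq:mixrepres.ncchisq}) we have $Y'\mid M\sim\chi^2_{g+2M}$, and the $r$-th moment about zero of a $\chi^2_\nu$ variable equals $2^r\,\Gamma(r+\nu/2)/\Gamma(\nu/2)$. Setting $h=g/2$, so that $(g+2M)/2=h+M>0$, and recalling Eq.~(\ref{eq:poch.symb2}), this gives $\mathbb{E}\!\left[(Y')^r\mid M\right]=2^r\,(h+M)_r$. Taking expectations over $M$ then yields $\mathbb{E}\!\left[(Y')^r\right]=2^r\,\mathbb{E}\!\left[(h+M)_r\right]$.

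Next I would apply Proposition~\ref{propo:expans.poch.symb.binom} with $a=h$ (legitimate since $h=g/2>0$) and $b=M$, pointwise for each non-negative integer value of $M$ (the value $0$ giving the trivial identity $(h)_r=(h)_r$), to write $(h+M)_r=\sum_{i=0}^{r}\dfrac{1}{i!}\left[\dfrac{d^i}{d h^i}(h)_r\right]M^i$. Since this sum is finite and every raw moment $\mathbb{E}[M^i]$ of a Poisson variable is finite, expectation and summation may be interchanged, so $\mathbb{E}\!\left[(Y')^r\right]=2^r\sum_{i=0}^{r}\dfrac{1}{i!}\left[\dfrac{d^i}{d h^i}(h)_r\right]\mathbb{E}[M^i]$.

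Finally I would use that, for $M\sim\mbox{Poisson}(\lambda/2)$, the factorial moments are $\mathbb{E}\!\left[M(M-1)\cdots(M-j+1)\right]=(\lambda/2)^j$; hence, expanding ordinary powers in terms of falling factorials via $M^i=\sum_{j=0}^{i}\mathcal{S}(i,j)\,M(M-1)\cdots(M-j+1)$, one gets $\mathbb{E}[M^i]=\sum_{j=0}^{i}\mathcal{S}(i,j)\,(\lambda/2)^j$. Substituting this into the previous display and reordering the resulting finite double sum over $0\le j\le i\le r$ produces exactly Eq.~(\ref{eq:mom.ncchisq}). I expect the only real care to be needed in this last step --- matching the summation ranges correctly and invoking the Poisson moment identity --- everything else being a routine conditioning argument; as a consistency check, when $\lambda=0$ one has $M\equiv0$, only the $i=j=0$ term survives (since $\mathcal{S}(i,0)=0$ for $i\ge1$), and the formula collapses to $2^r(h)_r=2^r\,\Gamma(r+g/2)/\Gamma(g/2)$, the correct moment of $\chi^2_g$.
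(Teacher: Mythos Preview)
Your proposal is correct and follows essentially the same route as the paper's proof: condition on $M$ via the mixture representation to obtain $\mathbb{E}[(Y')^r]=2^r\,\mathbb{E}[(h+M)_r]$, expand $(h+M)_r$ using Proposition~\ref{propo:expans.poch.symb.binom}, and then substitute the Poisson raw-moment formula $\mathbb{E}(M^i)=\sum_{j=0}^{i}\mathcal{S}(i,j)(\lambda/2)^j$. The extra remarks you add (finite-sum interchange, the $\lambda=0$ sanity check) are fine but not needed for the argument.
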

\begin{proof}
For the proof see~\ref{proof:mom.ncchisq} in the Appendix.
\end{proof}

However, neither the moments formula available in the literature nor the one herein derived apply in case of zero degrees of freedom. As far as the computation of the $r$-th moment about zero of the purely non-central chi-squared distribution is concerned, the following formula can be used.

\begin{proposition}[Moments about zero of $\chi'^{\,2}_0 \left(\lambda \right)$]
\label{propo:mom.ncchisq.zero}
Let $Y'_{pnc}$ have a $\chi'^{\,2}_0 \left(\lambda \right)$ distribution. Then, for every $r \in \mathbb{N}$, the $r$-th moment about zero of $Y'_{pnc}$ can be written as:
\begin{equation}
\mathbb{E}\left[\left(Y'_{pnc} \right)^r \right]=2^r \, \sum_{i=0}^{r}\sum_{j=0}^{i} \left|s\left(r,i\right)\right|\mathcal{S}\left(i,j\right) \left(\frac{\lambda}{2}\right)^j,
\label{eq:mom.ncchisq.zero}
\end{equation}
where $\left|s\left(r,i\right)\right|$ is an unsigned Stirling number of the first kind and $\mathcal{S}\left(i,j\right)$ is a Stirling number of the second kind.
\end{proposition}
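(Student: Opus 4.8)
The plan is to run, \emph{mutatis mutandis}, the argument behind Proposition~\ref{propo:mom.ncchisq} with the degrees of freedom set to zero. Starting from the decomposition in Property~\ref{prope:sumrepres.ncchisq}, write $Y'_{pnc}=\sum_{j=1}^{M}F_j$ with $M\sim\mbox{Poisson}(\lambda/2)$ and $\{F_j\}$ a sequence of independent $\chi^2_2$ variables, the whole collection being mutually independent. Conditioning on $M=m$ and using the reproductive property of the chi-squared distribution with respect to degrees of freedom, one gets $Y'_{pnc}\mid M\sim\chi^2_{2M}$, where the case $M=0$ is understood as the distribution degenerate at $0$. Hence, for every $r\in\mathbb{N}$,
\[
\mathbb{E}\!\left[\left(Y'_{pnc}\right)^r \mid M=m\right]=2^r\,\frac{\Gamma(m+r)}{\Gamma(m)}=2^r\,(m)_r,
\]
an identity which remains valid at $m=0$ once $(m)_r$ is read off Eq.~(\ref{eq:poch.symb}), since $(0)_r=0$ for $r\ge1$.

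Next I would expand the ascending factorial $(m)_r$ into ordinary powers of $m$. Being a polynomial of degree $r$ in its argument, $(h)_r=\sum_{i=0}^{r}|s(r,i)|\,h^i$, where $|s(r,i)|$ is the unsigned Stirling number of the first kind; equivalently, this is exactly what Proposition~\ref{propo:expans.poch.symb.binom} yields when specialized to $a=0$, since then $\tfrac{1}{i!}\big[\tfrac{d^i}{da^i}(a)_r\big]_{a=0}=|s(r,i)|$. Combining this with the law of total expectation and linearity over the finite sum,
\[
\mathbb{E}\!\left[\left(Y'_{pnc}\right)^r\right]=2^r\,\mathbb{E}\!\left[(M)_r\right]=2^r\sum_{i=0}^{r}|s(r,i)|\,\mathbb{E}\!\left[M^i\right].
\]

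Finally, it remains to insert the raw moments of the Poisson law, $\mathbb{E}[M^i]=\sum_{j=0}^{i}\mathcal{S}(i,j)\,(\lambda/2)^j$, which itself follows from $M^i=\sum_{j=0}^{i}\mathcal{S}(i,j)\,M(M-1)\cdots(M-j+1)$ together with the factorial-moment identity $\mathbb{E}[M(M-1)\cdots(M-j+1)]=(\lambda/2)^j$, and then to merge the two summations; this produces Eq.~(\ref{eq:mom.ncchisq.zero}). I do not anticipate a genuine obstacle: the only points requiring a little care are bookkeeping the degenerate case $m=0$ and correctly pairing the combinatorial identities, namely that the first-kind Stirling numbers mediate the passage from the rising factorial $(m)_r$ to the powers $m^i$, whereas the second-kind Stirling numbers encode the Poisson moments $\mathbb{E}[M^i]$ — it is precisely this two-step structure that produces the double sum in the statement.
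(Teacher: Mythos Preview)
Your proposal is correct and follows essentially the same route as the paper: both arguments reduce to $\mathbb{E}[(Y'_{pnc})^r]=2^r\,\mathbb{E}[(M)_r]$ via the mixture/conditioning representation, then expand the rising factorial $(M)_r$ into powers of $M$ through the unsigned Stirling numbers of the first kind and finish with the Poisson raw-moment formula in terms of Stirling numbers of the second kind. The paper simply invokes Eq.~(\ref{eq:mom.ncchisq.dim1}) at $h=0$ rather than re-deriving the conditional moment from Property~\ref{prope:sumrepres.ncchisq}, but the substance is the same.
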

\begin{proof}
For the proof see~\ref{proof:mom.ncchisq.zero} in the Appendix.
\end{proof}

Finally, the comparison between Eq.~(\ref{eq:mom.literat.ncchisq}) and Eq.~(\ref{eq:mom.ncchisq}) leads to the following identity. The latter will be used in Section~\ref{subsec:dnc.beta.mom} in order to derive a new general formula for the moments about zero of the non-central beta distributions.

\begin{proposition}[Identity]
\label{propo:ident.mom}
Let $r \in \mathbb{N}$ and $h \in \mathbb{R} - \{0\}$. Then:
\begin{equation}
{r \choose j} \frac{\left(h\right)_r}{\left(h\right)_j}=\sum_{i=j}^{r} \mathcal{S}\left(i,j\right) \frac{1}{i!}\left[\frac{d^i}{d h^i} \left(h\right)_r\right], \quad \forall j=0,\ldots,r,
\label{eq:ident.mom}
\end{equation}
where $\left(h\right)_r$ is defined as in Eq.~(\ref{eq:poch.symb}) and $\mathcal{S}\left(i,j\right)$ is a Stirling number of the second kind.
\end{proposition}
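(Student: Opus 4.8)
The plan is to derive Eq.~(\ref{eq:ident.mom}) by matching, term by term, the two available expansions of the $r$-th moment about zero of a non-central chi-squared random variable. Fix $r \in \mathbb{N}$, let $Y' \sim \chi'^{\,2}_g\left(\lambda\right)$ with $g>0$, and put $h=g/2>0$. First I would rewrite the classical formula in Eq.~(\ref{eq:mom.literat.ncchisq}): using Eq.~(\ref{eq:poch.symb2}) to express $\Gamma\left(r+h\right)/\Gamma\left(j+h\right)=\left(h\right)_r/\left(h\right)_j$, it becomes
\begin{equation}
\mathbb{E}\left[\left(Y'\right)^r\right]=2^r\sum_{j=0}^{r}{r \choose j}\frac{\left(h\right)_r}{\left(h\right)_j}\left(\frac{\lambda}{2}\right)^j .
\end{equation}
Next I would take the new formula of Proposition~\ref{propo:mom.ncchisq} and interchange the order of the (finite) double summation, replacing $\sum_{i=0}^{r}\sum_{j=0}^{i}$ by $\sum_{j=0}^{r}\sum_{i=j}^{r}$ in Eq.~(\ref{eq:mom.ncchisq}), which gives
\begin{equation}
\mathbb{E}\left[\left(Y'\right)^r\right]=2^r\sum_{j=0}^{r}\left(\sum_{i=j}^{r}\mathcal{S}\left(i,j\right)\frac{1}{i!}\left[\frac{d^i}{dh^i}\left(h\right)_r\right]\right)\left(\frac{\lambda}{2}\right)^j .
\end{equation}

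Both right-hand sides are polynomials of degree at most $r$ in the variable $\lambda/2$, and they agree for every $\lambda\ge 0$, hence at infinitely many points; consequently their coefficients coincide. Cancelling the common factor $2^r$ and equating the coefficients of $\left(\lambda/2\right)^j$ for each $j=0,\ldots,r$ yields exactly
\[
{r \choose j}\frac{\left(h\right)_r}{\left(h\right)_j}=\sum_{i=j}^{r}\mathcal{S}\left(i,j\right)\frac{1}{i!}\left[\frac{d^i}{dh^i}\left(h\right)_r\right],
\]
i.e. Eq.~(\ref{eq:ident.mom}), for every $h$ of the form $g/2$ with $g>0$, that is, for every $h>0$.

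To upgrade the conclusion to all $h\in\mathbb{R}-\{0\}$, as in the statement, I would finally observe that both sides of the last display are polynomials in $h$: the left-hand side equals ${r \choose j}\left(h+j\right)_{r-j}$ (a polynomial of degree $r-j$), while on the right $\left(h\right)_r$ has degree $r$ and each derivative $\frac{d^i}{dh^i}\left(h\right)_r$ with $0\le i\le r$ is a polynomial of degree $r-i\ge 0$; since these two polynomials agree on the whole half-line $\left(0,+\infty\right)$, they are identical, so the identity holds for every $h\in\mathbb{R}$ and in particular for $h\ne 0$. The argument is essentially bookkeeping; the only points worth stating explicitly are the legitimacy of the index swap (the double sum is finite) and the coefficient comparison (a polynomial of degree at most $r$ with more than $r$ zeros vanishes identically), so I do not anticipate any real obstacle.
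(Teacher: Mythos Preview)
Your proposal is correct and follows essentially the same route as the paper: rewrite Eq.~(\ref{eq:mom.literat.ncchisq}) via Pochhammer symbols, swap the finite double sum in Eq.~(\ref{eq:mom.ncchisq}), and equate the coefficients of $(\lambda/2)^j$. Your final polynomial-identity argument extending the result from $h>0$ to all $h\in\mathbb{R}-\{0\}$ is a genuine addition; the paper's proof stops at ``equating'' the two displays without explicitly addressing that the comparison is made only for $h=g/2>0$.
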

\begin{proof}
For the proof see~\ref{proof:ident.mom} in the Appendix.
\end{proof}

\section{The non-central beta distributions}
\label{sec:dnc.beta.def.repres}

\subsection{Definitions and representations}
\label{subsec:dnc.beta.def.repres}

It is well known that if $Y_i$, $i=1,2$, are independent chi-squared random variables with $2 \alpha_i>0$ degrees of freedom, then the random variable:
\begin{equation}
X=\frac{Y_1}{Y_1+Y_2}
\label{eq:def.beta}
\end{equation}
has a beta distribution with shape parameters $\alpha_1$, $\alpha_2$, denoted by Beta$(\alpha_1, \alpha_2)$. We point out that a $\mbox{Beta}\left(\alpha_1,0\right)$ random variable with $\alpha_1>0$ is degenerate at one: in fact, the chi-squared random variable present only at denominator in Eq.~(\ref{eq:def.beta}) is degenerate at zero. Similarly, a $\mbox{Beta}\left(0,\alpha_2\right)$ random variable with $\alpha_2>0$ is degenerate at zero: in fact, the chi-squared random variable present at both numerator and denominator in Eq.~(\ref{eq:def.beta}) is degenerate at zero, too. Then, we recall that the beta density function takes the following form:
\begin{equation}
\mbox{Beta}\left(x;\alpha_1,\alpha_2\right)=\frac{x^{\alpha_1-1} \, \left(1-x\right)^{\alpha_2-1}}{B\left(\alpha_1,\alpha_2\right)}, \quad 0<x<1 \; .
\label{eq:dens.beta}
\end{equation}

That said, in order to go into the matter of interest in due depth, we recall herein a characterizing property of independent chi-squared (and, more generally, gamma) random variables. The latter is a matter of great consequence for our interests; in fact, it will be largely used in the derivation of all the results proved in the sequel. Thus, it is given a special reference.

\begin{property}[Characterizing property of independent $\chi^2$ random variables]
\label{prope:char.prop.chisq}
$Y_i$, $i=1,2$, are independent chi-squared random variables if and only if the compositional ratio $X=Y_1/\left(Y_1+Y_2\right)$ is independent of $Y_1+Y_2$.
\end{property}

By replacing the two chi-squared random variables involved in Eq.~(\ref{eq:def.beta}) with two independent non-central ones, we obtain the definition of the ``doubly'' non-central beta distribution, that is the most general non-central extension of the beta one. The latter is defined as follows.

Let $Y'_i$, $i=1,2$, be independent $\chi'^{\,2}_{2\alpha_i}\left(\lambda_i\right)$ random variables. Then, a random variable is said to have a doubly non-central beta distribution with shape parameters $\alpha_1,\alpha_2$ and non-centrality parameters $\lambda_1,\lambda_2$, denoted by $\mbox{\normalfont{B}}''\left(\alpha_1,\alpha_2,\lambda_1,\lambda_2\right)$, if it is distributed as
\begin{equation}
X'=\frac{Y'_1}{Y'_1+Y'_2}
\label{eq:def.dnc.beta}
\end{equation}
\cite{JohKotBal95}. The case $\lambda_1=\lambda_2=0$ clearly corresponds to the beta distribution. Moreover, by taking $\alpha_1=\alpha_2=0$ in Eq.~(\ref{eq:def.dnc.beta}), the latter degenerates into the compositional ratio $X'_{pnc}$ of two purely non-central chi-squared independent random variables with non-centrality parameters $\lambda_1$, $\lambda_2$. Its distribution is denoted by $\mbox{\normalfont{B}}''\left(0,0,\lambda_1,\lambda_2\right)$. 

The $\mbox{B}''$ density can be easily derived by using the mixture representation of the non-central chi-squared distribution. Specifically, let $M_i$, $i=1,2$, be independent Poisson random variables with means $\lambda_i/2$. Conditionally on $(M_1,M_2)$, $X'$ has a Beta$(\alpha_1+M_1, \alpha_2 + M_2)$ distribution, $Y'_i|(M_1,M_2)$ being independent with distributions $\chi^2_{2\alpha_i + 2 M_i}$, $i=1,2$. Therefore, the density function $f_{X'}$ of $X' \sim \mbox{B}''\left(\alpha_1,\alpha_2,\lambda_1,\lambda_2\right)$ can be stated as:
\begin{eqnarray}
\lefteqn{f_{X'}\left(x;\alpha_1,\alpha_2,\lambda_1,\lambda_2\right)=}\nonumber \\
& = & \sum_{j=0}^{+\infty} \sum_{k=0}^{+\infty} \frac{e^{-\frac{\lambda_1}{2}} \left(\frac{\lambda_1}{2}\right)^j}{j!} \frac{e^{-\frac{\lambda_2}{2}} \left(\frac{\lambda_2}{2}\right)^k}{k!} \frac{x^{\alpha_1+j-1} \left(1-x\right)^{\alpha_2+k-1}}{B\left(\alpha_1+j,\alpha_2+k\right)},\quad 0<x<1,
\label{eq:dens.beta.dnc}
\end{eqnarray}
i.e. as the double series of the Beta$(\alpha_1+j,\alpha_2+k)$ densities, $j,k \in \mathbb{N} \cup \{0\}$, weighted by the joint probabilities of the bivariate random variable $\left(M_1,M_2\right)$, where $M_i$, $i=1,2$, are independent with $\mbox{Poisson}\left(\lambda_i/2\right)$ distributions.

By analogy with the density, the distribution function $F_{X'}$ of $X' \sim \mbox{\normalfont{B}}''\left(\alpha_1,\alpha_2,\lambda_1,\lambda_2\right)$ can be stated as:
\begin{eqnarray}
\lefteqn{F_{X'}\left(x;\alpha_1,\alpha_2,\lambda_1,\lambda_2\right)=} \nonumber \\
& = & \sum_{j=0}^{+\infty} \sum_{k=0}^{+\infty} \frac{e^{-\frac{\lambda_1}{2}} \left(\frac{\lambda_1}{2}\right)^j}{j!} \frac{e^{-\frac{\lambda_2}{2}} \left(\frac{\lambda_2}{2}\right)^k}{k!} \frac{B\left(x; \, \alpha_1+j,\alpha_2+k\right)}{B\left(\alpha_1+j,\alpha_2+k\right)},\quad 0<x<1,
\label{eq:distr.beta.dnc}
\end{eqnarray}
i.e. as the double series of the Beta$(\alpha_1+j,\alpha_2+k)$ distribution functions, $j,k \in \mathbb{N} \cup \{0\}$, weighted by the joint probabilities of the bivariate random variable $\left(M_1,M_2\right)$, where $M_i$, $i=1,2$, are independent with $\mbox{Poisson}\left(\lambda_i/2\right)$ distributions. We recall that $B\left(x; \, a,b\right)=\int_{0}^{x} t^{a-1} \, \left(1-t\right)^{b-1} \, dt$ is the incomplete beta function. An implementation of Eq.~(\ref{eq:distr.beta.dnc}) in \texttt{R} language is proposed in \ref{funct:int.pdncbeta}, \ref{funct:pdncbeta}.

The above discussion directly leads to the following mixture representation.
\begin{property}[Mixture representation of B$''$]
\label{prope:mixrepres.dncb}
Let $X'$ have a $\mbox{\normalfont{B}}''\left(\alpha_1,\alpha_2,\lambda_1,\lambda_2\right)$ distribution and $M_i$, $i=1,2$, be independent Poisson random variables with means $\lambda_i/2$. Then, $X'$ admits the following representation:
\begin{equation}
X'\,|\left(M_1, \, M_2\right) \sim \mbox{\normalfont{Beta}}\left(\alpha_1+M_1, \alpha_2+M_2\right) \; .
\label{eq:mixrepres.dncb}
\end{equation}
\end{property}

In view of the foregoing arguments, it's clear that Property~\ref{prope:char.prop.chisq} is no longer valid in the non-central setting. However, an interesting generalization of the latter holds true. As a matter of fact, a doubly non-central beta random variable is herein proved to be independent of the sum of the two non-central chi-squared random variables involved in its definition in a suitable conditional form. More precisely, in the notation of Eq.~(\ref{eq:def.dnc.beta}), the latter occurs conditionally on the sum $M^+$ of the two Poisson random variables on which both $X'$ and $Y'_1+Y'_2$ depend. As a side effect, the distribution of $X'$ given $M^+$ is also obtained.   

\begin{proposition}[Conditional independence]
\label{propo:dncb.cond.ind}
Let $X' \sim \mbox{\normalfont{B}}''\left(\alpha_1,\alpha_2,\lambda_1,\lambda_2\right)$ and $Y'_i$, $i=1,2$, be independent $\chi'^{\,2}_{2\alpha_i}\left(\lambda_i\right)$ random variables, with $Y'^+=Y'_1+Y'_2$. Furthermore, let $M_i$, $i=1,2$, be independent Poisson random variables with means $\lambda_i/2$ and $M^+=M_1+M_2$. Then, $X'$ and $Y'^+$ are conditionally independent given $M^+$ and the density of $X'$ given $M^+$ is:
\begin{equation}
f_{\left.X'\right|M^+}\left(x\right)=\sum_{i=0}^{M^+} \mbox{\normalfont{Binomial}}\left(i;M^+,\frac{\lambda_1}{\lambda^+}\right) \cdot \mbox{\normalfont{Beta}}\left(x;\alpha_1+i,\alpha_2+M^+-i\right),
\label{eq:dncb.distr.cond.m}
\end{equation}
where:
\begin{equation}
\mbox{\normalfont{Binomial}}\left(i;M^+,\frac{\lambda_1}{\lambda^+}\right)={M^+ \choose i} \left(\frac{\lambda_1}{\lambda^+}\right)^i \left(1-\frac{\lambda_1}{\lambda^+}\right)^{M^+-i}, \qquad i=0,\ldots,M^+.
\label{eq:notat.binom.distr}
\end{equation}
\end{proposition}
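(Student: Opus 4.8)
The plan is to derive the result from the mixture representation of Property~\ref{prope:mixrepres.dncb} combined with the characterizing Property~\ref{prope:char.prop.chisq}, applied \emph{conditionally} on the pair $(M_1,M_2)$, and then to integrate out the redundant part of that conditioning so as to retain only $M^+$.

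First I would record the conditional structure. Conditionally on $(M_1,M_2)$, the variables $Y'_i\mid(M_1,M_2)$ are independent with $\chi^2_{2\alpha_i+2M_i}$ distributions, $i=1,2$. Hence, by Property~\ref{prope:char.prop.chisq}, given $(M_1,M_2)$ the compositional ratio $X'$ is independent of the total $Y'^+=Y'_1+Y'_2$; moreover $X'\mid(M_1,M_2)\sim\mbox{Beta}(\alpha_1+M_1,\alpha_2+M_2)$ while, by the reproductive property of the chi-squared distribution with respect to degrees of freedom, $Y'^+\mid(M_1,M_2)\sim\chi^2_{2(\alpha_1+\alpha_2)+2M^+}$. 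The key point to isolate here is that the conditional law of $Y'^+$ depends on $(M_1,M_2)$ only through the sum $M^+=M_1+M_2$.

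Next I would descend from $(M_1,M_2)$ to $M^+$. For measurable sets $A$ and $B$,
\[
\mathbb{P}\left(X'\in A,\,Y'^+\in B\mid M^+=m\right)=\sum_{i=0}^{m}\mathbb{P}\left(M_1=i\mid M^+=m\right)\,\mathbb{P}\left(X'\in A,\,Y'^+\in B\mid M_1=i,\,M_2=m-i\right).
\]
Within the sum I would factor the joint probability using the conditional independence just obtained, and then replace $\mathbb{P}(Y'^+\in B\mid M_1=i,M_2=m-i)$ by $\mathbb{P}(Y'^+\in B\mid M^+=m)=\mathbb{P}(\chi^2_{2(\alpha_1+\alpha_2)+2m}\in B)$; this substitution is legitimate precisely because, as noted above, this conditional law is free of $i$. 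Pulling the common factor out of the sum leaves $\mathbb{P}(Y'^+\in B\mid M^+=m)\sum_{i=0}^{m}\mathbb{P}(M_1=i\mid M^+=m)\,\mathbb{P}(X'\in A\mid M_1=i,M_2=m-i)$, and the remaining sum is just $\mathbb{P}(X'\in A\mid M^+=m)$. This proves that $X'$ and $Y'^+$ are conditionally independent given $M^+$.

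Finally I would read off the conditional density. The computation above already gives $f_{X'\mid M^+}(x)=\sum_{i=0}^{M^+}\mathbb{P}(M_1=i\mid M^+)\,\mbox{Beta}(x;\alpha_1+i,\alpha_2+M^+-i)$, so it only remains to recall the classical fact that, since $M_i\sim\mbox{Poisson}(\lambda_i/2)$ are independent, $M_1\mid M^+\sim\mbox{Binomial}\left(M^+,\frac{\lambda_1}{\lambda_1+\lambda_2}\right)$; inserting the weights $\mbox{Binomial}(i;M^+,\lambda_1/\lambda^+)$ yields exactly Eq.~(\ref{eq:dncb.distr.cond.m}). The one delicate spot is the marginalization step: one must keep straight that the conditional independence is invoked given the \emph{full} pair $(M_1,M_2)$, whereas the factor extracted from the sum is the conditional law given $M^+$ alone, and these agree only because the reproductive property collapses the dependence of $Y'^+$ on $(M_1,M_2)$ down to $M^+$.
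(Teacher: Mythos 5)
Your proof is correct and follows essentially the same route as the paper's: both apply the characterizing property of independent chi-squared variables conditionally on the pair of Poisson counts, note via the reproductive/mixture properties that the conditional law of $Y'^+$ depends on $(M_1,M_2)$ only through $M^+$, and then marginalize over $M_1\mid M^+\sim\mbox{Binomial}\left(M^+,\lambda_1/\lambda^+\right)$ to obtain both the factorization and Eq.~(\ref{eq:dncb.distr.cond.m}). The only differences (conditioning on $(M_1,M_2)$ rather than $(M_1,M^+)$, and phrasing the factorization in terms of probabilities of sets rather than densities) are cosmetic.
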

\begin{proof}
For the proof see~\ref{proof:dncb.cond.ind} in the Appendix.
\end{proof}
 
The doubly non-central beta density can be equivalently written as a perturbation of the corresponding central case, i.e. the beta one, as follows.
\begin{proposition}[Perturbation representation of B$''$]
\label{propo:pertrepres.dncb}
Let $X'$ have a $\mbox{\normalfont{B}}''$ distribution with shape parameters $\alpha_1$, $\alpha_2$ and non-centrality parameters $\lambda_1$, $\lambda_2$. Then, the density $f_{X'}$ of $X'$ can be written as:
\begin{equation}
f_{X'}\left(x;\alpha_1,\alpha_2,\lambda_1,\lambda_2\right)= \mbox{\normalfont{Beta}}\left(x; \alpha_1,\alpha_2 \right) \cdot e^{-\frac{\lambda^+}{2}} \,  \Psi_2\left[\alpha^+;\alpha_1,\alpha_2;\frac{\lambda_1}{2}x,\frac{\lambda_2 }{2}\left(1-x\right)\right] \; ,
\label{eq:c.ncb}
\end{equation}
where $\mbox{\normalfont{Beta}}\left(x;\alpha_1,\alpha_2\right)$ is defined as in Eq.~(\ref{eq:dens.beta}), $\alpha^+=\alpha_1+\alpha_2$, $\lambda^+=\lambda_1+\lambda_2$ and
\begin{equation}
\Psi_2\left[\alpha;\gamma,\gamma';x,y\right]=\sum_{j=0}^{+\infty}\sum_{k=0}^{+\infty}\frac{(\alpha)_{j+k}}{(\gamma)_j \, (\gamma')_k}  \frac{x^j}{j!}\frac{y^k}{k!}, \quad x,y \geq 0
\label{eq:perturb.ncb}
\end{equation}
is the Humbert's confluent hypergeometric function \cite{SriKar85}.
\end{proposition}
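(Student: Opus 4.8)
The plan is to start from the series representation of the $\mbox{B}''$ density in Eq.~(\ref{eq:dens.beta.dnc}) and simply factor out the central beta density term by term. Writing
\begin{equation}
\frac{x^{\alpha_1+j-1}\left(1-x\right)^{\alpha_2+k-1}}{B\left(\alpha_1+j,\alpha_2+k\right)}=\frac{x^{\alpha_1-1}\left(1-x\right)^{\alpha_2-1}}{B\left(\alpha_1,\alpha_2\right)}\cdot\frac{B\left(\alpha_1,\alpha_2\right)}{B\left(\alpha_1+j,\alpha_2+k\right)}\,x^j\left(1-x\right)^k,
\label{eq:plan.factor}
\end{equation}
the first factor is precisely $\mbox{Beta}\left(x;\alpha_1,\alpha_2\right)$ as defined in Eq.~(\ref{eq:dens.beta}) and can be pulled out of the double series, since it does not depend on $j$ or $k$.

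The crux of the argument is then to identify the ratio of beta functions with a ratio of Pochhammer symbols. Using $B\left(a,b\right)=\Gamma\left(a\right)\Gamma\left(b\right)/\Gamma\left(a+b\right)$ and rearranging, one gets
\begin{equation}
\frac{B\left(\alpha_1,\alpha_2\right)}{B\left(\alpha_1+j,\alpha_2+k\right)}=\frac{\Gamma\left(\alpha_1+j\right)}{\Gamma\left(\alpha_1\right)}^{-1}\cdot\frac{\Gamma\left(\alpha_2+k\right)}{\Gamma\left(\alpha_2\right)}^{-1}\cdot\frac{\Gamma\left(\alpha^++j+k\right)}{\Gamma\left(\alpha^+\right)}=\frac{\left(\alpha^+\right)_{j+k}}{\left(\alpha_1\right)_j\,\left(\alpha_2\right)_k},
\label{eq:plan.poch}
\end{equation}
where the last equality invokes Eq.~(\ref{eq:poch.symb2}) (this is where one tacitly uses $\alpha_1,\alpha_2\neq0$, so the statement is understood for the non-degenerate shape parameters; the degenerate cases were already dealt with separately). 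Substituting Eq.~(\ref{eq:plan.poch}) back into the factored series and combining $e^{-\lambda_1/2}e^{-\lambda_2/2}=e^{-\lambda^+/2}$, the double sum becomes
\begin{equation}
\sum_{j=0}^{+\infty}\sum_{k=0}^{+\infty}\frac{\left(\alpha^+\right)_{j+k}}{\left(\alpha_1\right)_j\,\left(\alpha_2\right)_k}\,\frac{\left(\tfrac{\lambda_1}{2}x\right)^j}{j!}\,\frac{\left(\tfrac{\lambda_2}{2}\left(1-x\right)\right)^k}{k!},
\label{eq:plan.psi}
\end{equation}
which is exactly $\Psi_2\!\left[\alpha^+;\alpha_1,\alpha_2;\tfrac{\lambda_1}{2}x,\tfrac{\lambda_2}{2}\left(1-x\right)\right]$ by Eq.~(\ref{eq:perturb.ncb}), giving Eq.~(\ref{eq:c.ncb}).

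There is no real obstacle here: the only point deserving a word of care is the rearrangement of the double series, which is legitimate because all terms are nonnegative for $0<x<1$ and $\lambda_1,\lambda_2\ge0$, so Tonelli's theorem justifies pulling out the common factor and regrouping; the resulting $\Psi_2$ series converges for all nonnegative arguments, consistent with Eq.~(\ref{eq:perturb.ncb}). I would present the computation in the compact form above, citing Eqs.~(\ref{eq:dens.beta.dnc}), (\ref{eq:poch.symb2}) and (\ref{eq:perturb.ncb}) as the only ingredients.
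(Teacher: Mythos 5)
Your proposal is correct and follows essentially the same route as the paper's proof: both start from the double-series form of the $\mbox{B}''$ density in Eq.~(\ref{eq:dens.beta.dnc}), rewrite $B\left(\alpha_1+j,\alpha_2+k\right)$ as $B\left(\alpha_1,\alpha_2\right)\left(\alpha_1\right)_j\left(\alpha_2\right)_k/\left(\alpha^+\right)_{j+k}$ via Eq.~(\ref{eq:poch.symb2}), and then pull out $\mbox{Beta}\left(x;\alpha_1,\alpha_2\right)\,e^{-\lambda^+/2}$ to recognize the remaining double sum as $\Psi_2$. Your added remark on justifying the regrouping by nonnegativity of the terms is a harmless extra precaution not present in the paper.
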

\begin{proof}
For the proof see~\ref{proof:pertrepres.dncb} in the Appendix.
\end{proof}

Unfortunately, the perturbation representation of the doubly non-central beta density is not so easily tractable and interpretable. Indeed, Eq.~(\ref{eq:c.ncb}) shows that, unless a constant term, the beta density is perturbed by a function in two variables given by the sum of the double power series reported in Eq.~(\ref{eq:perturb.ncb}). The latter has not a simple behavior and, to our knowledge, is not reducible into a more tractable analytical form. Therefore, the effect of such perturbation is not easy to understand. However, it can be clearly seen when $\alpha_1=\alpha_2=1$, because in this case the beta density reduces to the uniform one (see Section~\ref{subsec:dnc.beta.dens.plots}). In this regard, note that, in light of Eq.~(\ref{eq:poch.symb.sum}), one obtains:
\begin{eqnarray}
\lefteqn{\Psi_2\left[\alpha^+;\alpha_1,\alpha_2;\frac{\lambda_1}{2}x,\frac{\lambda_2 }{2}\left(1-x\right)\right]=} \nonumber \\
& = & \sum_{j=0}^{+\infty} \sum_{k=0}^{+\infty} \frac{\left(\alpha^+\right)_{j+k}}{\left(\alpha_1\right)_j \left(\alpha_2\right)_k} \frac{\left(\frac{\lambda_1}{2}x\right)^j}{j!} \frac{\left[\frac{\lambda_2}{2}(1-x)\right]^k}{k!}= \nonumber \\
& = & \sum_{j=0}^{+\infty} \frac{\left(\alpha^+\right)_j}{\left(\alpha_1\right)_j} \frac{\left(\frac{\lambda_1}{2}x\right)^j}{j!} \sum_{k=0}^{+\infty} \frac{\left(\alpha^++j\right)_{k}}{\left(\alpha_2\right)_k} \frac{\left[\frac{\lambda_2}{2}(1-x)\right]^k}{k!}= \nonumber \\
& = & \sum_{j=0}^{+\infty} \frac{\left(\alpha^+\right)_j}{\left(\alpha_1\right)_j} \frac{\left(\frac{\lambda_1}{2}x\right)^j}{j!} \, _1 F_1\left[\alpha^++j;\alpha_2;\frac{\lambda_2}{2} \, \left(1-x\right)\right],
\label{eq:perturb.ncb.alter}
\end{eqnarray}
where $_1F_1\left(a;b;x\right)=\sum_{k=0}^{+\infty}\frac{(a)_k}{(b)_k}\frac{x^k}{k!}$ is the Kummer's confluent hypergeometric function \cite{SriKar85}. From Eq.~(\ref{eq:perturb.ncb.alter}) it's immediate to see that the $\Psi_2$ function can be equivalently expressed as a series of weighted Kummer's confluent hypergeometric functions. Therefore, this formula can be usefully adopted as a natural basis for implementing such a function in any statistical package where the $_1F_1$ function is already implemented, for instance the \texttt{R} programming environment. In this regard, an implementation of Eq.~(\ref{eq:perturb.ncb.alter}) in \texttt{R} language is proposed in \ref{funct:dperturb}.

A new representation of $X'\sim\mbox{\normalfont{B}}''\left(\alpha_1,\alpha_2,\lambda_1,\lambda_2\right)$ is now introduced. According to the latter, a doubly non-central beta random variable can be expressed in terms of a convex linear combination of a central component and a purely non-central one. These two additive components are given random weights that can be fully understood by recalling the type 1 and the type 2 non-central beta distributions, namely two special cases of the doubly non-central one. The latter are now briefly recalled but for more details the reader can refer for example to \cite{NadGup04}.

If two random variables $Y'_1$ and $Y_2$ are independently distributed according to $\chi'^{\,2}_{2\alpha_1}\left(\lambda\right)$ and $\chi^{2}_{2\alpha_2}$, respectively, then the random variable:
\begin{equation}
X'_1=\frac{Y'_1}{Y'_1+Y_2}
\label{eq:def.nc1.beta}
\end{equation}
is said to have a type 1 non-central beta distribution, denoted by $\mbox{\normalfont{B}}'_1\left(\alpha_1,\alpha_2,\lambda\right)$. The density function $f_{X'_1}$ of $X'_1 \sim \mbox{\normalfont{B}}'_1\left(\alpha_1,\alpha_2,\lambda\right)$ can be derived by means of a reasoning analogous to the one leading to Eq.~(\ref{eq:dens.beta.dnc}) and it is given by:
\begin{equation}
f_{X'_1}\left(x_1;\alpha_1,\alpha_2,\lambda\right)=\sum_{j=0}^{+\infty} \frac{e^{-\frac{\lambda}{2}} \left(\frac{\lambda}{2}\right)^j}{j!} \frac{x_1^{\alpha_1+j-1} \left(1-x_1\right)^{\alpha_2-1}}{B\left(\alpha_1+j,\alpha_2\right)},\quad 0<x_1<1,
\label{eq:dens.beta.nc1}
\end{equation}
i.e. the series of the Beta$(\alpha_1+j,\alpha_2)$ densities, $j \in \mathbb{N} \cup \{0\}$, weighted by the probabilities of $M \sim \mbox{Poisson}\left(\lambda/2\right)$. Roughly speaking, Eq.~(\ref{eq:dens.beta.nc1}) can be intuitively established by taking $\lambda_2=0$ and renaming $\lambda_1$ with $\lambda$ in Eq.~(\ref{eq:dens.beta.dnc}). Such a distribution admits the following mixture and perturbation representations.

\begin{property}[Mixture representation of $\mbox{\normalfont{B}}'_1$]
\label{prope:mixrepres.ncb1}
Let $X'_1$ have a $\mbox{\normalfont{B}}'_1\left(\alpha_1,\alpha_2,\lambda\right)$ distribution and $M$ be a Poisson random variable with mean $\lambda/2$. Then, $X'_1$ admits the following representation: $X'_1| \, M \sim \mbox{\normalfont{Beta}}\left(\alpha_1+M, \alpha_2\right)$.
\end{property}

\begin{proposition}[Perturbation representation of $\mbox{\normalfont{B}}'_1$]
\label{propo:pertrepres.ncb1}
Let $X'_1\sim\mbox{\normalfont{B}}'_1\left(\alpha_1,\alpha_2,\lambda\right)$ and $\alpha^+=\alpha_1+\alpha_2$. Then, the density $f_{X'_1}$ of $X'_1$ can be written as:
\begin{equation}
f_{X'_1}\left(x_1;\alpha_1,\alpha_2,\lambda\right)= \mbox{\normalfont{Beta}}\left(x_1; \alpha_1,\alpha_2 \right) \cdot e^{-\frac{\lambda}{2}} \, _1 F_1\left(\alpha^+;\alpha_1;\frac{\lambda}{2} \, x_1\right).
\label{eq:c.ncb1}
\end{equation}
\end{proposition}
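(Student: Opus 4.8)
The plan is to obtain Eq.~(\ref{eq:c.ncb1}) directly from the series representation of the $\mbox{B}'_1$ density in Eq.~(\ref{eq:dens.beta.nc1}) by pulling out the central beta factor. First I would rewrite each summand of Eq.~(\ref{eq:dens.beta.nc1}) as
\begin{equation*}
\frac{x_1^{\alpha_1+j-1}\left(1-x_1\right)^{\alpha_2-1}}{B\left(\alpha_1+j,\alpha_2\right)}
= \frac{x_1^{\alpha_1-1}\left(1-x_1\right)^{\alpha_2-1}}{B\left(\alpha_1,\alpha_2\right)} \cdot \frac{B\left(\alpha_1,\alpha_2\right)}{B\left(\alpha_1+j,\alpha_2\right)} \, x_1^j
= \mbox{\normalfont{Beta}}\left(x_1;\alpha_1,\alpha_2\right) \cdot \frac{B\left(\alpha_1,\alpha_2\right)}{B\left(\alpha_1+j,\alpha_2\right)} \, x_1^j ,
\end{equation*}
so that the common factor $\mbox{\normalfont{Beta}}\left(x_1;\alpha_1,\alpha_2\right) e^{-\lambda/2}$ can be taken outside the sum over $j$.

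The key step is then the identification of the ratio of beta functions with a ratio of Pochhammer symbols. Writing $B\left(a,b\right)=\Gamma\left(a\right)\Gamma\left(b\right)/\Gamma\left(a+b\right)$ and using Eq.~(\ref{eq:poch.symb2}), one gets
\begin{equation*}
\frac{B\left(\alpha_1,\alpha_2\right)}{B\left(\alpha_1+j,\alpha_2\right)}
= \frac{\Gamma\left(\alpha_1\right)\,\Gamma\left(\alpha^++j\right)}{\Gamma\left(\alpha^+\right)\,\Gamma\left(\alpha_1+j\right)}
= \frac{\left(\alpha^+\right)_j}{\left(\alpha_1\right)_j},
\end{equation*}
where $\alpha^+=\alpha_1+\alpha_2$. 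Substituting this into the sum yields $\sum_{j=0}^{+\infty}\frac{\left(\alpha^+\right)_j}{\left(\alpha_1\right)_j}\frac{\left(\frac{\lambda}{2}x_1\right)^j}{j!}$, which is exactly ${}_1F_1\left(\alpha^+;\alpha_1;\frac{\lambda}{2}x_1\right)$ by the definition recalled just after Eq.~(\ref{eq:perturb.ncb.alter}), giving Eq.~(\ref{eq:c.ncb1}).

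I do not expect any serious obstacle here: the manipulations are term-by-term on an everywhere-convergent power series (the ${}_1F_1$ series converges for all arguments), so no interchange-of-limits subtlety arises, and the only bookkeeping is the Gamma-function algebra above. As an alternative, and perhaps cleanest, route I would simply specialize Proposition~\ref{propo:pertrepres.dncb}: setting $\lambda_2=0$ in Eq.~(\ref{eq:c.ncb}) collapses the $k$-series in the definition~(\ref{eq:perturb.ncb}) of $\Psi_2$ to its $k=0$ term, so $\Psi_2\left[\alpha^+;\alpha_1,\alpha_2;\frac{\lambda_1}{2}x,0\right]={}_1F_1\left(\alpha^+;\alpha_1;\frac{\lambda_1}{2}x\right)$ and $\lambda^+=\lambda_1$; renaming $\lambda_1$ as $\lambda$ recovers the claim. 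I would present the first, self-contained derivation as the main proof and mention the second as a remark, since it relies on a result proved later in the appendix.
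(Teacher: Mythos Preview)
Your proposal is correct and follows essentially the same approach as the paper. The paper's proof consists solely of the remark that it ``follows the same lines as the proof of Proposition~\ref{propo:pertrepres.dncb}'', and your main derivation is precisely that: rewrite $B(\alpha_1+j,\alpha_2)$ via Pochhammer symbols to extract the central beta factor, then recognize the remaining single series as ${}_1F_1$. Your alternative route (specializing Eq.~(\ref{eq:c.ncb}) at $\lambda_2=0$) is also valid and is a natural reading of the paper's one-line proof.
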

\begin{proof}
The proof follows the same lines as the proof of Proposition~\ref{propo:pertrepres.dncb}.
\end{proof}

By integrating Eq.~(\ref{eq:dens.beta.nc1}) or, roughly speaking, by taking $\lambda_2=0$ and renaming $\lambda_1$ with $\lambda$ in Eq.~(\ref{eq:distr.beta.dnc}), the distribution function $F_{X'_1}$ of $X'_1 \sim \mbox{\normalfont{B}}'_1\left(\alpha_1,\alpha_2,\lambda\right)$ can be obtained as follows:
$$
F_{X'_1}\left(x_1;\alpha_1,\alpha_2,\lambda\right)=\sum_{j=0}^{+\infty} \frac{e^{-\frac{\lambda}{2}} \left(\frac{\lambda}{2}\right)^j}{j!} \frac{B\left(x_1; \, \alpha_1+j,\alpha_2\right)}{B\left(\alpha_1+j,\alpha_2\right)},\quad 0<x_1<1,
$$
i.e., by analogy with the $\mbox{\normalfont{B}}'_1$ density, as the series of the Beta$(\alpha_1+j,\alpha_2)$ distribution functions, $j \in \mathbb{N} \cup \{0\}$, weighted by the probabilities of $M \sim \mbox{Poisson}\left(\lambda/2\right)$. As previously said, the case of $\alpha_1=\alpha_2=1$ is hugely important in this context. In the latter, the $\mbox{B}'_1$ density becomes significantly easier. Indeed, by considering Eq.~(\ref{eq:c.ncb1}) and letting $a=2$, $z=\frac{\lambda}{2}x$ in the following formula (\href{http://functions.wolfram.com/HypergeometricFunctions/Hypergeometric1F1/03/01/02/}{link}):
\begin{equation}
_1F_1\left(a;a-1;z\right)=e^z \left(1+\frac{z}{a-1}\right),
\label{eq:form1.1f1}
\end{equation}
we obtain:
\begin{equation}
f_{X'_1}\left(x_1;1,1,\lambda\right)=e^{-\frac{\lambda}{2}} \, _1F_1\left(2;1;\frac{\lambda}{2}x_1\right)=e^{-\frac{\lambda}{2}\left(1-x_1\right)} \left(1+\frac{\lambda}{2}x_1\right), \quad 0<x_1<1.
\label{eq:c.ncb1.11}
\end{equation}
Hence, by integrating Eq.~(\ref{eq:c.ncb1.11}), it's easy to see that the $\mbox{B}'_1$ distribution function turns out to be:
$$
F_{X'_1}\left(x_1;1,1,\lambda\right)=x_1 \, e^{-\frac{\lambda}{2}\left(1-x_1\right)}, \qquad 0<x_1<1.
$$ 

Finally, the type 2 non-central beta, denoted by $\mbox{\normalfont{B}}'_2\left(\alpha_1,\alpha_2,\lambda\right)$, is the distribution of the random variable:
\begin{equation}
X'_2=\frac{Y_1}{Y_1+Y'_2},
\label{eq:def.nc2.beta}
\end{equation}
where $Y_1$ and $Y'_2$ are independently distributed according to $\chi^{2}_{2\alpha_1}$ and $\chi'^{\,2}_{2\alpha_2}\left(\lambda\right)$, respectively.

The type 1 and the type 2 non-central beta random variables are connected by the following relationship.

\begin{property}[Relationship between $\mbox{\normalfont{B}}'_1$ and $\mbox{\normalfont{B}}'_2$]
\label{prope:relat.ncb1.ncb2}
Let $X'_2 \sim \mbox{\normalfont{B}}'_2\left(\alpha_1,\alpha_2,\lambda\right)$ and $X'_1 \sim \mbox{\normalfont{B}}'_1\left(\alpha_2,\alpha_1,\lambda\right)$. Then:
\begin{equation}
X'_1=1-X'_2.
\label{eq:relat.ncb1.ncb2}
\end{equation}
\end{property}
\begin{proof}
For the proof see~\ref{proof:relat.ncb1.ncb2} in the Appendix.
\end{proof}

Hence, the density function $f_{X'_2}$ of $X'_2 \sim \mbox{\normalfont{B}}'_2\left(\alpha_1,\alpha_2,\lambda\right)$ can be easily derived by making use of the transformation of variable in Eq.~(\ref{eq:relat.ncb1.ncb2}) and it is given by:
\begin{equation}
f_{X'_2}\left(x_2;\alpha_1,\alpha_2,\lambda\right)=\sum_{k=0}^{+\infty} \frac{e^{-\frac{\lambda}{2}} \left(\frac{\lambda}{2}\right)^k}{k!} \frac{x_2^{\alpha_1-1} \left(1-x_2\right)^{\alpha_2+k-1}}{B\left(\alpha_1,\alpha_2+k\right)},\quad 0<x_2<1,
\label{eq:dens.beta.nc2}
\end{equation}
i.e. the series of the Beta$(\alpha_1,\alpha_2+k)$ densities, $k \in \mathbb{N} \cup \{0\}$, weighted by the probabilities of $M \sim \mbox{Poisson}\left(\lambda/2\right)$. Roughly speaking, Eq.~(\ref{eq:dens.beta.nc2}) can be intuitively established by taking $\lambda_1=0$ and renaming $\lambda_2$ with $\lambda$ in Eq.~(\ref{eq:dens.beta.dnc}). Such a distribution admits the following mixture and perturbation representations.

\begin{property}[Mixture representation of $\mbox{\normalfont{B}}'_2$]
\label{prope:mixrepres.ncb2}
Let $X'_2$ have a $\mbox{\normalfont{B}}'_2\left(\alpha_1,\alpha_2,\lambda\right)$ distribution and $M$ be a Poisson random variable with mean $\lambda/2$. Then, $X'_2$ admits the following representation:
\begin{equation}
X'_2| \, M \sim \mbox{\normalfont{Beta}}\left(\alpha_1, \alpha_2+M\right).
\label{eq:mixrepres.ncb2}
\end{equation}
\end{property}

\begin{proposition}[Perturbation representation of $\mbox{\normalfont{B}}'_2$]
\label{propo:pertrepres.ncb2}
Let $X'_2\sim\mbox{\normalfont{B}}'_2\left(\alpha_1,\alpha_2,\lambda\right)$. Then, the density $f_{X'_2}$ of $X'_2$ can be written as:
\begin{equation}
f_{X'_2}\left(x_2;\alpha_1,\alpha_2,\lambda\right)= \mbox{\normalfont{Beta}}\left(x_2; \alpha_1,\alpha_2 \right) \cdot e^{-\frac{\lambda}{2}} \, _1 F_1\left[\alpha^+;\alpha_2;\frac{\lambda}{2}\left(1-x_2\right)\right] \; .
\label{eq:c.ncb2}
\end{equation}
\end{proposition}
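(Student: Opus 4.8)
The plan is to start from the series form of the density in Eq.~(\ref{eq:dens.beta.nc2}) and recognize the resulting power series in $\tfrac{\lambda}{2}(1-x_2)$ as a Kummer confluent hypergeometric function, exactly along the lines of the proof of Proposition~\ref{propo:pertrepres.ncb1}.

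First I would factor the central beta density out of each summand in Eq.~(\ref{eq:dens.beta.nc2}). Writing $\mbox{Beta}(x_2;\alpha_1,\alpha_2)=x_2^{\alpha_1-1}(1-x_2)^{\alpha_2-1}/B(\alpha_1,\alpha_2)$, the $k$-th term of the series equals
\begin{equation*}
\frac{e^{-\frac{\lambda}{2}}\left(\frac{\lambda}{2}\right)^k}{k!}\,\mbox{Beta}(x_2;\alpha_1,\alpha_2)\,(1-x_2)^k\,\frac{B(\alpha_1,\alpha_2)}{B(\alpha_1,\alpha_2+k)},
\end{equation*}
so that, pulling out the factor $\mbox{Beta}(x_2;\alpha_1,\alpha_2)\,e^{-\lambda/2}$ (which does not depend on $k$), it remains to identify
\begin{equation*}
\sum_{k=0}^{+\infty}\frac{B(\alpha_1,\alpha_2)}{B(\alpha_1,\alpha_2+k)}\,\frac{\left[\frac{\lambda}{2}(1-x_2)\right]^k}{k!}
\end{equation*}
with $_1F_1[\alpha^+;\alpha_2;\tfrac{\lambda}{2}(1-x_2)]$. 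A termwise manipulation is legitimate here because, for $0<x_2<1$ and $\lambda\ge0$, the series in Eq.~(\ref{eq:dens.beta.nc2}) converges absolutely (each term is nonnegative and sums to the finite value $f_{X'_2}(x_2)$), so rearranging the common factors and reindexing causes no trouble.

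Next I would compute the beta-function ratio. Using $B(a,b)=\Gamma(a)\Gamma(b)/\Gamma(a+b)$ and then Eq.~(\ref{eq:poch.symb2}), one gets
\begin{equation*}
\frac{B(\alpha_1,\alpha_2)}{B(\alpha_1,\alpha_2+k)}=\frac{\Gamma(\alpha_2)\,\Gamma(\alpha_1+\alpha_2+k)}{\Gamma(\alpha_2+k)\,\Gamma(\alpha_1+\alpha_2)}=\frac{(\alpha^+)_k}{(\alpha_2)_k},
\end{equation*}
with $\alpha^+=\alpha_1+\alpha_2$. Substituting this back turns the series into $\sum_{k\ge0}\frac{(\alpha^+)_k}{(\alpha_2)_k}\frac{z^k}{k!}$ with $z=\frac{\lambda}{2}(1-x_2)$, which is precisely $_1F_1(\alpha^+;\alpha_2;z)$ by the definition of the Kummer function recalled after Eq.~(\ref{eq:perturb.ncb.alter}); this yields Eq.~(\ref{eq:c.ncb2}).

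There is essentially no real obstacle; the only point demanding a little care is the Gamma/Pochhammer bookkeeping in the beta-function ratio (and, when $\alpha_1$ or $\alpha_2$ is not an integer, the use of Eq.~(\ref{eq:poch.symb2}) in place of Eq.~(\ref{eq:poch.symb})). As a cross-check, I would also note that the statement follows immediately from Property~\ref{prope:relat.ncb1.ncb2} together with the already-established Proposition~\ref{propo:pertrepres.ncb1}: since $X'_1=1-X'_2$ with $X'_1\sim\mbox{B}'_1(\alpha_2,\alpha_1,\lambda)$, one has $f_{X'_2}(x_2)=f_{X'_1}(1-x_2;\alpha_2,\alpha_1,\lambda)$, and Eq.~(\ref{eq:c.ncb1}) gives $\mbox{Beta}(1-x_2;\alpha_2,\alpha_1)\,e^{-\lambda/2}\,_1F_1(\alpha^+;\alpha_2;\tfrac{\lambda}{2}(1-x_2))$; finally $\mbox{Beta}(1-x_2;\alpha_2,\alpha_1)=\mbox{Beta}(x_2;\alpha_1,\alpha_2)$ because $B(\alpha_2,\alpha_1)=B(\alpha_1,\alpha_2)$, recovering Eq.~(\ref{eq:c.ncb2}).
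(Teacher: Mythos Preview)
Your proposal is correct and takes essentially the same approach as the paper: the paper simply states that the proof follows the same lines as that of Proposition~\ref{propo:pertrepres.dncb}, which amounts to rewriting the beta function ratio via Pochhammer symbols and recognizing the resulting series as the hypergeometric factor --- exactly what you do. Your additional cross-check via Property~\ref{prope:relat.ncb1.ncb2} and Proposition~\ref{propo:pertrepres.ncb1} is a nice independent verification not spelled out in the paper.
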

\begin{proof}
The proof follows the same lines as the proof of Proposition~\ref{propo:pertrepres.dncb}.
\end{proof}

By integrating Eq.~(\ref{eq:dens.beta.nc2}) or, roughly speaking, by taking $\lambda_1=0$ and renaming $\lambda_2$ with $\lambda$ in Eq.~(\ref{eq:distr.beta.dnc}), the distribution function $F_{X'_2}$ of $X'_2 \sim \mbox{\normalfont{B}}'_2\left(\alpha_1,\alpha_2,\lambda\right)$ can be obtained as follows:
$$
F_{X'_2}\left(x_2;\alpha_1,\alpha_2,\lambda\right)=\sum_{k=0}^{+\infty} \frac{e^{-\frac{\lambda}{2}} \left(\frac{\lambda}{2}\right)^k}{k!} \frac{B\left(x_2; \, \alpha_1,\alpha_2+k\right)}{B\left(\alpha_1,\alpha_2+k\right)},\quad 0<x_2<1,
$$
i.e., by analogy with the $\mbox{\normalfont{B}}'_2$ density, as the series of the Beta$(\alpha_1,\alpha_2+k)$ distribution functions, $k \in \mathbb{N} \cup \{0\}$, weighted by the probabilities of $M \sim \mbox{Poisson}\left(\lambda/2\right)$. Finally, in view of Property~\ref{prope:relat.ncb1.ncb2}, for $\alpha_1=\alpha_2=1$ we have that the $\mbox{B}'_2$ density takes on the following simple form:
\begin{equation}
f_{X'_2}\left(x_2;1,1,\lambda\right)=e^{-\frac{\lambda}{2} x_2} \left[1+\frac{\lambda}{2}\left(1-x_2\right)\right], \quad 0<x_2<1;
\label{eq:c.ncb2.11}
\end{equation}
moreover, the following holds true for the $\mbox{B}'_2$ distribution function:
$$
F_{X'_2}\left(x_2;1,1,\lambda\right)=1- e^{-\frac{\lambda}{2}x_2}\left(1-x_2\right), \qquad 0<x_2<1.
$$ 

That said, we are now ready to establish the aforementioned representation of a $\mbox{B}''$ random variable.

\begin{proposition}[Representation of $\mbox{\normalfont{B}}''$ as a convex linear combination]
\label{propo:rappr.clc.beta.dnc}
Let $X' \sim \mbox{\normalfont{B}}''\left(\alpha_1,\alpha_2,\lambda_1,\lambda_2\right)$ and $\alpha^+=\alpha_1+\alpha_2$, $\lambda^+=\lambda_1+\lambda_2$. Furthermore, let $M_r$, $r=1,2$, be independent Poisson random variables with means $\lambda_r/2$ and $M^+=M_1+M_2$. Then:
\begin{equation}
X'=X'_2 \, X +\left(1-X'_2\right) \,  X'_{pnc},
\label{eq:rappr.clc.beta.dnc}
\end{equation}
where:
\begin{itemize}
\item[i)] $X$ and $\left(X'_2,X'_{pnc}\right)$ are mutually independent and $X \sim \mbox{\normalfont{Beta}}\left(\alpha_1,\alpha_2\right)$, 
\item[ii)] $\left(X'_2, X'_{pnc}\right)$ are conditionally independent given $M^+$ with:
$$
\left.X'_2\right|M^+ \sim \mbox{\normalfont{Beta}}\left(\alpha^+,M^+\right),
$$
$$
\left.X'_{pnc}\right|M^+ \sim \sum_{i=0}^{M^+} \mbox{\normalfont{Binomial}}\left(i;M^+,\frac{\lambda_1}{\lambda^+}\right) \cdot \mbox{\normalfont{Beta}}\left(x;i,M^+-i\right),
$$
\item[iii)] $X'_2 \sim \mbox{\normalfont{B}}'_2\left(\alpha^+,0,\lambda^+\right)$ and $X'_{pnc} \sim \mbox{\normalfont{B}}''\left(0,0,\lambda_1,\lambda_2\right)$.
\end{itemize}
\end{proposition}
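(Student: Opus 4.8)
The plan is to construct all the variables appearing in the statement on one probability space by splitting each of the two non-central chi-squared building blocks of $X'$ through Property~\ref{prope:sumrepres.ncchisq}, and then to read off both the identity~(\ref{eq:rappr.clc.beta.dnc}) and the claimed dependence structure from that construction. Concretely, I would write $Y'_i=Y_i+Y'_{i,pnc}$ for $i=1,2$, where $Y_i\sim\chi^2_{2\alpha_i}$ and $Y'_{i,pnc}=\sum_{j=1}^{M_i}F_{ij}\sim\chi'^{\,2}_0\left(\lambda_i\right)$ with $M_i\sim\mbox{Poisson}\left(\lambda_i/2\right)$ and $\{F_{ij}\}$ a sequence of independent $\chi^2_2$ variables, the whole collection $Y_1,Y_2,M_1,M_2,\{F_{1j}\},\{F_{2j}\}$ being mutually independent. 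Put $Y=Y_1+Y_2$ and $Y'_{pnc}=Y'_{1,pnc}+Y'_{2,pnc}$; the reproductive property of $\chi^2$ gives $Y\sim\chi^2_{2\alpha^+}$, while $Y'_{pnc}$ is a sum of $M^+=M_1+M_2\sim\mbox{Poisson}\left(\lambda^+/2\right)$ independent $\chi^2_2$ variables, i.e. $Y'_{pnc}\sim\chi'^{\,2}_0\left(\lambda^+\right)$. Define $X=Y_1/Y$, $X'_2=Y/(Y+Y'_{pnc})$ and $X'_{pnc}=Y'_{1,pnc}/Y'_{pnc}$. Since $Y'_1+Y'_2=Y+Y'_{pnc}$,
\[
X'=\frac{Y_1+Y'_{1,pnc}}{Y+Y'_{pnc}}=\frac{Y_1}{Y}\cdot\frac{Y}{Y+Y'_{pnc}}+\frac{Y'_{1,pnc}}{Y'_{pnc}}\cdot\frac{Y'_{pnc}}{Y+Y'_{pnc}}=X'_2\,X+(1-X'_2)\,X'_{pnc},
\]
which is~(\ref{eq:rappr.clc.beta.dnc}). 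The marginal laws in iii) then follow from the defining representations: $X\sim\mbox{Beta}\left(\alpha_1,\alpha_2\right)$ by~(\ref{eq:def.beta}); $X'_2\sim\mbox{B}'_2\left(\alpha^+,0,\lambda^+\right)$ by~(\ref{eq:def.nc2.beta}), since $Y$ is central with $2\alpha^+$ degrees of freedom and $Y'_{pnc}$ is purely non-central with parameter $\lambda^+$ and independent of $Y$; and $X'_{pnc}\sim\mbox{B}''\left(0,0,\lambda_1,\lambda_2\right)$ by~(\ref{eq:def.dnc.beta}) with null shape parameters.

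For the unconditional independence in i) I would note that $X=Y_1/(Y_1+Y_2)$ is independent of $Y_1+Y_2=Y$ by Property~\ref{prope:char.prop.chisq}; since $(Y_1,Y_2)$ is independent of $(M_1,M_2,\{F_{ij}\})$ by construction, $X$ is independent of the pair $\bigl(Y,(M_1,M_2,\{F_{ij}\})\bigr)$, and $(X'_2,X'_{pnc})$ is a measurable function of that pair, whence $X$ and $(X'_2,X'_{pnc})$ are mutually independent. For ii), conditionally on $(M_1,M_2)$ the variables $Y,Y'_{1,pnc},Y'_{2,pnc}$ are independent chi-squared with $2\alpha^+,2M_1,2M_2$ degrees of freedom; hence, given $(M_1,M_2)$, $X'_2=Y/(Y+Y'_{pnc})$ has law $\mbox{Beta}\left(\alpha^+,M^+\right)$, depending on $(M_1,M_2)$ only through $M^+$, so $X'_2\mid M^+\sim\mbox{Beta}\left(\alpha^+,M^+\right)$. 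Applying Proposition~\ref{propo:dncb.cond.ind} to $X'_{pnc}\sim\mbox{B}''\left(0,0,\lambda_1,\lambda_2\right)$ gives the conditional independence of $X'_{pnc}$ and $Y'_{pnc}$ given $M^+$ together with the stated binomial mixture form of the density of $X'_{pnc}\mid M^+$. Finally, to obtain the conditional independence of $X'_2$ and $X'_{pnc}$ given $M^+$: given $(M_1,M_2)$, $X'_{pnc}$ is a function of $(Y'_{1,pnc},Y'_{2,pnc})$, hence independent of $Y'_{pnc}$ by Property~\ref{prope:char.prop.chisq} and therefore of $(Y,Y'_{pnc})$, so $X'_2$ and $X'_{pnc}$ are independent given $(M_1,M_2)$; averaging the product of the conditional laws over the conditional law of $(M_1,M_2)$ given $M^+$, and using that the conditional law of $X'_2$ depends on $(M_1,M_2)$ only through $M^+$, upgrades this to conditional independence given $M^+$.

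The main obstacle is the bookkeeping in step ii): one must condition first on the finer information $(M_1,M_2)$ — where every variable is a ratio of independent central chi-squares and Property~\ref{prope:char.prop.chisq} applies directly — and only afterwards coarsen to $M^+$, the coarsening being legitimate precisely because the conditional law of $X'_2$ is insensitive to $M_1,M_2$ separately. Everything else reduces to standard manipulations of ratios of independent gamma variables and to the reproductive and characterizing properties already recorded, so no further substantial difficulty is expected.
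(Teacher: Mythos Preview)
Your proof is correct and follows essentially the same route as the paper: you split each $Y'_i$ via Property~\ref{prope:sumrepres.ncchisq}, define $X,X'_2,X'_{pnc}$ as the same three ratios, verify the algebraic identity, and establish the dependence structure by first conditioning on $(M_1,M_2)$ and then coarsening to $M^+$ using that the conditional law of $X'_2$ depends only on $M^+$. The only minor differences are that you invoke Proposition~\ref{propo:dncb.cond.ind} as a shortcut for the law of $X'_{pnc}\mid M^+$ (the paper re-derives it in place), and you do not spell out the convention $X'_{pnc}=0$ on the event $M^+=0$, which the paper mentions explicitly.
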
 
\begin{proof}
For the proof see~\ref{proof:rappr.clc.beta.dnc} in the Appendix.
\end{proof}

It's apparent that the doubly non-central beta model can be easily simulated in different ways. Until now we have seen that this can be done by means of its definition, its mixture representation and its conditional distribution given $M^+$ in Eq.~(\ref{eq:dncb.distr.cond.m}). However, such issue can be alternatively addressed by resorting to the above proved representation.

To this end, in the notation of Proposition~\ref{propo:rappr.clc.beta.dnc}, it's necessary to generate the random variables $X$, $M^+$ and simulate accordingly from the distributions of $X'_2|M^+$ and $X'_{pnc}|M^+$. Firstly, it's to be noted that $X'_2|(M^+=0)$ is degenerate at one and $X'_{pnc}|(M^+=0)$ is degenerate at zero. Secondly, in case of $M^+ \neq 0$, the distribution of $X'_{pnc}|M^+$ is given by a mixture of $M^++1$ beta distributions, two of which have one shape parameter equal to zero. To sample from such mixture, one chooses an index $i^*$ from $\{0,\ldots,M^+\}$ according to the probabilities of the binomial distribution referred to hereinabove and then simulates a value from the corresponding $\mbox{Beta}\left(i^*,M^+-i^*\right)$ distribution. An implementation of this algorithm in \texttt{R} language is proposed in \ref{funct:rndncbeta}.
 
That said, Figures~\ref{fig:SIMUL2}, \ref{fig:SIMUL3}, \ref{fig:SIMUL5} show the results of the simulation from the $\mbox{B}''$ model for selected values of the shape and the non-centrality parameters. The generating process of the $\mbox{B}''$ random variate was carried out by means of two algorithms: the former is based on the definition while the latter on the new representation we have just introduced. In all the cases considered, the histogram of the simulated values was plotted together with the true density, thus anticipating the matters that will be discussed in the subsequent Section relating to the variety of shapes taken on by it. The graphs show that the results of the two approaches are indeed comparable.

Finally, in Section~\ref{subsec:dnc.beta.mom} the latter representation will be used in the derivation of an interesting expression for the mean of the doubly non-central beta distribution in terms of a convex linear combination of the mean of the beta distribution and a compositional ratio of the non-centrality parameters.

\begin{figure}[ht]
 \centering
 \subfigure
   {\includegraphics[width=6.3cm]{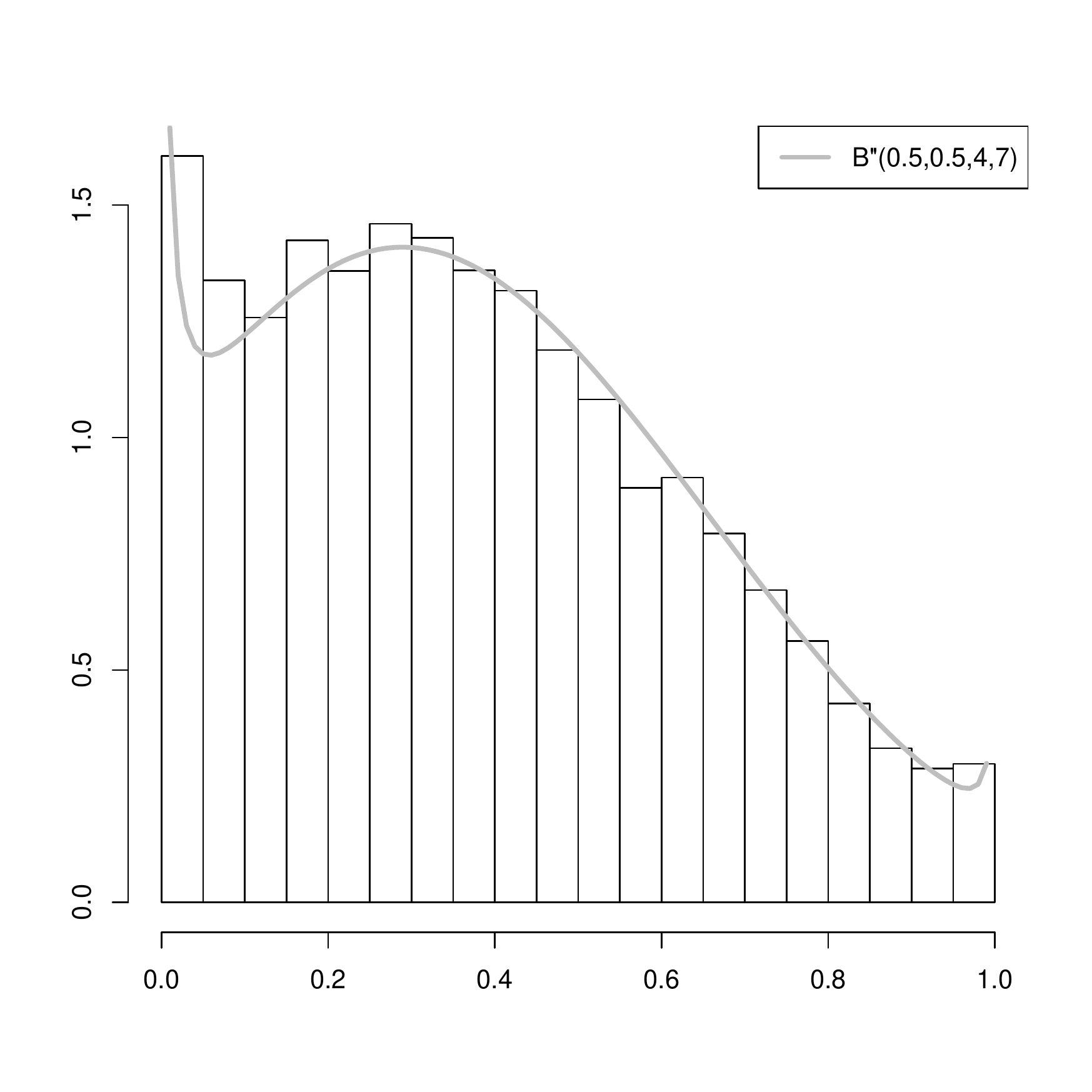}}
 \hspace{5mm}
 \subfigure
   {\includegraphics[width=6.3cm]{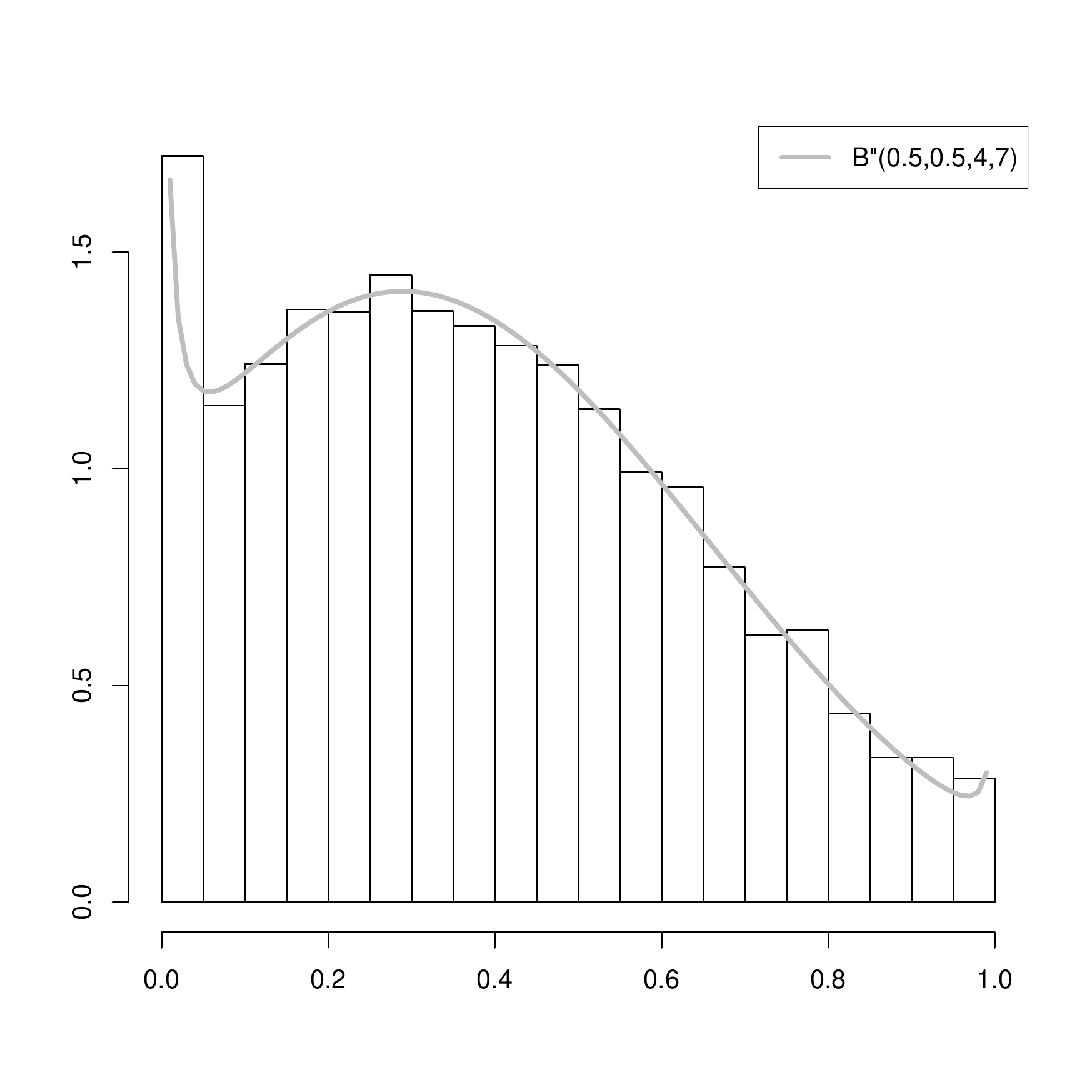}}
  \caption{Histogram of 10000 random draws simulated from $\mbox{B}''\left(0.5,0.5,4,7\right)$ by means of the algorithms based on its definition (left-hand panel) and its representation in Proposition~\ref{propo:rappr.clc.beta.dnc} (right-hand panel); the plot of the true density is superimposed in gray.}
 \label{fig:SIMUL2}
\end{figure}
 
\begin{figure}[ht]
 \centering
 \subfigure
   {\includegraphics[width=6.3cm]{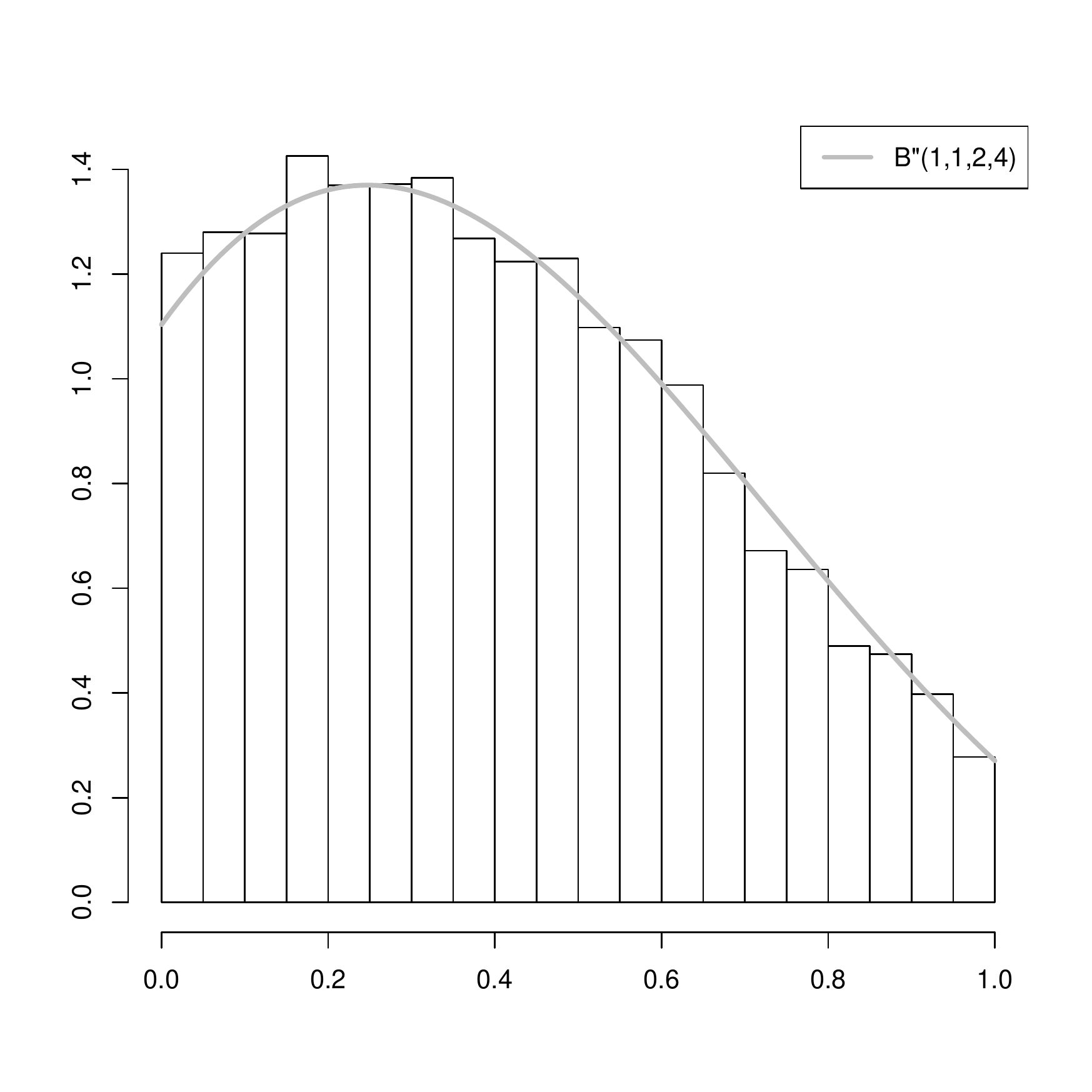}}
 \hspace{5mm}
 \subfigure
   {\includegraphics[width=6.3cm]{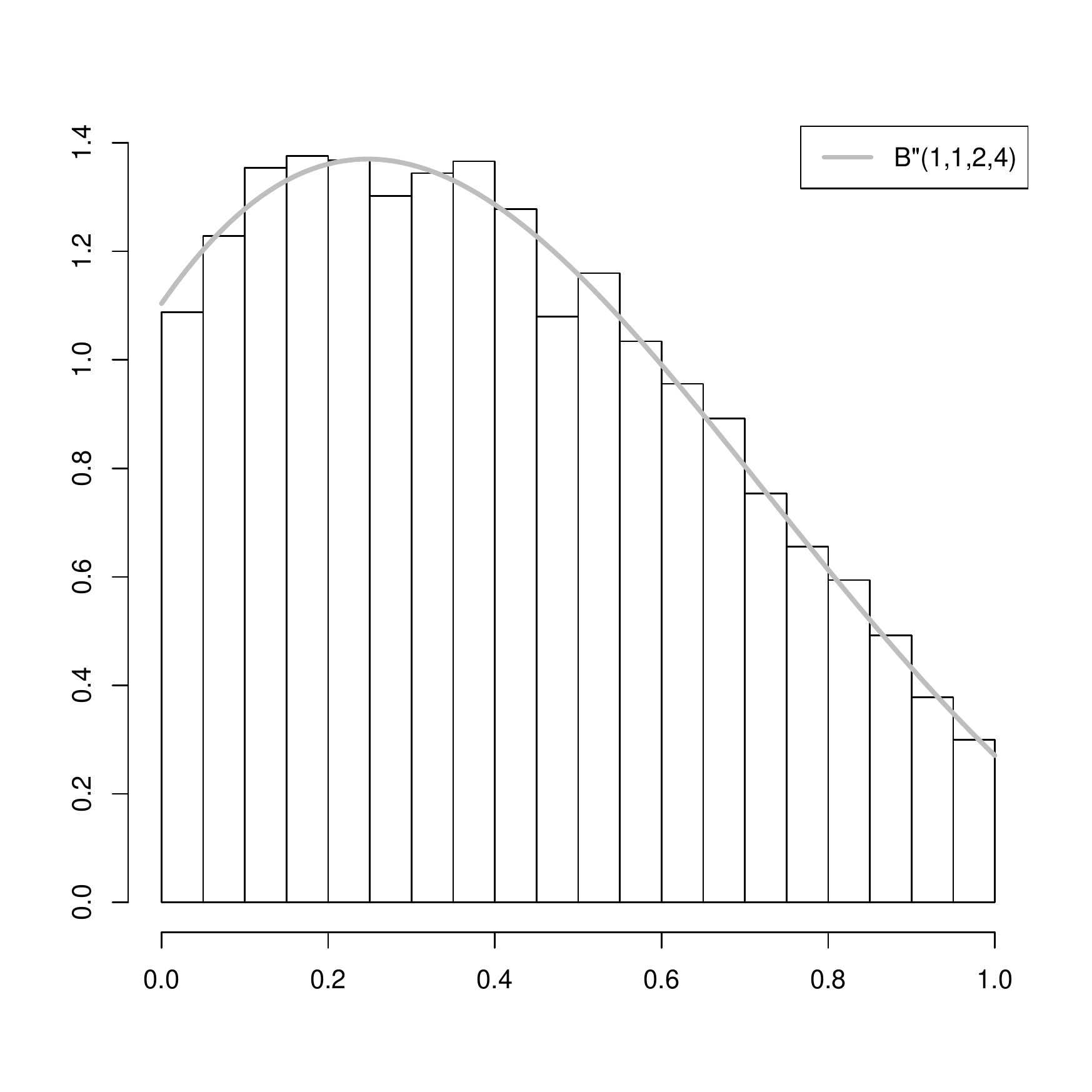}}
  \caption{Histogram of 10000 random draws simulated from $\mbox{B}''\left(1,1,2,4\right)$ by means of the algorithms based on its definition (left-hand panel) and its representation in Proposition~\ref{propo:rappr.clc.beta.dnc} (right-hand panel); the plot of the true density is superimposed in gray.}
 \label{fig:SIMUL3}
\end{figure}

\begin{figure}[ht]
 \centering 
 \subfigure
   {\includegraphics[width=6.3cm]{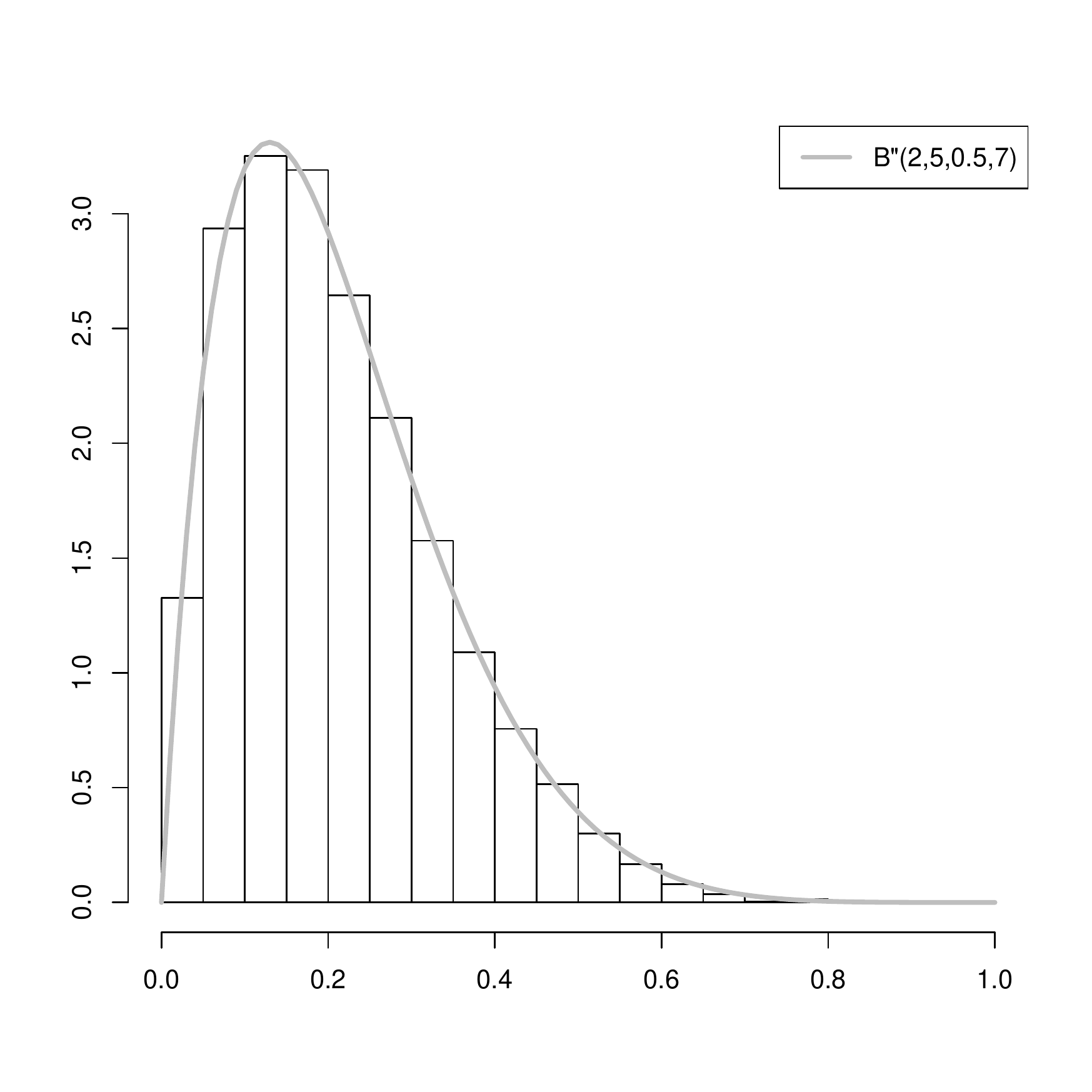}}
 \hspace{5mm}
 \subfigure
   {\includegraphics[width=6.3cm]{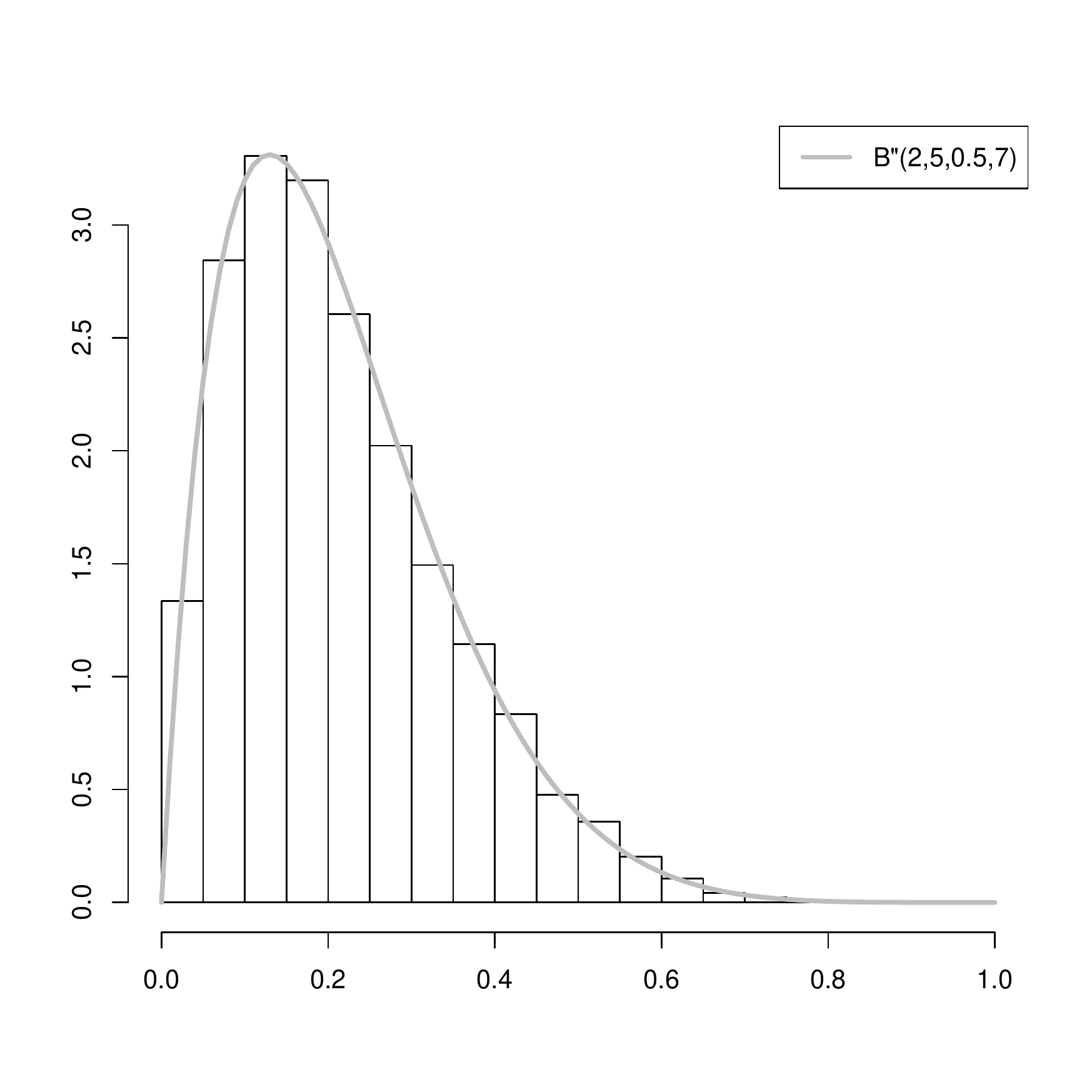}}
  \caption{Histogram of 10000 random draws simulated from $\mbox{B}''\left(2,5,0.5,7\right)$ by means of the algorithms based on its definition (left-hand panel) and its representation in Proposition~\ref{propo:rappr.clc.beta.dnc} (right-hand panel); the plot of the true density is superimposed in gray.}
 \label{fig:SIMUL5}
 \end{figure}

\subsection{Density plots}
\label{subsec:dnc.beta.dens.plots}

A key feature of the $\mbox{B}''$ distribution over the beta one lies in the much larger variety of shapes reachable by its density on varying the non-centrality parameters.

In this regard, it's worth recalling that by reversing both the shape and the non-centrality parameters the $\mbox{B}''$ density turns out to be symmetrical with respect to the midpoint of the interval $(0,1)$.

\begin{property}
\label{prope:relat.dncb1}
Let $X' \sim \mbox{\normalfont{B}}''\left(\alpha_1,\alpha_2,\lambda_1,\lambda_2\right)$. Then $1-X' \sim \mbox{\normalfont{B}}''\left(\alpha_2,\alpha_1,\lambda_2,\lambda_1\right)$.
\end{property}
\begin{proof}
For the proof see~\ref{proof:relat.dncb1} in the Appendix.
\end{proof}
As a special case of Property~\ref{prope:relat.dncb1}, the $\mbox{\normalfont{B}}''$ density with $\alpha_1=\alpha_2$ and $\lambda_1=\lambda_2$ is symmetrical with respect to $x=\frac{1}{2}$.

Some significant plots of the B$''$ density are displayed in the following Figures~\ref{fig:DNCB1},~\ref{fig:DNCB2} for selected values of the shape and the non-centrality parameters.

\begin{figure}[ht]
 \centering
 \subfigure
   {\includegraphics[width=6.3cm]{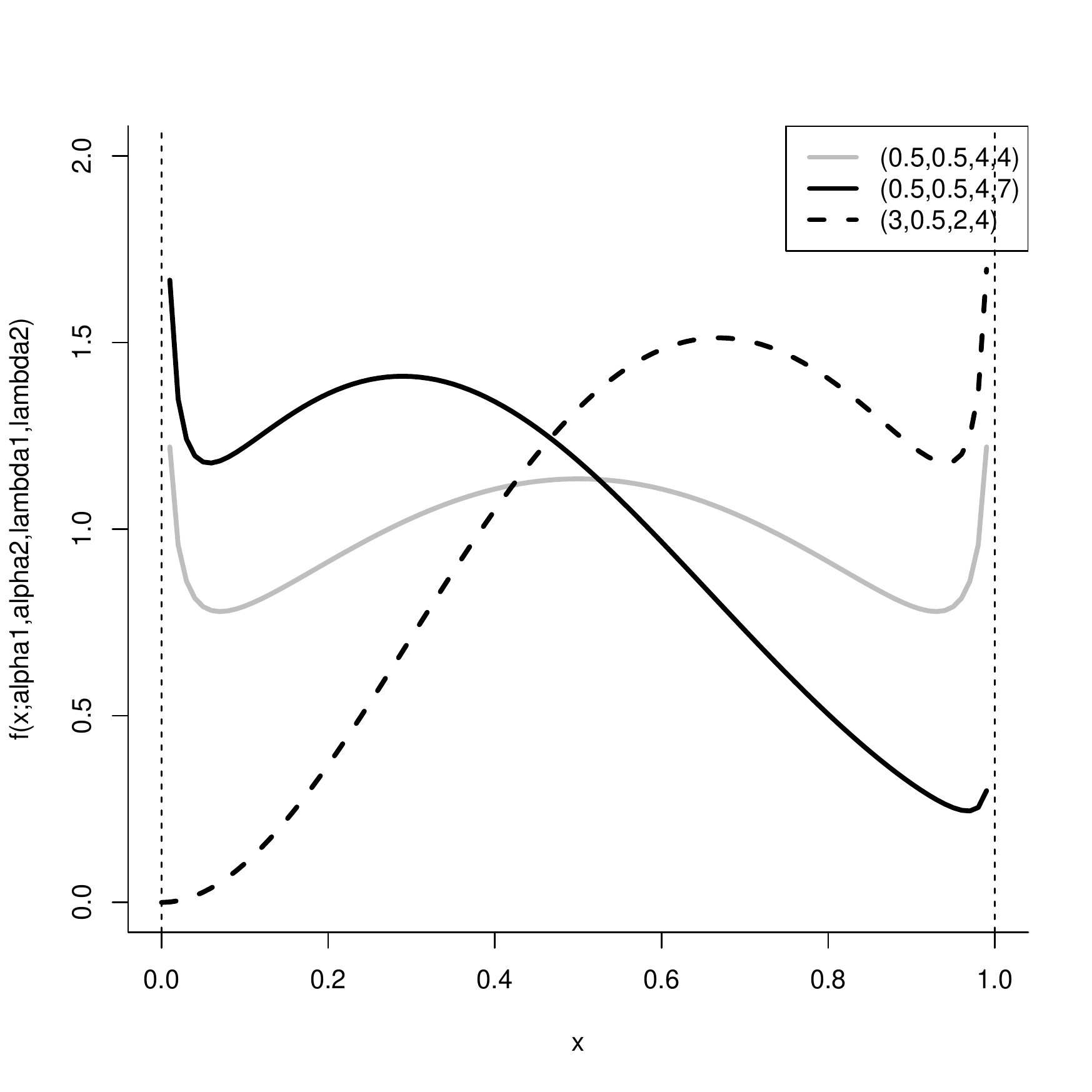}}
 \hspace{5mm}
 \subfigure
   {\includegraphics[width=6.3cm]{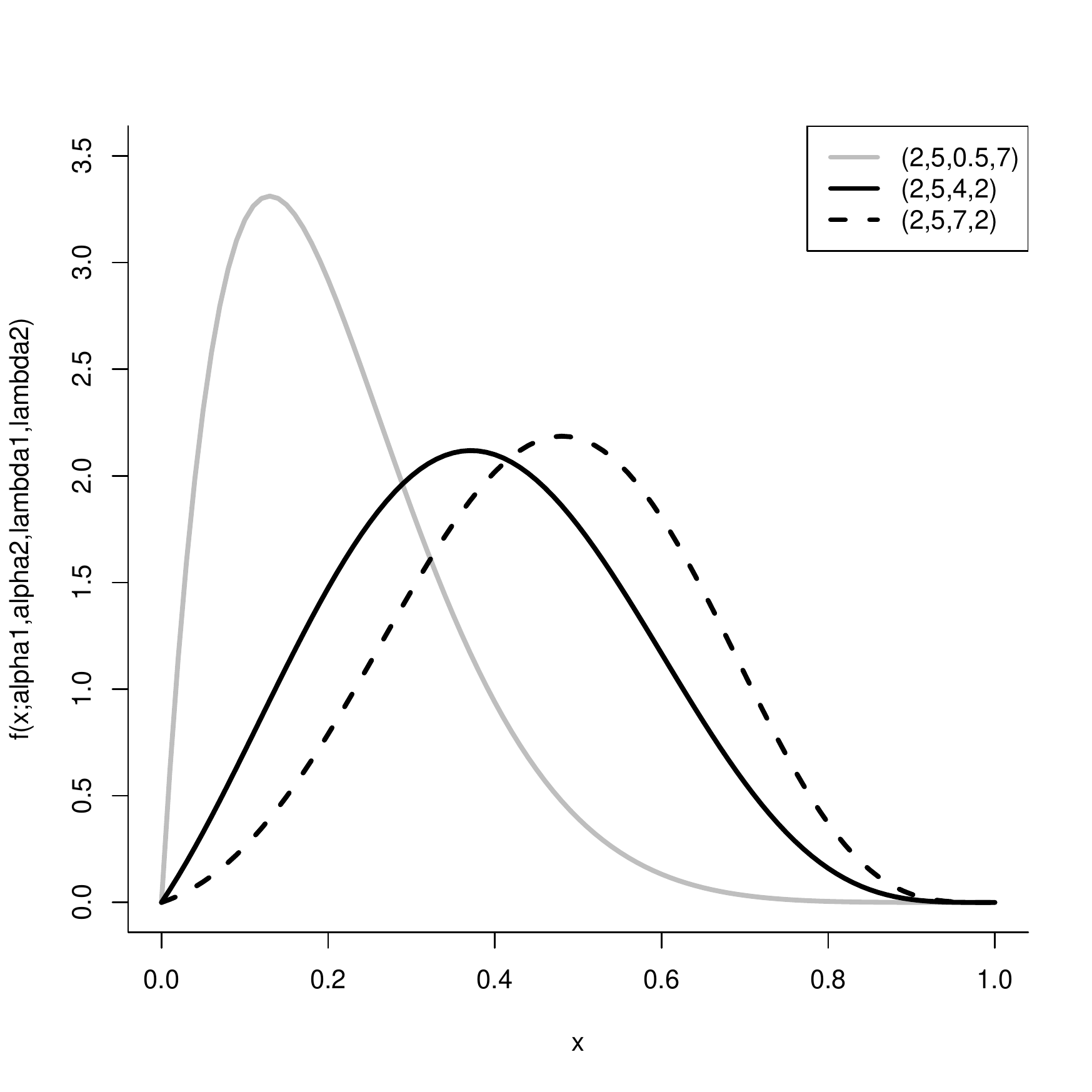}}
 \caption{Plots of the density of $X' \sim \mbox{\normalfont{B}}''\left(\alpha_1,\alpha_2,\lambda_1,\lambda_2\right)$ for selected values of $\left(\alpha_1,\alpha_2,\lambda_1,\lambda_2\right)$.}
 \label{fig:DNCB1}
\end{figure}

In this regard, we recall that the limits at $0$ and $1$ of the beta density are equal to $0$ or $+\infty$ if $\alpha_i \neq 1$ and are equal to $1$ if $\alpha_i=1$, $i=1,2$. In the latter case the beta density reduces to the uniform one. When $\alpha_i \neq 1$ the $\mbox{\normalfont{B}}''$ density shows the same limiting characteristics as the beta model ones. On the contrary, when $\alpha_i=1$ the doubly non-central beta density shows a more flexible behavior at the unit interval endpoints by taking on arbitrary finite and positive limits at $0$ and $1$. Some examples of this particularly relevant feature of the $\mbox{B}''$ density are shown in Figures~\ref{fig:DNCB2} for selected values of the non-centrality parameters $\lambda_1$, $\lambda_2$. Such peculiarity follows from Remark~\ref{prope:limit.0.nc.chisq} and was set out in \cite{OngOrs15}.

More specifically, the limits of the $\mbox{B}''$ density have the following expressions, that, interestingly, are really simple.

\begin{figure}[ht]
 \centering
 \subfigure
   {\includegraphics[width=6.3cm]{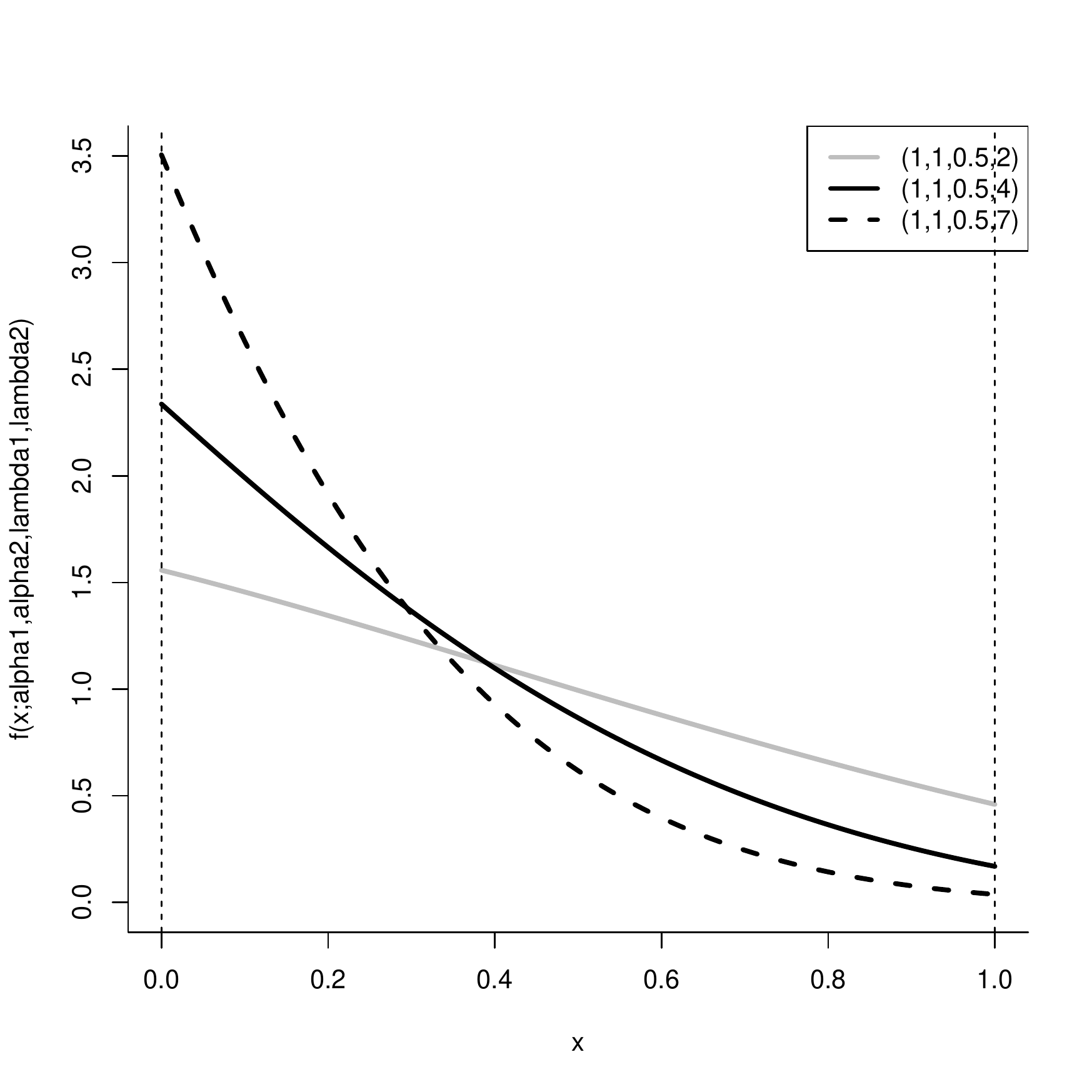}}
 \hspace{5mm}
 \subfigure
   {\includegraphics[width=6.3cm]{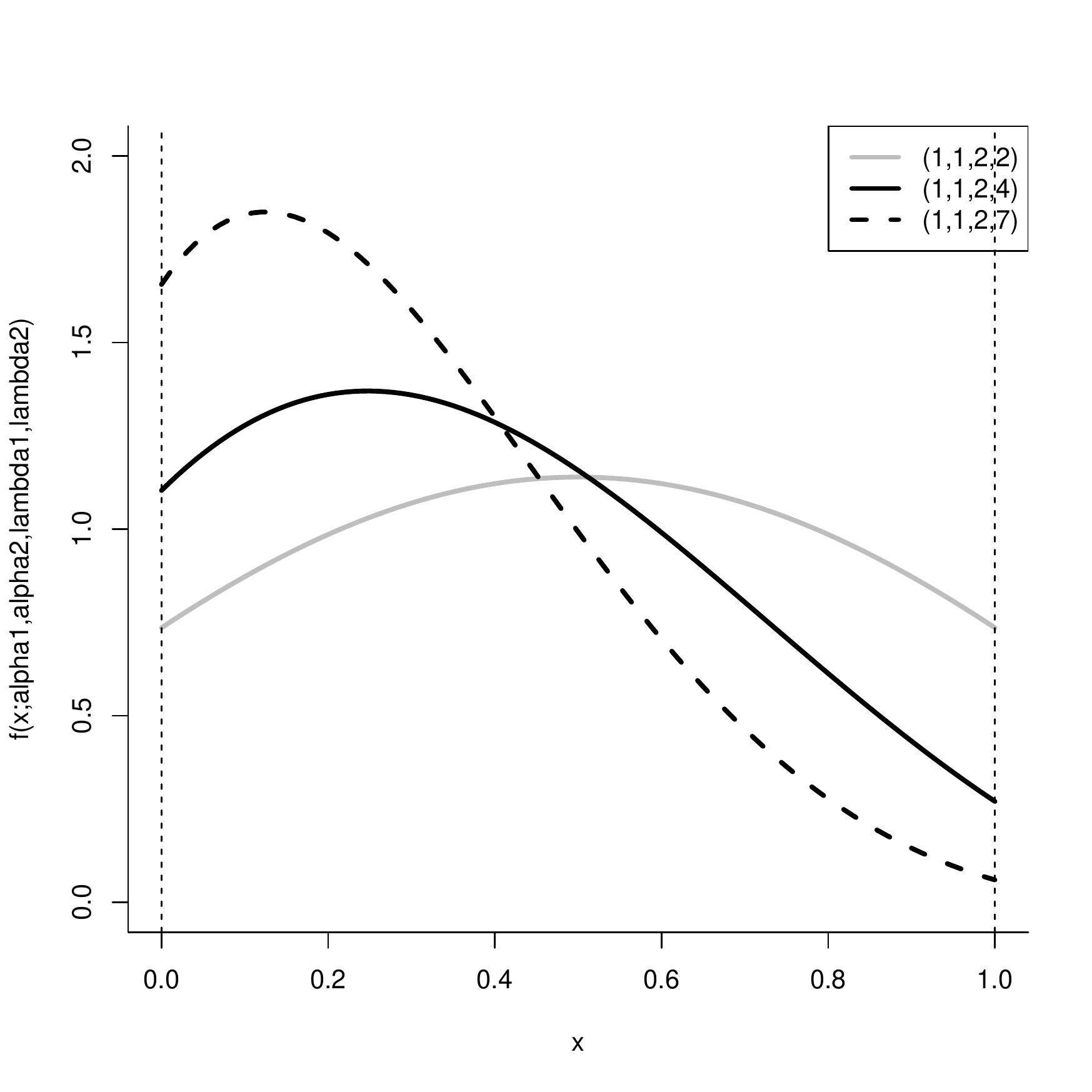}}
  \subfigure
   {\includegraphics[width=6.3cm]{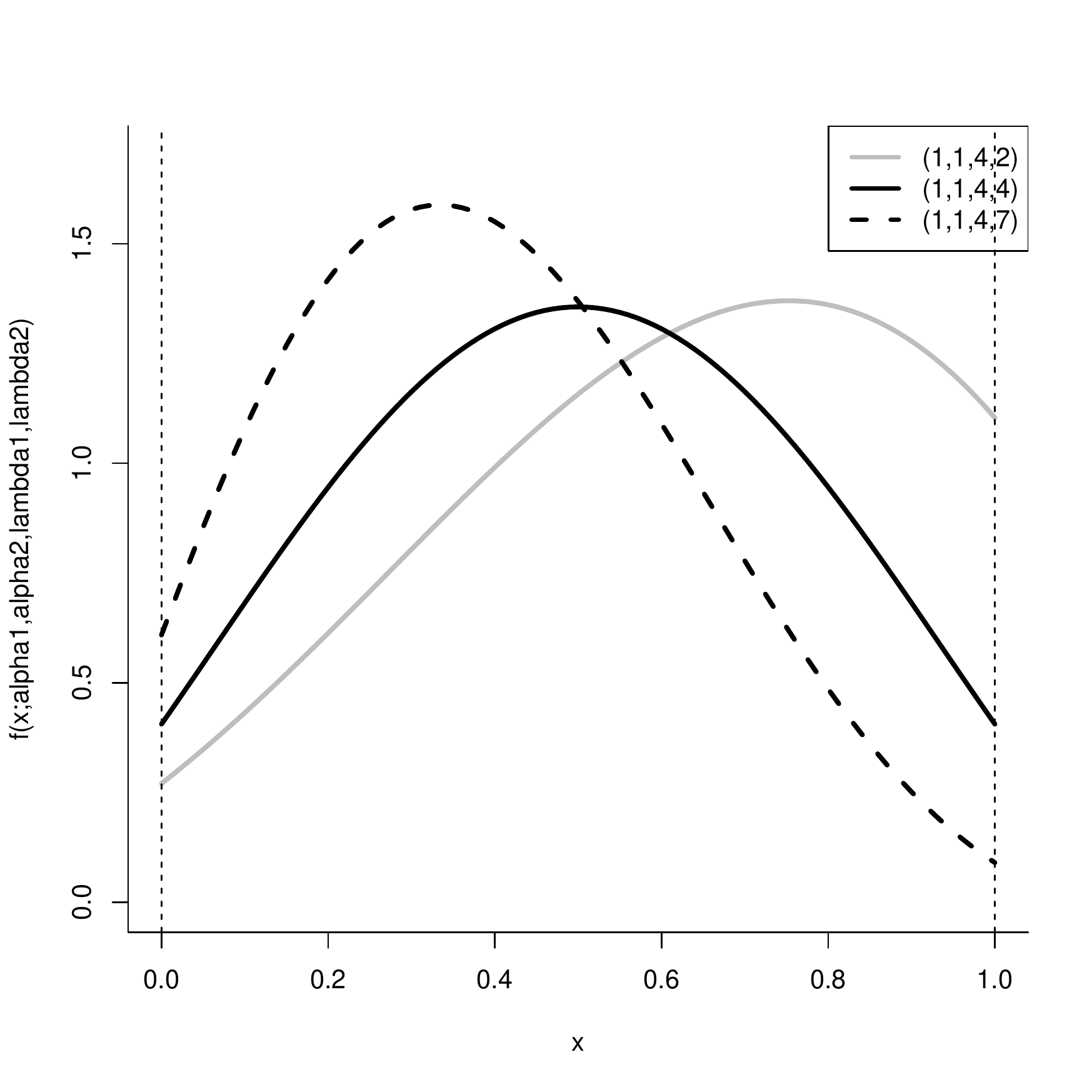}}
 \caption{Plots of the density of $X' \sim \mbox{\normalfont{B}}''\left(\alpha_1,\alpha_2,\lambda_1,\lambda_2\right)$ for $\alpha_1=\alpha_2=1$ and selected values of $\lambda_1,\lambda_2$.}
 \label{fig:DNCB2}
 \end{figure}

\begin{proposition}[Limits at $0$ and $1$ of the $\mbox{B}''$ density when $\alpha_1=\alpha_2=1$]
\label{propo:dens.beta.dnc.lims}
Let $X'$ $\sim$ $\mbox{\normalfont{B}}''\left(\alpha_1,\alpha_2,\lambda_1,\lambda_2\right)$. Then, the limits at $0$ and $1$ of the density $f_{X'}$ of $X'$ when $\alpha_1=\alpha_2=1$ are: 
\begin{equation}
\lim_{x \rightarrow 0^+}f_{X'}\left(x; 1,1,\lambda_1,\lambda_2\right)=e^{-\frac{\lambda_1}{2}}\left(\frac{\lambda_2}{2}+1\right),
\label{eq:dens.beta.dnc.11.0}
\end{equation}
\begin{equation}
\lim_{x \rightarrow 1^-}f_{X'}\left(x; 1,1,\lambda_1,\lambda_2\right)=e^{-\frac{\lambda_2}{2}}\left(\frac{\lambda_1}{2}+1\right).
\label{eq:dens.beta.dnc.11.1}
\end{equation}
\end{proposition}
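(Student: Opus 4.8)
The plan is to read off both limits from the perturbation representation of the $\mbox{B}''$ density in Proposition~\ref{propo:pertrepres.dncb}. Setting $\alpha_1=\alpha_2=1$ there one has $\mbox{Beta}\left(x;1,1\right)=1$, $\alpha^+=2$ and $\lambda^+=\lambda_1+\lambda_2$, so that $f_{X'}\left(x;1,1,\lambda_1,\lambda_2\right)=e^{-\frac{\lambda^+}{2}}\,\Psi_2\!\left[2;1,1;\frac{\lambda_1}{2}x,\frac{\lambda_2}{2}\left(1-x\right)\right]$ for $0<x<1$. For the limit at $0$ I would use the alternative form of $\Psi_2$ given in Eq.~(\ref{eq:perturb.ncb.alter}); since $\alpha_1=1$ gives $\left(\alpha_1\right)_j=j!$, the perturbing factor becomes a power series in $x$ whose $j$-th term carries the factor $\left(\frac{\lambda_1}{2}x\right)^j$, so letting $x\to0^+$ annihilates every term except $j=0$ and leaves $\lim_{x\to0^+}\Psi_2\!\left[2;1,1;\frac{\lambda_1}{2}x,\frac{\lambda_2}{2}\left(1-x\right)\right]={}_1F_1\!\left(2;1;\frac{\lambda_2}{2}\right)$.

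Next I would evaluate this Kummer function in closed form through Eq.~(\ref{eq:form1.1f1}) with $a=2$ and $z=\frac{\lambda_2}{2}$, which gives ${}_1F_1\!\left(2;1;\frac{\lambda_2}{2}\right)=e^{\frac{\lambda_2}{2}}\left(1+\frac{\lambda_2}{2}\right)$; multiplying by the constant $e^{-\frac{\lambda^+}{2}}=e^{-\frac{\lambda_1}{2}}e^{-\frac{\lambda_2}{2}}$ then yields Eq.~(\ref{eq:dens.beta.dnc.11.0}). For the limit at $1$ I would not repeat the computation but instead invoke the symmetry in Property~\ref{prope:relat.dncb1}: since $1-X'\sim\mbox{B}''\left(\alpha_2,\alpha_1,\lambda_2,\lambda_1\right)$, the $\mbox{B}''$ density obeys $f_{X'}\left(x;1,1,\lambda_1,\lambda_2\right)=f_{X'}\left(1-x;1,1,\lambda_2,\lambda_1\right)$ (with the understanding that on the right-hand side the density is that of a $\mbox{B}''\left(1,1,\lambda_2,\lambda_1\right)$ variable), whence $\lim_{x\to1^-}f_{X'}\left(x;1,1,\lambda_1,\lambda_2\right)=\lim_{y\to0^+}f_{X'}\left(y;1,1,\lambda_2,\lambda_1\right)=e^{-\frac{\lambda_2}{2}}\left(1+\frac{\lambda_1}{2}\right)$ by the first part, which is Eq.~(\ref{eq:dens.beta.dnc.11.1}).

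The only delicate point -- and the main obstacle -- is justifying the interchange of $\lim_{x\to0^+}$ with the infinite double summation defining $\Psi_2$. This is settled by noting that all the coefficients in the series Eq.~(\ref{eq:perturb.ncb}) are nonnegative, so on $x\in[0,1]$ each summand is dominated in modulus by the corresponding summand with arguments fixed at $\frac{\lambda_1}{2}$ and $\frac{\lambda_2}{2}$; since $\Psi_2$ is an entire function of its two arguments, $\Psi_2\!\left[2;1,1;\frac{\lambda_1}{2},\frac{\lambda_2}{2}\right]<+\infty$, so by the Weierstrass $M$-test the series converges uniformly on $[0,1]$ and the term-by-term passage to the limit is legitimate. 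Equivalently, one may bypass the perturbation representation and argue directly from the density series Eq.~(\ref{eq:dens.beta.dnc}): for $\alpha_1=\alpha_2=1$ one has $B\left(1+j,1+k\right)=\frac{j!\,k!}{\left(j+k+1\right)!}$, and as $x\to0^+$ only the $j=0$ summands survive, reducing the double series to $e^{-\frac{\lambda^+}{2}}\sum_{k\ge0}\left(k+1\right)\frac{\left(\frac{\lambda_2}{2}\right)^k}{k!}=e^{-\frac{\lambda^+}{2}}\,e^{\frac{\lambda_2}{2}}\left(1+\frac{\lambda_2}{2}\right)$, again equal to $e^{-\frac{\lambda_1}{2}}\left(1+\frac{\lambda_2}{2}\right)$.
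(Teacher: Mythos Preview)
Your argument is correct, but the route differs from the paper's. The paper works directly from the mixture density in Eq.~(\ref{eq:dens.beta.dnc}): setting $\alpha_1=\alpha_2=1$, it lets $x\to0^+$ so that only the $j=0$ terms survive, and then recognises $\sum_{k\ge0}(k+1)\,e^{-\lambda_2/2}(\lambda_2/2)^k/k!$ as $\mathbb{E}(M_2)+1=\lambda_2/2+1$ for $M_2\sim\mbox{Poisson}(\lambda_2/2)$; the limit at $1$ is then redone by the symmetric direct computation. This is exactly the alternative you sketch in your final paragraph. Your primary route instead passes through the perturbation representation (Proposition~\ref{propo:pertrepres.dncb}) together with the closed form ${}_1F_1(2;1;z)=e^z(1+z)$ from Eq.~(\ref{eq:form1.1f1}), and for the limit at $1$ you invoke the reflection symmetry of Property~\ref{prope:relat.dncb1} rather than repeating the calculation. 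Both approaches are short; yours has the virtue of making the interchange of limit and infinite sum explicit via the Weierstrass $M$-test (a point the paper leaves implicit), and the symmetry shortcut halves the work. The paper's approach is marginally more self-contained in that it does not rely on the Kummer identity.
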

\begin{proof}
For the proof see~\ref{proof:dens.beta.dnc.lims} in the Appendix.
\end{proof}

Thanks to this essential characteristic, the $\mbox{B}''$ distribution enables to properly model the portions of data having values next to the endpoints of the real interval $\left(0,1\right)$. In this regard, the $\mbox{B}''$ applicative potential will be highlighted in Section~\ref{subsec:dnc.beta.examples} through the analysis of real data.

Finally, by carrying out the same lines as the proof of Proposition~\ref{propo:dens.beta.dnc.lims} or, roughly speaking, by taking $\lambda_2=0$ and renaming $\lambda_1$ with $\lambda$ in Eqs.~(\ref{eq:dens.beta.dnc.11.0}), (\ref{eq:dens.beta.dnc.11.1}), one can obtain the limits of the $\mbox{B}'_1$ density when $\alpha_1=\alpha_2=1$. In view of Property~\ref{prope:relat.ncb1.ncb2}, the limits of the $\mbox{B}'_2$ density can be simply stated by reversing the $\mbox{B}'_1$ ones. Following are their expressions.

\begin{proposition}[Limits at $0$ and $1$ of the $\mbox{B}'_1$ and $\mbox{B}'_2$ densities when $\alpha_1=\alpha_2=1$]
\label{propo:dens.beta.nc12.lims}
Let $X'_1$ $\sim$ $\mbox{\normalfont{B}}'_1\left(\alpha_1,\alpha_2,\lambda\right)$ and $X'_2$ $\sim$ $\mbox{\normalfont{B}}'_2\left(\alpha_1,\alpha_2,\lambda\right)$. Then, the limits at $0$ and $1$ of the density $f_{X'_1}$ of $X'_1$ when $\alpha_1=\alpha_2=1$ are: 
\begin{equation}
\lim_{x_1 \rightarrow 0^+}f_{X'_1}\left(x_1; 1,1,\lambda\right)=e^{-\frac{\lambda}{2}}, \quad
\lim_{x_1 \rightarrow 1^-}f_{X'_1}\left(x_1; 1,1,\lambda\right)=\frac{\lambda}{2}+1,
\label{eq:dens.beta.nc1.11.01}
\end{equation}
while the limits at $0$ and $1$ of the density $f_{X'_2}$ of $X'_2$ when $\alpha_1=\alpha_2=1$ are: 
\begin{equation}
\lim_{x_2 \rightarrow 0^+}f_{X'_2}\left(x_2; 1,1,\lambda\right)=\frac{\lambda}{2}+1, \quad
\lim_{x_2 \rightarrow 1^-}f_{X'_2}\left(x_2; 1,1,\lambda\right)=e^{-\frac{\lambda}{2}}.
\label{eq:dens.beta.nc2.11.01}
\end{equation}
\end{proposition}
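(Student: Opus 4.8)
The plan is to read the limits directly off the closed-form expressions of the two densities that have already been obtained for $\alpha_1=\alpha_2=1$. Recall that Eq.~(\ref{eq:c.ncb1.11}) states
\[
f_{X'_1}\left(x_1;1,1,\lambda\right)=e^{-\frac{\lambda}{2}\left(1-x_1\right)}\left(1+\frac{\lambda}{2}x_1\right),\qquad 0<x_1<1,
\]
and Eq.~(\ref{eq:c.ncb2.11}) states
\[
f_{X'_2}\left(x_2;1,1,\lambda\right)=e^{-\frac{\lambda}{2}x_2}\left[1+\frac{\lambda}{2}\left(1-x_2\right)\right],\qquad 0<x_2<1.
\]
Both right-hand sides are products of an exponential and an affine function of the argument, hence continuous (indeed real-analytic) on the whole closed interval $[0,1]$. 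Consequently the required one-sided limits at the endpoints are obtained simply by substituting $x_1=0,1$ and $x_2=0,1$, respectively.

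First I would treat $X'_1$. Letting $x_1\to 0^+$ in the formula for $f_{X'_1}$ makes the affine factor equal to $1$ and the exponent equal to $-\lambda/2$, which yields $e^{-\lambda/2}$; letting $x_1\to 1^-$ makes the exponent vanish and the affine factor equal to $1+\lambda/2$, which yields $1+\lambda/2$. This is exactly Eq.~(\ref{eq:dens.beta.nc1.11.01}).

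For $X'_2$ one may proceed identically from the formula for $f_{X'_2}$: as $x_2\to 0^+$ the exponent vanishes and the bracket equals $1+\lambda/2$, giving $1+\lambda/2$, while as $x_2\to 1^-$ the bracket equals $1$ and the exponent equals $-\lambda/2$, giving $e^{-\lambda/2}$. Alternatively, and without any further computation, one can invoke Property~\ref{prope:relat.ncb1.ncb2}: since $X'_2=1-X'_1$ with $X'_1\sim\mbox{B}'_1(\alpha_2,\alpha_1,\lambda)$ and $\alpha_1=\alpha_2=1$, the density of $X'_2$ at $x_2$ equals the density of $X'_1$ at $1-x_2$, so the limit of $f_{X'_2}$ at $0$ equals the limit of $f_{X'_1}$ at $1$ and vice versa; together with the two values just found for $X'_1$ this gives Eq.~(\ref{eq:dens.beta.nc2.11.01}). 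Either route completes the argument.

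There is essentially no obstacle here: the substantive work was carried out upstream, namely reducing the series form of the $\mbox{B}'_1$ density to the elementary expression in Eq.~(\ref{eq:c.ncb1.11}) through the identity $_1F_1(a;a-1;z)=e^{z}\bigl(1+z/(a-1)\bigr)$ specialized to $a=2$, and then transferring the conclusion to $\mbox{B}'_2$ via Property~\ref{prope:relat.ncb1.ncb2}. The only point needing a moment's care is that this $_1F_1$ identity requires $a-1\neq 0$, which holds since $a=2$. Once the closed forms are in hand the limits are immediate; as noted in the surrounding text, they could equally be recovered from Proposition~\ref{propo:dens.beta.dnc.lims} by setting $\lambda_2=0$ and $\lambda_1=\lambda$ in Eqs.~(\ref{eq:dens.beta.dnc.11.0})--(\ref{eq:dens.beta.dnc.11.1}).
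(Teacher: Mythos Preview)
Your proof is correct. The paper does not give a standalone proof of this proposition; it simply remarks (in the text immediately preceding the statement) that one obtains the $\mathrm{B}'_1$ limits either by repeating the series computation of Proposition~\ref{propo:dens.beta.dnc.lims} or by setting $\lambda_2=0$, $\lambda_1=\lambda$ in Eqs.~(\ref{eq:dens.beta.dnc.11.0})--(\ref{eq:dens.beta.dnc.11.1}), and then derives the $\mathrm{B}'_2$ limits by the reflection in Property~\ref{prope:relat.ncb1.ncb2}.

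Your primary route is different and more direct: you exploit the closed-form densities already established in Eqs.~(\ref{eq:c.ncb1.11}) and~(\ref{eq:c.ncb2.11}), observe they extend continuously to $[0,1]$, and substitute the endpoint values. This avoids revisiting the Poisson-weighted series altogether. The paper's route has the mild advantage of emphasizing that these are just specializations of the doubly non-central result, while yours makes clear that once the elementary formulas are in hand the limits are trivial. You also note both of the paper's suggested alternatives, so nothing is missing.
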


\subsection{Patnaik's approximation}
\label{subsec:dnc.beta.dens.pat.approx}

A simple and reliable approximation for the doubly non-central beta distribution can be easily derived by applying the Patnaik's approximation for the non-central chi-squared one \cite{Pat49}.

Indeed, hereafter we prove that the $\mbox{B}''$ model can be approximated by the three-parameter generalization of the beta one introduced by Libby and Novick \cite{LibNov82}.

\begin{proposition}[Patnaik's approximation for B$''$]
\label{propo:dnc.beta.pat.approx}
Let $X'$ have a $\mbox{\normalfont{B}}''$ distribution with shape parameters $\alpha_r$ and non-centrality parameters $\lambda_r$, $r=1,2$. Furthermore, let $Y_r$ be independent $\chi^2_{\nu_r}$ random variables, with:
\begin{equation}
\nu_r=\frac{\left(2\alpha_r+\lambda_r\right)^2}{2\left(\alpha_r+\lambda_r\right)}.
\label{eq:dnc.beta.dens.pat.approx.par1}
\end{equation}
By taking:
\begin{equation}
\beta_r=\frac{\nu_r}{2}, \quad \rho_r=\frac{2\left(\alpha_r+\lambda_r\right)}{2\alpha_r+\lambda_r}, \quad r=1,2, \qquad \quad \gamma=\frac{\rho_2}{\rho_1},
\label{eq:dnc.beta.dens.pat.approx.par2}
\end{equation}
one can approximate $X' \stackrel{d}{\approx} X'_P$, where $X'_P=\frac{\rho_1 \, Y_1}{\rho_1 \, Y_1+\rho_2 \, Y_2} \sim \mbox{\normalfont{G3B}}\left(\beta_1,\beta_2,\gamma\right)$
and:
\begin{equation}
\mbox{\normalfont{G3B}}\left(x';\beta_1,\beta_2,\gamma\right)=\mbox{\normalfont{Beta}}\left(x';\beta_1,\beta_2\right) \,  \frac{\gamma^{\beta_1}}{\left[1-\left(1-\gamma\right)x'\right]^{\beta_1+\beta_2}}, \quad 0<x'<1
\label{eq:g3b.dens}
\end{equation}
is the probability density function of the Libby and Novick's generalized beta distribution \cite{LibNov82}.
\end{proposition}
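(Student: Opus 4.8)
The plan is to apply Patnaik's approximation (Property~\ref{prope:pat.approx.ncchisq}) separately to each of the two independent non-central chi-squared random variables $Y'_1,Y'_2$ appearing in the definition~(\ref{eq:def.dnc.beta}) of $X'$, and then to identify the distribution of the resulting approximating compositional ratio.

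First I would specialize Property~\ref{prope:pat.approx.ncchisq} to $Y'_r \sim \chi'^{\,2}_{2\alpha_r}(\lambda_r)$, i.e.\ set $g=2\alpha_r$ and $\lambda=\lambda_r$ in the formulas given there. This yields $\nu_r=(2\alpha_r+\lambda_r)^2/\bigl(2(\alpha_r+\lambda_r)\bigr)$ and $\rho_r=2(\alpha_r+\lambda_r)/(2\alpha_r+\lambda_r)$, which are exactly Eqs.~(\ref{eq:dnc.beta.dens.pat.approx.par1}) and~(\ref{eq:dnc.beta.dens.pat.approx.par2}), and gives $Y'_r \stackrel{d}{\approx} \rho_r Y_r$ with $Y_r \sim \chi^2_{\nu_r}$, independent across $r=1,2$ (independence being inherited from that of $Y'_1,Y'_2$). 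Substituting these approximants into~(\ref{eq:def.dnc.beta}) produces $X' \stackrel{d}{\approx} X'_P = \rho_1 Y_1/(\rho_1 Y_1+\rho_2 Y_2)$.

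It then remains to show that $X'_P \sim \mbox{G3B}(\beta_1,\beta_2,\gamma)$. Writing $U_r=\rho_r Y_r$, each $U_r$ is $\mbox{Gamma}(\beta_r,1/(2\rho_r))$ in the shape--rate parametrization, since $Y_r\sim\chi^2_{\nu_r}=\mbox{Gamma}(\nu_r/2,1/2)$ and rescaling by $\rho_r$ divides the rate by $\rho_r$, with $\beta_r=\nu_r/2$, and $U_1,U_2$ are independent. I would then perform the change of variables $(U_1,U_2)\mapsto(X'_P,S)$ with $X'_P=U_1/(U_1+U_2)$, $S=U_1+U_2$ (Jacobian $S$), write down the joint density, and integrate out $S$ over $(0,+\infty)$ using $\int_0^\infty s^{\beta_1+\beta_2-1}e^{-cs}\,ds=\Gamma(\beta_1+\beta_2)/c^{\beta_1+\beta_2}$ with $c=x'/(2\rho_1)+(1-x')/(2\rho_2)$. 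Factoring $c=\bigl(1/(2\rho_2)\bigr)\bigl[1-(1-\rho_2/\rho_1)x'\bigr]$ and collecting the constants, the marginal density of $X'_P$ reduces precisely to the right-hand side of~(\ref{eq:g3b.dens}) with $\gamma=\rho_2/\rho_1$. Equivalently, one may observe that $X'_P=\rho_1 W/\bigl(\rho_1 W+\rho_2(1-W)\bigr)$ with $W=Y_1/(Y_1+Y_2)\sim\mbox{Beta}(\beta_1,\beta_2)$ and obtain the G3B density by transforming the beta density through this monotone map.

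The computations involved are entirely routine; the only point requiring a little care is the bookkeeping that makes the general Patnaik parameters of Property~\ref{prope:pat.approx.ncchisq} coincide with the stated $\nu_r,\beta_r,\rho_r,\gamma$, together with the sanity check that as $\lambda_r\to 0^+$ one has $\nu_r\to 2\alpha_r$, $\rho_r\to 1$ and $\gamma\to 1$, so that the approximation consistently degenerates to the exact beta law $X'\sim\mbox{Beta}(\alpha_1,\alpha_2)$ in the central case.
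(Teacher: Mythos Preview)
Your proposal is correct and essentially matches the paper's proof: both apply Property~\ref{prope:pat.approx.ncchisq} componentwise to obtain $X'\stackrel{d}{\approx}\rho_1 Y_1/(\rho_1 Y_1+\rho_2 Y_2)$, and the paper derives the G3B density exactly via your ``equivalent'' route, writing $X'_P=\rho_1 W/(\rho_1 W+\rho_2(1-W))$ with $W=Y_1/(Y_1+Y_2)\sim\mbox{Beta}(\beta_1,\beta_2)$ and pushing the beta density through this monotone map. Your primary derivation (integrating $S$ out of the joint gamma density) is a harmless variant leading to the same result.
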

\begin{proof}
For the proof see~\ref{proof:dnc.beta.pat.approx} in the Appendix.
\end{proof}

Observe that as $\lambda_r$ tends to $0^+$, $r=1,2$, $\nu_r$ tends to $2 \alpha_r$ and $\rho_r$ tends to 1; therefore, the distributions of both $X'$ and $X'_P$ tend to the $\mbox{Beta}\left(\alpha_1,\alpha_2\right)$ one.

A graphic comparison between the $\mbox{\normalfont{B}}''\left(\alpha_1,\alpha_2,\lambda_1,\lambda_2\right)$ density and its approximation herein derived is shown in Figures~\ref{fig:PAT1}, \ref{fig:PAT2}, \ref{fig:PAT3} for selected values of the shape and the non-centrality parameters. Note that in all the cases depicted the plots of the two densities are very similar, except for more or less slight differences on the tails. In this regard, the approximation results particularly unsatisfactory on the tails for $\alpha_1=\alpha_2=1$, due to its inability to replicate the behavior of the $\mbox{\normalfont{B}}''$ density at the unit interval endpoints (see the right-hand panel of Figure~\ref{fig:PAT2}).

\begin{figure}[ht]
 \centering
 \subfigure
   {\includegraphics[width=6.3cm]{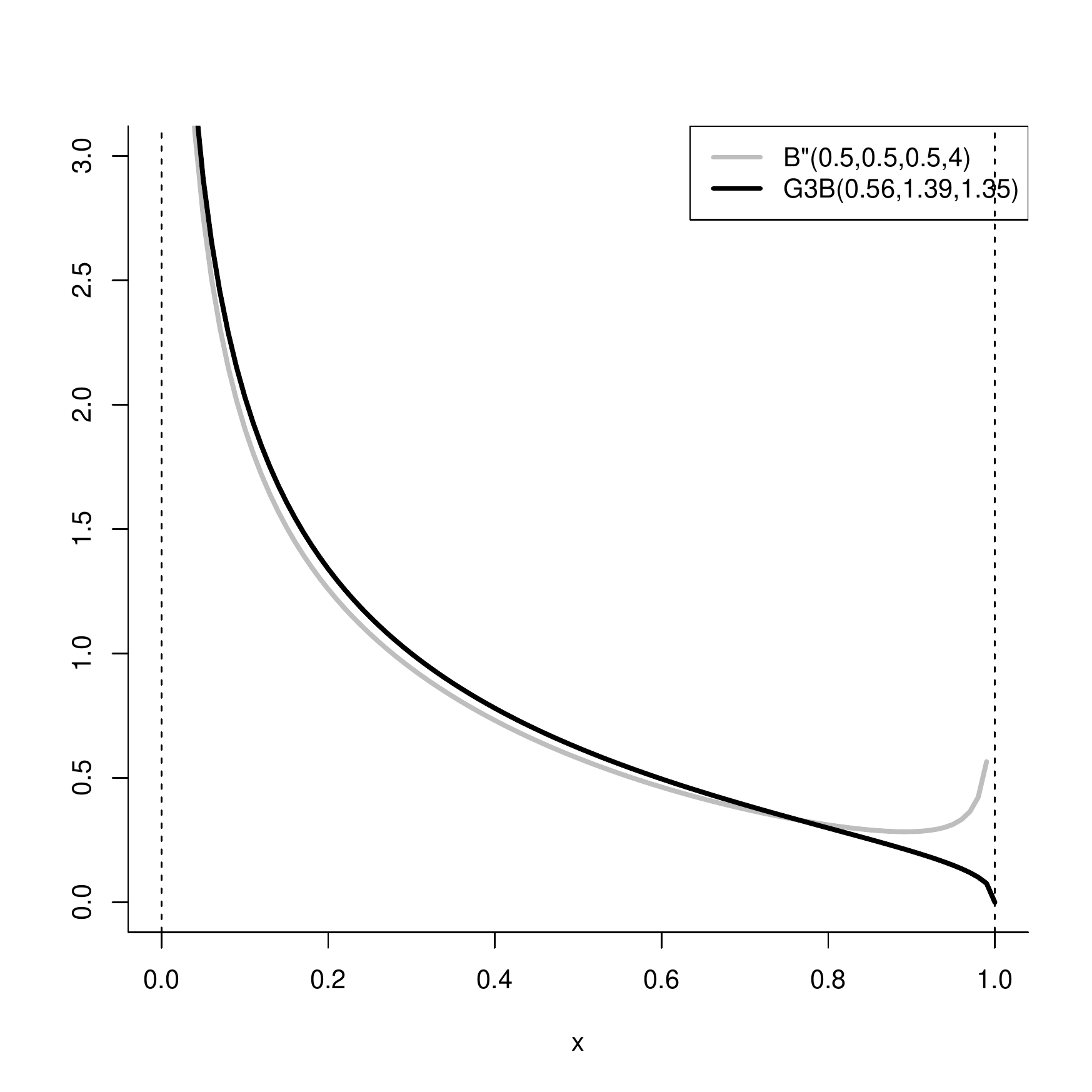}}
 \hspace{5mm}
 \subfigure
   {\includegraphics[width=6.3cm]{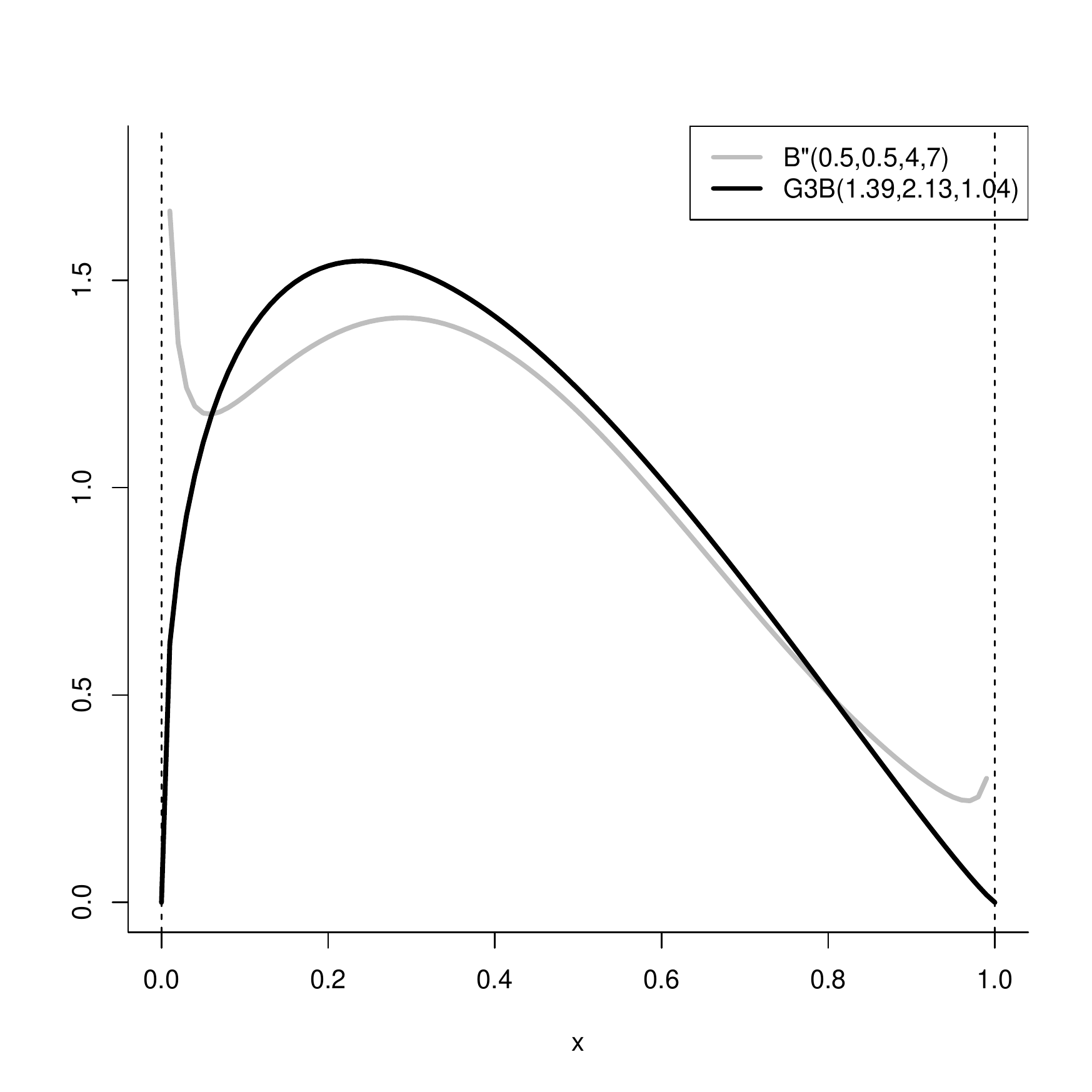}}
  \caption{Plots of the densities of $X' \sim \mbox{\normalfont{B}}''\left(\alpha_1,\alpha_2,\lambda_1,\lambda_2\right)$ and $X'_P \sim \mbox{\normalfont{G3B}}\left(\beta_1,\beta_2,\gamma\right)$ for selected values of $\alpha_1,\alpha_2,\lambda_1,\lambda_2$, with $\beta_1,\beta_2,\gamma$ defined as in Eqs.~(\ref{eq:dnc.beta.dens.pat.approx.par1}),~(\ref{eq:dnc.beta.dens.pat.approx.par2}).}
 \label{fig:PAT1}
\end{figure}

\begin{figure}[ht]
 \centering
 \subfigure
   {\includegraphics[width=6.3cm]{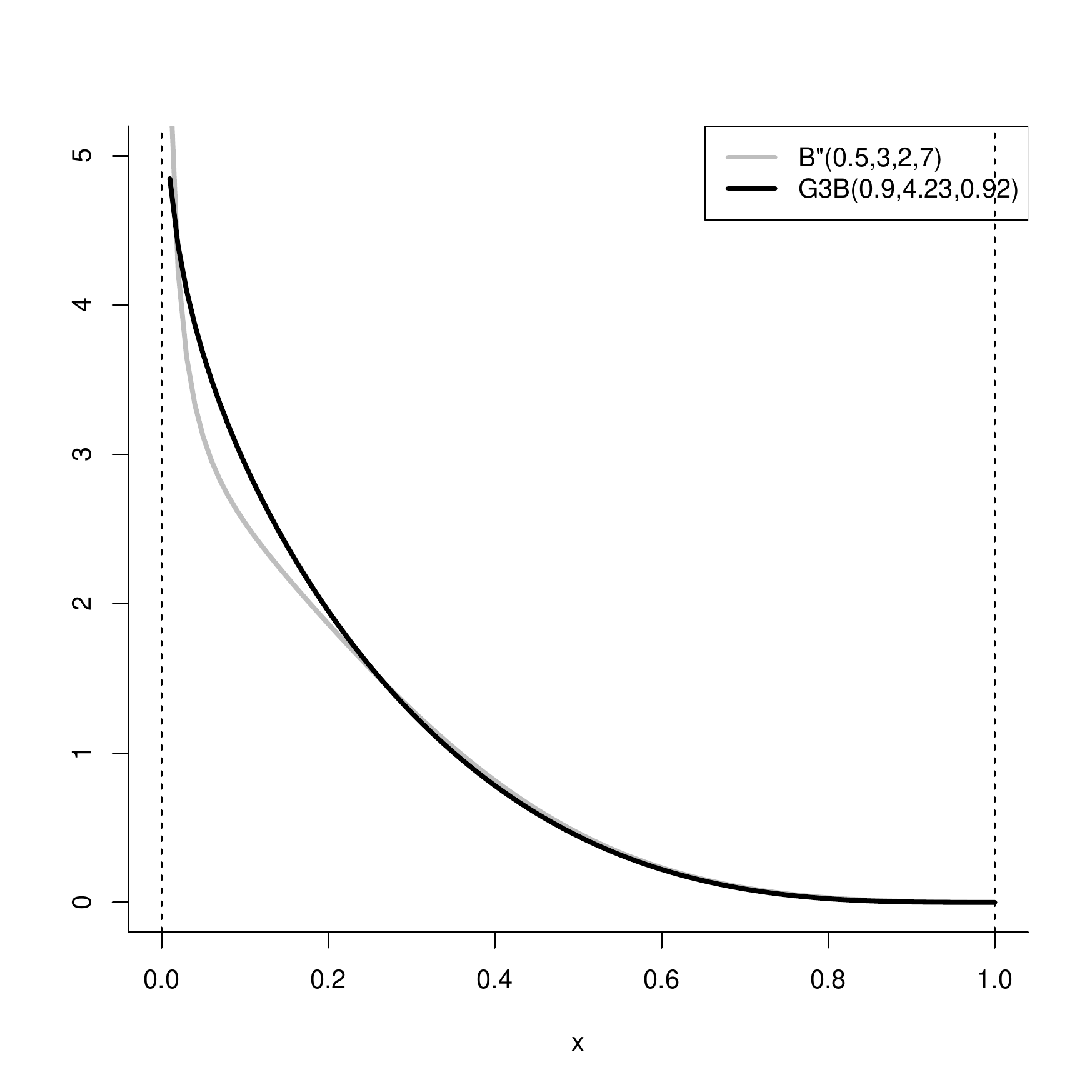}}
 \hspace{5mm}
 \subfigure
   {\includegraphics[width=6.3cm]{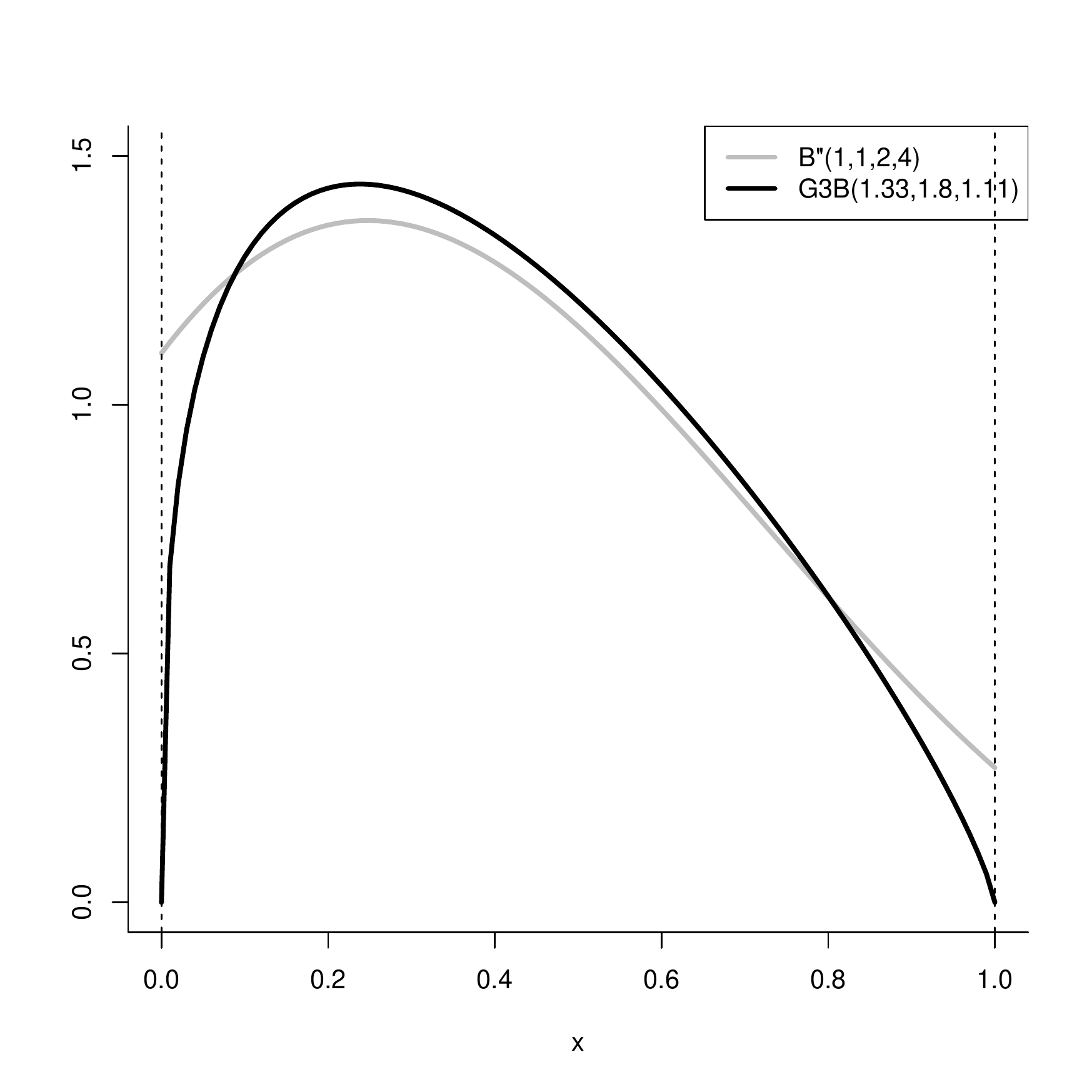}}
 \caption{Plots of the densities of $X' \sim \mbox{\normalfont{B}}''\left(\alpha_1,\alpha_2,\lambda_1,\lambda_2\right)$ and $X'_P \sim \mbox{\normalfont{G3B}}\left(\beta_1,\beta_2,\gamma\right)$ for selected values of $\alpha_1,\alpha_2,\lambda_1,\lambda_2$, with $\beta_1,\beta_2,\gamma$ defined as in Eqs.~(\ref{eq:dnc.beta.dens.pat.approx.par1}),~(\ref{eq:dnc.beta.dens.pat.approx.par2}).}
 \label{fig:PAT2}
\end{figure}  

\begin{figure}[ht]
 \centering 
  \subfigure
   {\includegraphics[width=6.3cm]{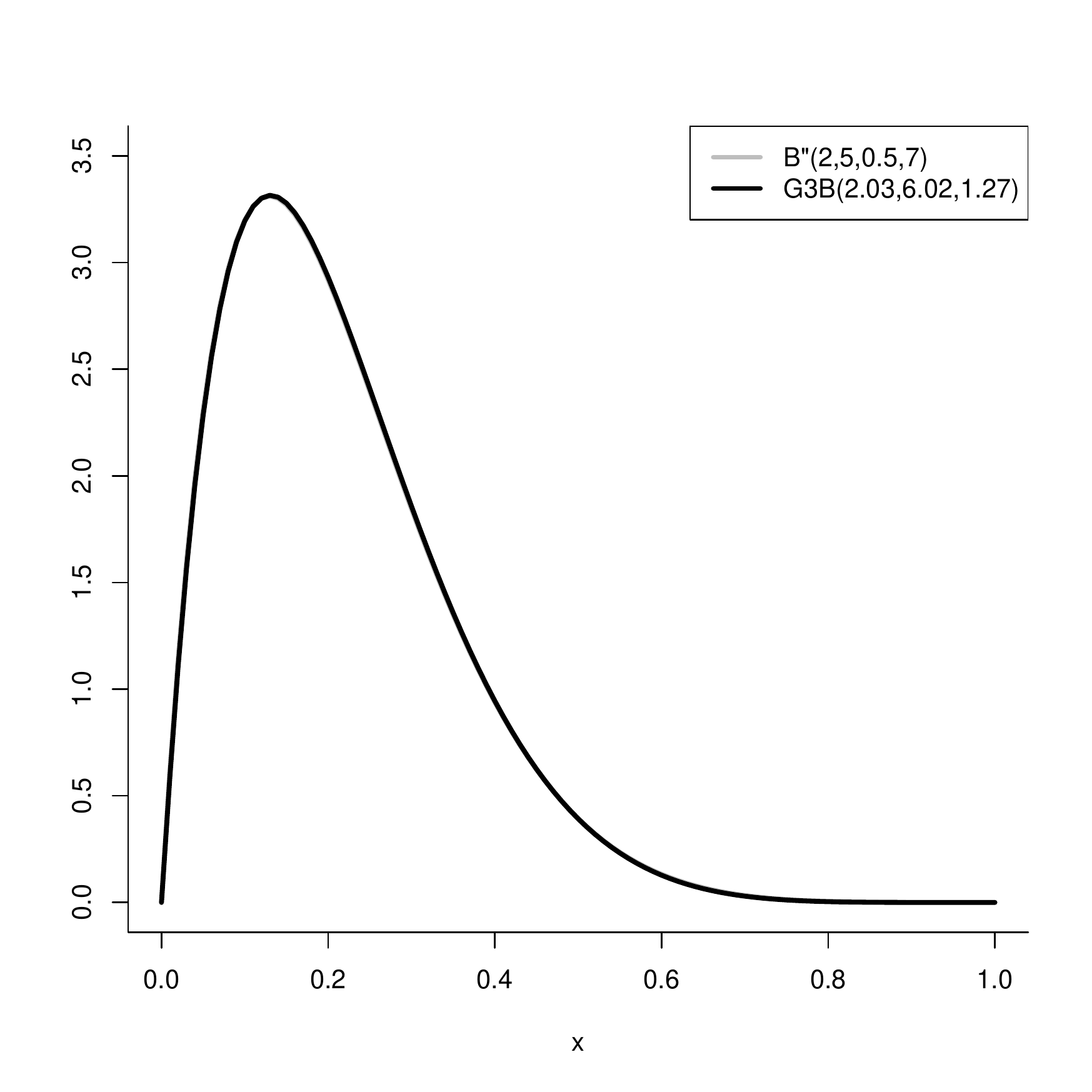}}
 \hspace{5mm}
 \subfigure
   {\includegraphics[width=6.3cm]{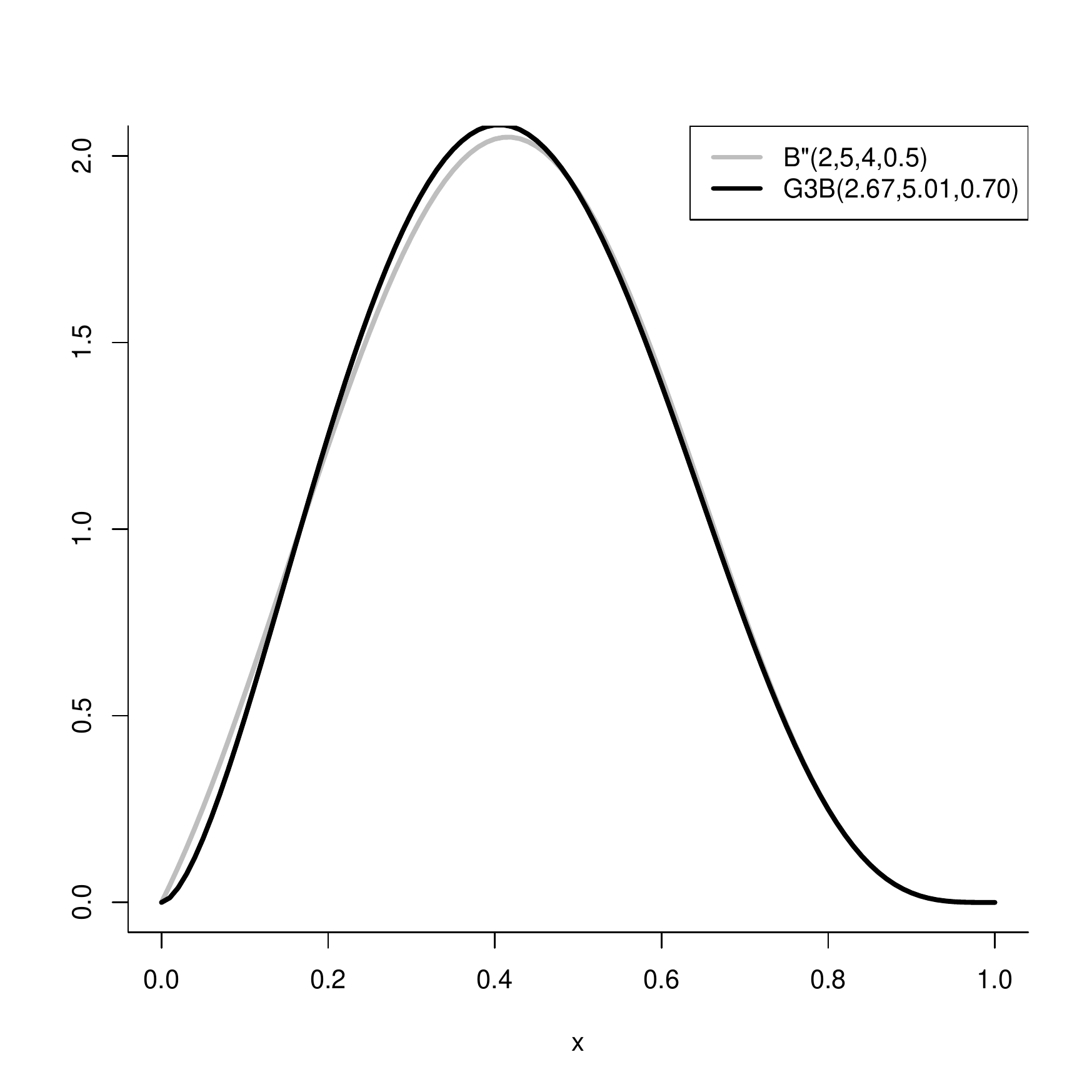}}
  \caption{Plots of the densities of $X' \sim \mbox{\normalfont{B}}''\left(\alpha_1,\alpha_2,\lambda_1,\lambda_2\right)$ and $X'_P \sim \mbox{\normalfont{G3B}}\left(\beta_1,\beta_2,\gamma\right)$ for selected values of $\alpha_1,\alpha_2,\lambda_1,\lambda_2$, with $\beta_1,\beta_2,\gamma$ defined as in Eqs.~(\ref{eq:dnc.beta.dens.pat.approx.par1}),~(\ref{eq:dnc.beta.dens.pat.approx.par2}).}
 \label{fig:PAT3}
\end{figure}

A three-parameter generalized beta random variable, thanks to its relationship with the beta, has distribution function that takes a really simple form. Indeed, from Eq.~(\ref{eq:appdncbetapat4}) in the proof of Proposition~\ref{propo:dnc.beta.pat.approx}, it's immediate to see that if $X'_P \sim \mbox{G3B}\left(\beta_1,\beta_2,\gamma\right)$ then $X=\frac{\gamma X'_P}{\gamma X'_P+1-X'_P} \sim \mbox{Beta}\left(\beta_1,\beta_2\right)$. By exploiting the latter and by denoting the distribution functions of $X'_P$ and $X$ with $F_{X'_P}$ and $F_{X}$ respectively, for every $x' \in (0,1)$ we have accordingly:
\begin{eqnarray}
F_{X'_P}\left(x'\right) & = & \Pr\left(X'_P \le x'\right)=\Pr\left(\frac{X}{X+\gamma \, \left(1-X\right)} \le x'\right)= \nonumber \\
& = & \Pr\left(X \le \frac{\gamma x'}{\gamma x'+1-x'}\right)=F_{X}\left(\frac{\gamma x'}{\gamma x'+1-x'}\right)=\frac{B\left(\frac{\gamma  x'}{\gamma x'+1-x'};\beta_1,\beta_2\right)}{B\left(\beta_1,\beta_2\right)}. \nonumber \\
\label{eq:distr.beta.dnc.approx}
\end{eqnarray}

In view of the foregoing arguments, the latter can be used to approximate the $\mbox{B}''$ distribution function. In this regard, a graphic analysis was performed in order to investigate the goodness of approximation of Eq.~(\ref{eq:distr.beta.dnc.approx}). The results are displayed in Figures~\ref{fig:PATD1}, \ref{fig:PATD2}, \ref{fig:PATD3}, that show a deep reliability of the G3B distribution function as an approximation of the $\mbox{B}''$ one. \texttt{R} codes for the density and the distribution function of the Libby and Novick's generalized beta model are proposed in \ref{funct:dlng3beta}, \ref{funct:plng3beta}.

\begin{figure}[ht]
 \centering
 \subfigure
   {\includegraphics[width=6.3cm]{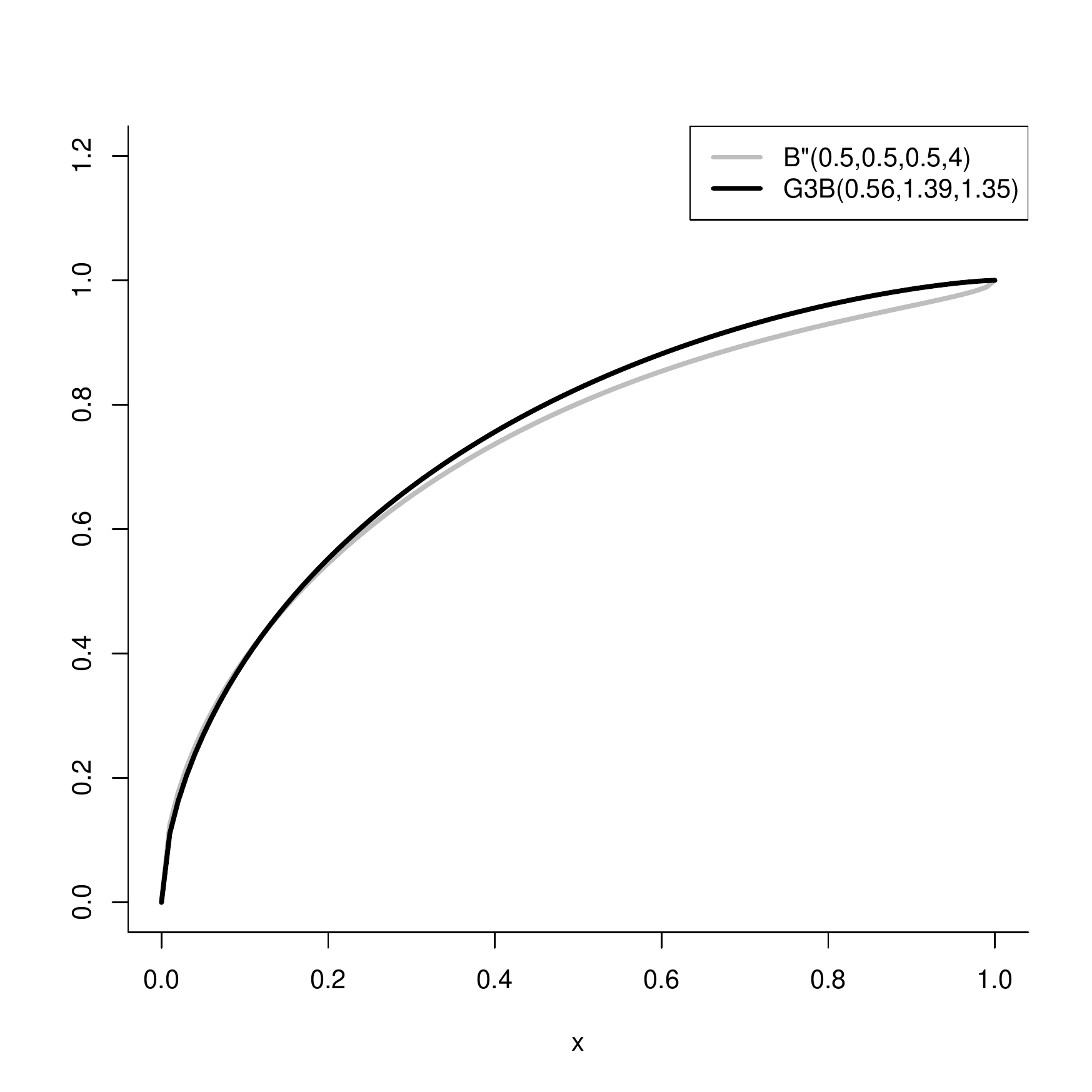}}
 \hspace{5mm}
 \subfigure
   {\includegraphics[width=6.3cm]{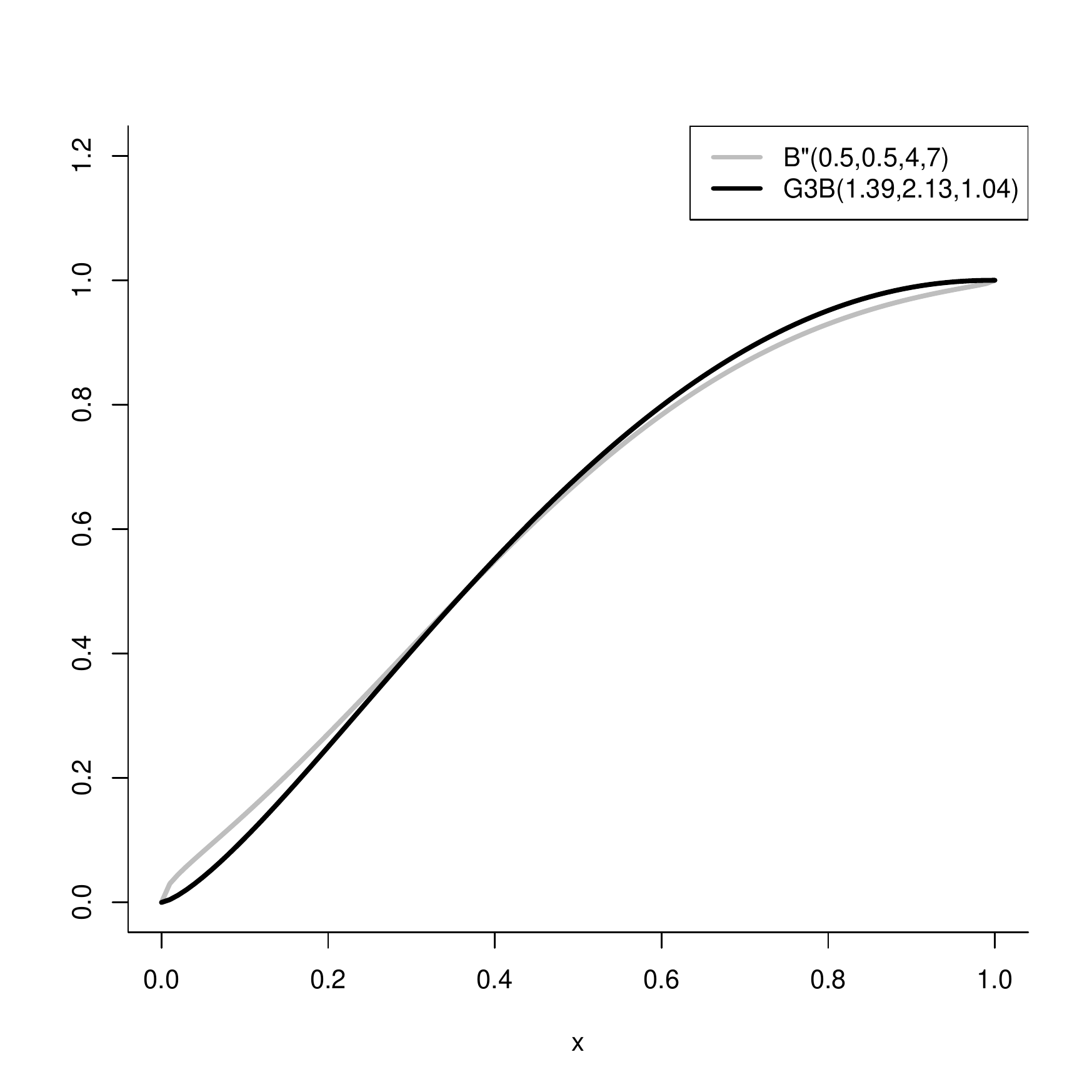}}
  \caption{Plots of the distribution functions of $X' \sim \mbox{\normalfont{B}}''\left(\alpha_1,\alpha_2,\lambda_1,\lambda_2\right)$ and $X'_P \sim \mbox{\normalfont{G3B}}\left(\beta_1,\beta_2,\gamma\right)$ for selected values of $\alpha_1,\alpha_2,\lambda_1,\lambda_2$, with $\beta_1,\beta_2,\gamma$ defined as in Eqs.~(\ref{eq:dnc.beta.dens.pat.approx.par1}),~(\ref{eq:dnc.beta.dens.pat.approx.par2}).}
 \label{fig:PATD1}
\end{figure}

\subsection{Moments}
\label{subsec:dnc.beta.mom}

By analogy with the form of the $\mbox{B}''$ density in Eq.~(\ref{eq:dens.beta.dnc}) and the $\mbox{B}''$ distribution function in Eq.~(\ref{eq:distr.beta.dnc}), the $r$-th moment about zero of the doubly non-central beta distribution can be stated as the double series of the $r$-th moments about zero of the Beta$(\alpha_1+j,\alpha_2+k)$ distributions, $j,k \in \mathbb{N} \cup \{0\}$, weighted by the joint probabilities of $\left(M_1,M_2\right)$, where $M_i$, $i=1,2$, are independent with $\mbox{Poisson}\left(\lambda_i/2\right)$ distributions. As far as we know, the latter is the only analytical form available in the literature for the moments of the $\mbox{B}''$ distribution.

\begin{figure}[ht]
 \centering
 \subfigure
   {\includegraphics[width=6.3cm]{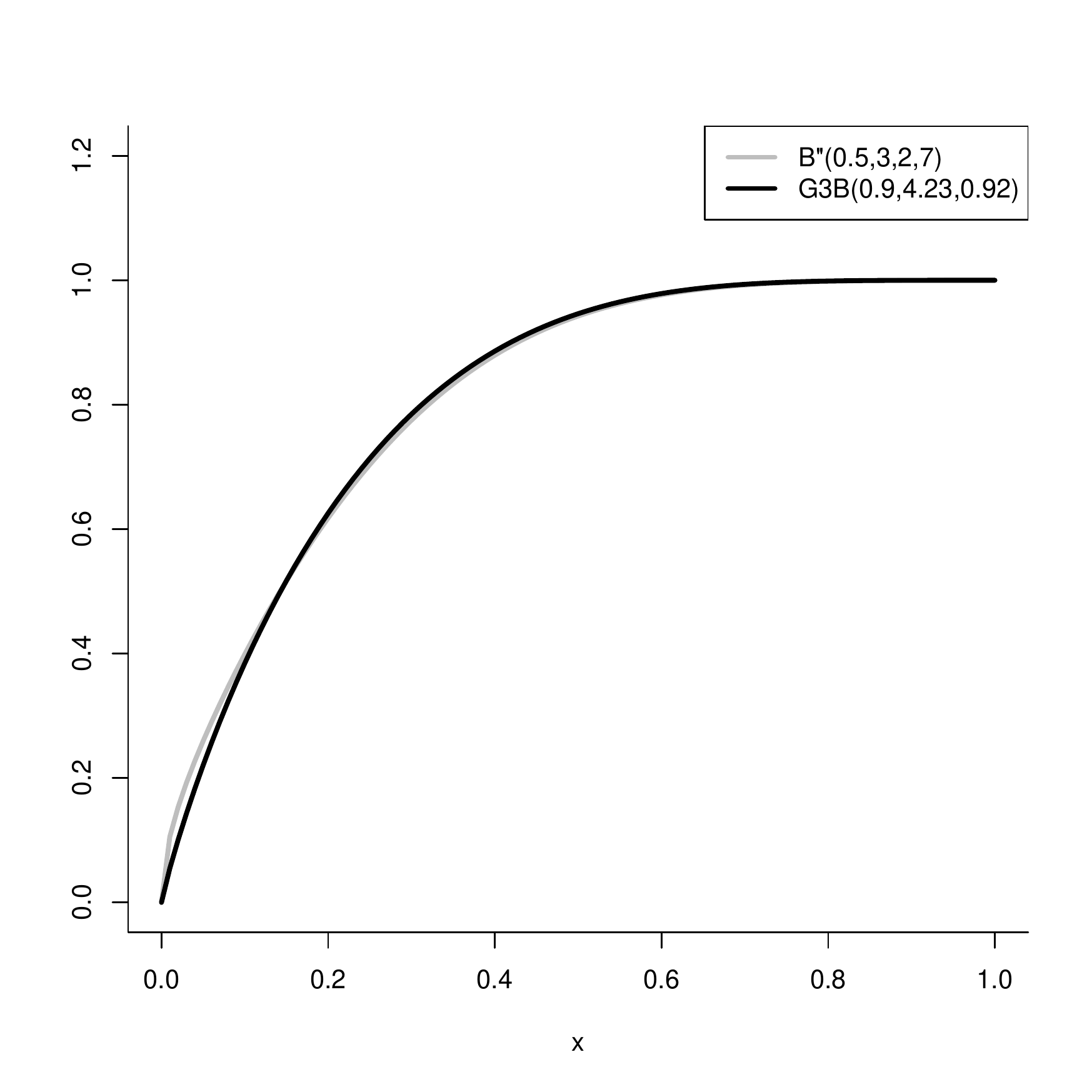}}
 \hspace{5mm}
 \subfigure
   {\includegraphics[width=6.3cm]{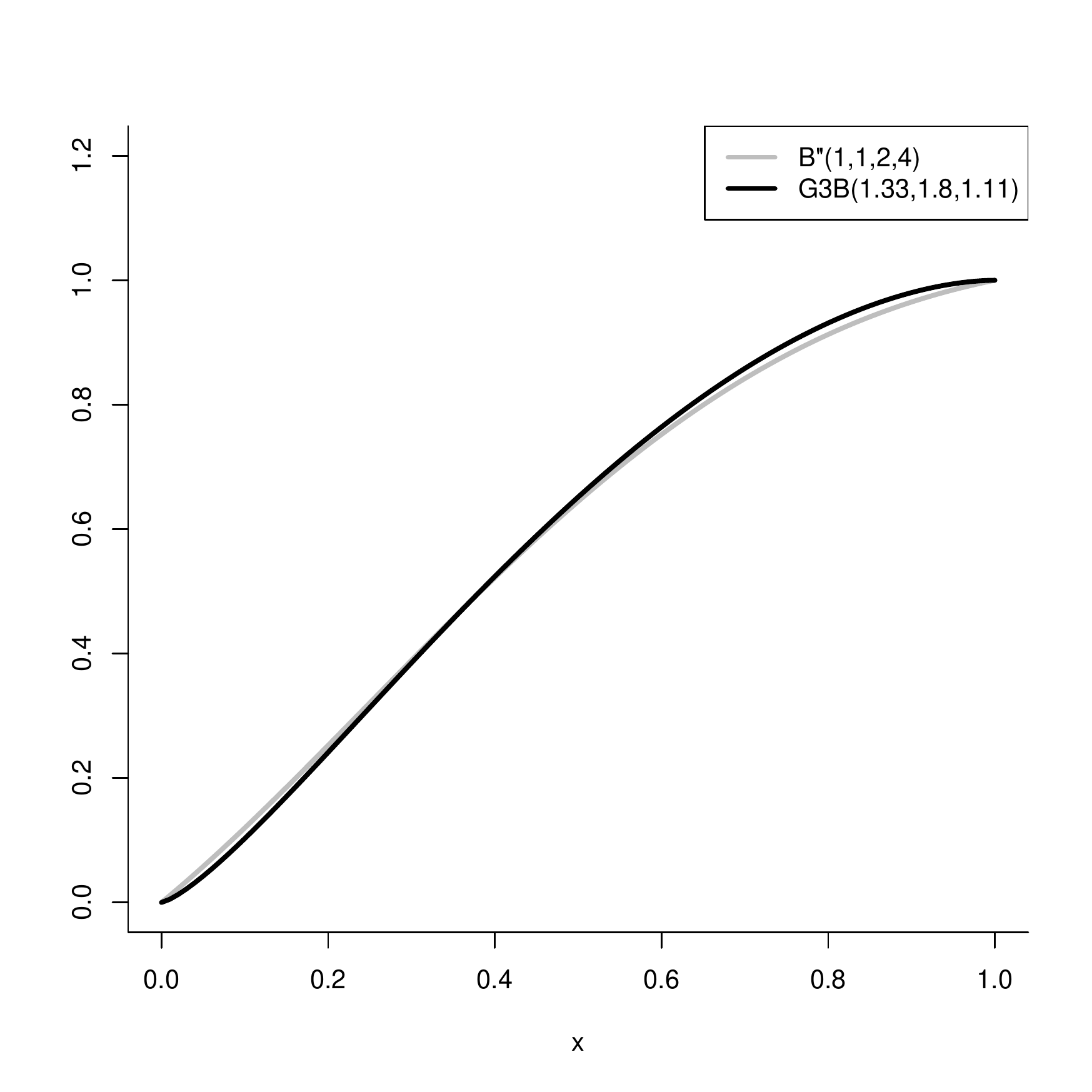}}
 \caption{Plots of the distribution functions of $X' \sim \mbox{\normalfont{B}}''\left(\alpha_1,\alpha_2,\lambda_1,\lambda_2\right)$ and $X'_P \sim \mbox{\normalfont{G3B}}\left(\beta_1,\beta_2,\gamma\right)$ for selected values of $\alpha_1,\alpha_2,\lambda_1,\lambda_2$, with $\beta_1,\beta_2,\gamma$ defined as in Eqs.~(\ref{eq:dnc.beta.dens.pat.approx.par1}),~(\ref{eq:dnc.beta.dens.pat.approx.par2}).}
 \label{fig:PATD2}
\end{figure}  

\begin{figure}[ht]
 \centering 
  \subfigure
   {\includegraphics[width=6.3cm]{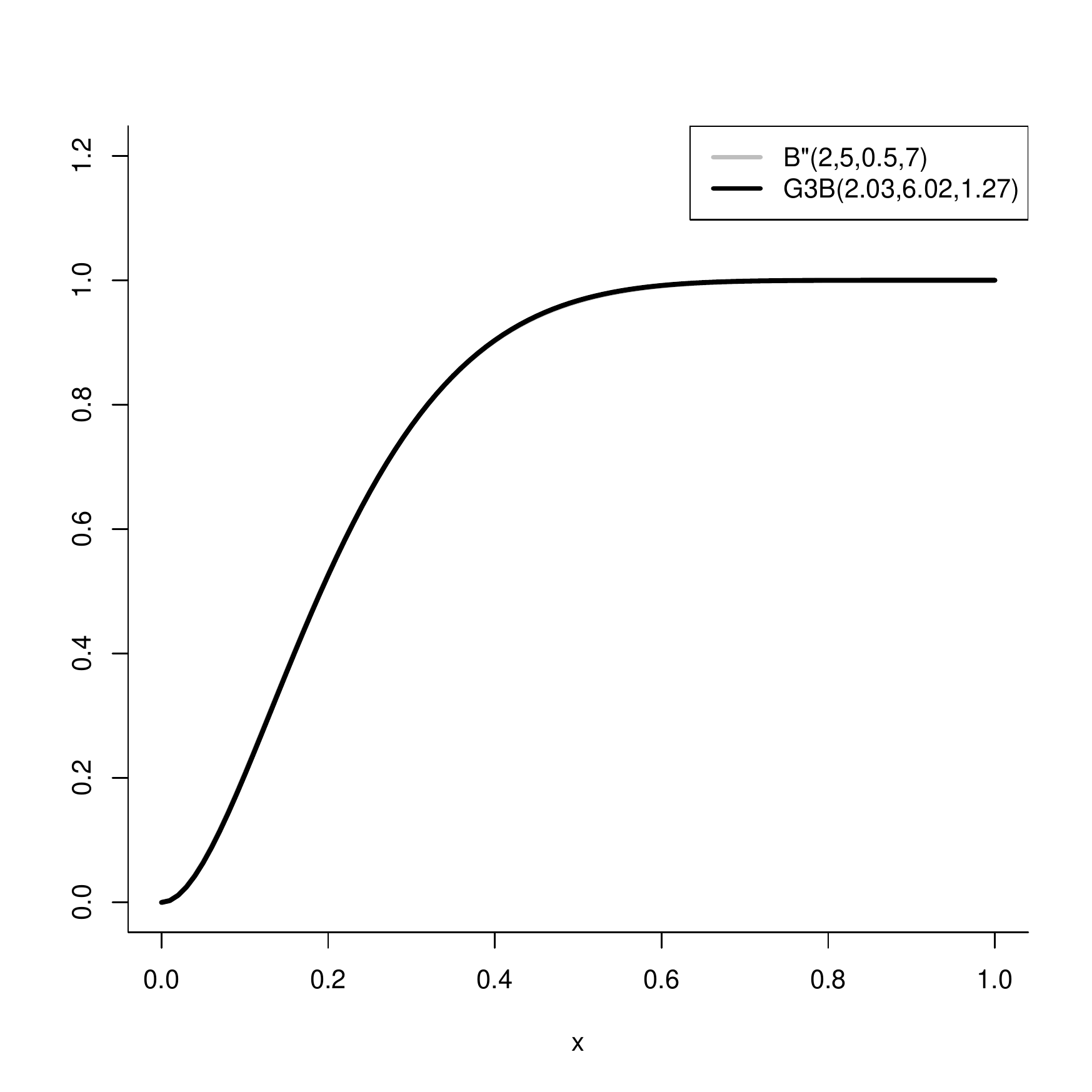}}
 \hspace{5mm}
 \subfigure
   {\includegraphics[width=6.3cm]{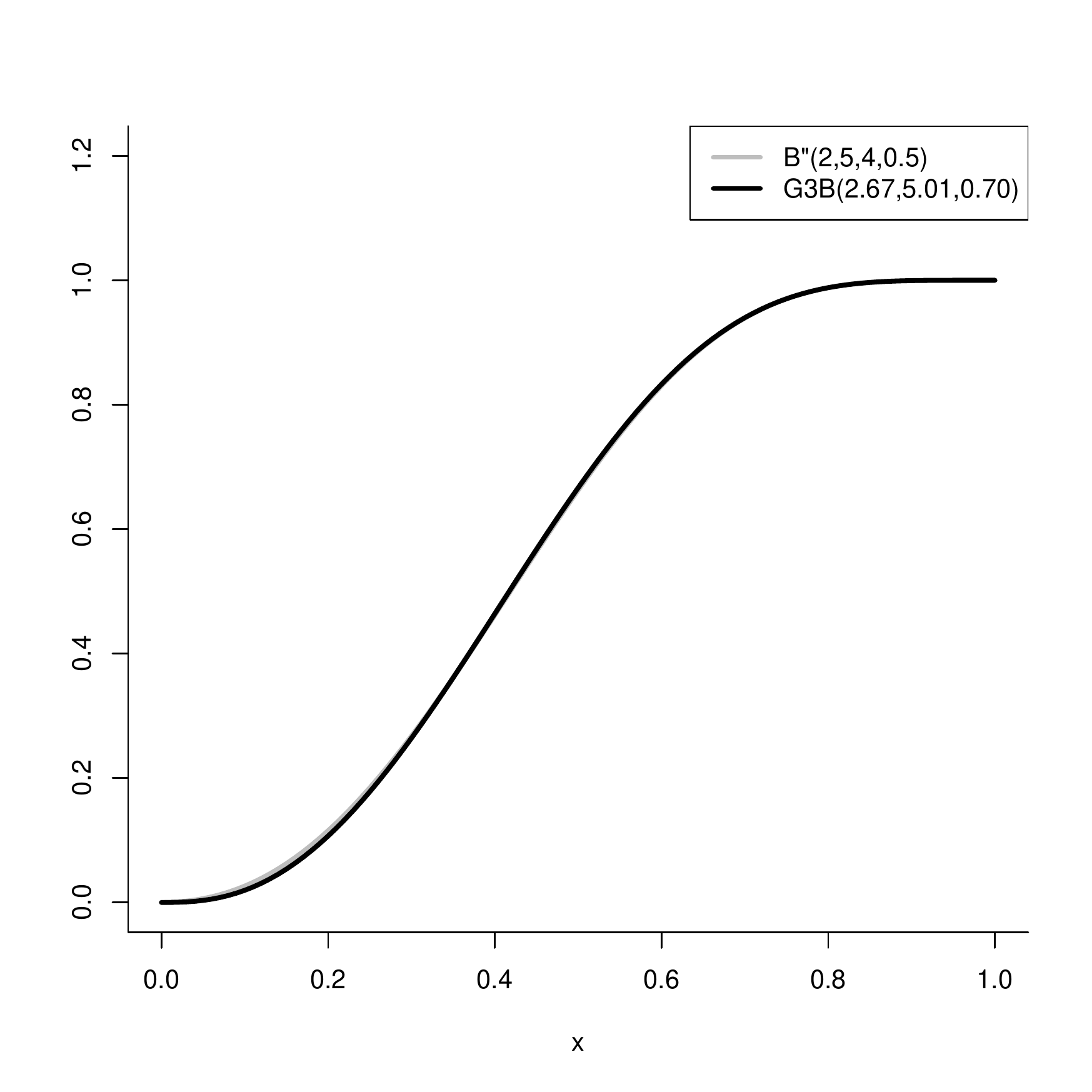}}
  \caption{Plots of the distribution functions of $X' \sim \mbox{\normalfont{B}}''\left(\alpha_1,\alpha_2,\lambda_1,\lambda_2\right)$ and $X'_P \sim \mbox{\normalfont{G3B}}\left(\beta_1,\beta_2,\gamma\right)$ for selected values of $\alpha_1,\alpha_2,\lambda_1,\lambda_2$, with $\beta_1,\beta_2,\gamma$ defined as in Eqs.~(\ref{eq:dnc.beta.dens.pat.approx.par1}),~(\ref{eq:dnc.beta.dens.pat.approx.par2}).}
 \label{fig:PATD3}
\end{figure}

That said, in the present Section a new general formula for the moments of such distribution is provided. This formula allows the computation of moments to be reduced from a double series to a single one. According to the latter, in fact, the $r$-th moment can be evaluated in terms of a perturbation of the corresponding moment of the beta distribution through a weighted sum of Kummer's confluent hypergeometric functions. More specifically, the present result extends and completes Proposition 7 in \cite{OngOrs15} and concludes that the $r$-th moment of the $\mbox{B}''$ distribution can be expressed as follows.

\begin{proposition}[Moments about zero of $\mbox{\normalfont{B}}''$ distribution]
\label{propo:momr.beta.dnc.rapp}
Let $X' \sim \mbox{\normalfont{B}}''\left(\alpha_1,\alpha_2,\lambda_1,\lambda_2\right)$ and $\alpha^+=\alpha_1+\alpha_2$, $\lambda^+=\lambda_1+\lambda_2$. Let $M_j$, $j=1,2$, be independent Poisson random variables with means $\lambda_j/2$ and $M^+=M_1+M_2$. Then, for every $r \in \mathbb{N}$, the $r$-th moment about zero of $X'$ admits the following expression:
\begin{equation}
\mathbb{E}\left[\left(X'\right)^r\right]=\frac{\left(\alpha_1\right)_r}{\left(\alpha^+\right)_r} \, e^{-\frac{\lambda^+}{2}}\sum_{i=0}^{r} \frac{{r \choose i}\left(\alpha^+\right)_i\left(\frac{\lambda_1}{2}\right)^i}{\left(\alpha_1\right)_i \left(\alpha^++r\right)_i} \, _1F_1\left(\alpha^++i;\alpha^++r+i;\frac{\lambda^+}{2}\right).
\label{eq:momr.beta.dnc}
\end{equation}
\end{proposition}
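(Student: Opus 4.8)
The plan is to evaluate $\mathbb{E}\left[(X')^r\right]$ by conditioning on $M^+=M_1+M_2$ and then averaging over the Poisson law of $M^+$. Since $M_1,M_2$ are independent with $\mbox{Poisson}(\lambda_i/2)$ laws (equivalently, $Y'_1+Y'_2\sim\chi'^{\,2}_{2\alpha^+}(\lambda^+)$ by Property~\ref{prope:repr.prop.ncchisq}, whose mixing variable in Property~\ref{prope:mixrepres.ncchisq} is $M^+$), one has $M^+\sim\mbox{Poisson}(\lambda^+/2)$. Combining the conditional density of $X'$ given $M^+$ obtained in Proposition~\ref{propo:dncb.cond.ind} with the fact that the $r$-th moment of a $\mbox{Beta}(\alpha_1+l,\alpha_2+m-l)$ variable is $(\alpha_1+l)_r/(\alpha^++m)_r$, I would first write
\begin{equation*}
\mathbb{E}\left[(X')^r\mid M^+=m\right]=\frac{1}{(\alpha^++m)_r}\sum_{l=0}^{m}\binom{m}{l}\left(\frac{\lambda_1}{\lambda^+}\right)^{l}\left(1-\frac{\lambda_1}{\lambda^+}\right)^{m-l}(\alpha_1+l)_r
\end{equation*}
(the degenerate case $\lambda^+=0$ is the central beta one and is handled directly; otherwise $\lambda_1/\lambda^+$ is a genuine probability). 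The task then splits into collapsing the inner binomial sum and, afterwards, the Poisson average.

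For the inner sum I would regard $(\alpha_1+l)_r$ as a polynomial of degree $r$ in $l$ and expand it through Proposition~\ref{propo:expans.poch.symb.binom}, i.e. $(\alpha_1+l)_r=\sum_{i=0}^{r}\frac{1}{i!}\left[\frac{d^i}{da^i}(a)_r\right]_{a=\alpha_1}l^{i}$. Substituting and exchanging the finite sums, the quantity $\sum_{l=0}^{m}\binom{m}{l}p^{l}(1-p)^{m-l}l^{i}$, with $p=\lambda_1/\lambda^+$, is the $i$-th raw moment of a $\mbox{Binomial}(m,p)$ variable, which I would re-express via its factorial moments as $\sum_{j=0}^{i}\mathcal{S}(i,j)\,m^{\underline{j}}\,p^{j}$, where $m^{\underline{j}}=m(m-1)\cdots(m-j+1)$ and $\mathcal{S}(i,j)$ is a Stirling number of the second kind. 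After one further interchange of summation, the coefficient of $m^{\underline{j}}p^{j}$ is $\sum_{i=j}^{r}\mathcal{S}(i,j)\frac{1}{i!}\left[\frac{d^i}{da^i}(a)_r\right]_{a=\alpha_1}$, which by Proposition~\ref{propo:ident.mom} equals $\binom{r}{j}(\alpha_1)_r/(\alpha_1)_j$. This reduces the conditional moment to
\begin{equation*}
\mathbb{E}\left[(X')^r\mid M^+=m\right]=\frac{(\alpha_1)_r}{(\alpha^++m)_r}\sum_{j=0}^{r}\binom{r}{j}\frac{m^{\underline{j}}}{(\alpha_1)_j}\left(\frac{\lambda_1}{\lambda^+}\right)^{j}.
\end{equation*}

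It then remains to average over $M^+\sim\mbox{Poisson}(\lambda^+/2)$ and to swap the (now finite) $j$-sum with the Poisson series. For fixed $j$, using $m^{\underline{j}}/m!=1/(m-j)!$ and shifting $m=j+s$, the residual series is $\sum_{s\ge 0}\frac{(\lambda^+/2)^{j+s}}{s!\,(\alpha^++j+s)_r}$; the factorisation $1/(\alpha^++j+s)_r=\frac{1}{(\alpha^++j)_r}\cdot\frac{(\alpha^++j)_s}{(\alpha^++j+r)_s}$, a consequence of Eq.~(\ref{eq:poch.symb.sum}), identifies it with $\frac{(\lambda^+/2)^j}{(\alpha^++j)_r}\,{}_1F_1\!\left(\alpha^++j;\alpha^++r+j;\frac{\lambda^+}{2}\right)$. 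Finally, simplifying the constants through $(\lambda_1/\lambda^+)^j(\lambda^+/2)^j=(\lambda_1/2)^j$ and $1/(\alpha^++j)_r=(\alpha^+)_j/[(\alpha^+)_r(\alpha^++r)_j]$ (again Eq.~(\ref{eq:poch.symb.sum})) yields exactly Eq.~(\ref{eq:momr.beta.dnc}). The main obstacle will be the middle step: seeing that Proposition~\ref{propo:expans.poch.symb.binom}, the factorial-moment expansion of the binomial, and the identity of Proposition~\ref{propo:ident.mom} dovetail precisely so as to reduce the inner sum to the compact factor $\binom{r}{j}(\alpha_1)_r/(\alpha_1)_j$; everything around it is routine Pochhammer bookkeeping and a single index shift.
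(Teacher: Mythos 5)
Your argument is correct, and it rests on the same pillars as the paper's proof: conditioning on $M^+$ via Proposition~\ref{propo:dncb.cond.ind}, expanding $\left(\alpha_1+l\right)_r$ through Proposition~\ref{propo:expans.poch.symb.binom}, expressing binomial raw moments through Stirling numbers of the second kind, invoking the identity of Proposition~\ref{propo:ident.mom}, and finishing with the Pochhammer relation~(\ref{eq:poch.symb.sum}). The difference is in the sequencing, and it buys you a substantially shorter second half. You apply Proposition~\ref{propo:ident.mom} \emph{before} the Poisson averaging, collapsing the conditional moment to $\frac{(\alpha_1)_r}{(\alpha^++m)_r}\sum_{j}\binom{r}{j}\frac{m^{\underline{j}}}{(\alpha_1)_j}\left(\lambda_1/\lambda^+\right)^j$; the Poisson expectation of each term is then a bare index shift that produces the $_1F_1$ factors directly. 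The paper instead keeps the Stirling structure open through the averaging: it converts the falling factorials $M^+!/(M^+-k)!$ into powers of $M^+$ via Stirling numbers of the \emph{first} kind, introduces an auxiliary random variable $M^{+}_*$ whose weights are $_1F_1$-normalized, computes its factorial moments from a generating function (Eq.~(\ref{eq:momr.beta.dnc.dim22})), and only at the very end collapses everything using the orthogonality of the two kinds of Stirling numbers together with Proposition~\ref{propo:ident.mom}. Your route also obtains the conditional moment slightly more directly (moments of the beta mixture in Eq.~(\ref{eq:dncb.distr.cond.m}) rather than the ratio $\mathbb{E}[(Y'_1)^r|M^+]/\mathbb{E}[(Y'^+)^r|M^+]$), though both rest on the same conditional-independence result. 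The only points to make explicit in a final write-up are the interchange of the finite $j$-sum with the Poisson series (immediate, since all terms are nonnegative) and the degenerate case $\lambda^+=0$, which you already flag; neither is a gap.
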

\begin{proof}
For the proof see~\ref{proof:momr.beta.dnc.rapp} in the Appendix.
\end{proof}

The first two moments of the $\mbox{B}''$ distribution can thus be computed as special cases of Eq.~(\ref{eq:momr.beta.dnc}) by taking $r=1$ and $r=2$ as follows:
\begin{equation}
\mathbb{E}\left(X'\right)=\frac{\alpha_1}{\alpha^+}\, e^{-\frac{\lambda^+}{2}} \left[_1F_1\left(\alpha^+;\alpha^++1;\frac{\lambda^+}{2}\right)+\frac{\alpha^+ \, \frac{\lambda_1}{2}}{\alpha_1 \left(\alpha^++1\right)} \, _1F_1\left(\alpha^++1;\alpha^++2;\frac{\lambda^+}{2}\right)\right],
\label{eq:mom1.beta.dnc}
\end{equation}
\begin{eqnarray}
\lefteqn{\mathbb{E}\left[\left(X'\right)^2\right]=\frac{\left(\alpha_1\right)_2}{\left(\alpha^+\right)_2} \, e^{-\frac{\lambda^+}{2}} \left[_1F_1\left(\alpha^+;\alpha^++2;\frac{\lambda^+}{2}\right)+\frac{\alpha^+ \, \lambda_1}{\alpha_1 \left(\alpha^++2\right)} \cdot \right.}\nonumber\\
& \cdot &  \left. _1F_1\left(\alpha^++1;\alpha^++3;\frac{\lambda^+}{2}\right)+\frac{\left(\alpha^+\right)_2 \, \left(\frac{\lambda_1}{2}\right)^2}{\left(\alpha_1\right)_2 \left(\alpha^++2\right)_2} \,  _1F_1\left(\alpha^++2;\alpha^++4;\frac{\lambda^+}{2}\right)\right].
\label{eq:mom2.beta.dnc}
\end{eqnarray}
An implementation in \texttt{R} language of the moments formula in Eq.~(\ref{eq:momr.beta.dnc}) is proposed in \ref{funct:mdncbeta}.

Now we should like to make a few comments on the moments of the type 1 and the type 2 non-central beta distributions. More specifically, by making use of the definition of the $r$-th moment about zero of a random variable, the following formula holds for the moments of $X'_1$ $\sim$ $\mbox{\normalfont{B}}'_1\left(\alpha_1,\alpha_2,\lambda\right)$:
\begin{equation}
\mathbb{E}\left[\left(X'_1\right)^r\right]=\frac{\left(\alpha_1\right)_r}{\left(\alpha^+\right)_r} \, e^{-\frac{\lambda}{2}} \, _2F_2\left(\alpha_1+r,\alpha^+;\alpha_1, \alpha^++r;\frac{\lambda}{2}\right),
\label{eq:momr.beta.nc1.def}
\end{equation}
where $_2F_2\left(a_1,a_2;b_1,b_2;x\right)=\sum_{k=0}^{+\infty}\frac{(a_1)_k \, (a_2)_k}{(b_1)_k \, (b_2)_k}	\frac{x^k}{k!}$ is the generalized hypergeometric function $_pF_q$ with $p=2$ and $q=2$ coefficients respectively at numerator and denominator \cite{SriKar85}.

That said, a new general formula for the moments about zero of the $\mbox{\normalfont{B}}'_1$ distribution can be derived regardless of Eq.~(\ref{eq:momr.beta.nc1.def}) in light of Eq.~(\ref{eq:momr.beta.dnc}). Indeed, by taking $\lambda_2=0$ and renaming $\lambda_1$ with $\lambda$ in Eq.~(\ref{eq:momr.beta.dnc}), the following holds true.

\begin{proposition}[Moments about zero of $\mbox{\normalfont{B}}'_1$ distribution]
\label{propo:momr.beta.nc1}
Let $X'_1$ $\sim$ $\mbox{\normalfont{B}}'_1\left(\alpha_1,\alpha_2,\lambda\right)$ and $\alpha^+=\alpha_1+\alpha_2$. Then, for every $r \in \mathbb{N}$, the $r$-th moment about zero of $X_1'$ admits the following expression:
\begin{equation}
\mathbb{E}\left[\left(X'_1\right)^r\right]=\frac{\left(\alpha_1\right)_r}{\left(\alpha^+\right)_r} \, e^{-\frac{\lambda}{2}}\sum_{i=0}^{r} \frac{{r \choose i}\left(\alpha^+\right)_i\left(\frac{\lambda}{2}\right)^i}{\left(\alpha_1\right)_i \left(\alpha^++r\right)_i} \, _1F_1\left(\alpha^++i;\alpha^++r+i;\frac{\lambda}{2}\right).
\label{eq:momr.beta.nc1}
\end{equation}
\end{proposition}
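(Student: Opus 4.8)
The plan is to obtain Proposition~\ref{propo:momr.beta.nc1} as an immediate specialization of the $\mbox{B}''$ moment formula in Proposition~\ref{propo:momr.beta.dnc.rapp}, exactly as announced in the sentence preceding the statement.

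First I would record the identification $\mbox{B}'_1(\alpha_1,\alpha_2,\lambda)=\mbox{B}''(\alpha_1,\alpha_2,\lambda,0)$. This is transparent from the defining ratios: in~(\ref{eq:def.dnc.beta}) the denominator summand $Y'_2\sim\chi'^{\,2}_{2\alpha_2}(\lambda_2)$ becomes, for $\lambda_2=0$, a plain $\chi^2_{2\alpha_2}$ variable, so setting $\lambda_2=0$ and renaming $\lambda_1$ as $\lambda$ turns~(\ref{eq:def.dnc.beta}) into~(\ref{eq:def.nc1.beta}). Equivalently, the same substitution collapses the $\mbox{B}''$ density~(\ref{eq:dens.beta.dnc}) to the $\mbox{B}'_1$ density~(\ref{eq:dens.beta.nc1}), since the $\mbox{Poisson}(0)$ law is degenerate at $0$ and only the $k=0$ term of the second Poisson series survives.

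Next I would apply Proposition~\ref{propo:momr.beta.dnc.rapp} with $\lambda_1=\lambda$ and $\lambda_2=0$, so that $\lambda^+=\lambda_1+\lambda_2=\lambda$. On the right-hand side of~(\ref{eq:momr.beta.dnc}) the parameter $\lambda_2$ occurs only through $\lambda^+$, so the replacement is purely cosmetic and can be carried out term by term in the sum over $i=0,\dots,r$: the factor $e^{-\lambda^+/2}$ becomes $e^{-\lambda/2}$, each $\bigl(\tfrac{\lambda_1}{2}\bigr)^i$ becomes $\bigl(\tfrac{\lambda}{2}\bigr)^i$, and the argument $\tfrac{\lambda^+}{2}$ of every $_1F_1$ becomes $\tfrac{\lambda}{2}$. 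What remains is precisely~(\ref{eq:momr.beta.nc1}).

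There is no real obstacle here; the single point deserving a word is that Proposition~\ref{propo:momr.beta.dnc.rapp} must still be valid at the boundary value $\lambda_2=0$, which it is, because its derivation never divides by $\lambda_2$ and the final expression depends on $\lambda_2$ only analytically, indeed only through $\lambda^+$. If a self-contained argument were preferred, one could instead start from the series form~(\ref{eq:momr.beta.nc1.def}) and rearrange the resulting $_2F_2$ into a finite combination of $_1F_1$'s, but the specialization route above is shorter and is the one the paper intends.
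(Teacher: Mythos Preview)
Your proposal is correct and follows exactly the route the paper takes: the paper derives Proposition~\ref{propo:momr.beta.nc1} simply by setting $\lambda_2=0$ and renaming $\lambda_1$ as $\lambda$ in Eq.~(\ref{eq:momr.beta.dnc}), with no further argument. Your additional remark that the boundary case $\lambda_2=0$ is admissible because the final formula depends on $\lambda_2$ only through $\lambda^+$ is a sound justification that the paper leaves implicit.
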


As a side effect, by comparing Eqs.~(\ref{eq:momr.beta.nc1.def}),~(\ref{eq:momr.beta.nc1}), the following identity between the aforementioned hypergeometric functions holds true:
$$_2F_2\left(\alpha_1+r,\alpha^+;\alpha_1, \alpha^++r;\frac{\lambda}{2}\right)=\sum_{i=0}^{r} \frac{{r \choose i}\left(\alpha^+\right)_i\left(\frac{\lambda}{2}\right)^i}{\left(\alpha_1\right)_i \left(\alpha^++r\right)_i} \, _1F_1\left(\alpha^++i;\alpha^++r+i;\frac{\lambda}{2}\right).$$

Finally, the general formula for the moments about zero of the type 2 non-central beta distribution can be stated by taking $\lambda_1=0$ and renaming $\lambda_2$ with $\lambda$ in Eq.~(\ref{eq:momr.beta.dnc}). The latter can be also derived by making use of the definition of the $r$-th moment about zero of a random variable, obtaining as follows:
\begin{equation}
\mathbb{E}\left[\left(X'_2\right)^r\right]=\frac{\left(\alpha_1\right)_r}{\left(\alpha^+\right)_r} \, e^{-\frac{\lambda}{2}} \, _1F_1\left(\alpha^+;\alpha^++r;\frac{\lambda}{2}\right).
\label{eq:momr.beta.nc2}
\end{equation}

An interesting relationship applies among the means of the three non-central beta distributions recalled herein. More precisely, the mean of the $\mbox{B}''$ distribution with shape parameters $\alpha_1$, $\alpha_2$ and non-centrality parameters $\lambda_1$, $\lambda_2$ can be expressed as a convex linear combination of the means of the $\mbox{B}'_1$ and $\mbox{B}'_2$ distributions with shape parameters $\alpha_1$, $\alpha_2$ and non-centrality parameter $\lambda^+=\lambda_1+\lambda_2$.   

\begin{proposition}[Relationship among the means of $\mbox{\normalfont{B}}''$, $\mbox{\normalfont{B}}'_1$, $\mbox{\normalfont{B}}'_2$ distributions]
\label{propo:relat.means.nc.beta}
Let $X' \;$ $\sim \;$ $\mbox{\normalfont{B}}''$ $\left(\alpha_1,\alpha_2,\lambda_1,\lambda_2\right)$, $X'_1$ $\sim$ $\mbox{\normalfont{B}}'_1\left(\alpha_1,\alpha_2,\lambda^+\right)$ and $X'_2$ $\sim$ $\mbox{\normalfont{B}}'_2\left(\alpha_1,\alpha_2,\lambda^+\right)$, where $\lambda^+=\lambda_1+\lambda_2$. Then:
\begin{equation}
\mathbb{E}\left(X'\right)=\frac{\lambda_1}{\lambda^+} \, \mathbb{E}\left(X'_1\right)+\frac{\lambda_2}{\lambda^+} \, \mathbb{E}\left(X'_2\right).
\label{eq:relat.means.nc.beta}
\end{equation}
\end{proposition}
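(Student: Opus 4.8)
The plan is to obtain Eq.~(\ref{eq:relat.means.nc.beta}) by combining the first-moment expressions already produced in this Section. Setting $r=1$ in Eq.~(\ref{eq:momr.beta.nc2}) gives, for $X'_2\sim\mbox{B}'_2(\alpha_1,\alpha_2,\lambda^+)$, $\mathbb{E}(X'_2)=\frac{\alpha_1}{\alpha^+}\,e^{-\frac{\lambda^+}{2}}\,{}_1F_1(\alpha^+;\alpha^++1;\frac{\lambda^+}{2})$; setting $r=1$ in Eq.~(\ref{eq:momr.beta.nc1}) (equivalently Eq.~(\ref{eq:momr.beta.nc1.def})) gives, for $X'_1\sim\mbox{B}'_1(\alpha_1,\alpha_2,\lambda^+)$, the sum of that same term and $\frac{\lambda^+}{2}\cdot\frac{e^{-\frac{\lambda^+}{2}}}{\alpha^++1}\,{}_1F_1(\alpha^++1;\alpha^++2;\frac{\lambda^+}{2})$; and Eq.~(\ref{eq:mom1.beta.dnc}) writes $\mathbb{E}(X')$ for $X'\sim\mbox{B}''(\alpha_1,\alpha_2,\lambda_1,\lambda_2)$ using exactly these two confluent hypergeometric ``atoms'', the second one now carrying the coefficient $\frac{\lambda_1}{2}$ rather than $\frac{\lambda^+}{2}$.

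Writing $A=\frac{\alpha_1}{\alpha^+}\,e^{-\frac{\lambda^+}{2}}\,{}_1F_1(\alpha^+;\alpha^++1;\frac{\lambda^+}{2})$ and $B=\frac{e^{-\frac{\lambda^+}{2}}}{\alpha^++1}\,{}_1F_1(\alpha^++1;\alpha^++2;\frac{\lambda^+}{2})$, the three means become $\mathbb{E}(X'_2)=A$, $\mathbb{E}(X'_1)=A+\frac{\lambda^+}{2}B$ and $\mathbb{E}(X')=A+\frac{\lambda_1}{2}B$. Assuming $\lambda^+>0$ — when $\lambda^+=0$ the claim is the trivial identity $\mathbb{E}(X)=\mathbb{E}(X)$ among beta means — one then uses $\frac{\lambda_1}{2}=\frac{\lambda_1}{\lambda^+}\cdot\frac{\lambda^+}{2}$ together with $\frac{\lambda_1}{\lambda^+}+\frac{\lambda_2}{\lambda^+}=1$ to rewrite $\mathbb{E}(X')=\frac{\lambda_1}{\lambda^+}\left(A+\frac{\lambda^+}{2}B\right)+\frac{\lambda_2}{\lambda^+}A=\frac{\lambda_1}{\lambda^+}\,\mathbb{E}(X'_1)+\frac{\lambda_2}{\lambda^+}\,\mathbb{E}(X'_2)$, which is Eq.~(\ref{eq:relat.means.nc.beta}).

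Nothing analytically delicate occurs, so the only real ``obstacle'' is bookkeeping: one must check that the coefficient of the first atom is literally the same in all three means and that the coefficient of the second atom in $\mathbb{E}(X')$ is $\lambda_1/\lambda^+$ times the one in $\mathbb{E}(X'_1)$. Both facts are immediate once the key point is respected — namely that $\mathbb{E}(X'_1)$ and $\mathbb{E}(X'_2)$ are evaluated with non-centrality parameter $\lambda^+$, not with $\lambda_1$ or $\lambda_2$ — which is exactly the hypothesis of the statement.

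As an alternative I could argue probabilistically, bypassing the hypergeometric formulas. Starting from the representation of Proposition~\ref{propo:rappr.clc.beta.dnc}, $X'=X'_2X+(1-X'_2)X'_{pnc}$, and taking expectations, the independence of $X$ from $(X'_2,X'_{pnc})$ and the conditional independence given $M^+$ reduce everything to $\mathbb{E}[X'_{pnc}\mid M^+]=\frac{1}{M^+}\,\mathbb{E}\!\left[\mbox{Binomial}(M^+,\lambda_1/\lambda^+)\right]=\frac{\lambda_1}{\lambda^+}$ for $M^+\ge1$ (the event $M^+=0$ contributes nothing, since there $X'_{pnc}$ is degenerate at $0$ while $1-X'_2$ vanishes too); combining this with the mixture representations of Properties~\ref{prope:mixrepres.ncb1} and~\ref{prope:mixrepres.ncb2}, which give $\mathbb{E}(X'_1)=\mathbb{E}\!\left[\frac{\alpha_1+M}{\alpha^++M}\right]$ and $\mathbb{E}(X'_2)=\mathbb{E}\!\left[\frac{\alpha_1}{\alpha^++M}\right]$ with $M\sim\mbox{Poisson}(\lambda^+/2)$, and noting $M^+$ has that same law, one again lands on Eq.~(\ref{eq:relat.means.nc.beta}). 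Here the mild subtlety is the handling of the $M^+=0$ term described above.
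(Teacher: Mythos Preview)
Your primary argument is correct and is essentially the same as the paper's own proof: the paper also starts from Eq.~(\ref{eq:mom1.beta.dnc}), rewrites $\mathbb{E}(X')$ as the sum of the two ``atoms'' you call $A$ and $\frac{\lambda_1}{2}B$, and then splits it as $\frac{\lambda_1}{\lambda^+}\bigl(A+\frac{\lambda^+}{2}B\bigr)+\frac{\lambda_2}{\lambda^+}A$, identifying the brackets with $\mathbb{E}(X'_1)$ and $\mathbb{E}(X'_2)$ via Eqs.~(\ref{eq:momr.beta.nc1}) and~(\ref{eq:momr.beta.nc2}) at $r=1$. Your alternative probabilistic route via Proposition~\ref{propo:rappr.clc.beta.dnc} is not used by the paper here (that representation is instead exploited in the proof of Proposition~\ref{propo:alter.exp.mean.dnc.beta}), but it is a valid and slightly more conceptual variant.
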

\begin{proof}
For the proof see~\ref{proof:relat.means.nc.beta} in the Appendix.
\end{proof}

Moreover, by resorting to Proposition~\ref{propo:rappr.clc.beta.dnc}, we can obtain an alternative and interesting expression for the mean of the doubly non-central beta distribution in terms of a convex linear combination of the mean of the beta distribution and a compositional ratio of the non-centrality parameters.
\begin{proposition}[Alternative expression for the mean of $\mbox{\normalfont{B}}''$ distribution]
\label{propo:alter.exp.mean.dnc.beta}
Let $X' \;$ $\sim \;$ $\mbox{\normalfont{B}}''$ $\left(\alpha_1,\alpha_2,\lambda_1,\lambda_2\right)$ and $\alpha^+=\alpha_1+\alpha_2$, $\lambda^+=\lambda_1+\lambda_2$. Then:
\begin{equation}
\mathbb{E}\left(X'\right)=\frac{\alpha_1}{\alpha^+}\left[e^{-\frac{\lambda^+}{2}}  \, _1F_1\left(\alpha^+;\alpha^++1;\frac{\lambda^+}{2}\right)\right]+\frac{\lambda_1}{\lambda^+}\left[1-e^{-\frac{\lambda^+}{2}}  \, _1F_1\left(\alpha^+;\alpha^++1;\frac{\lambda^+}{2}\right)\right].
\label{eq:alter.exp.mean.dnc.beta}
\end{equation}
\end{proposition}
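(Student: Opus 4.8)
The plan is to pass to expectations in the convex-combination representation of Proposition~\ref{propo:rappr.clc.beta.dnc}. Writing $X'=X'_2\,X+\left(1-X'_2\right)X'_{pnc}$ and using that $X\sim\mbox{\normalfont{Beta}}\left(\alpha_1,\alpha_2\right)$ is independent of the pair $\left(X'_2,X'_{pnc}\right)$, linearity of the expectation gives
$$
\mathbb{E}\left(X'\right)=\mathbb{E}\left(X\right)\,\mathbb{E}\left(X'_2\right)+\mathbb{E}\left[\left(1-X'_2\right)X'_{pnc}\right],
$$
so the task reduces to the two expectations on the right-hand side. For the first one, $\mathbb{E}\left(X\right)=\alpha_1/\alpha^+$ is the beta mean, while, since $X'_2\sim\mbox{\normalfont{B}}'_2\left(\alpha^+,0,\lambda^+\right)$, specialising the moment formula~(\ref{eq:momr.beta.nc2}) to $r=1$, shape parameters $\left(\alpha^+,0\right)$ and non-centrality $\lambda^+$ yields
$$
\mathbb{E}\left(X'_2\right)=e^{-\frac{\lambda^+}{2}}\,{}_1F_1\left(\alpha^+;\alpha^++1;\frac{\lambda^+}{2}\right)=:c .
$$
This already produces the first summand $\frac{\alpha_1}{\alpha^+}\,c$ of~(\ref{eq:alter.exp.mean.dnc.beta}).

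For the second term I would condition on $M^+=M_1+M_2$, which is $\mbox{\normalfont{Poisson}}\left(\lambda^+/2\right)$ distributed, and use the conditional independence of $X'_2$ and $X'_{pnc}$ given $M^+$ stated in part~ii) of Proposition~\ref{propo:rappr.clc.beta.dnc}:
$$
\mathbb{E}\left[\left(1-X'_2\right)X'_{pnc}\right]=\mathbb{E}\left[\mathbb{E}\left(1-X'_2\mid M^+\right)\,\mathbb{E}\left(X'_{pnc}\mid M^+\right)\right].
$$
Since $X'_2\mid M^+\sim\mbox{\normalfont{Beta}}\left(\alpha^+,M^+\right)$ we have $\mathbb{E}\left(1-X'_2\mid M^+\right)=M^+/\left(\alpha^++M^+\right)$. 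For the remaining conditional mean, the stated mixture writes $X'_{pnc}\mid M^+$ as a $\mbox{\normalfont{Binomial}}\left(M^+,\lambda_1/\lambda^+\right)$-weighted average of the laws $\mbox{\normalfont{Beta}}\left(i,M^+-i\right)$, each of which has mean $i/M^+$ (this covers as well the two degenerate components $i=0$ and $i=M^+$, sitting at $0$ and $1$ respectively); averaging $i/M^+$ against the binomial weights and recalling that the binomial mean is $M^+\,\lambda_1/\lambda^+$ gives $\mathbb{E}\left(X'_{pnc}\mid M^+\right)=\lambda_1/\lambda^+$ whenever $M^+\ge1$, whereas for $M^+=0$ the accompanying factor $M^+/\left(\alpha^++M^+\right)$ vanishes, so that case needs no separate treatment. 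Hence
$$
\mathbb{E}\left[\left(1-X'_2\right)X'_{pnc}\right]=\frac{\lambda_1}{\lambda^+}\,\mathbb{E}\left[\frac{M^+}{\alpha^++M^+}\right].
$$

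The closing step is the only mildly delicate one: rewrite $\frac{M^+}{\alpha^++M^+}=1-\frac{\alpha^+}{\alpha^++M^+}$ and note that, once more because $X'_2\mid M^+\sim\mbox{\normalfont{Beta}}\left(\alpha^+,M^+\right)$, one has $\mathbb{E}\left[\frac{\alpha^+}{\alpha^++M^+}\right]=\mathbb{E}\left[\mathbb{E}\left(X'_2\mid M^+\right)\right]=\mathbb{E}\left(X'_2\right)=c$; hence $\mathbb{E}\left[\frac{M^+}{\alpha^++M^+}\right]=1-c$. Substituting back gives $\mathbb{E}\left(X'\right)=\frac{\alpha_1}{\alpha^+}\,c+\frac{\lambda_1}{\lambda^+}\left(1-c\right)$, which is precisely~(\ref{eq:alter.exp.mean.dnc.beta}) once $c$ is spelled out. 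The whole argument is essentially bookkeeping rather than anything deep; the two places calling for a little care are the handling of the degenerate beta components in the mixture for $X'_{pnc}\mid M^+$, and the realisation that one and the same quantity $c=\mathbb{E}\left(X'_2\right)$ appears twice, both as the weight attached to the beta mean $\alpha_1/\alpha^+$ and --- through $1-c$ --- as the weight attached to the non-centrality ratio $\lambda_1/\lambda^+$.
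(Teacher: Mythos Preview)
Your proof is correct and follows essentially the same route as the paper's: both start from the convex-combination representation of Proposition~\ref{propo:rappr.clc.beta.dnc}, use the independence in part~i) to split off $\mathbb{E}(X)\,\mathbb{E}(X'_2)$, condition on $M^+$ and use part~ii) to factor $\mathbb{E}[(1-X'_2)X'_{pnc}\mid M^+]$, and obtain $\mathbb{E}(X'_{pnc}\mid M^+)=\lambda_1/\lambda^+$ from the binomial-weighted mixture of beta means. The only cosmetic difference is that the paper passes directly from $\mathbb{E}_{M^+}\{\mathbb{E}[(1-X'_2)\mid M^+]\}$ to $\mathbb{E}(1-X'_2)$ by the tower property, whereas you first write out $\mathbb{E}(1-X'_2\mid M^+)=M^+/(\alpha^++M^+)$ and then recognize its expectation as $1-c$ via the same tower property applied in reverse; the two computations are equivalent.
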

\begin{proof}
For the proof see~\ref{proof:alter.exp.mean.dnc.beta} in the Appendix.
\end{proof}

We conclude the present Section by further investigating the moments of the $\mbox{\normalfont{B}}''$ distribution when $\alpha_1=\alpha_2=1$. In the latter case, the mean and the variance interestingly take on the following simple forms.

\begin{proposition}[Mean and variance of $\mbox{\normalfont{B}}''$ distribution when $\alpha_1=\alpha_2=1$]
\label{propo:exp.mean.var.dnc11.beta}
Let $X' \;$ $\sim \;$ $\mbox{\normalfont{B}}''$ $\left(\alpha_1,\alpha_2,\lambda_1,\lambda_2\right)$ with $\alpha_1=\alpha_2=1$. Then:
\begin{equation}
\mathbb{E}\left(X'\right)=\frac{1}{2}+\frac{\lambda_1-\lambda_2}{2\left(\lambda^+\right)^3}\left[\left(\lambda^+\right)^2-4 \, \lambda^++8-8 \, e^{-\frac{\lambda^+}{2}}\right],
\label{eq:exp.mean.dnc11.beta}
\end{equation}
\begin{eqnarray}
\mbox{\normalfont{Var}}\left(X'\right) & = & \frac{4}{\left(\lambda^+\right)^2}+\frac{4 \, \lambda_1 \, \lambda_2}{\left(\lambda^+\right)^5}\left[\left(\lambda^+-2\right)  \left(\lambda^+-8\right)-2 \, e^{-\frac{\lambda^+}{2}} 	\left(\lambda^++8\right)\right]+\nonumber\\
& + & \frac{8 \left(\lambda_1-\lambda_2\right)^2}{\left(\lambda^+\right)^6}\left[\left(\lambda^+\right)^2 \, e^{-\frac{\lambda^+}{2}} -2\left(1-e^{-\frac{\lambda^+}{2}}\right)\left(\lambda^++1-e^{-\frac{\lambda^+}{2}}\right)\right].
\label{eq:exp.var.dnc11.beta}
\end{eqnarray}
\end{proposition}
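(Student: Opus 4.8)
The plan is to specialise the moment formulas already proved to $\alpha_1=\alpha_2=1$ (so $\alpha^{+}=2$) and then to collapse every confluent hypergeometric function that survives into an elementary closed form in $\lambda^{+}$ and $e^{-\lambda^{+}/2}$. The only analytic input required is the elementary identity
\[
\sum_{k=0}^{+\infty}\frac{z^{k}}{(k+j)\,k!}=z^{-j}\int_{0}^{z}t^{\,j-1}e^{t}\,dt,\qquad j\in\mathbb{N},
\]
which makes each ${}_1F_1(a;a+m;z)$ with $m\in\mathbb{N}$ a rational function of $z$ times $e^{z}$ plus a rational function of $z$; putting $z=\lambda^{+}/2$ and pulling out the prefactor $e^{-\lambda^{+}/2}$ then produces rational functions of $\lambda^{+}$ carrying a single $e^{-\lambda^{+}/2}$ term.

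For the mean I would start from Proposition~\ref{propo:alter.exp.mean.dnc.beta}, which at $\alpha^{+}=2$ reduces to $\mathbb{E}(X')=\tfrac12\,\psi+\tfrac{\lambda_1}{\lambda^{+}}(1-\psi)$ with the single function $\psi:=e^{-\lambda^{+}/2}\,{}_1F_1(2;3;\lambda^{+}/2)$. The identity above with $j=2$ gives ${}_1F_1(2;3;z)=2z^{-2}[(z-1)e^{z}+1]$, hence $1-\psi=(\lambda^{+})^{-2}\big[(\lambda^{+})^{2}-4\lambda^{+}+8-8e^{-\lambda^{+}/2}\big]$. Writing $\tfrac{\lambda_1}{\lambda^{+}}=\tfrac12+\tfrac{\lambda_1-\lambda_2}{2\lambda^{+}}$ and collecting terms turns $\mathbb{E}(X')$ into $\tfrac12+\tfrac{\lambda_1-\lambda_2}{2\lambda^{+}}(1-\psi)$, which is exactly Eq.~(\ref{eq:exp.mean.dnc11.beta}).

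For the variance I would take Eq.~(\ref{eq:mom2.beta.dnc}) at $\alpha_1=\alpha_2=1$; it leaves ${}_1F_1(2;4;\cdot)$, ${}_1F_1(3;5;\cdot)$, ${}_1F_1(4;6;\cdot)$ evaluated at $\lambda^{+}/2$, each of the form ${}_1F_1(a;a+2;z)$. Using $\tfrac{1}{(a+k)(a+k+1)}=\tfrac{1}{a+k}-\tfrac{1}{a+k+1}$ together with the identity above, ${}_1F_1(a;a+2;z)=\tfrac{a(a+1)}{z^{a+1}}\big[P_a(z)e^{z}+Q_a(z)\big]$ for explicit low-degree polynomials $P_a,Q_a$. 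Substituting and absorbing $e^{-\lambda^{+}/2}$ expresses $\mathbb{E}[(X')^{2}]$ as a rational function of $\lambda^{+}$ (carrying one $e^{-\lambda^{+}/2}$) whose coefficients are quadratic in $\lambda_1$; then $\mathrm{Var}(X')=\mathbb{E}[(X')^{2}]-[\mathbb{E}(X')]^{2}$, and using $\lambda_1+\lambda_2=\lambda^{+}$ one re-expresses all $\lambda_1$-monomials through $\lambda^{+}$, $\lambda_1\lambda_2$ and $(\lambda_1-\lambda_2)^{2}$ (the part linear in $\lambda_1-\lambda_2$ must cancel, since $\mathrm{Var}(X')=\mathrm{Var}(1-X')$ and $1-X'\sim\mathrm{B}''(1,1,\lambda_2,\lambda_1)$), arriving at Eq.~(\ref{eq:exp.var.dnc11.beta}).

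The main obstacle is this last regrouping for the variance: it is a long but purely mechanical matter to check that the polynomial and exponential contributions of $\mathbb{E}[(X')^{2}]$ and of $[\mathbb{E}(X')]^{2}$ combine so that exactly the bracketed factors $(\lambda^{+}-2)(\lambda^{+}-8)-2e^{-\lambda^{+}/2}(\lambda^{+}+8)$ and $(\lambda^{+})^{2}e^{-\lambda^{+}/2}-2(1-e^{-\lambda^{+}/2})(\lambda^{+}+1-e^{-\lambda^{+}/2})$ emerge, over $(\lambda^{+})^{5}$ and $(\lambda^{+})^{6}$ respectively; a useful sanity check is that the would-be singular term $4/(\lambda^{+})^{2}$ is annihilated by these as $\lambda^{+}\to0^{+}$, leaving the uniform variance $1/12$. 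A hypergeometric-free alternative involves essentially the same algebra: by Proposition~\ref{propo:dncb.cond.ind}, $X'\mid M^{+}$ is a $\mathrm{Binomial}(M^{+},\lambda_1/\lambda^{+})$-mixture of $\mathrm{Beta}(1+i,1+M^{+}-i)$ laws, so $\mathbb{E}[(X')^{r}\mid M^{+}]$ is a ratio of polynomials in $M^{+}$ and in a binomial count, and averaging over $M^{+}\sim\mathrm{Poisson}(\lambda^{+}/2)$ reduces everything to $\mathbb{E}[(M^{+}+2)^{-1}]$ and $\mathbb{E}[(M^{+}+3)^{-1}]$, which are the integrals $z^{-j}\int_{0}^{z}t^{\,j-1}e^{t}\,dt$ at $z=\lambda^{+}/2$ in disguise.
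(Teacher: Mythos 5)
Your proposal is correct and follows essentially the same route as the paper: the mean is obtained by specialising Proposition~\ref{propo:alter.exp.mean.dnc.beta} to $\alpha^+=2$ and evaluating $e^{-\lambda^+/2}\,{}_1F_1\left(2;3;\frac{\lambda^+}{2}\right)$ in closed form, and the variance by specialising Eq.~(\ref{eq:mom2.beta.dnc}) to $\alpha_1=\alpha_2=1$, reducing ${}_1F_1\left(a;a+2;\frac{\lambda^+}{2}\right)$ for $a=2,3,4$ to rational--exponential expressions, and then subtracting the squared mean and regrouping in terms of $\lambda_1\lambda_2$ and $\left(\lambda_1-\lambda_2\right)^2$. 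The only differences are cosmetic: you derive the ${}_1F_1$ closed forms from an elementary series--integral identity instead of quoting the incomplete-gamma formulas in Eqs.~(\ref{eq:form2.1f1}), (\ref{eq:f11.form.momsec}), and you add a useful symmetry and $\lambda^+\to 0^+$ sanity check that the paper replaces with the explicit (tedious) algebra.
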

\begin{proof}
For the proof see~\ref{proof:exp.mean.var.dnc11.beta} in the Appendix.
\end{proof}

Observe that $\mathbb{E}(X')>\frac{1}{2}$ when $\lambda_1>\lambda_2$; in fact, in view of Eq.~(\ref{eq:exp.mean.dnc11.beta}), $\left(\lambda^+\right)^2-4 \, \lambda^++8>8 \, e^{-\frac{\lambda^+}{2}}$ for every $\lambda^+>0$. 

Note that Eqs.~(\ref{eq:exp.mean.dnc11.beta}),~(\ref{eq:exp.var.dnc11.beta}) become considerably simplified by assuming that the non-centrality parameters are equal. Indeed, by taking $\lambda_1=\lambda_2=\lambda$ we have accordingly:
$$\mathbb{E}\left(X'\right)=\frac{1}{2}, \qquad \mbox{Var}\left(X'\right)=\frac{1}{\lambda^2}+\frac{1}{2 \, \lambda^3}\left[\left(\lambda-1\right)\left(\lambda-4\right)-\left(\lambda+4\right) \, e^{-\lambda}\right].$$

Finally, by carrying out the same lines as the proof of Proposition~\ref{propo:exp.mean.var.dnc11.beta} or, roughly speaking, by taking $\lambda_2=0$ and renaming $\lambda_1$ with $\lambda$ in Eqs.~(\ref{eq:exp.mean.dnc11.beta}), (\ref{eq:exp.var.dnc11.beta}), one can obtain simple expressions for the mean and the variance of the $\mbox{B}'_1$ distribution when $\alpha_1=\alpha_2=1$. Similarly, simple expressions can be derived also for the mean and the variance of the $\mbox{B}'_2$ distribution when $\alpha_1=\alpha_2=1$ by taking $\lambda_1=0$ and renaming $\lambda_2$ with $\lambda$ in Eqs.~(\ref{eq:exp.mean.dnc11.beta}), (\ref{eq:exp.var.dnc11.beta}). Following are their expressions.

\begin{proposition}[Mean and variance of $\mbox{\normalfont{B}}'_1$ and $\mbox{\normalfont{B}}'_2$ distributions when $\alpha_1=\alpha_2=1$]
\label{propo:exp.mean.var.nc12.11.beta}
Let $X'_1 \sim \mbox{\normalfont{B}}'_1\left(\alpha_1,\alpha_2,\lambda\right)$ and $X'_2 \sim \mbox{\normalfont{B}}'_2\left(\alpha_1,\alpha_2,\lambda\right)$ with $\alpha_1=\alpha_2=1$. Then:
\begin{equation}
\mathbb{E}\left(X'_1\right)=\frac{1}{2}+\frac{1}{2 \, \lambda^2}\left(\lambda^2-4 \, \lambda+8-8 \, e^{-\frac{\lambda}{2}}\right),
\label{eq:exp.mean.nc1.11.beta}
\end{equation}
\begin{equation}
\mathbb{E}\left(X'_2\right)=\frac{1}{2}-\frac{1}{2 \, \lambda^2}\left(\lambda^2-4 \, \lambda+8-8 \, e^{-\frac{\lambda}{2}}\right),
\label{eq:exp.mean.nc2.11.beta}
\end{equation}
\begin{equation}
\mbox{\normalfont{Var}}\left(X'_1\right)=\mbox{\normalfont{Var}}\left(X'_2\right)=\frac{4}{\lambda^2}+\frac{8}{\lambda^4}\left[\lambda^2 \, e^{-\frac{\lambda}{2}} -2\left(1-e^{-\frac{\lambda}{2}}\right)\left(\lambda+1-e^{-\frac{\lambda}{2}}\right)\right].
\label{eq:exp.var.nc12.11.beta}
\end{equation}
\end{proposition}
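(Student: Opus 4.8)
The plan is to obtain the $\mathrm{B}'_1$ formulas by specialising Proposition~\ref{propo:exp.mean.var.dnc11.beta}, and then to read off the $\mathrm{B}'_2$ formulas either by the same specialisation with the roles of the two non-centrality parameters interchanged or, more slickly, by the reflection in Property~\ref{prope:relat.ncb1.ncb2}.

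Step one: I would first note that $\mathrm{B}'_1(\alpha_1,\alpha_2,\lambda)$ is literally the member $\mathrm{B}''(\alpha_1,\alpha_2,\lambda,0)$ of the doubly non-central family, since in the defining ratio~\eqref{eq:def.dnc.beta} the denominator component $Y'_2\sim\chi'^{\,2}_{2\alpha_2}(0)$ is an ordinary $\chi^2_{2\alpha_2}$, which is exactly the variable appearing in~\eqref{eq:def.nc1.beta}. Hence the moments of $X'_1$ with $\alpha_1=\alpha_2=1$ follow by putting $\lambda_1=\lambda$, $\lambda_2=0$ (so $\lambda^+=\lambda$) into Eqs.~\eqref{eq:exp.mean.dnc11.beta} and~\eqref{eq:exp.var.dnc11.beta}: the prefactor $\frac{\lambda_1-\lambda_2}{2(\lambda^+)^3}$ collapses to $\frac{1}{2\lambda^2}$, yielding~\eqref{eq:exp.mean.nc1.11.beta}, while in the variance the $\lambda_1\lambda_2$ term vanishes and $\frac{8(\lambda_1-\lambda_2)^2}{(\lambda^+)^6}$ becomes $\frac{8}{\lambda^4}$, yielding the expression in~\eqref{eq:exp.var.nc12.11.beta}. (If one is fussy about evaluating the closed form of Proposition~\ref{propo:exp.mean.var.dnc11.beta} on the boundary $\lambda_2=0$, it suffices to observe that both sides of~\eqref{eq:exp.mean.dnc11.beta},~\eqref{eq:exp.var.dnc11.beta} are continuous in $(\lambda_1,\lambda_2)$ throughout $\{\lambda^+>0\}$, so the identity persists there.)

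Step two: for $X'_2$ I would invoke Property~\ref{prope:relat.ncb1.ncb2}: if $X'_2\sim\mathrm{B}'_2(\alpha_1,\alpha_2,\lambda)$ then $1-X'_2\sim\mathrm{B}'_1(\alpha_2,\alpha_1,\lambda)$, and for $\alpha_1=\alpha_2=1$ this is again $\mathrm{B}'_1(1,1,\lambda)$. Therefore $\mathbb{E}(X'_2)=1-\mathbb{E}(X'_1)$ produces~\eqref{eq:exp.mean.nc2.11.beta}, and $\mathrm{Var}(X'_2)=\mathrm{Var}(1-X'_1)=\mathrm{Var}(X'_1)$ produces the common variance in~\eqref{eq:exp.var.nc12.11.beta}. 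Equivalently one may simply set $\lambda_1=0$, $\lambda_2=\lambda$ in~\eqref{eq:exp.mean.dnc11.beta},~\eqref{eq:exp.var.dnc11.beta}, which flips the sign of the mean correction and leaves the variance unchanged.

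As a self-contained alternative — the ``same lines as the proof of Proposition~\ref{propo:exp.mean.var.dnc11.beta}'' route — one computes directly from the closed-form density~\eqref{eq:c.ncb1.11}, $f_{X'_1}(x;1,1,\lambda)=e^{-\frac{\lambda}{2}(1-x)}\bigl(1+\tfrac{\lambda}{2}x\bigr)$. Writing $c=\lambda/2$, the first two raw moments become combinations of the elementary integrals $\int_0^1 x^k e^{cx}\,dx$ for $k=1,2,3$; one gets $\mathbb{E}(X'_1)=c^{-2}\bigl(c^2-c+1-e^{-c}\bigr)$ and $\mathbb{E}[(X'_1)^2]=c^{-3}\bigl(c^3-2c^2+4c-4+4e^{-c}\bigr)$, whence $\mathrm{Var}(X'_1)=\mathbb{E}[(X'_1)^2]-\mathbb{E}(X'_1)^2$; re-inserting $c=\lambda/2$ and collecting the polynomial, $e^{-\lambda/2}$ and $e^{-\lambda}$ contributions reproduces~\eqref{eq:exp.mean.nc1.11.beta} and~\eqref{eq:exp.var.nc12.11.beta}. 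No step is a genuine obstacle; the only care needed is the boundary remark in Step one and the routine algebraic bookkeeping when expanding the variance in the self-contained route.
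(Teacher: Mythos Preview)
Your proposal is correct and follows essentially the same approach as the paper: the paper does not give a separate proof but simply states that the formulas follow ``by taking $\lambda_2=0$ and renaming $\lambda_1$ with $\lambda$'' (respectively $\lambda_1=0$, $\lambda_2$ renamed $\lambda$) in Eqs.~\eqref{eq:exp.mean.dnc11.beta}--\eqref{eq:exp.var.dnc11.beta}, which is precisely your Step one and the specialisation variant of your Step two. Your additional observations --- the continuity remark for the boundary, the use of Property~\ref{prope:relat.ncb1.ncb2} to read off the $\mathrm{B}'_2$ results from the $\mathrm{B}'_1$ ones, and the self-contained integral computation from the closed density~\eqref{eq:c.ncb1.11} --- are all correct and go slightly beyond what the paper records.
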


In view of the foregoing arguments, note that $\mathbb{E}(X'_1)>\frac{1}{2}$ and $\mathbb{E}(X'_2)<\frac{1}{2}$ for every $\lambda>0$.

\subsection{Applications}
\label{subsec:dnc.beta.examples}

The applicative potential of the $\mbox{B}''$ model is now highlighted through the analysis of real data. To this end, we first turned our attention to three significant examples arisen respectively from the sectors of geology, economics and psychology.

More specifically, we focused on the proportion of sand in 21 sediment specimens, the proportion of males involved in agriculture as occupation for 47 French-speaking provinces of Switzerland at about 1888 and the subjective diagnostic probability of calculus deficiency assigned by 15 statisticians. The first data are available in \cite{Ait03} (p. 380) and details about the geologic interpretation can be found in \cite{McC75}. The second data are taken from the ``swiss'' data set, which is included in the \texttt{R} ``datasets'' package (\href{http://stat.ethz.ch/R-manual/R-devel/library/datasets/html/swiss.html}{link}); details about the reference frame are available in the \texttt{R} on-line Documentation and in the references quoted therein. Finally, the third data are again available in \cite{Ait03} (p. 375).

In a comparative perspective, five distributions were fitted to the above mentioned three data sets. The first is the standard beta, that is one of the most frequently employed to model proportions. The second is the doubly non-central beta, which is the subject of interest in the present work.

The third model is the three-parameter generalization of the beta distribution proposed by Libby and Novick \cite{LibNov82}, the density function of which has been previously reported. In the notation of Eq.~(\ref{eq:g3b.dens}), the parameter $\gamma>0$ allows the G3B density to take a much wider variety of shapes than the beta one. In particular, when $\beta_1=\beta_2=1$, the latter shows a more flexible behavior at the unit interval endpoints than the beta. In fact, its limits at 0 and 1 have the following expressions:
$$
\lim_{x \rightarrow 0^+} \mbox{\normalfont{G3B}}\left(x;1,1,\gamma\right)=\gamma, \quad \lim_{x \rightarrow 1^-} \mbox{\normalfont{G3B}}\left(x;1,1,\gamma\right)=\frac{1}{\gamma},
$$
which are clearly subject to the strong constraint of being mutual to each other. Furthermore, we recall that the $r$-th moment about zero of $X \sim \mbox{\normalfont{G3B}}\left(\beta_1,\beta_2,\gamma\right)$ is:
\begin{equation}
\mathbb{E}\left(X\right)^r=\frac{\left(\beta_1\right)_r}{\left(\beta_1+\beta_2\right)_r} \, \gamma^{\beta_1} \, _2F_1\left(\beta_1+r,\beta_1+\beta_2;\beta_1+\beta_2+r;1-\gamma\right),
\label{eq:momr.g3b.def}
\end{equation}
where $_2F_1=\sum_{k=0}^{+\infty} \frac{\left(a\right)_k \left(b\right)_k}{\left(c\right)_k}  \frac{x^k}{k!}$, $\left|x\right|<1$ is the Gauss hypergeometric function. Notice that the $_2F_1$ function in Eq.~(\ref{eq:momr.g3b.def}), despite its representation in terms of infinite series converges only for $\left|1-\gamma\right|<1$, can be computed for any $\gamma>0$ by using suitably one of the Euler transformation formulas \cite{SriKar85}:
\begin{eqnarray*}
_2F_1\left(a,b;c;x\right) & = & \left(1-x\right)^{-a} \, _2F_1\left(a,c-b;c;\frac{x}{x-1}\right)\\
& = & \left(1-x\right)^{-b} \, _2F_1\left(c-a,b;c;\frac{x}{x-1}\right)\\
& = & \left(1-x\right)^{c-a-b} \, _2F_1\left(c-a,c-b;c;x\right),
\end{eqnarray*}
that enable to rewrite such function to have absolute values of the argument less than one.

Then, we considered the Gauss hypergeometric model \cite{ArmBay94}. A random variable $X$ is said to have a Gauss hypergeometric distribution with shape parameters $a>0$, $b>0$ and additional parameters $\lambda \in \mathbb{R}$, $z>-1$, denoted by $\mbox{GH}\left(a,b,\lambda,z\right)$, if its probability density function is:
$$
\mbox{\normalfont{GH}}\left(x;a,b,\lambda,z\right)=\frac{\mbox{\normalfont{Beta}}\left(x;a,b\right)}{\left(1+z \, x\right)^{\lambda} \, _2F_1\left(\lambda,a;a+b;-z\right) }, \quad 0<x<1.
$$
Note that the case $z=0$ corresponds to the beta distribution. When $a=b=1$ its limiting values are given by the following functions of $\lambda$ and $z$:
$$
\lim_{x \rightarrow 0^+} \mbox{\normalfont{GH}}\left(x;1,1,\lambda,z\right)=\frac{1}{_2F_1\left(\lambda,1;2;-z\right)},
$$
$$
\lim_{x \rightarrow 1^-} \mbox{\normalfont{GH}}\left(x;1,1,\lambda,z\right)=\frac{1}{\left(1+z\right)^{\lambda} \, _2F_1\left(\lambda,1;2;-z\right)};
$$
the latter are analitically hard, not so easily interpretable and not particularly simplified by using any of the transformation formulas of the $_2F_1$ function that are valid in case of specific values for its arguments. In this regard, see \cite{SriKar85}. Moreover, the $r$-th moment about zero of $X \sim \mbox{\normalfont{GH}}\left(a,b,\lambda,z\right)$ is:
$$
\mathbb{E}\left(X\right)^r=\frac{\left(a\right)_r}{\left(a+b\right)_r} \frac{_2F_1\left(\lambda,a+r;a+b+r;-z\right)}{_2F_1\left(\lambda,a;a+b;-z\right)}.
$$

Finally, we used the confluent hypergeometric model, proposed by Gordy \cite{Gor98}, who applied it to the auction theory. A random variable $X$ is said to have a confluent hypergeometric distribution with shape parameters $c>0$, $d>0$ and additional parameter $\delta \in \mathbb{R}$, denoted by $\mbox{CH}\left(c,d,\delta\right)$, if its probability density function is:
$$
\mbox{\normalfont{CH}}\left(x;c,d,\delta\right)=\frac{\mbox{\normalfont{Beta}}\left(x;c,d\right)  \, e^{-\delta \, x}}{ _1F_1\left(c;c+d;-\delta\right)} , \quad 0<x<1.
$$
Note that the case $\delta=0$ corresponds to the beta distribution. In this regard, by taking $a=1$ and $z=-\delta$ in the following formula:
\begin{equation}
_1F_1\left(a;a+1;z\right)=a\left(-z\right)^{-a}\left[\Gamma\left(a\right)-\Gamma\left(a,-z\right)\right]
\label{eq:form2.1f1}
\end{equation}
(\href{http://functions.wolfram.com/HypergeometricFunctions/Hypergeometric1F1/03/01/02/}{link}), where $\Gamma\left(a,-z\right)=\int_{-z}^{+\infty}t^{a-1} \, e^{-t} \, dt$ is the incomplete gamma function, we obtain:
$$
_1F_1\left(1;2;-\delta\right)=\frac{1}{\delta}\left(1-\int_{\delta}^{+\infty}e^{-t} \, dt\right)=\frac{e^{\delta}-1}{\delta \, e^{\delta}}.
$$
Hence, when $c=d=1$, the CH density function takes on the following form:
$$
\mbox{\normalfont{CH}}\left(x;1,1,\delta\right)=\frac{\delta \, e^{\delta\left(1-x\right)}}{e^{\delta}-1}, \quad 0<x<1;
$$
moreover, as $x$ tends to the endpoints of $\left(0,1\right)$, the latter tends to:
$$
\lim_{x \rightarrow 0^+} \mbox{\normalfont{CH}}\left(x;1,1,\delta\right)=\frac{\delta \, e^{\delta}}{e^{\delta}-1}, \quad \lim_{x \rightarrow 1^-} \mbox{\normalfont{CH}}\left(x;1,1,\delta\right)=\frac{\delta}{e^{\delta}-1}.
$$
The $r$-th moment about zero of $X \sim \mbox{\normalfont{CH}}\left(c,d,\delta\right)$ is:
$$
\mathbb{E}\left(X\right)^r=\frac{\left(c\right)_r}{\left(c+d\right)_r} \frac{ _1F_1\left(c+r;c+d+r;-\delta\right)}{_1F_1\left(c;c+d;-\delta\right)}.
$$
The latter can be easily obtained by using the definition of $r$-th moment of a random variable and the integral representation of the Kummer's confluent hypergeometric function \cite{SriKar85}, that is:
$$
\frac{\Gamma\left(b-a\right) \, \Gamma\left(a\right)}{\Gamma\left(b\right)} \, _1F_1\left(a;b;z\right)=\int_{0}^{1}t^{a-1} \left(1-t\right)^{b-1} e^{z t} \, dt, \qquad b>a>0.
$$

That said, the method of moments was applied in order to obtain the estimates for the parameters of each model. The shape parameters were assigned unitary values in all the models except obviously for the beta one. In case of two parameters to be estimated, the mean and the variance of the model were set simultaneously equal to the values of the corresponding sample statistics; in case of one parameter, instead, only the mean was considered. In particular, the formulas in Eqs.~(\ref{eq:exp.mean.dnc11.beta}),~(\ref{eq:exp.var.dnc11.beta}) were used as the expressions for the mean and the variance of the $\mbox{B}''$ model. Clearly, none of the estimates of interest admits an explicit expression (except for the beta one), due to the hard analytical formulas of the moments. Therefore, the aforementioned systems of equations were solved numerically by means of the built-in function ``FindRoot'' of \textit{Mathematica} language.

The data histogram together with the estimated densities of the five models considered are shown in Figure~\ref{fig:COMP1} for the first analysis setting, in Figure~\ref{fig:COMP2} for the second one and in Figure~\ref{fig:COMP3} for the third one.

Notice that the beta and the doubly non-central beta produce a fairly accurate fit for all the proportions/probabilities considered (left-hand panels), exhibiting a substantially better fit than the G3B, the GH and the CH, the inadequate performances of which are evident (right-hand panels).

In particular, it is remarkable that the $\mbox{B}''$ distribution allows for the tails of the data histograms to be captured and modeled and appears to be more helpful in data interpretation, as it recognizes the presence of values next to zero and one by showing positive and finite limits. On the contrary, the beta distribution cannot display such ability. Furthermore, none of the three alternative models considered enables to capture this peculiarity of the data pattern. Therefore, for such models, good fitting and having positive and finite limits would sound irreconcilable features.

\begin{figure}[ht]
 \centering
 \subfigure
   {\includegraphics[width=6cm]{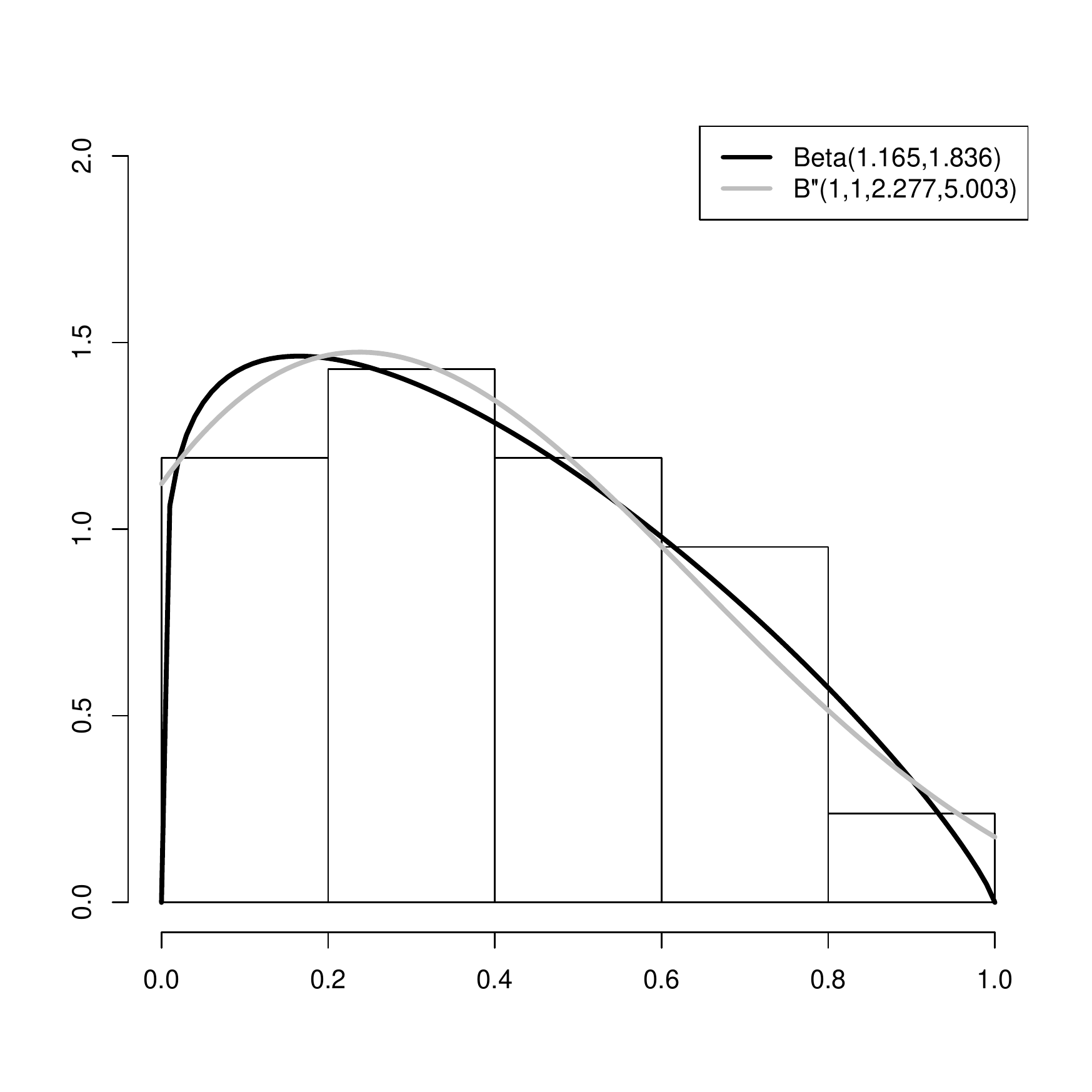}}
 \hspace{5mm}
 \subfigure
   {\includegraphics[width=6cm]{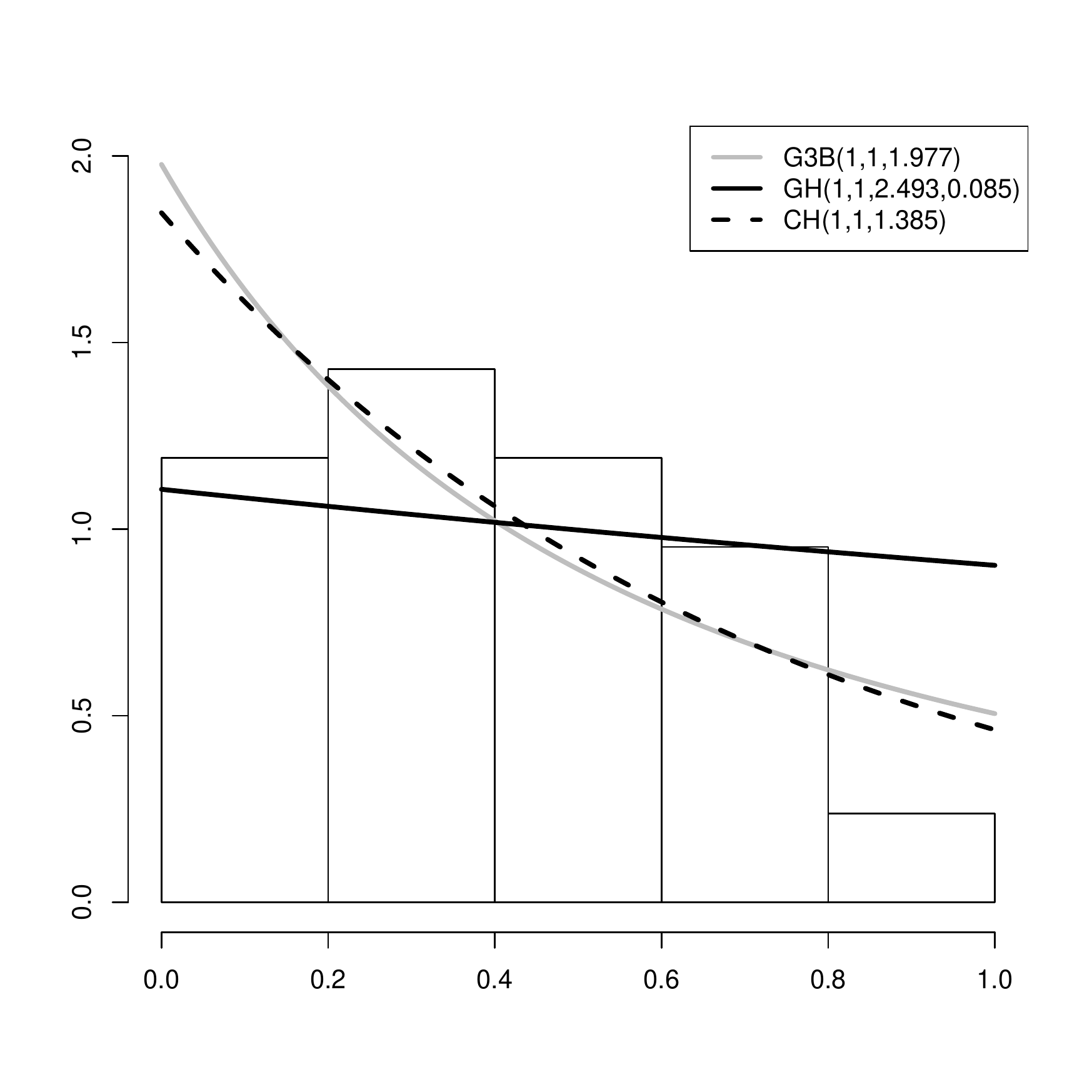}}
 \caption{Histogram of the proportion of sand in 21 sediment specimens superimposed with the estimated densities of the beta model, the doubly non-central beta model with unitary shape parameters (left-hand panel) and the G3B, the GH, the CH models with unitary shape parameters (right-hand panel).}
\label{fig:COMP1}
 \end{figure}

\begin{figure}[ht]
 \centering 
 \subfigure
   {\includegraphics[width=6cm]{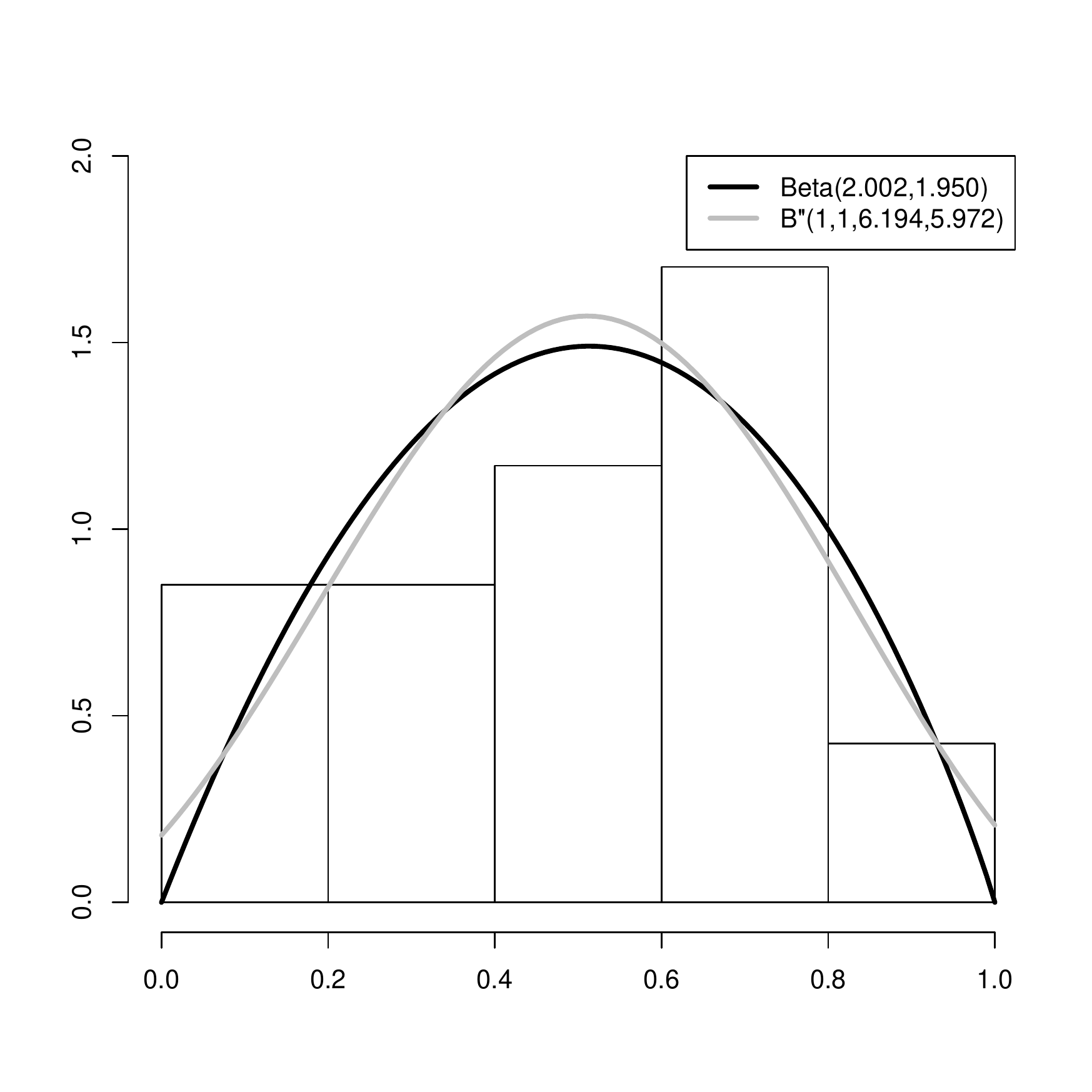}}
 \hspace{5mm}
 \subfigure
   {\includegraphics[width=6cm]{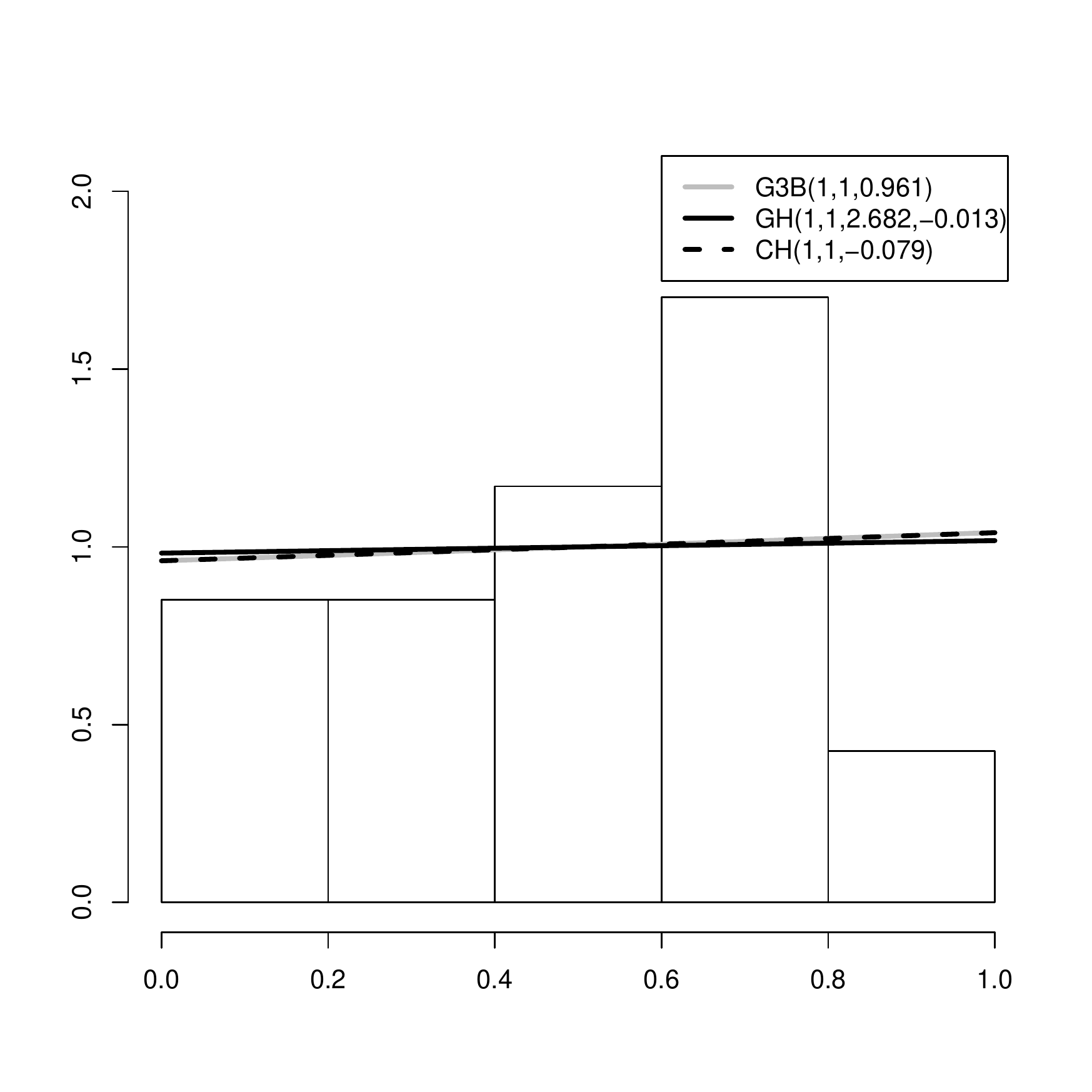}}
\caption{Histogram of the proportion of males involved in agriculture as occupation for 47 French-speaking provinces of Switzerland at about 1888 superimposed with the estimated densities of the beta model, the doubly non-central beta model with unitary shape parameters (left-hand panel) and the G3B, the GH, the CH models with unitary shape parameters (right-hand panel).}
\label{fig:COMP2}
\end{figure}

\begin{figure}[ht]
 \centering 
 \subfigure
   {\includegraphics[width=6cm]{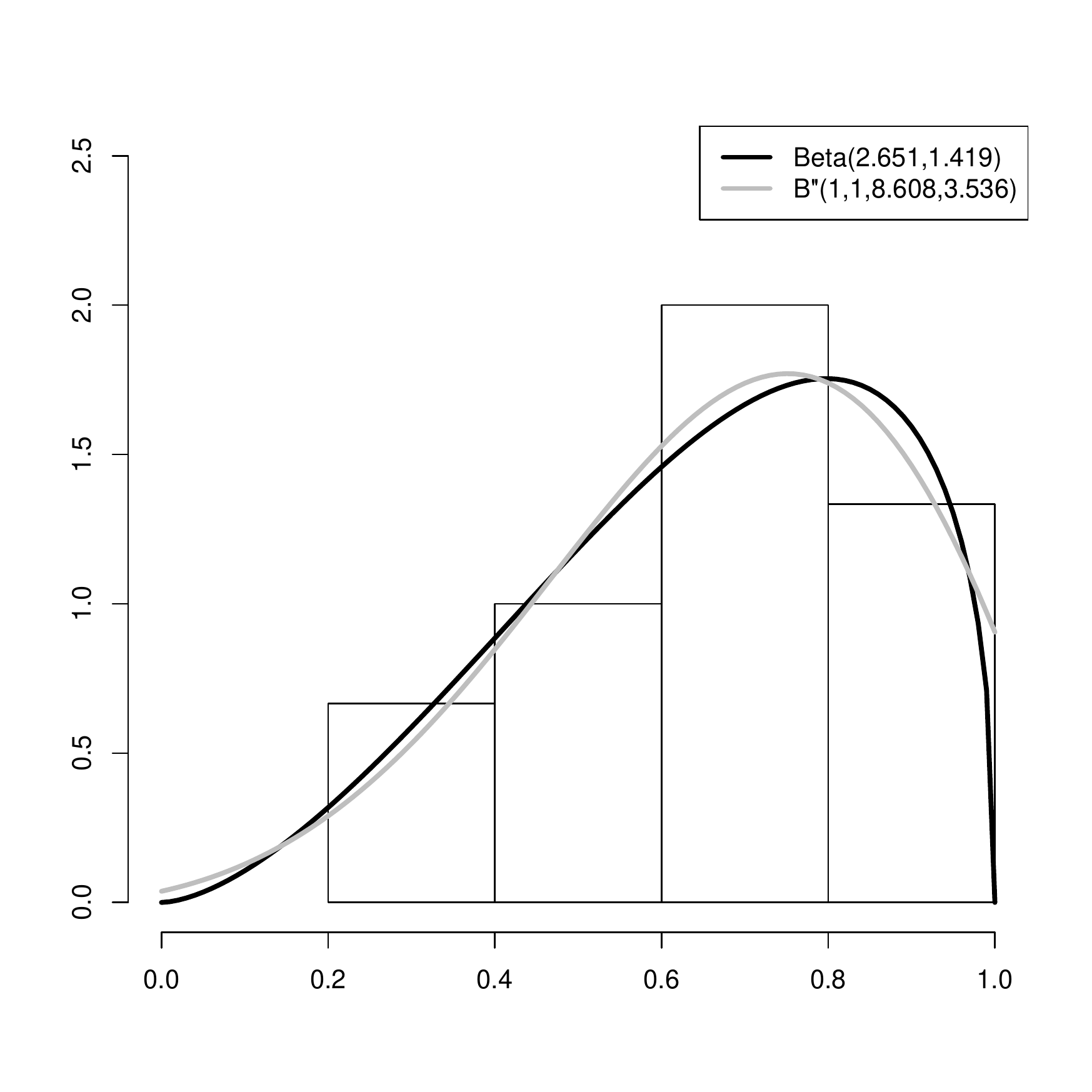}}
 \hspace{5mm}
 \subfigure
   {\includegraphics[width=6cm]{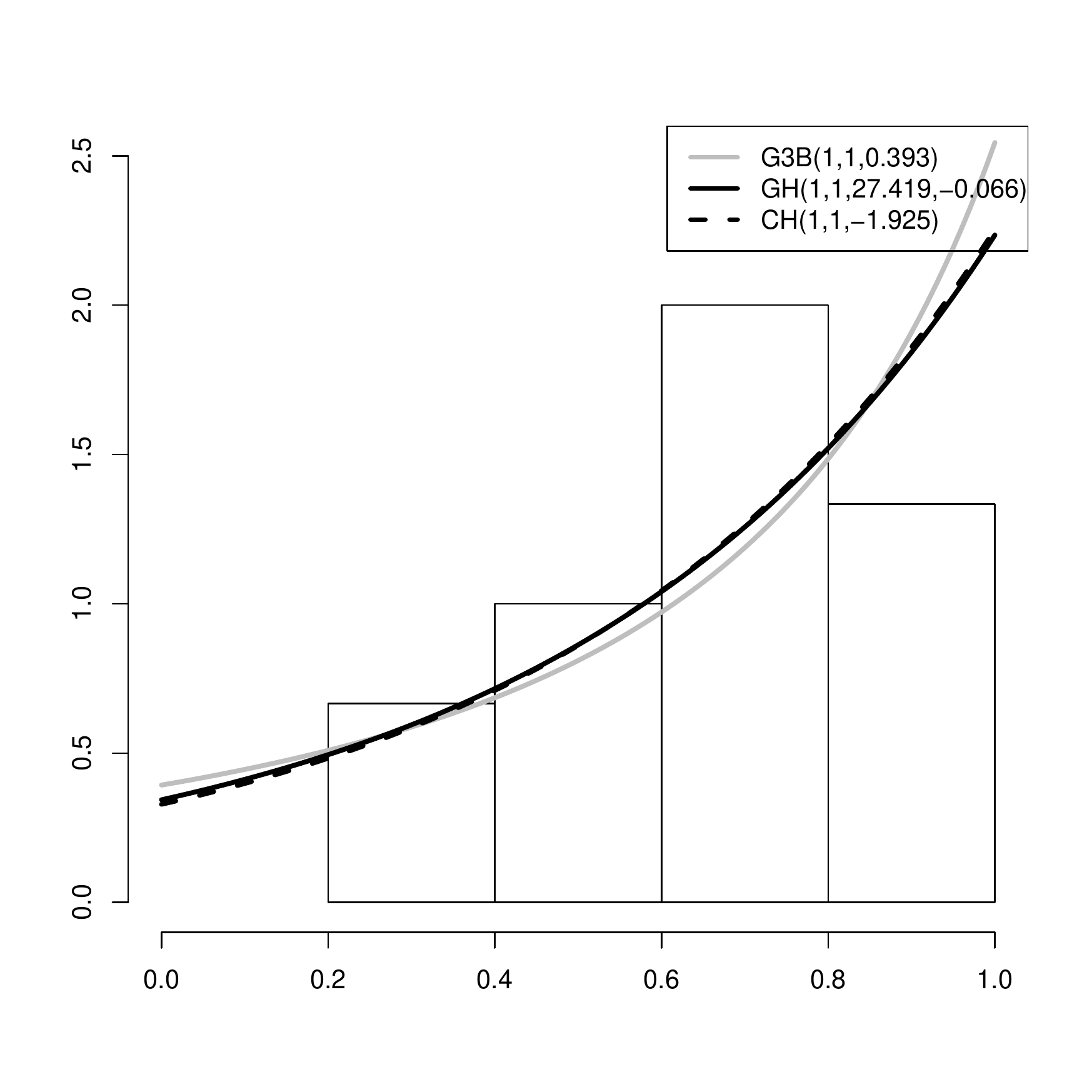}}
\caption{Histogram of the subjective diagnostic probability of calculus deficiency assigned by 15 statisticians superimposed with the estimated densities of the beta model, the doubly non-central beta model with unitary shape parameters (left-hand panel) and the G3B, the GH, the CH models with unitary shape parameters (right-hand panel).}
\label{fig:COMP3}
\end{figure}

Finally, it should be stressed that there might be situations in which the method of moments results in negative estimates for one of the two non-centrality parameters of the $\mbox{B}''$ distribution. This may be explained by incompatibilities between the data concerned and the range of shapes the $\mbox{B}''$ density can take on for $\alpha_1=\alpha_2=1$ by varying $\lambda_1$, $\lambda_2$. To avoid this inconvenience, the method of moments could be adopted to estimate all four parameters of the $\mbox{B}''$ model without fixing the shape parameters to one. However, in this case the moments formulas to be used are special cases of Eq.~(\ref{eq:momr.beta.dnc}) and therefore are computationally heavy. Moreover, no real solutions might exist. Then, one can fall back to using the type 1 or the type 2 non-central beta distributions. More specifically, in the case of negative estimates for $\lambda_1$ and $\lambda_2$, the type 2 and the type 1 models should be respectively fitted to data.

By way of example, we considered the proportion by weight of cornite in 25 specimens of kongite. The latter data are once again available in \cite{Ait03} (p. 356).

In order to derive the method of moments estimates for the $\mbox{B}''\left(1,1,\lambda_1,\lambda_2\right)$ distribution, the mean and the variance of such model were set simultaneously equal to the values of the corresponding sample statistics, obtaining the following solutions $\tilde{\lambda}_1=-0.530932$, $\tilde{\lambda}_2=7.36794$ for the non-centrality parameters. As $\tilde{\lambda}_1<0$, we are lead to believe that the present situation is not well suited for being modeled by the doubly non-central beta distribution while the type 2 model may be more appropriate in this respect. The latter was thus fitted to the aforementioned data in place of the $\mbox{B}''$ and the simple formula in Eq.~(\ref{eq:exp.mean.nc2.11.beta}) was used as the expression for the mean.

That said, Figure~\ref{fig:COMP4} shows the data histogram and the estimated densities of the five models considered.

In the previous examples the differences in fitting between the $\mbox{B}''$ and the three alternative models were unquestionably clear. In the present case, instead, their performances are indeed comparable and all satisfactory, with the small exception of the G3B model, the density of which shows a much higher limiting value at zero (right-hand panel), due to the strong constraint existing	between its limits. This means that, contrarily to the $\mbox{B}''$ distribution, the potential of the $\mbox{B}'_2$ in capturing the observations with low and high values does not exceed the one of the other three models and similar conclusions might be drawn with reference to the $\mbox{B}'_1$ where appropriate.

\begin{figure}[ht]
 \centering 
 \subfigure
   {\includegraphics[width=6cm]{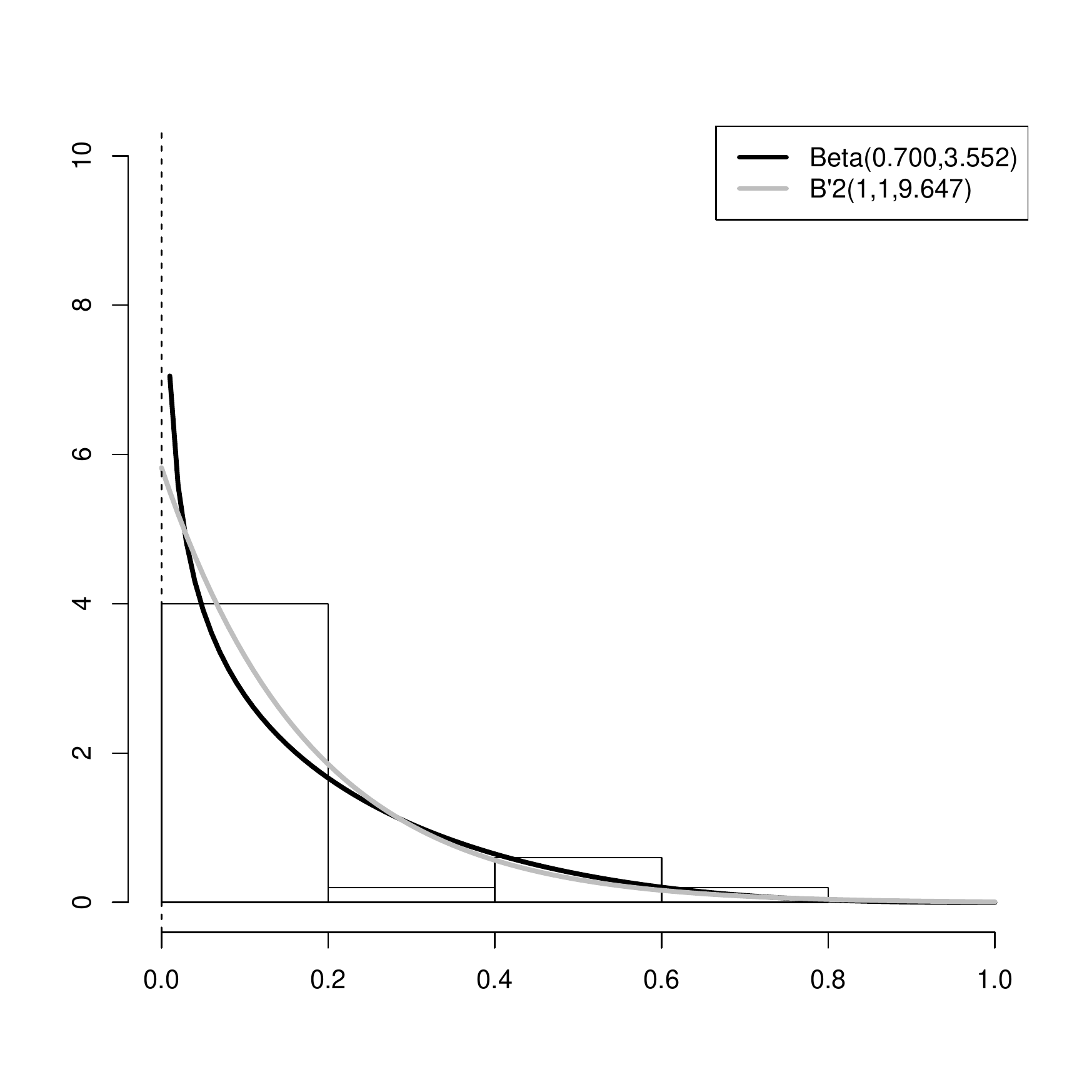}}
 \hspace{5mm}
 \subfigure
   {\includegraphics[width=6cm]{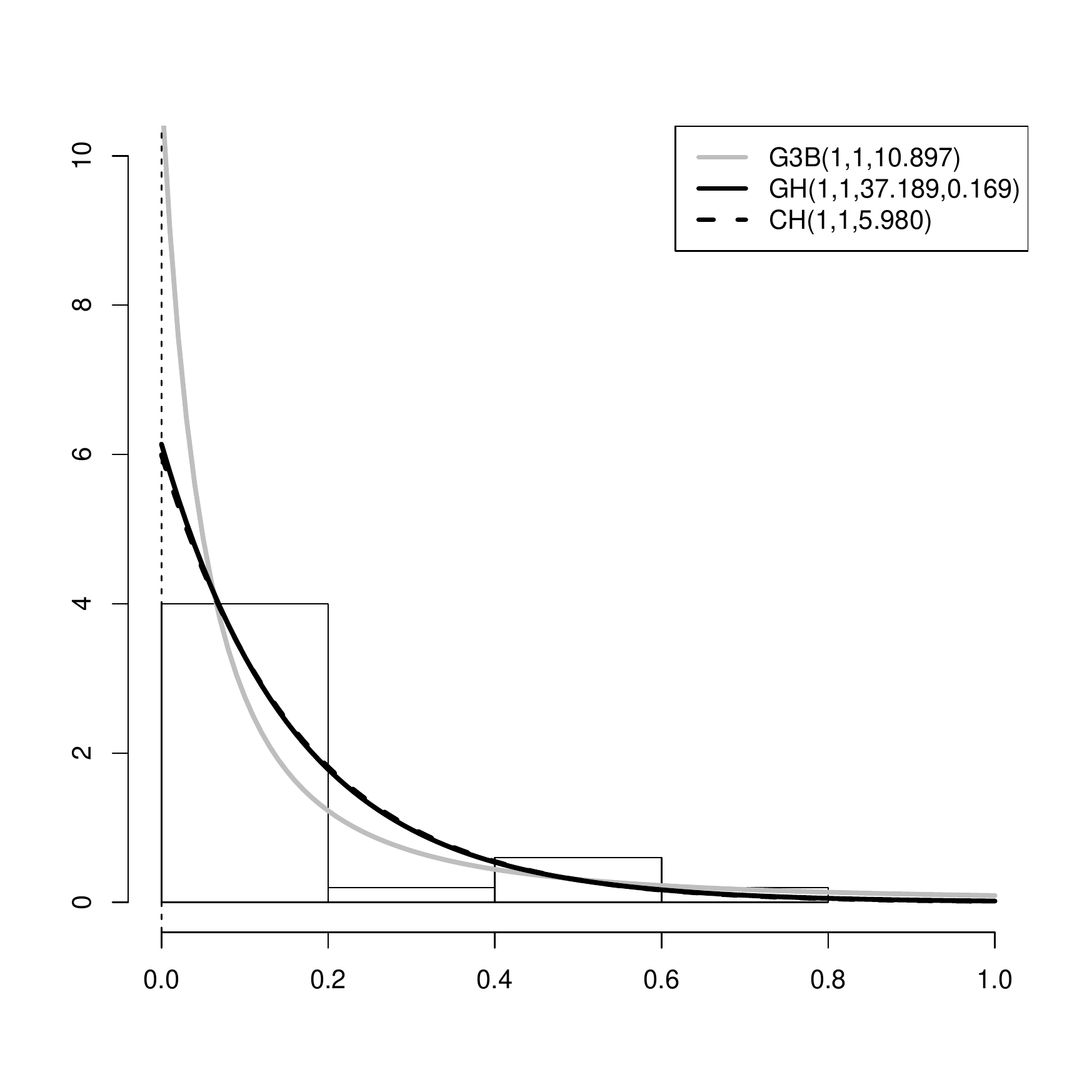}}
\caption{Histogram of the proportion by weight of cornite in 25 specimens of kongite superimposed with the estimated densities of the beta model, the type 2 non-central beta model with unitary shape parameters (left-hand panel) and the G3B, the GH, the CH models with unitary shape parameters (right-hand panel).}
\label{fig:COMP4}
\end{figure}
 
The foregoing conclusions lead us to compare the performances of the beta and the doubly non-central beta distributions more deeply. For this purpose, we resorted to the Akaike information criterion \cite{Aka74}, given by $\mbox{AIC}=-2 \, l(\hat{\theta})+2 \, p$, where $l=l(\theta)$ is the log-likelihood function for the $p$-dimensional vector $\theta$ of the model parameters and $\hat{\theta}$ is the maximum-likelihood estimate of $\theta$. The distribution with the smallest value for this criterion is taken as the one that gives the best description of the data. 

In this regard, let $X_1,\ldots,X_n$ be independent random variables with identical distribution depending on an unknown parameter vector $\theta$ to be estimated.

Suppose first that $X_i \sim \mbox{Beta}\left(\alpha_1,\alpha_2\right)$, $i=1,\ldots,n$. It is well known that, given the observed sample $x=\left(x_1,\ldots,x_n\right)$, the log-likelihood function for the vector $\left(\alpha_1,\alpha_2\right) \in \mathbb{R}^{2+}$ of the shape parameters is:
$$
l\left(\alpha_1,\alpha_2;x\right)=\left(\alpha_1-1\right) \sum_{i=1}^{n}\log x_i+\left(\alpha_2-1\right)\sum_{i=1}^{n}\log \left(1-x_i\right)-n\log\left[B\left(\alpha_1,\alpha_2\right)\right].
$$
Secondly, let $X_i \sim \mbox{B}''\left(1,1,\lambda_1,\lambda_2\right)$, $i=1,\ldots,n$. Then, by resorting to Eq.~(\ref{eq:c.ncb}), it's easy to see that the log-likelihood function for the vector $\left(\lambda_1,\lambda_2\right) \in \mathbb{R}^{2+}$ of the non-centrality parameters is given by:
\begin{equation}
l\left(\lambda_1,\lambda_2;x\right)=-\frac{n}{2} \left(\lambda_1+\lambda_2\right) +\sum_{i=1}^{n}\log \Psi_2\left[2;1,1;\frac{\lambda_1}{2}x_i,\frac{\lambda_2 }{2}\left(1-x_i\right)\right].
\label{eq:loglik.dnc.beta}
\end{equation}

The number of parameters of both the above models is $p=2$; therefore, in view of the AIC definition, the maximized value of the log-likelihood is the only discriminating criterion between them.

In the present setup the log-likelihoods are to be maximized numerically. This procedure can be easily accomplished by using for example the ``optim'' built-in-function from the \texttt{R} statistical package or, alternatively, the ``FindMaximum'' one from \textit{Mathematica} software. These routines are able to locate the maximum of the log-likelihood surface for a wide range of starting values. However, to ease computations, it is useful to have reasonable starting values, such as, for example, the method of moments estimates.

That said, the aforementioned algorithms were applied on the four data sets subject to the analyses previously carried out in this Section.

The standard errors of such estimates can be evaluated by recalling that, under suitable regularity conditions, the maximum-likelihood estimator $\hat{\Theta}$ of $\theta$ is asymptotically distributed according to a multivariate normal with mean vector $\theta$ and asymptotic covariance matrix that can be approximated by the inverse of the observed information matrix $I(\hat{\theta};x)=\left\{-\frac{\partial^2l\left(\theta;x\right)}{\partial \theta \, \partial \theta^{T}}\right\}_{\theta=\hat{\theta}}$. The required second-order derivatives can be computed numerically by means of the \texttt{R} ``optim'' function.

Table~\ref{tab:ml.estim.aic} lists the maximum-likelihood estimates, their standard errors and the AIC statistics of the two models of interest for each of the above cases, labelled as ``sand'', ``male'', ``calculus'' and ``cornite''. By the comparison of the present results with the first ones, it's immediate to see that the parameters estimates are very similar for both methods. In particular, it's to be noted that the considerations previously drawn with regards to the ``cornite'' data are now confirmed by applying the maximum-likelihood approach. Indeed, as $\hat{\lambda}_1=0$, we are inclined to think that the $\mbox{B}''$ distribution results overparametrized to model such data while its special case $\mbox{B}'_2$ is enough to this end. Furthermore, the results indicate that the $\mbox{B}''$ model has the smallest value for the AIC statistic in half the cases (in bold in Table~\ref{tab:ml.estim.aic}). So, in these cases the latter could be chosen as the most suitable model.

\begin{table}[ht]
\centering
\small
\caption{Maximum-likelihood estimates, standard errors (SE) and AIC statistics for the beta model and the doubly non-central beta model with unitary shape parameters in the four case studies ``sand'', ``male'', ``calculus'' and ``cornite'' (the values mentioned in the text are written in bold).}\label{tab:ml.estim.aic}
\begin{tabular}{c|c|c|c|c|c|c}
Data & \multicolumn{3}{c| }{$\mbox{\normalfont{Beta}}\left(\alpha_1,\alpha_2\right)$} & \multicolumn{3}{c}{$\mbox{\normalfont{B}}''\left(1,1,\lambda_1,\lambda_2\right)$}\\
\cline{2-7} & \multicolumn{1}{c}{$\hat{\alpha}_1$ (SE)} & \multicolumn{1}{c}{$\hat{\alpha}_2$ (SE)} & \multicolumn{1}{c| }{AIC} & \multicolumn{1}{c}{$\hat{\lambda}_1$ (SE)} & \multicolumn{1}{c}{$\hat{\lambda}_2$ (SE)} & \multicolumn{1}{c}{AIC} \\ \hline
sand & 1.089(0.304) & 1.757(0.530) & \textbf{-0.55} & 2.096(1.882) & 4.754(2.939) & -0.352\\
male & 1.854(0.362) & 1.898(0.372) & -5.964 & 6.257(1.936) & 6.066(1.889) & \textbf{-6.684}\\
calculus & 2.757(0.998) & 1.479(0.497) & \textbf{-3.044} & 8.893(4.785) & 3.691(2.570) & -2.668\\
cornite & 0.950(0.236) & 4.647(1.408) & -35.450 & 0(1.611) & 9.646(7.376) & \textbf{-36.294}
\end{tabular}
\end{table}

Before concluding, we want to further illustrate the flexibility of the $\mbox{B}''$ distribution. To this end, we used four data sets taken from \cite{Ait03} and related to the subjects of petrology and geology. The first data set consists of the proportions of magnesium oxide in 23 specimens of aphyric Skye lavas (p. 360); more details for a petrological interpretation of the latter can be found in \cite{ThoEssDun72}. The second one deals with the proportions by weight of albite in 25 specimens of kongite (p. 356). Finally, we considered the proportion of clay in 39 sediment samples at different water depths in an Arctic Lake (p. 359), adapted from \cite{CoaRus68} (Table 1) and the proportion of abies in 30 specimens of fossil pollen from three different locations (p. 389). 

As it did before, the fit of the $\mbox{B}''$ distribution with unitary shape parameters was compared with the beta one for each of the above cases. As criteria for comparing the fits, we used the AIC statistic, based on the maximum-likelihood estimates for the model parameters. For both distributions, the latter were evalutated numerically by means of the \texttt{R} function ``optim'' using the method of moments estimates as starting values. The results of fitting are shown in Table~\ref{tab:ml.estim.aic.2}, where the aforementioned data are labelled as ``oxide'', ``albite'', ``clay'' and ``abies''. Moreover, Figures~\ref{fig:Ml1}, \ref{fig:Ml2} show the data histograms superimposed with the fitted probability density functions of the two models.

\begin{table}[ht]
\centering
\small
\caption{Maximum-likelihood estimates, standard errors (SE) and AIC statistics for the beta model and the doubly non-central beta model with unitary shape parameters in the four case studies ``oxide'', ``albite'', ``clay'' and ``abies''.}\label{tab:ml.estim.aic.2}
\begin{tabular}{c|c|c|c|c|c|c}
Data & \multicolumn{3}{c| }{$\mbox{\normalfont{Beta}}\left(\alpha_1,\alpha_2\right)$} & \multicolumn{3}{c}{$\mbox{\normalfont{B}}''\left(1,1,\lambda_1,\lambda_2\right)$}\\
\cline{2-7} & \multicolumn{1}{c}{$\hat{\alpha}_1$ (SE)} & \multicolumn{1}{c}{$\hat{\alpha}_2$ (SE)} & \multicolumn{1}{c| }{AIC} & \multicolumn{1}{c}{$\hat{\lambda}_1$ (SE)} & \multicolumn{1}{c}{$\hat{\lambda}_2$ (SE)} & \multicolumn{1}{c}{AIC} \\ \hline
oxide & 2.766(0.772) & 11.555(3.459) & -39.408 & 9.590(3.585) & 46.277(14.922) & -39.64\\
albite & 16.105(4.517) & 20.761(5.843) & -51.022 & 65.481(19.104) & 84.582(24.509) & -51.768\\
clay & 1.227(0.251) & 3.074(0.706) & -23.24 & 3.738(1.490) & 11.718(3.480) & -24.6\\
abies & 16.852(4.317) & 19.958(5.125) & -61.694 & 66.644(17.738) & 79.197(20.980) & -61.814
\end{tabular}
\end{table}

\begin{figure}[ht]
 \centering 
 \subfigure
   {\includegraphics[width=6cm]{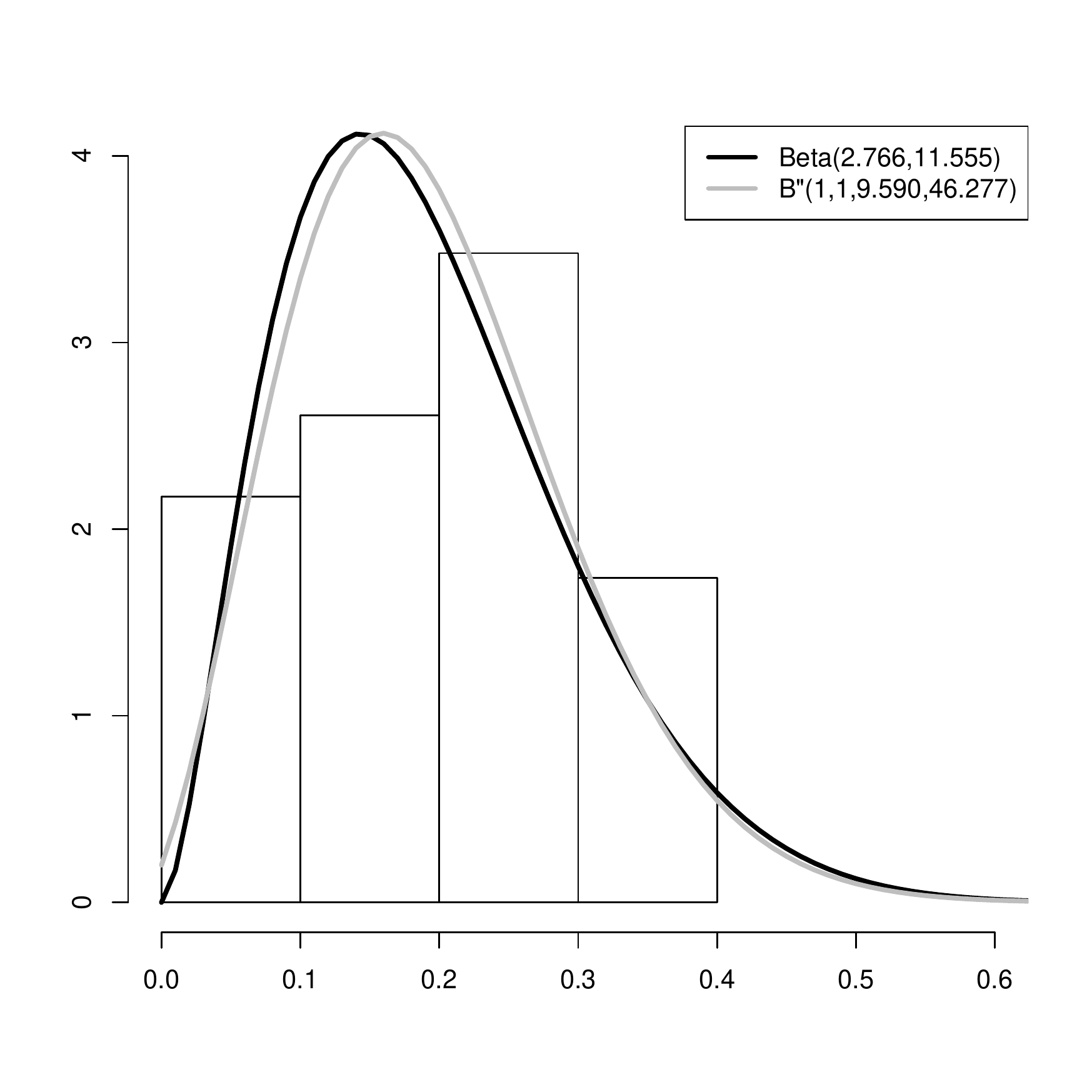}}
 \hspace{5mm}
 \subfigure
   {\includegraphics[width=6cm]{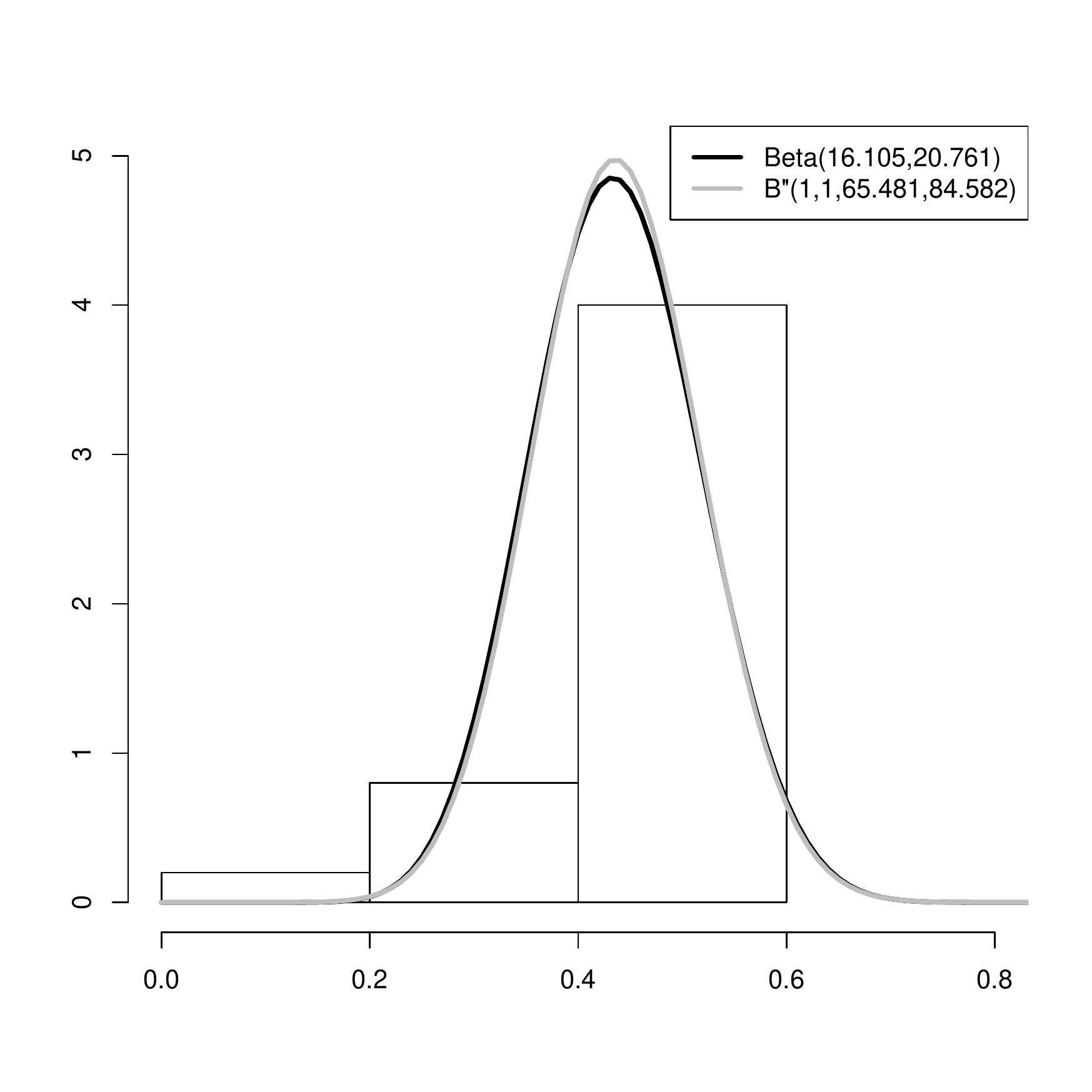}}
\caption{Histograms of the proportion of magnesium oxide in 23 specimens of aphyric Skye lavas (left-hand panel) and of the proportion by weight of albite in 25 specimens of kongite (right-hand panel) superimposed with the estimated densities of the beta model and the doubly non-central beta model with unitary shape parameters.}
\label{fig:Ml1}
\end{figure}

\begin{figure}[ht]
 \centering 
 \subfigure
   {\includegraphics[width=6cm]{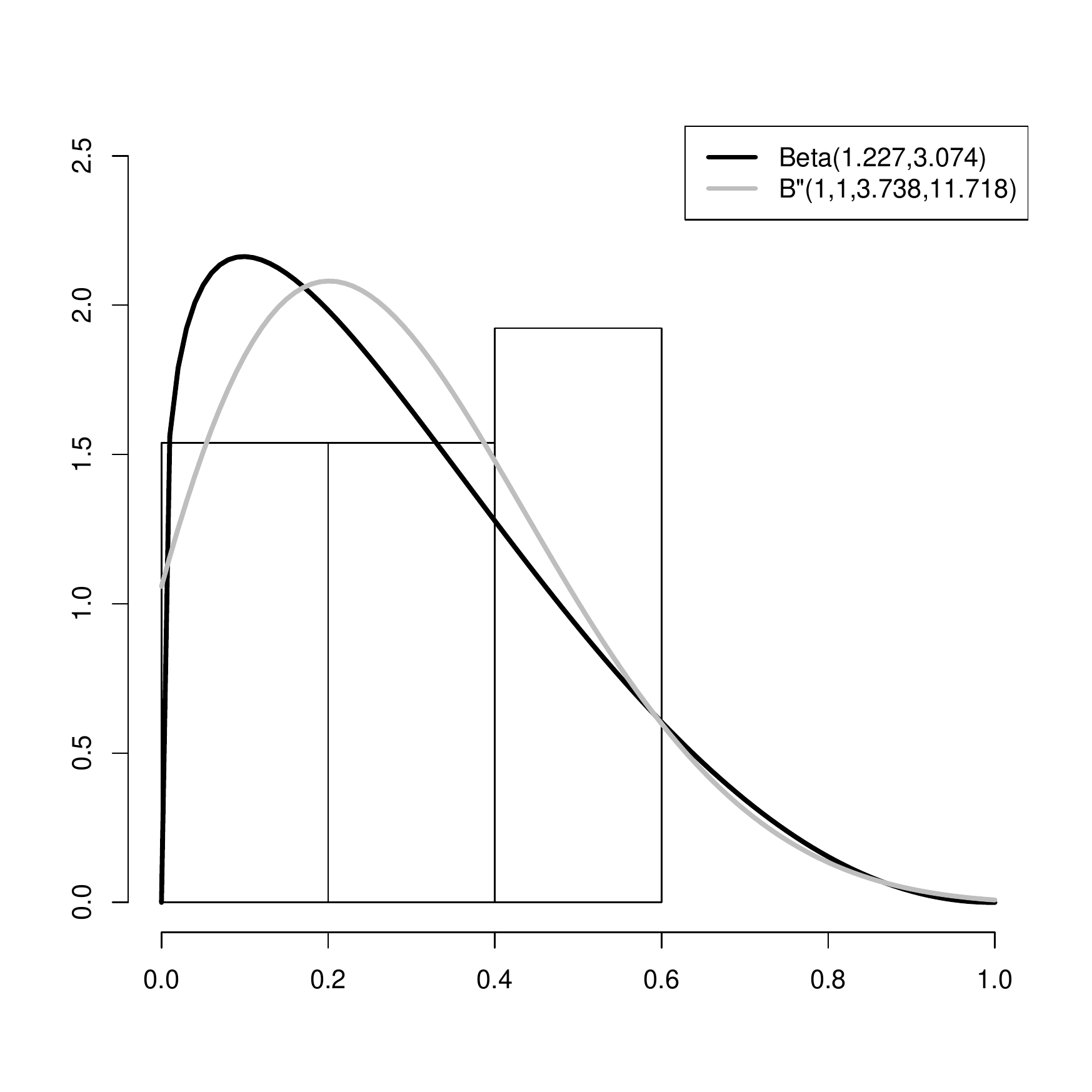}}
 \hspace{5mm}
 \subfigure
   {\includegraphics[width=6cm]{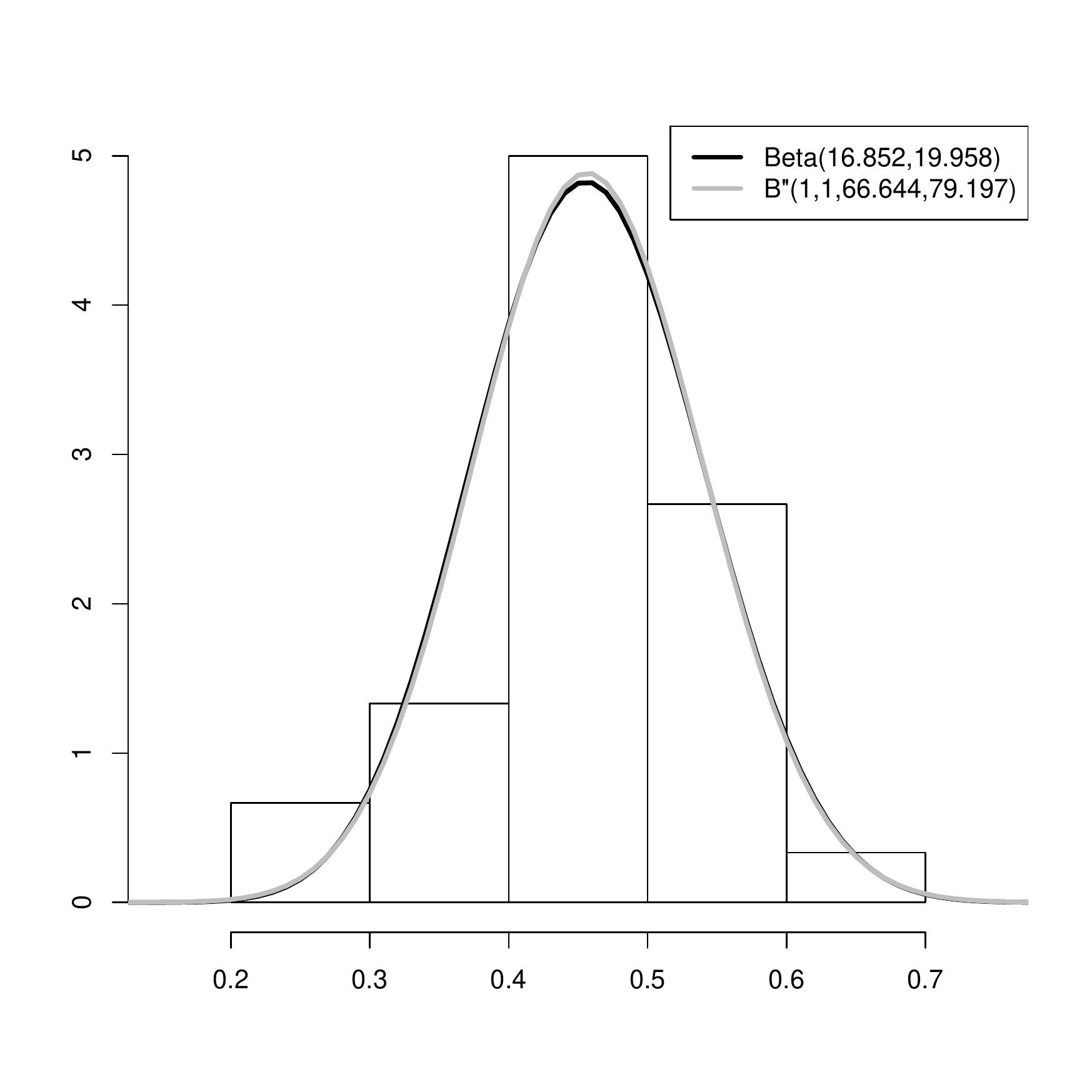}}
\caption{Histograms of the proportion of clay in 39 sediment samples at different water depths in an Arctic Lake (left-hand panel) and of the proportion of abies in 30 specimens of fossil pollen (right-hand panel) superimposed with the estimated densities of the beta model and the doubly non-central beta model with unitary shape parameters.}
\label{fig:Ml2}
\end{figure}

Note that in all these cases the $\mbox{B}''$ model has the lowest AIC and so it could be chosen as the best one. Moreover, the present analysis shows that the higher are the non-centrality parameters of the doubly non-central beta distribution, the better is the ability of the latter to model even data with no values next to zero and one (right-hand panels).

\section{Conclusions}
\label{sec:concl}

New insights into the class of the non-central beta distributions were provided in this paper. More specifically, new representations and moments expressions were derived for the doubly non-central beta distribution despite its uneasy analytical tractability. A particularly relevant advantage of this model over alternative ones on the real interval $(0,1)$, such as the beta, the Libby and Novick's generalized beta, the Gauss hypergeometric and the confluent hypergeometric models, is its ability to properly capture the tails of data by allowing its density to take on finite and positive limits. Indeed, various applications using real data proved the superior performance of the doubly non-central beta distribution over the others in terms of fitting. In particular, in many cases the doubly non-central beta showed lower values of the Akaike information criterion than the beta one, which is well known to be the most frequently employed to model proportions. That is why we hope this model may attract wider applications in statistics.

An investigation of its multivariate generalization is clearly needed. However, the poor analytical simplicity characterizing the unidimensional case can only get worse as the dimensionality increases.

In this regard, a first analysis of a more easily tractable and interpretable analogue of the doubly non-central beta distribution was carried out in \cite{OngOrs15}. An in-depth study of such distribution seems to be desirable; in fact, the latter preserves the applicative potential of the standard doubly non-central beta model and its straightforward form can make possible its extension to the multidimensional setting in a simple manner. Ultimately, it seems worthwhile to continue developing this research line by tackling it in future work.

\appendix
\section{Appendix. Proofs}
\label{sec:app.proofs}

\begin{myproof}[\textbf{Proposition~\ref{propo:expans.poch.symb.binom}}]
\label{proof:expans.poch.symb.binom}
The proof of Eq.~(\ref{eq:expans.poch.symb.binom}) follows from Eq.~(\ref{eq:poch.symb.sum}) by noting that $l=i+\left(l-i\right)$, $\forall i=0,\ldots,l$ and by doing multiplications in a way such that the left-hand side of Eq.~(\ref{eq:expans.poch.symb.binom}) can be written in the following form:
$$\left(a+b\right)_l=\sum_{i=0}^{l} \left[P_i\left(a\right)\right] \, b^i,$$
where $P_i\left(a\right)$ is a polynomial in the variable $a$ with degree equal to $l-i$, $\forall i=0,\ldots,l$. It's easy to see that $P_i\left(a\right)$ can be written in the same form as in the right-hand side of Eq.~(\ref{eq:expans.poch.symb.binom}).
\end{myproof}

\begin{myproof}[\textbf{Proposition~\ref{propo:mom.ncchisq}}]
\label{proof:mom.ncchisq}
By virtue of the law of iterated expectations, one has $\mathbb{E}\left[\left(Y' \right)^r \right]=\mathbb{E}_M\left\{\mathbb{E}\left[\left.\left(Y' \right)^r\right|M \right]\right\}$, where, in the notation of Property~\ref{prope:mixrepres.ncchisq}, $M$ is a Poisson random variable with mean $\lambda/2$ and, conditionally on $M$, $Y'$ has a $\chi^2_{g+2M}$ distribution. In view of the general formula for the moments about zero of the gamma distribution \cite{JohKotBal94}, one obtains $\mathbb{E}\left[\left.\left(Y' \right)^r\right|M \right]=2^r \left(h+M\right)_r$, where $h=g/2$; therefore:
\begin{equation}
\mathbb{E}\left[\left(Y' \right)^r \right]=2^r \, \mathbb{E}\left[\left(h+M\right)_r\right].
\label{eq:mom.ncchisq.dim1}
\end{equation}
By virtue of Proposition~\ref{propo:expans.poch.symb.binom}, Eq.~(\ref{eq:mom.ncchisq.dim1}) can be restated as follows:$$\mathbb{E}\left[\left(Y' \right)^r \right]=2^r \, \sum_{i=0}^{r} \frac{1}{i!}\left[\frac{d^i}{d h^i} \left(h\right)_r\right] \mathbb{E}\left(M^i\right),$$where, in view of the general formula for the moments about zero of Poisson distribution \cite{JohKemKot05}:
\begin{equation}
\mathbb{E}\left(M^i\right)=\sum_{j=0}^{i} \mathcal{S}\left(i,j\right) \left(\frac{\lambda}{2}\right)^j, \qquad i \in \mathbb{N},
\label{eq:mom.pois}
\end{equation}
$\mathcal{S}\left(i,j\right)$ being a Stirling number of the second kind. Thus, Eq.~(\ref{eq:mom.ncchisq}) is established.
\end{myproof}

\begin{myproof}[\textbf{Proposition~\ref{propo:mom.ncchisq.zero}}]
\label{proof:mom.ncchisq.zero}
By taking $h=0$ in Eq.~(\ref{eq:mom.ncchisq.dim1}), one has $\mathbb{E}\left[\left(Y'_{pnc} \right)^r \right]=2^r \, \mathbb{E}\left[\left(M\right)_r\right]$, where $M \sim \mbox{Poisson}\left(\lambda/2\right)$. By bearing in mind that $(M)_r=\sum_{i=0}^r \left|s\left(r,i\right)\right| M^i$ and by virtue of Eq.~(\ref{eq:mom.pois}), Eq.~(\ref{eq:mom.ncchisq.zero}) is established. 
\end{myproof}

\begin{myproof}[\textbf{Proposition~\ref{propo:ident.mom}}]
\label{proof:ident.mom}
By virtue of Eq.~(\ref{eq:poch.symb2}), one has $\Gamma\left(j+h\right)=\Gamma\left(h\right) \cdot \left(h\right)_j$, $\forall j=0,\ldots,r$, so that Eq.~(\ref{eq:mom.literat.ncchisq}) can be rewritten as follows:
\begin{equation}
\mathbb{E}\left[\left(Y' \right)^r \right]=2^r \sum_{j=0}^{r}{r \choose j} \frac{\left(h\right)_r}{\left(h\right)_j} \left(\frac{\lambda}{2}\right)^j,
\label{eq:mom.literat.ncchisq.mod}
\end{equation}
where $h=g/2$. Furthermore, by noting that $\sum_{i=0}^{r}\sum_{j=0}^{i}a_{ij}=\sum_{j=0}^{r}\sum_{i=j}^{r}a_{ij}$, Eq.~(\ref{eq:mom.ncchisq}) can be restated according to the following form:
\begin{equation}
\mathbb{E}\left[\left(Y' \right)^r \right]=2^r \, \sum_{j=0}^{r}\left(\frac{\lambda}{2}\right)^j \sum_{i=j}^{r} \mathcal{S}\left(i,j\right) \frac{1}{i!}\left[\frac{d^i}{d h^i} \left(h\right)_r\right].
\label{eq:mom.ncchisq.mod}
\end{equation}
Hence, by equating Eq.~(\ref{eq:mom.literat.ncchisq.mod}) and Eq.~(\ref{eq:mom.ncchisq.mod}), Eq.~(\ref{eq:ident.mom}) is established. 
\end{myproof}

\begin{myproof}[\textbf{Proposition~\ref{propo:dncb.cond.ind}}]
\label{proof:dncb.cond.ind}
Observe that Eq.~(\ref{eq:mixrepres.dncb}) can be restated as follows:
\begin{equation}
\left.X'\right|\left(M_1,M^+\right) \sim \mbox{Beta}\left(\alpha_1+M_1,\alpha_2+M^+-M_1\right).
\label{eq:misc.beta.dnc}
\end{equation}
In view of Property~\ref{prope:repr.prop.ncchisq}, $Y'^+$ has a $\chi'^2_{2\alpha^+}\left(\lambda^+\right)$ distribution with $\alpha^+=\alpha_1+\alpha_2$ and $\lambda^+=\lambda_1+\lambda_2$; moreover, by virtue of Property~\ref{prope:mixrepres.ncchisq}, one has that:
\begin{equation}
\left.Y'^+\right|\left(M_1,M^+\right) \stackrel{d}{=} \left.Y'^+\right|M^+ \sim \chi^2_{2\alpha^++2M^+},
\label{eq:misc.chisq.nc}
\end{equation}
where $\stackrel{d}{=}$ stands for ``equal in distribution''. By Property~\ref{prope:char.prop.chisq}, $X'$ and $Y'^+$ are conditionally independent given $\left(M_1,M^+\right)$. Hence, conditionally on $\left(M_1,M^+\right)$, the joint distribution of $\left(X',Y'^+\right)$ factorizes into the marginal distributions of $X'$ and $Y'^+$.

That said, the proof follows by noting that the joint density function of $\left.\left(X',Y'^+\right)\right|M^+$ turns out to factorize into the marginal density functions of $X'|M^+$ and $Y'^+|M^+$. Indeed, under Eq.~(\ref{eq:misc.chisq.nc}) one can obtain:
\begin{eqnarray*}
f_{\left.\left(X',Y'^+\right)\right|M^+}\left(x,y\right) & = & \sum_{i=0}^{M^+}\Pr\left(\left.M_1=i\right|M^+\right) \cdot f_{\left.\left(X',Y'^+\right)\right|\left(M_1,M^+\right)}\left(x,y\right)=\\
& = & f_{\left.Y'^+\right|M^+}\left(y\right) \cdot f_{\left.X'\right|M^+}\left(x\right),
\end{eqnarray*}
where, under Eq.~(\ref{eq:misc.beta.dnc}) and by bearing in mind that $M_1|M^+ \sim \mbox{Binomial}\left(M^+,\frac{\lambda_1}{\lambda^+}\right)$, the density $f_{\left.X'\right|M^+}$ of $X'$ given $M^+$ is of the same form as in Eq.~(\ref{eq:dncb.distr.cond.m}).
\end{myproof}

\begin{myproof}[\textbf{Proposition~\ref{propo:pertrepres.dncb}}]
\label{proof:pertrepres.dncb}
In light of Eq.~(\ref{eq:poch.symb2}) and in the notation of Eq.~(\ref{eq:dens.beta.dnc}), it follows that:
\begin{eqnarray*}
B\left(\alpha_1+j,\alpha_2+k\right) & = & \frac{\Gamma\left(\alpha_1+j\right) \, \Gamma\left(\alpha_2+k\right)}{\Gamma\left(\alpha^++j+k\right)}=\frac{\Gamma\left(\alpha_1\right) \, \left(\alpha_1\right)_j \; \Gamma\left(\alpha_2\right) \, \left(\alpha_2\right)_k}{\Gamma\left(\alpha^+\right) \, \left(\alpha^+\right)_{j+k}}=\\
& = & B\left(\alpha_1,\alpha_2\right) \, \frac{\left(\alpha_1\right)_j \, \left(\alpha_2\right)_k}{\left(\alpha^+\right)_{j+k}}.
\end{eqnarray*}
Hence, Eq.~(\ref{eq:c.ncb}) can be obtained from Eq.~(\ref{eq:dens.beta.dnc}) with simple computations by making use of the latter result and of Eq.~(\ref{eq:perturb.ncb}); indeed we have:
$$f_{X'}\left(x;\alpha_1,\alpha_2,\lambda_1,\lambda_2 \right)=\mbox{Beta}\left(x;\alpha_1,\alpha_2\right) \cdot e^{-\frac{\lambda^+}{2}}  \sum_{j=0}^{+\infty} \sum_{k=0}^{+\infty} \frac{\left(\alpha^+\right)_{j+k}}{\left(\alpha_1\right)_j \left(\alpha_2\right)_k} \frac{\left(\frac{\lambda_1}{2}x\right)^j}{j!} \frac{\left[\frac{\lambda_2}{2}(1-x)\right]^k}{k!}.$$
\end{myproof}

\begin{myproof}[\textbf{Property~\ref{prope:relat.ncb1.ncb2}}]
\label{proof:relat.ncb1.ncb2}
By virtue of Eq.~(\ref{eq:def.nc2.beta}) one has:
$$X'_2=\frac{Y_1}{Y_1+Y'_2}=1-\frac{Y'_2}{Y'_2+Y_1} \; \Leftrightarrow \; \frac{Y'_2}{Y'_2+Y_1}=1-X'_2,$$
where $Y_1 \sim \chi^{2}_{2\alpha_1}$ and $Y'_2 \sim \chi'^{\,2}_{2\alpha_2}\left(\lambda\right)$ independently. In view of Eq.~(\ref{eq:def.nc1.beta}), Eq.~(\ref{eq:relat.ncb1.ncb2}) is established.
\end{myproof}

\begin{myproof}[\textbf{Proposition~\ref{propo:rappr.clc.beta.dnc}}]
\label{proof:rappr.clc.beta.dnc}
Let $Y_r$, $r=1,2$, be independent $\chi^2_{2\alpha_r}$ random variables and $Y^+=Y_1+Y_2 \sim \chi^2_{2\alpha^+}$, with $\alpha^+=\alpha_1+\alpha_2$. In light of Eqs.~(\ref{eq:sumrepres.ncchisq}),~(\ref{eq:def.dnc.beta}) one has: 
\begin{equation}
X'=\frac{Y'_1}{Y'_1+Y'_2}=\frac{Y_1+\sum_{j=1}^{M_1}F_j}{Y^++\sum_{j=1}^{M^+}F_j}=\frac{Y_1}{Y^++\sum_{j=1}^{M^+}F_j}+\frac{\sum_{j=1}^{M_1}F_j}{Y^++\sum_{j=1}^{M^+}F_j}.
\label{eq:rappr.clc.beta.dnc1}
\end{equation}
Observe that the first term on the right-hand side of Eq.~(\ref{eq:rappr.clc.beta.dnc1}) can be restated as:  
$$
\frac{Y_1}{Y^++\sum_{j=1}^{M^+}F_j}=\frac{Y^+}{Y^++\sum_{j=1}^{M^+}F_j} \cdot \frac{Y_1}{Y^+};
$$
similarly, with respect to the second term, we have:
$$
\frac{\sum_{j=1}^{M_1}F_j}{Y^++\sum_{j=1}^{M^+}F_j}=
\frac{\sum_{j=1}^{M^+}F_j}{Y^++\sum_{j=1}^{M^+} F_j} \cdot \frac{\sum_{j=1}^{M_1}F_j}{\sum_{j=1}^{M^+} F_j},
$$
which is meaningful provided we set that:
$$
\frac{\sum_{j=1}^{M_1}F_j}{\sum_{j=1}^{M^+} F_j}=0 \qquad \mbox{if }M_r=0, \quad \forall r=1,2.
$$
Finally, by setting:
\begin{equation}
X'_2=\frac{Y^+}{Y^++\sum_{j=1}^{M^+}F_j}, \qquad X=\frac{Y_1}{Y^+}, \qquad X'_{pnc}=\frac{\sum_{j=1}^{M_1}F_j}{\sum_{j=1}^{M^+} F_j},
\label{eq:rappr.clc.beta.dnc1.primopezzo.nomi}
\end{equation}
the decomposition in Eq.~(\ref{eq:rappr.clc.beta.dnc}) is established.

Now consider the random vector $\left(X,X'_2,X'_{pnc}\right)$. In light of Eq.~(\ref{eq:rappr.clc.beta.dnc1.primopezzo.nomi}), the marginal random vector $\left(X'_2,X'_{pnc}\right)$ is a function of $\left(Y^+,M_1,M_2,\left\{F_j\right\}\right)$; moreover, the latter is independent of $X$: in fact, $Y^+$ is independent of $X$ by virtue of Property~\ref{prope:char.prop.chisq}. Finally, $X$ and $\left(X'_2,X'_{pnc}\right)$ are mutually independent and, in view of Eq.~(\ref{eq:def.beta}), $X \sim \mbox{\normalfont{Beta}}\left(\alpha_1,\alpha_2\right)$: result \textit{i)} is thus proved.

In order to prove result \textit{ii)}, observe first that:
$$
\left.X'_2\right|\left(M_1,M_2\right)=\left.\frac{Y^+}{Y^++\sum_{j=1}^{M^+}F_j}\right|\left(M_1,M_2\right) \sim \mbox{Beta}\left(\alpha^+,M^+\right)
$$
and:
$$
\left.X'_{pnc}\right|\left(M_1,M_2\right)=\left.\frac{\sum_{j=1}^{M_1}F_j}{\sum_{j=1}^{M^+} F_j}\right|\left(M_1,M_2\right) \sim \mbox{Beta}\left(M_1,M_2\right);
$$
moreover, $X'_2$ and $X'_{pnc}$ are conditionally independent given $\left(M_1,M_2\right)$. That said, the proof of \textit{ii)} follows by noting that the joint density function of $\left(X'_2,X'_{pnc}\right) \, |M^+$ turns out to factorize into the marginal distributions of $\left.X'_2\right|M^+$ and $\left.X'_{pnc}\right|M^+$. Indeed, by bearing in mind that $\left.M_1\right|M^+ \sim \mbox{Binomial}\left(M^+,\frac{\lambda_1}{\lambda^+}\right)$, one can obtain:
\begin{eqnarray*}
\lefteqn{f_{\left.\left(X'_2,X'_{pnc}\right)\right|M^+}\left(x_2,x\right)=}\\
& = & \sum_{i=0}^{M^+} f_{\left.\left(X'_2,X'_{pnc}\right)\right|\left(M_1,M_2\right)}\left(x_2,x\right) \cdot \Pr\left(\left.M_1=i\right|M^+\right)=\\
& = & \sum_{i=0}^{M^+} f_{\left.X'_2\right|\left(M_1,M_2\right)}\left(x_2\right) \cdot f_{\left.X'_{pnc}\right|\left(M_1,M_2\right)}\left(x\right) \cdot \Pr\left(\left.M_1=i\right|M^+\right)=\\
& = & \sum_{i=0}^{M^+} \mbox{Beta}\left(x_2;\alpha^+,M^+\right) \cdot \mbox{Beta}\left(x;i,M^+-i\right) \cdot \Pr\left(\left.M_1=i\right|M^+\right)=\\
& = & \mbox{Beta}\left(x_2;\alpha^+,M^+\right) \cdot \sum_{i=0}^{M^+} \Pr\left(\left.M_1=i\right|M^+\right) \cdot \mbox{Beta}\left(x;i,M^+-i\right)=\\
& = & f_{\left.X'_2\right|M^+}\left(x_2\right) \cdot f_{\left.X'_{pnc}\right|M^+}\left(x\right).
\end{eqnarray*}
Finally, result \textit{iii)} follows from Eq.~(\ref{eq:mixrepres.dncb}) and Eq.~(\ref{eq:mixrepres.ncb2}).
\end{myproof}

\begin{myproof}[\textbf{Property~\ref{prope:relat.dncb1}}]
\label{proof:relat.dncb1}
Let $Y'_r$, $r=1,2$, be independent $\chi'^{\,2}_{2\alpha_r}\left(\lambda_r\right)$ random variables. The proof follows from Eq.~(\ref{eq:def.dnc.beta}) by noting that:
$$X'=\frac{Y'_1}{Y'_1+Y'_2}=1-\frac{Y'_2}{Y'_2+Y'_1} \; \Leftrightarrow \; 1-X'=\frac{Y'_2}{Y'_2+Y'_1} \sim \mbox{\normalfont{B}}''\left(\alpha_2,\alpha_1,\lambda_2,\lambda_1\right).$$
\end{myproof}

\begin{myproof}[\textbf{Proposition~\ref{propo:dens.beta.dnc.lims}}]
\label{proof:dens.beta.dnc.lims}
By taking $\alpha_1=\alpha_2=1$ in Eq.~(\ref{eq:dens.beta.dnc}), one has:
\begin{equation}
f_{X'}\left(x;1,1,\lambda_1,\lambda_2\right)=\sum_{j=0}^{+\infty} \sum_{k=0}^{+\infty} \frac{e^{-\frac{\lambda_1}{2}} \left(\frac{\lambda_1}{2}\right)^j}{j!} \frac{e^{-\frac{\lambda_2}{2}} \left(\frac{\lambda_2}{2}\right)^k}{k!} \frac{x^j \left(1-x\right)^k}{B\left(1+j,1+k\right)},\quad 0<x<1.
\label{eq:dens.beta.dnc.11}
\end{equation}
Hence, by taking the limit of both sides of Eq.~(\ref{eq:dens.beta.dnc.11}) as $x$ tends to $0^+$, the outcome is:
\begin{eqnarray*}
\lefteqn{\lim_{x \rightarrow 0^+} f_{X'}\left(x;1,1,\lambda_1,\lambda_2\right)=}\\
& = & e^{-\frac{\lambda_1}{2}} \, \sum_{k=0}^{+\infty} \frac{e^{-\frac{\lambda_2}{2}} \left(\frac{\lambda_2}{2}\right)^k}{k!} \frac{1}{B\left(1,1+k\right)}=e^{-\frac{\lambda_1}{2}} \, \sum_{k=0}^{+\infty} (k+1) \, \frac{e^{-\frac{\lambda_2}{2}} \left(\frac{\lambda_2}{2}\right)^k}{k!}=\\
& = & e^{-\frac{\lambda_1}{2}} \left[\sum_{k=0}^{+\infty} k \, \frac{e^{-\frac{\lambda_2}{2}} \left(\frac{\lambda_2}{2}\right)^k}{k!}+\sum_{k=0}^{+\infty} \frac{e^{-\frac{\lambda_2}{2}} \left(\frac{\lambda_2}{2}\right)^k}{k!}\right]=e^{-\frac{\lambda_1}{2}} \left(\frac{\lambda_2}{2} +1\right),
\end{eqnarray*}
that is Eq.~(\ref{eq:dens.beta.dnc.11.0}).

Similarly the limit at $1$ of Eq.~(\ref{eq:dens.beta.dnc.11}) turns out to be:
\begin{eqnarray*}
\lefteqn{\lim_{x \rightarrow 1^-} f_{X'}\left(x;1,1,\lambda_1,\lambda_2\right)=}\\
& = & e^{-\frac{\lambda_2}{2}} \, \sum_{j=0}^{+\infty} \frac{e^{-\frac{\lambda_1}{2}} \left(\frac{\lambda_1}{2}\right)^j}{j!} \frac{1}{B\left(1+j,1\right)}=e^{-\frac{\lambda_2}{2}} \, \sum_{j=0}^{+\infty} (j+1) \, \frac{e^{-\frac{\lambda_1}{2}} \left(\frac{\lambda_1}{2}\right)^j}{j!}=\\
& = & e^{-\frac{\lambda_2}{2}} \left[\sum_{j=0}^{+\infty} j \, \frac{e^{-\frac{\lambda_1}{2}} \left(\frac{\lambda_1}{2}\right)^j}{j!}+\sum_{j=0}^{+\infty} \frac{e^{-\frac{\lambda_1}{2}} \left(\frac{\lambda_1}{2}\right)^j}{j!}\right]=e^{-\frac{\lambda_2}{2}} \left(\frac{\lambda_1}{2} +1\right),
\end{eqnarray*}
that is Eq.~(\ref{eq:dens.beta.dnc.11.1}).
\end{myproof}

\begin{myproof}[\textbf{Proposition~\ref{propo:dnc.beta.pat.approx}}]
\label{proof:dnc.beta.pat.approx}
By virtue of Eq.~(\ref{eq:def.dnc.beta}) and Property~\ref{prope:pat.approx.ncchisq}, we have:
\begin{equation}
X'=\frac{Y'_1}{Y'_1+Y'_2} \; \stackrel{d}{\approx} \; X'_P=\frac{\rho_1 Y_1}{\rho_1 Y_1+\rho_2 Y_2},
\label{eq:appdncbetapat1}
\end{equation}
where $\rho_r=\frac{2\left(\alpha_r+\lambda_r\right)}{2\alpha_r+\lambda_r}$, $r=1,2$ and $Y_r$ are independent $\chi^2_{\nu_r}$ random variables with $\nu_r=\frac{\left(2\alpha_r+\lambda_r\right)^2}{2\left(\alpha_r+\lambda_r\right)}$; moreover, Eq.~(\ref{eq:appdncbetapat1}) is tantamount to:
\begin{equation}
X' \stackrel{d}{\approx}\frac{1}{\frac{\rho_1 Y_1+\rho_2 Y_2}{\rho_1 \, Y_1}}=\frac{1}{1+\frac{\rho_2}{\rho_1}\frac{Y_2}{Y_1}}.
\label{eq:appdncbetapat2}
\end{equation}
Now let $X$ have a $\mbox{Beta}\left(\frac{\nu_1}{2},\frac{\nu_2}{2}\right)$ distribution. Therefore, we have:
\begin{equation}
X=\frac{Y_1}{Y_1+Y_2} \; \Leftrightarrow \; \frac{Y_2}{Y_1}=\frac{1-X}{X};
\label{eq:appdncbetapat3}
\end{equation}
in light of Eq.~(\ref{eq:appdncbetapat3}), Eq.~(\ref{eq:appdncbetapat2}) can be thus restated as follows:
\begin{equation}
X' \stackrel{d}{\approx} \frac{1}{1+\frac{\rho_2}{\rho_1}\frac{1-X}{X}}=\frac{\rho_1 \, X}{\rho_1 \, X+\rho_2  \left(1-X\right)}=f\left(X\right)=X'_P.
\label{eq:appdncbetapat4}
\end{equation}
By noting that $
X=f^{-1}\left(X'_P\right)=\frac{\rho_2 \, X'_P}{\rho_1 \left(1-X'_P\right)+\rho_2 \, X'_P}$, $\frac{dx}{dx'}=\frac{\rho_1 \, \rho_2}{\left[\rho_1  \left(1-x'\right)+\rho_2 \, x'\right]^2}$, by taking $\beta_r=\frac{\nu_r}{2}$, $r=1,2$ and by denoting the densities of $X$ and $X'_P$ with $f_{X}$ and $f_{X'_P}$, respectively, the proof is straightforward once we observe that:
$$
f_{X'_P}\left(x'\right)=f_X\left(\frac{\rho_2 \, x'}{\rho_1 \left(1-x'\right)+\rho_2 \, x'}\right) \frac{dx}{dx'}=\frac{\left(\frac{\rho_2}{\rho_1}\right)^{\beta_1}}{B\left(\beta_1,\beta_2\right)}\frac{x'^{\beta_1-1} \, \left(1-x'\right)^{\beta_2-1}}{\left[1-\left(1-\frac{\rho_2}{\rho_1}\right) \, x'\right]^{\beta_1+\beta_2}},
$$
with $x' \in (0,1)$.
\end{myproof}

\begin{myproof}[\textbf{Proposition~\ref{propo:momr.beta.dnc.rapp}}]
\label{proof:momr.beta.dnc.rapp}
In the notation of Eq.~(\ref{eq:def.dnc.beta}) and by virtue of Proposition~\ref{propo:dncb.cond.ind}, one has: $\mathbb{E}\left[\left.\left(X'\right)^r\right|M^+\right]=\frac{\mathbb{E}\left[\left.\left(Y'_1\right)^r\right|M^+\right]}{\mathbb{E}\left[\left.\left(Y'^+\right)^r\right|M^+\right]}$; moreover, in view of the general formula for the moments of the gamma distribution \cite{JohKotBal94}, one has:
\begin{eqnarray*}
\lefteqn{\mathbb{E}\left[\left.\left(Y'_1\right)^r\right|M^+\right]=\mathbb{E}\left\{\left.\mathbb{E}\left[\left.\left(Y'_1\right)^r \right|M_1,M^+\right] \right|M^+\right\}=\mathbb{E}\left\{\left.\mathbb{E}\left[\left.\left(Y'_1\right)^r \right|M_1\right]\right|M^+\right\}=} \\
& = & \mathbb{E}\left[\left.2^r \, \left(\alpha_1+M_1\right)_r\right|M^+\right]=2^r \sum_{i=0}^{M^+}\left(\alpha_1+i\right)_r {M^+ \choose i} \left(\frac{\lambda_1}{\lambda^+}\right)^i \left(1-\frac{\lambda_1}{\lambda^+}\right)^{M^+-i}
\end{eqnarray*}
and $\mathbb{E}\left[\left.\left(Y'^+\right)^r\right|M^+\right]=2^r \left(\alpha^++M^+\right)_r$. Therefore:
\begin{equation}
\mathbb{E}\left[\left.\left(X'\right)^r\right|M^+\right]=\frac{1}{\left(\alpha^++M^+\right)_r} \sum_{i=0}^{M^+}\left(\alpha_1+i\right)_r {M^+ \choose i} \left(\frac{\lambda_1}{\lambda^+}\right)^i \left(1-\frac{\lambda_1}{\lambda^+}\right)^{M^+-i}.
\label{eq:momr.beta.dnc.dim6b}
\end{equation}
By letting $L \sim \mbox{Binomial}\left(M^+,\lambda_1/\lambda^+\right)$, Eq.~(\ref{eq:momr.beta.dnc.dim6b}) can be restated as follows:
\begin{equation}
\mathbb{E}\left[\left.\left(X'\right)^r\right|M^+\right]=\frac{\mathbb{E}\left[\left(\alpha_1+L\right)_r\right]}{\left(\alpha^++M^+\right)_r}.
\label{eq:momr.beta.dnc.dim7}
\end{equation}
In this regard, by replacing $a$ and $b$ with, respectively, $\alpha_1$ and $L$ in Eq.~(\ref{eq:expans.poch.symb.binom}), one has:
\begin{equation}
\mathbb{E}\left[\left(\alpha_1+L\right)_r\right]=\sum_{i=0}^{r}\frac{1}{i!}\left[\frac{d^i}{d\alpha_1^i}\left(\alpha_1\right)_r \right]\mathbb{E}\left(L^i\right),
\label{eq:momr.beta.dnc.dim8}
\end{equation}
where, $\forall i \in \mathbb{N} \cup \{0\}$:
\begin{equation}
\mbox{E}\left(L^i\right)=\sum_{k=0}^{i}\mathcal{S}\left(i,k\right) \frac{M^+!}{\left(M^+-k\right)!} \left(\frac{\lambda_1}{\lambda^+}\right)^k
\label{eq:momr.beta.dnc.dim9}
\end{equation}
\cite{JohKemKot05}, $\mathcal{S}\left(i,k\right)$ being a Stirling number of the second kind. By making use of $\frac{M^+!}{\left(M^+-k\right)!}=\sum_{j=0}^{k} s\left(k,j\right) \cdot \left(M^+\right)^j$ \cite{JohKemKot05}, $s\left(k,j\right)$ being a Stirling number of the first kind, Eq.~(\ref{eq:momr.beta.dnc.dim9}) can be rewritten as follows:
\begin{equation}
\mathbb{E}\left(L^i\right)=\sum_{k=0}^i \mathcal{S}\left(i,k\right) \left[\sum_{j=0}^{k} s \left(k,j\right) \cdot \left(M^+\right)^j\right] \, \left(\frac{\lambda_1}{\lambda^+}\right)^k.
\label{eq:momr.beta.dnc.dim11}
\end{equation}
By noting that $\sum_{k=0}^{i}\sum_{j=0}^{k}a_{kj}=\sum_{j=0}^{i}\sum_{k=j}^{i}a_{kj}$ and by letting $\theta_1=\lambda_1/\lambda^+$, Eq.~(\ref{eq:momr.beta.dnc.dim11}) turns out to be tantamount to:
\begin{equation}
\mathbb{E}\left(L^i\right)=\sum_{j=0}^{i}\left[\sum_{k=j}^{i}\mathcal{S}\left(i,k\right) s\left(k,j\right)\theta_1^k\right]\left(M^+\right)^j;
\label{eq:momr.beta.dnc.dim12}
\end{equation}
under Eq.~(\ref{eq:momr.beta.dnc.dim12}), Eq.~(\ref{eq:momr.beta.dnc.dim8}) can be written accordingly in the form of:
\begin{equation}
\mathbb{E}\left[\left(\alpha_1+L\right)_r\right]=\sum_{i=0}^{r}\frac{1}{i!}\left[\frac{d^i}{d \alpha_1^i} \left(\alpha_1\right)_r \right] \cdot	\sum_{j=0}^{i}\left[\sum_{k=j}^{i}\mathcal{S}\left(i,k\right) s\left(k,j\right)\theta_1^k\right]\left(M^+\right)^j
\label{eq:momr.beta.dnc.dim13}
\end{equation}
and finally, under Eq.~(\ref{eq:momr.beta.dnc.dim13}), Eq.~(\ref{eq:momr.beta.dnc.dim7}) can be stated in the following form:
$$
\mathbb{E}\left[\left(X'\right)^r|M^+\right]=\frac{\sum_{i=0}^{r}\frac{1}{i!}\left[\frac{d^i}{d \alpha_1^i} \left(\alpha_1\right)_r \right] \cdot	\sum_{j=0}^{i}\left[\sum_{k=j}^{i}\mathcal{S}\left(i,k\right) s\left(k,j\right)\theta_1^k\right]\left(M^+\right)^j}{\left(\alpha^++M^+\right)_r}.
$$
Therefore, by virtue of the law of iterated expectations, since $M^+ \sim \mbox{Poisson}\left(\lambda^+/2\right)$, the $r$-th moment about zero of the doubly non-central beta distribution turns out to have the following expression:
\begin{equation}
\mathbb{E}\left[\left(X'\right)^r\right]=e^{-\frac{\lambda^+}{2}}\sum_{i=0}^{r}\frac{1}{i!}\left[\frac{d^i}{d \alpha_1^i} \left(\alpha_1\right)_r \right] \cdot	\sum_{j=0}^{i}\left[\sum_{k=j}^{i}\mathcal{S}\left(i,k\right) s\left(k,j\right)\theta_1^k\right] \cdot \sum_{l=0}^{+\infty} \frac{l^j}{\left(\alpha^++l\right)_r} \frac{ \left(\frac{\lambda^+}{2}\right)^l}{l!}.
\label{eq:momr.beta.dnc.dim15}
\end{equation}
Observe that, in view of Eq.~(\ref{eq:poch.symb.sum}), the following holds:
$$
\left(\alpha^+\right)_l \left(\alpha^++l\right)_r=\left(\alpha^+\right)_r \left(\alpha^++r\right)_l \Leftrightarrow \frac{1}{\left(\alpha^++l\right)_r}=\frac{\left(\alpha^+\right)_l}{\left(\alpha^++r\right)_l} \frac{1}{\left(\alpha^+\right)_r}
$$
and Eq.~(\ref{eq:momr.beta.dnc.dim15}) can be rewritten accordingly as follows:
$$
\mathbb{E}\left[\left(X'\right)^r\right]=\frac{e^{-\frac{\lambda^+}{2}}}{\left(\alpha^+\right)_r}\sum_{i=0}^{r}\frac{1}{i!}\left[\frac{d^i}{d \alpha_1^i} \left(\alpha_1\right)_r \right] \cdot	\sum_{j=0}^{i}\left[\sum_{k=j}^{i}S\left(i,k\right) s\left(k,j\right)\theta_1^k\right] \cdot \sum_{l=0}^{+\infty} \frac{l^j \, \left(\alpha^+\right)_l}{\left(\alpha^++r\right)_l} \frac{ \left(\frac{\lambda^+}{2}\right)^l}{l!}.
$$

Now let $M^{+}_*$ be a random variable on the non-negative integers such that:
$$
\Pr\left(M^{+}_*=l\right)=\frac{\frac{\left(\alpha^+\right)_l}{\left(\alpha^++r\right)_l} \frac{\left(\frac{\lambda^+}{2}\right)^l}{l!}}{_1F_1\left(\alpha^+;\alpha^++r;\frac{\lambda^+}{2}\right)}, \quad \forall l \in \mathbb{N} \cup \{0\},
$$
so that, for every $j \in \mathbb{N} \cup \{0\}$:
$$\mathbb{E}\left(M^{+}_*\right)^j=\frac{1}{_1F_1\left(\alpha^+;\alpha^++r;\frac{\lambda^+}{2}\right)}\sum_{l=0}^{+\infty} \frac{l^j \, \left(\alpha^+\right)_l}{\left(\alpha^++r\right)_l} \frac{ \left(\frac{\lambda^+}{2}\right)^l}{l!}$$
and:
\begin{eqnarray}
\lefteqn{\mathbb{E}\left[\left(X'\right)^r\right]=\frac{e^{-\frac{\lambda^+}{2}}}{\left(\alpha^+\right)_r} \, _1F_1\left(\alpha^+;\alpha^++r;\frac{\lambda^+}{2}\right) \cdot}\nonumber\\
& \cdot & \sum_{i=0}^{r}\frac{1}{i!}\left[\frac{d^i}{d \alpha_1^i} \left(\alpha_1\right)_r \right] \cdot	\sum_{j=0}^{i}\left[\sum_{k=j}^{i}S\left(i,k\right) s\left(k,j\right)\theta_1^k\right] \cdot \mathbb{E}\left(M^{+}_*\right)^j.
\label{eq:momr.beta.dnc.dim19}
\end{eqnarray}
Note that the generating function of the descending factorial moments of $M^{+}_*$ has the following expression:
$$
\mathbb{E}\left[\left(1+t\right)^{M^{+*}}\right]=\frac{_1F_1\left(\alpha^+;\alpha^++r;\frac{\left(1+t\right)\lambda^+}{2}\right)}{_1F_1\left(\alpha^+;\alpha^++r;\frac{\lambda^+}{2}\right)}, \quad t \in \mathbb{R}
$$
and its derivative of order $m \in \mathbb{N}$ is:
\begin{equation}
\frac{d^m}{dt^m}\mathbb{E}\left[\left(1+t\right)^{M^{+*}}\right]=\frac{\left(\frac{\lambda^+}{2}\right)^m \left(\alpha^+\right)_m}{\left(\alpha^++r\right)_m} \frac{_1F_1\left(\alpha^++m;\alpha^++r+m;\frac{\left(1+t\right)\lambda^+}{2}\right)}{_1F_1\left(\alpha^+;\alpha^++r;\frac{\lambda^+}{2}\right)};
\label{eq:momr.beta.dnc.dim21}
\end{equation}
hence, by taking $t=0$ in Eq.~(\ref{eq:momr.beta.dnc.dim21}), it follows that the $m$-th descending factorial moment of $M^{+}_*$ turns out to be:
\begin{equation}
\mathbb{E}\left(M^{+}_*\right)_{[m]}=\frac{\left(\frac{\lambda^+}{2}\right)^m \, \left(\alpha^+\right)_m}{\left(\alpha^++r\right)_m}\frac{_1F_1\left(\alpha^++m;\alpha^++r+m;\frac{\lambda^+}{2}\right)}{_1F_1\left(\alpha^+;\alpha^++r;\frac{\lambda^+}{2}\right)}.
\label{eq:momr.beta.dnc.dim22}
\end{equation}
By bearing in mind that $\mathbb{E}\left(M^{+}_*\right)^j=\sum_{m=0}^{j}\mathcal{S}\left(j,m\right)\mathbb{E}\left(M^{+}_*\right)_{[m]}$ \cite{JohKemKot05} and in light of Eq.~(\ref{eq:momr.beta.dnc.dim22}), Eq.~(\ref{eq:momr.beta.dnc.dim19}) can be rewritten as follows:
\begin{eqnarray*}
\lefteqn{\mbox{E}\left[\left(X'\right)^r\right]=}\\
& = & \frac{e^{-\frac{\lambda^+}{2}}}{\left(\alpha^+\right)_r} \sum_{i=0}^{r}\frac{1}{i!}\left[\frac{d^i}{d \alpha_1^i} \left(\alpha_1\right)_r \right] \cdot	\sum_{j=0}^{i}\left[\sum_{k=j}^{i}\mathcal{S}\left(i,k\right) s\left(k,j\right)\theta_1^k\right] \cdot \\
& \cdot &  \sum_{m=0}^{j} \mathcal{S}\left(j,m\right) \frac{\left(\frac{\lambda^+}{2}\right)^m \, \left(\alpha^+\right)_m}{\left(\alpha^++r\right)_m} \, _1F_1\left(\alpha^++m;\alpha^++r+m;\frac{\lambda^+}{2}\right).
\end{eqnarray*}
Furthermore, by virtue of the following properties of the Stirling numbers of the first and the second kinds \cite{JohKemKot05}:
$$s\left(a,0\right)=s\left(0,a\right)=\mathcal{S}\left(a,0\right)=\mathcal{S}\left(0,a\right)=0, \quad \forall a>0,$$
$$\sum_{j=m}^{n}\mathcal{S}\left(n,j\right)s\left(j,m\right)=\sum_{j=m}^{n}s\left(n,j\right)\mathcal{S}\left(j,m\right)=\left\{\begin{array}{ll} 1 & \mbox{if }m=n\\ 0 & \mbox{otherwise} \end{array} \right.,$$one has:
\begin{eqnarray*}
\lefteqn{\mbox{E}\left[\left(X'\right)^r\right]=\frac{e^{-\frac{\lambda^+}{2}}}{\left(\alpha^+\right)_r} \sum_{i=0}^{r} \left[\sum_{j=i}^{r}\frac{1}{j!}\frac{d^j}{d\alpha_1^j}\left(\alpha_1\right)_r \mathcal{S}\left(j,i\right)\right] \cdot}\\
& \cdot & s\left(i,i\right)\mathcal{S}\left(i,i\right)\frac{\left(\alpha^+\right)_i \left(\theta_1 \frac{\lambda^+}{2}\right)^i }{\left(\alpha^++r\right)_i} \, _1F_1\left(\alpha^++i;\alpha^++r+i;\frac{\lambda^+}{2}\right);
\end{eqnarray*}
finally, in view of Eq.~(\ref{eq:ident.mom}) and by bearing in mind that $s\left(a,a\right)=\mathcal{S}\left(a,a\right)=1$, for every $a \geq 0$, Eq.~(\ref{eq:momr.beta.dnc}) is established.
\end{myproof}

\begin{myproof}[\textbf{Proposition~\ref{propo:relat.means.nc.beta}}]
\label{proof:relat.means.nc.beta}
Observe that Eq.~(\ref{eq:mom1.beta.dnc}) can be rewritten as follows:
\begin{eqnarray*}
\lefteqn{\mathbb{E}\left(X'\right)=}\\
& = & \frac{\alpha_1}{\alpha^+}\, e^{-\frac{\lambda^+}{2}} \, _1F_1\left(\alpha^+;\alpha^++1;\frac{\lambda^+}{2}\right)+e^{-\frac{\lambda^+}{2}} \frac{\frac{\lambda_1}{2}}{\alpha^++1} \, _1F_1\left(\alpha^++1;\alpha^++2;\frac{\lambda^+}{2}\right)=\\
& = & \frac{\lambda_1}{\lambda^+} \left[\frac{\alpha_1}{\alpha^+}\, e^{-\frac{\lambda^+}{2}} \, _1F_1\left(\alpha^+;\alpha^++1;\frac{\lambda^+}{2}\right)+e^{-\frac{\lambda^+}{2}} \frac{\frac{\lambda^+}{2}}{\alpha^++1} \, _1F_1\left(\alpha^++1;\alpha^++2;\frac{\lambda^+}{2}\right)\right]+\\
& + & \frac{\lambda_2}{\lambda^+}\left[\frac{\alpha_1}{\alpha^+}\, e^{-\frac{\lambda^+}{2}} \, _1F_1\left(\alpha^+;\alpha^++1;\frac{\lambda^+}{2}\right)\right];
\end{eqnarray*}
Eq.~(\ref{eq:relat.means.nc.beta}) is thus established.
\end{myproof}

\begin{myproof}[\textbf{Proposition~\ref{propo:alter.exp.mean.dnc.beta}}]
\label{proof:alter.exp.mean.dnc.beta}
In the notation of Proposition~\ref{propo:rappr.clc.beta.dnc}, one has:
\begin{equation}
\mathbb{E}\left(X'\right)=\mathbb{E}\left[X'_2 \, X +\left(1-X'_2\right) \,  X'_{pnc}\right];
\label{eq:alter.exp.mean.dnc.beta.dim1}
\end{equation}
in view of result \textit{i)} of the aforementioned Proposition, Eq.~(\ref{eq:alter.exp.mean.dnc.beta.dim1}) can be rewritten as $\mathbb{E}\left(X'\right)=\frac{\alpha_1}{\alpha^+} \, \mathbb{E}\left(X'_2\right)+ \mathbb{E}\left[\left(1-X'_2\right) \,  X'_{pnc}\right]$. As $X'_2 \sim \mbox{\normalfont{B}}'_2\left(\alpha^+,0,\lambda^+\right)$ in light of Eq.~(\ref{eq:momr.beta.nc2}), one has: $\mathbb{E}\left(X'_2\right)=e^{-\frac{\lambda^+}{2}} \, _1F_1\left(\alpha^+;\alpha^++1;\frac{\lambda^+}{2}\right)$. By virtue of the law of iterated expectations and in view of result \textit{ii)} of Proposition~\ref{propo:rappr.clc.beta.dnc}, the following holds true:
\begin{eqnarray}
\lefteqn{\mathbb{E}\left[\left(1-X'_2\right) \,  X'_{pnc}\right]=}\nonumber\\
& = & \mathbb{E}_{M^+}\left\{\mathbb{E}\left[\left.\left(1-X'_2\right) \,  X'_{pnc}\right|M^+\right]\right\}=\mathbb{E}_{M^+}\left\{\mathbb{E}\left[\left.\left(1-X'_2\right)\right|M^+\right] \,  \mathbb{E}\left(\left.X'_{pnc}\right|M^+\right)\right\},\nonumber\\
\label{eq:alter.exp.mean.dnc.beta.dim3}
\end{eqnarray}
where:
\begin{eqnarray*}
\mathbb{E}\left(\left.X'_{pnc}\right|M^+\right) & = & \sum_{i=0}^{M^+} {M^+ \choose i} \left(\frac{\lambda_1}{\lambda^+}\right)^i \left(1-\frac{\lambda_1}{\lambda^+}\right)^{M^+-i} \,  \mathbb{E}\left[\mbox{Beta}\left(x;i,M^+-i\right)\right]=\\
& = & \sum_{i=0}^{M^+} {M^+ \choose i} \left(\frac{\lambda_1}{\lambda^+}\right)^i \left(1-\frac{\lambda_1}{\lambda^+}\right)^{M^+-i} \,  \frac{i}{M^+}=\\
& = & \frac{1}{M^+} \, \sum_{i=0}^{M^+} i \, {M^+ \choose i} \left(\frac{\lambda_1}{\lambda^+}\right)^i \left(1-\frac{\lambda_1}{\lambda^+}\right)^{M^+-i}=\\
& = & \frac{1}{M^+} \cdot M^+ \, \frac{\lambda_1}{\lambda^+}=\frac{\lambda_1}{\lambda^+}.
\end{eqnarray*}
Therefore, Eq.~(\ref{eq:alter.exp.mean.dnc.beta.dim3}) can be restated as follows:
\begin{eqnarray*}
\lefteqn{\mathbb{E}\left[\left(1-X'_2\right) \,  X'_{pnc}\right]=\frac{\lambda_1}{\lambda^+} \, \mathbb{E}_{M^+}\left\{\mathbb{E}\left[\left.\left(1-X'_2\right)\right|M^+\right] \right\}=}\\
& = & \frac{\lambda_1}{\lambda^+} \, \mathbb{E}\left(1-X'_2\right)=\frac{\lambda_1}{\lambda^+} \left[1- \mathbb{E}\left(X'_2\right)\right]=\frac{\lambda_1}{\lambda^+} \left[1- e^{-\frac{\lambda^+}{2}} \, _1F_1\left(\alpha^+;\alpha^++1;\frac{\lambda^+}{2}\right)\right]
\end{eqnarray*}
and Eq.~(\ref{eq:alter.exp.mean.dnc.beta}) is established.
\end{myproof}

\begin{myproof}[\textbf{Proposition~\ref{propo:exp.mean.var.dnc11.beta}}]
\label{proof:exp.mean.var.dnc11.beta}
By taking $\alpha_1=\alpha_2=1$ in Eq.~(\ref{eq:alter.exp.mean.dnc.beta}), we have:
$$
\mathbb{E}\left(X'\right)=\frac{\lambda_1}{\lambda^+}+\left(\frac{1}{2}-\frac{\lambda_1}{\lambda^+}\right) \, e^{-\frac{\lambda^+}{2}}  \, _1F_1\left(2;3;\frac{\lambda^+}{2}\right).
$$
Observe that by taking $a=2$ and $z=\frac{\lambda^+}{2}$ in Eq.~(\ref{eq:form2.1f1}), one has:
\begin{eqnarray*}
_1F_1\left(2;3;\frac{\lambda^+}{2}\right) & =  & 2\left(-\frac{\lambda^+}{2}\right)^{-2}\left[\Gamma\left(2\right)-\Gamma\left(2,-\frac{\lambda^+}{2}\right)\right]=\frac{8}{\left(\lambda^+\right)^2}\left[1-\int_{-\frac{\lambda^+}{2}}^{+\infty}t \, e^{-t} \, dt\right]=\\
& = & \frac{8}{\left(\lambda^+\right)^2}\left[1+\left(\frac{\lambda^+}{2}-1\right)e^{\frac{\lambda^+}{2}}\right];
\end{eqnarray*}
therefore:
\begin{eqnarray*}
\mathbb{E}\left(X'\right) & = & \frac{\lambda_1}{\lambda^+}+\frac{8}{\left(\lambda^+\right)^2}\left(\frac{1}{2}-\frac{\lambda_1}{\lambda^+}\right) \, \left(e^{-\frac{\lambda^+}{2}}+\frac{\lambda^+}{2}-1\right)=\\
& = & \frac{\lambda_1}{\lambda^+} -\frac{4 \, \lambda_1}{\left(\lambda^+\right)^2}+\frac{2}{\lambda^+}+\frac{8 \, \lambda_1}{\left(\lambda^+\right)^3}-\frac{4}{\left(\lambda^+\right)^2}+\frac{4}{\left(\lambda^+\right)^2} \, e^{-\frac{\lambda^+}{2}}-\frac{8 \, \lambda_1}{\left(\lambda^+\right)^3} \, e^{-\frac{\lambda^+}{2}}.
\end{eqnarray*}
Eq.~(\ref{eq:exp.mean.dnc11.beta}) can be thus obtained with simple computations by noting that:
$$
\frac{\lambda_1}{\lambda^+}=\frac{1}{2}+\frac{\lambda_1-\lambda_2}{2 \, \lambda^+}, \qquad -\frac{4 \, \lambda_1}{\left(\lambda^+\right)^2}+\frac{2}{\lambda^+}=-\frac{2 \left(\lambda_1-\lambda_2\right)}{\left(\lambda^+\right)^2},
$$
$$
\frac{8 \, \lambda_1}{\left(\lambda^+\right)^3}-\frac{4}{\left(\lambda^+\right)^2}=\frac{4 \left(\lambda_1-\lambda_2\right)}{\left(\lambda^+\right)^3} , \qquad \frac{4}{\left(\lambda^+\right)^2} \, e^{-\frac{\lambda^+}{2}}-\frac{8 \, \lambda_1}{\left(\lambda^+\right)^3} \, e^{-\frac{\lambda^+}{2}}=-\frac{4 \left(\lambda_1-\lambda_2\right)}{\left(\lambda^+\right)^3}\, e^{-\frac{\lambda^+}{2}}.
$$

We now come to the proof of Eq.~(\ref{eq:exp.var.dnc11.beta}). When $\alpha_1=\alpha_2=1$, Eq.~(\ref{eq:mom2.beta.dnc}) can be rewritten as follows:
\begin{eqnarray*}
\lefteqn{\mathbb{E}\left[\left(X'\right)^2\right]=}\\
& = & \frac{1}{3} \, e^{-\frac{\lambda^+}{2}} \, _1F_1\left(2;4;\frac{\lambda^+}{2}\right)+\frac{\lambda_1}{6} \, e^{-\frac{\lambda^+}{2}} \, _1F_1\left(3;5;\frac{\lambda^+}{2}\right)+\frac{\lambda_1^2}{80} \, e^{-\frac{\lambda^+}{2}} \, _1F_1\left(4;6;\frac{\lambda^+}{2}\right).
\end{eqnarray*}
Observe that by taking $a=2, 3, 4$ and $z=\frac{\lambda^+}{2}$ in the following formula (\href{http://functions.wolfram.com/HypergeometricFunctions/Hypergeometric1F1/03/01/02/}{link}):
\begin{eqnarray}
\lefteqn{_1F_1\left(a;a+2;z\right)=\frac{\left(-z\right)^{-a}}{z} \cdot}\nonumber\\
& \cdot &  \left\{\Gamma\left(a\right) \, a^3+\left(za+a+z\right) \, \Gamma\left(a+1\right)-\left(a+1\right) \left[e^z \left(-z\right)^{a+1}+\left(a+z\right) \, \Gamma\left(a+1,-z\right)\right]\right\}\nonumber\\
\label{eq:f11.form.momsec}
\end{eqnarray}
one has respectively:
\begin{eqnarray*}
\lefteqn{_1F_1\left(2;4;\frac{\lambda^+}{2}\right)=}\\
& = & \frac{8}{\left(\lambda^+\right)^3} \left[12+3\, \lambda^++\frac{3\left(\lambda^+\right)^3}{8} \, e^{\frac{\lambda^+}{2}}-3\left(2+\frac{\lambda^+}{2}\right) \, \int_{-\frac{\lambda^+}{2}}^{+\infty}t^2 \, e^{-t} \, dt\right]=\\
& = & \frac{96}{\left(\lambda^+\right)^3}+\frac{24}{\left(\lambda^+\right)^2} -\frac{96}{\left(\lambda^+\right)^3} \, e^{\frac{\lambda^+}{2}}+\frac{24}{\left(\lambda^+\right)^2} \,e^{\frac{\lambda^+}{2}},
\end{eqnarray*}
\begin{eqnarray*}
\lefteqn{_1F_1\left(3;5;\frac{\lambda^+}{2}\right)=}\\
& = & -\frac{16}{\left(\lambda^+\right)^4} \left[72+12\, \lambda^+-\frac{\left(\lambda^+\right)^4}{4} \, e^{\frac{\lambda^+}{2}}-4\left(3+\frac{\lambda^+}{2}\right) \, \int_{-\frac{\lambda^+}{2}}^{+\infty}t^3 \, e^{-t} \, dt\right]=\\
& = & -\frac{1152}{\left(\lambda^+\right)^4}-\frac{192}{\left(\lambda^+\right)^3} +\frac{1152}{\left(\lambda^+\right)^4} \, e^{\frac{\lambda^+}{2}}-\frac{384}{\left(\lambda^+\right)^3} \,e^{\frac{\lambda^+}{2}}+\frac{48}{\left(\lambda^+\right)^2} \, e^{\frac{\lambda^+}{2}},
\end{eqnarray*}
\begin{eqnarray*}
\lefteqn{_1F_1\left(4;6;\frac{\lambda^+}{2}\right)=}\\
& = & \frac{32}{\left(\lambda^+\right)^5} \left[480+60\, \lambda^++\frac{5 \, \left(\lambda^+\right)^5}{32} \, e^{\frac{\lambda^+}{2}}-5\left(4+\frac{\lambda^+}{2}\right) \, \int_{-\frac{\lambda^+}{2}}^{+\infty}t^4 \, e^{-t} \, dt\right]=\\
& = & \frac{15360}{\left(\lambda^+\right)^5}+\frac{1920}{\left(\lambda^+\right)^4} -\frac{15360}{\left(\lambda^+\right)^5} \, e^{\frac{\lambda^+}{2}}+\frac{5760}{\left(\lambda^+\right)^4} \,e^{\frac{\lambda^+}{2}}-\frac{960}{\left(\lambda^+\right)^3} \, e^{\frac{\lambda^+}{2}}+\frac{80}{\left(\lambda^+\right)^2} \, e^{\frac{\lambda^+}{2}}.
\end{eqnarray*}
Therefore, one has:
\begin{eqnarray*}
\lefteqn{\mathbb{E}\left[\left(X'\right)^2\right]=}\\
& = & -\frac{32}{\left(\lambda^+\right)^3}+\frac{8}{\left(\lambda^+\right)^2} +\frac{32}{\left(\lambda^+\right)^3} \, e^{-\frac{\lambda^+}{2}}+\frac{8}{\left(\lambda^+\right)^2} \,e^{-\frac{\lambda^+}{2}}+\\
& + & \lambda_1 \left[\frac{192}{\left(\lambda^+\right)^4}-\frac{64}{\left(\lambda^+\right)^3} +\frac{8}{\left(\lambda^+\right)^2}- \frac{192}{\left(\lambda^+\right)^4}\, e^{-\frac{\lambda^+}{2}}-\frac{32}{\left(\lambda^+\right)^3} \,e^{-\frac{\lambda^+}{2}}\right]+\\
& + & \lambda_1^2 \left[-\frac{192}{\left(\lambda^+\right)^5}+\frac{72}{\left(\lambda^+\right)^4} -\frac{12}{\left(\lambda^+\right)^3}+\frac{1}{\left(\lambda^+\right)^2}+ \frac{192}{\left(\lambda^+\right)^5}\, e^{-\frac{\lambda^+}{2}}+\frac{24}{\left(\lambda^+\right)^4} \,e^{-\frac{\lambda^+}{2}}\right].
\end{eqnarray*}
Moreover:
\begin{eqnarray*}
\lefteqn{\mathbb{E}\left[\left(X'\right)\right]^2=}\\
& = & \frac{16}{\left(\lambda^+\right)^4}-\frac{16}{\left(\lambda^+\right)^3} +\frac{4}{\left(\lambda^+\right)^2}-\frac{32}{\left(\lambda^+\right)^4} \, e^{-\frac{\lambda^+}{2}}+\frac{16}{\left(\lambda^+\right)^3} \,e^{-\frac{\lambda^+}{2}}+\frac{16}{\left(\lambda^+\right)^4} \,e^{-\lambda^+}+\\
& + & \lambda_1 \left[-\frac{64}{\left(\lambda^+\right)^5}+\frac{64}{\left(\lambda^+\right)^4} -\frac{24}{\left(\lambda^+\right)^3}+ \frac{4}{\left(\lambda^+\right)^2}+\frac{128}{\left(\lambda^+\right)^5} \, e^{-\frac{\lambda^+}{2}}-\frac{64}{\left(\lambda^+\right)^4} \,e^{-\frac{\lambda^+}{2}}+ \right. \\
& + & \left. \frac{8}{\left(\lambda^+\right)^3} \,e^{-\frac{\lambda^+}{2}}-\frac{64}{\left(\lambda^+\right)^5} \,e^{-\lambda^+}\right]+\\
& + & \lambda_1^2 \left[\frac{64}{\left(\lambda^+\right)^6}-\frac{64}{\left(\lambda^+\right)^5} +\frac{32}{\left(\lambda^+\right)^4}-\frac{8}{\left(\lambda^+\right)^3}+ \frac{1}{\left(\lambda^+\right)^2}-\frac{128}{\left(\lambda^+\right)^6}\, e^{-\frac{\lambda^+}{2}}+\frac{64}{\left(\lambda^+\right)^5} \,e^{-\frac{\lambda^+}{2}}+ \right.\\
& - & \left.\frac{16}{\left(\lambda^+\right)^4} \,e^{-\frac{\lambda^+}{2}}+\frac{64}{\left(\lambda^+\right)^6} \,e^{-\lambda^+}\right].
\end{eqnarray*}
Hence:
\begin{eqnarray*}
\lefteqn{\mbox{Var}\left(X'\right)=\mathbb{E}\left[\left(X'\right)^2\right]-\mathbb{E}\left[\left(X'\right)\right]^2=}\\
& = & -\frac{16}{\left(\lambda^+\right)^4}-\frac{16}{\left(\lambda^+\right)^3} +\frac{4}{\left(\lambda^+\right)^2}+\frac{32}{\left(\lambda^+\right)^4} \, e^{-\frac{\lambda^+}{2}}+\frac{16}{\left(\lambda^+\right)^3} \,e^{-\frac{\lambda^+}{2}}+\frac{8}{\left(\lambda^+\right)^2} \,e^{-\frac{\lambda^+}{2}}+\\
& - & \frac{16}{\left(\lambda^+\right)^4} \,e^{-\lambda^+}+\\
& + & \lambda_1 \left[\frac{64}{\left(\lambda^+\right)^5}+\frac{128}{\left(\lambda^+\right)^4} -\frac{40}{\left(\lambda^+\right)^3}+ \frac{4}{\left(\lambda^+\right)^2}-\frac{128}{\left(\lambda^+\right)^5} \, e^{-\frac{\lambda^+}{2}}-\frac{128}{\left(\lambda^+\right)^4} \,e^{-\frac{\lambda^+}{2}}+ \right. \\
& - & \left. \frac{40}{\left(\lambda^+\right)^3} \,e^{-\frac{\lambda^+}{2}}+\frac{64}{\left(\lambda^+\right)^5} \,e^{-\lambda^+}\right]+\\
& + & \lambda_1^2 \left[-\frac{64}{\left(\lambda^+\right)^6}-\frac{128}{\left(\lambda^+\right)^5} +\frac{40}{\left(\lambda^+\right)^4}-\frac{4}{\left(\lambda^+\right)^3}+ \frac{128}{\left(\lambda^+\right)^6}\, e^{-\frac{\lambda^+}{2}}+\frac{128}{\left(\lambda^+\right)^5}\, e^{-\frac{\lambda^+}{2}}+ \right.\\
& + & \left. \frac{40}{\left(\lambda^+\right)^4} \,e^{-\frac{\lambda^+}{2}} - \frac{64}{\left(\lambda^+\right)^6} \,e^{-\lambda^+}\right].
\end{eqnarray*}
At this point, the proof of the variance formula follows from the mere application of some tedious algebra. Upon noting that:
\begin{eqnarray*}
-\frac{16}{\left(\lambda^+\right)^4} \,e^{-\lambda^+}+\frac{32}{\left(\lambda^+\right)^4} \, e^{-\frac{\lambda^+}{2}}-\frac{16}{\left(\lambda^+\right)^4} & = & -\frac{16}{\left(\lambda^+\right)^4} \left(1-e^{-\frac{\lambda^+}{2}}\right)^2,\\
\lambda_1 \left[\frac{64}{\left(\lambda^+\right)^5} \, e^{-\lambda^+}-\frac{128}{\left(\lambda^+\right)^5} \, e^{-\frac{\lambda^+}{2}} +\frac{64}{\left(\lambda^+\right)^5}\right] & = & \frac{64 \, \lambda_1}{\left(\lambda^+\right)^5} \, \left(1-e^{-\frac{\lambda^+}{2}}\right)^2,\\
\lambda_1^2 \left[-\frac{64}{\left(\lambda^+\right)^6} \,e^{-\lambda^+}+ \frac{128}{\left(\lambda^+\right)^6}\, e^{-\frac{\lambda^+}{2}}-\frac{64}{\left(\lambda^+\right)^6}\right] & = & -\frac{64 \, \lambda_1^2}{\left(\lambda^+\right)^6}\, \left(1-e^{-\frac{\lambda^+}{2}}\right)^2,
\end{eqnarray*}
we have:
\begin{eqnarray*}
\lefteqn{-\frac{16 \left(\lambda_1-\lambda_2\right)^2}{\left(\lambda^+\right)^6} \, \left(1-e^{-\frac{\lambda^+}{2}}\right)^2=}\\
& = & -\frac{16}{\left(\lambda^+\right)^4} \left(1-e^{-\frac{\lambda^+}{2}}\right)^2+\frac{64 \, \lambda_1}{\left(\lambda^+\right)^5} \, \left(1-e^{-\frac{\lambda^+}{2}}\right)^2-\frac{64 \, \lambda_1^2}{\left(\lambda^+\right)^6}\, \left(1-e^{-\frac{\lambda^+}{2}}\right)^2;
\end{eqnarray*}
moreover:
\begin{eqnarray*}
\lefteqn{\frac{\lambda_1 \, \lambda_2}{\lambda^+}\left[\frac{128}{\left(\lambda^+\right)^4}-\frac{40}{\left(\lambda^+\right)^3}+\frac{4}{\left(\lambda^+\right)^2}-\frac{128}{\left(\lambda^+\right)^4} \,e^{-\frac{\lambda^+}{2}}-\frac{40}{\left(\lambda^+\right)^3} \,e^{-\frac{\lambda^+}{2}}\right]=}\\
& = & \lambda_1 \, \left[\frac{128}{\left(\lambda^+\right)^4} -\frac{40}{\left(\lambda^+\right)^3}+ \frac{4}{\left(\lambda^+\right)^2}-\frac{128}{\left(\lambda^+\right)^4} \,e^{-\frac{\lambda^+}{2}}-\frac{40}{\left(\lambda^+\right)^3} \,e^{-\frac{\lambda^+}{2}}\right]+\\
& + & \lambda_1^2 \left[-\frac{128}{\left(\lambda^+\right)^5} +\frac{40}{\left(\lambda^+\right)^4}-\frac{4}{\left(\lambda^+\right)^3}+ \frac{128}{\left(\lambda^+\right)^5}\, e^{-\frac{\lambda^+}{2}}+\frac{40}{\left(\lambda^+\right)^4} \,e^{-\frac{\lambda^+}{2}} \right],
\end{eqnarray*}
so that:
\begin{eqnarray*}
\mbox{Var}\left(X'\right) & = & -\frac{16 \left(\lambda_1-\lambda_2\right)^2}{\left(\lambda^+\right)^6} \, \left(1-e^{-\frac{\lambda^+}{2}}\right)^2+\\
& + & \frac{\lambda_1 \, \lambda_2}{\lambda^+}\left[\frac{128}{\left(\lambda^+\right)^4}-\frac{40}{\left(\lambda^+\right)^3}+\frac{4}{\left(\lambda^+\right)^2}-\frac{128}{\left(\lambda^+\right)^4} \,e^{-\frac{\lambda^+}{2}}-\frac{40}{\left(\lambda^+\right)^3} \,e^{-\frac{\lambda^+}{2}}\right]+\\
& - & \frac{16}{\left(\lambda^+\right)^3} +\frac{4}{\left(\lambda^+\right)^2}+\frac{16}{\left(\lambda^+\right)^3} \,e^{-\frac{\lambda^+}{2}}+\frac{8}{\left(\lambda^+\right)^2} \,e^{-\frac{\lambda^+}{2}}.
\end{eqnarray*}
By observing that:
\begin{eqnarray*}
\lefteqn{\frac{16}{\left(\lambda^+\right)^5} \left(1-e^{-\frac{\lambda^+}{2}}\right) \left[4 \, \lambda_1 \, \lambda_2- \left(\lambda_1-\lambda_2\right)^2\right]=}\\
& = & \frac{\lambda_1 \, \lambda_2}{\lambda^+}\left[\frac{128}{\left(\lambda^+\right)^4}-\frac{128}{\left(\lambda^+\right)^4} \,e^{-\frac{\lambda^+}{2}}\right]-\frac{16}{\left(\lambda^+\right)^3} +\frac{16}{\left(\lambda^+\right)^3} \,e^{-\frac{\lambda^+}{2}},
\end{eqnarray*}
we have:
\begin{eqnarray*}
\mbox{Var}\left(X'\right) & = & -\frac{16 \left(\lambda_1-\lambda_2\right)^2}{\left(\lambda^+\right)^6} \, \left(1-e^{-\frac{\lambda^+}{2}}\right)\left(\lambda^++1-e^{-\frac{\lambda^+}{2}}\right)+\\
& + & \frac{\lambda_1 \, \lambda_2}{\lambda^+}\left[\frac{64}{\left(\lambda^+\right)^4}-\frac{40}{\left(\lambda^+\right)^3}+\frac{4}{\left(\lambda^+\right)^2}-\frac{64}{\left(\lambda^+\right)^4} \,e^{-\frac{\lambda^+}{2}}-\frac{40}{\left(\lambda^+\right)^3} \,e^{-\frac{\lambda^+}{2}}\right]+\\
& + & \frac{8}{\left(\lambda^+\right)^2} \,e^{-\frac{\lambda^+}{2}} +\frac{4}{\left(\lambda^+\right)^2}.
\end{eqnarray*}
Note that:
$$
\frac{\lambda_1 \, \lambda_2}{\lambda^+}\left[\frac{64}{\left(\lambda^+\right)^4}-\frac{40}{\left(\lambda^+\right)^3}+\frac{4}{\left(\lambda^+\right)^2}\right]=\frac{4 \, \lambda_1 \, \lambda_2}{\left(\lambda^+\right)^5} \left(\lambda^+-2\right)\left(\lambda^+-8\right)
$$
and:
$$
-\frac{40 \, \lambda_1 \, \lambda_2}{\left(\lambda^+\right)^4}\,e^{-\frac{\lambda^+}{2}} +\frac{8}{\left(\lambda^+\right)^2}\,e^{-\frac{\lambda^+}{2}}=\frac{8}{\left(\lambda^+\right)^4}\,e^{-\frac{\lambda^+}{2}} \left[\left(\lambda_1-\lambda_2\right)^2-\lambda_1 \, \lambda_2\right],
$$
so that:
\begin{eqnarray*}
\lefteqn{\frac{8}{\left(\lambda^+\right)^4}\,e^{-\frac{\lambda^+}{2}} \left(\lambda_1-\lambda_2\right)^2-\frac{16 \left(\lambda_1-\lambda_2\right)^2}{\left(\lambda^+\right)^6} \, \left(1-e^{-\frac{\lambda^+}{2}}\right)\left(\lambda^++1-e^{-\frac{\lambda^+}{2}}\right)=}\\
& = & \frac{8 \, \left(\lambda_1-\lambda_2\right)^2}{\left(\lambda^+\right)^6} \left[\left(\lambda^+\right)^2 \,e^{-\frac{\lambda^+}{2}} -2 \left(1-e^{-\frac{\lambda^+}{2}}\right)\left(\lambda^++1-e^{-\frac{\lambda^+}{2}}\right) \right].
\end{eqnarray*}
Finally:
$$
-\frac{8 \, \lambda_1 \, \lambda_2}{\left(\lambda^+\right)^4} \, e^{-\frac{\lambda^+}{2}}-\frac{64 \, \lambda_1 \, \lambda_2}{\left(\lambda^+\right)^5} \, e^{-\frac{\lambda^+}{2}}=-\frac{8 \, \lambda_1 \, \lambda_2}{\left(\lambda^+\right)^5} \left(\lambda^++8\right) e^{-\frac{\lambda^+}{2}}
$$
and:
\begin{eqnarray*}
\lefteqn{\frac{4 \, \lambda_1 \, \lambda_2}{\left(\lambda^+\right)^5} \left(\lambda^+-2\right)\left(\lambda^+-8\right)-\frac{8 \, \lambda_1 \, \lambda_2}{\left(\lambda^+\right)^5} \left(\lambda^++8\right) e^{-\frac{\lambda^+}{2}}=}\\
& = & \frac{4 \, \lambda_1 \, \lambda_2}{\left(\lambda^+\right)^5} \left[\left(\lambda^+-2\right)\left(\lambda^+-8\right)-2 \,e^{-\frac{\lambda^+}{2}} \left(\lambda^++8\right) \right].
\end{eqnarray*}
Eq.~(\ref{eq:exp.var.dnc11.beta}) is thus established.
\end{myproof}

\section{Appendix. R functions}
\label{sec:app.r.func}

\begin{funct}[Perturbation factor of the beta density in the perturbation representation of the doubly non-central beta density in Eq.~(\ref{eq:c.ncb})]
\label{funct:dperturb}

Arguments:
\begin{itemize}
\item \textit{x}: vector of quantiles
\item \textit{shape1}, \textit{shape2}: shape parameters of the doubly non-central beta distribution
\item \textit{ncp1}, \textit{ncp2}: non-centrality parameters of the doubly non-central beta distribution
\item \textit{tol}: tolerance with zero meaning to iterate until additional terms to not change the partial sum
\item \textit{maxiter}: maximum number of iterations to perform
\item \textit{debug}: Boolean, with TRUE meaning to return debugging information and FALSE meaning to return just the evaluate
\end{itemize}
\noindent \texttt{dperturb<-function(x,shape1,shape2,ncp1,ncp2,tol,maxiter,debug) \hspace{0.1cm} \{ }

\noindent \hspace{0.2cm} \texttt{L<-c(shape1,shape2)}

\noindent \hspace{0.2cm} \texttt{U<-sum(L)}

\noindent \hspace{0.2cm} \texttt{y1<-(ncp1/2)*x}

\noindent \hspace{0.2cm} \texttt{y2<-(ncp2/2)*(1-x)}

\noindent \hspace{0.2cm} \texttt{esp=-((ncp1+ncp2)/2)}

\noindent \hspace{0.2cm} \texttt{coef<-1}

\noindent \hspace{0.2cm} \texttt{temp<-hypergeo::genhypergeo(U=U,L=L[2],z=y2,tol=tol,maxiter=maxiter,}

\noindent \hspace{1.3cm} \texttt{check\_mod=TRUE,polynomial=FALSE,debug=FALSE)}

\noindent \hspace{0.2cm} \texttt{out<-NULL}

\noindent \hspace{0.2cm} \texttt{for(m in seq\_len(maxiter)) \hspace{0.1cm} \{ }

\noindent \hspace{0.5cm} \texttt{coef<-coef*((U/L[1])*y1/m)}

\noindent \hspace{0.5cm} \texttt{fac<-coef*hypergeo::genhypergeo(U=U+1,L=L[2],z=y2,tol=tol,}

\noindent \hspace{1.4cm} \texttt{maxiter=maxiter,check\_mod=TRUE,polynomial=FALSE,debug=FALSE)}

\noindent \hspace{0.5cm} \texttt{series<-temp+fac}

\noindent \hspace{0.5cm} \texttt{if(debug) \hspace{0.1cm} \{ }

\noindent \hspace{0.9cm} \texttt{out<-c(out,fac)}

\noindent \hspace{0.5cm} \texttt{\} }

\noindent \hspace{0.5cm} \texttt{if(hypergeo::isgood(series-temp,tol)) \hspace{0.1cm} \{ }

\noindent \hspace{0.9cm} \texttt{if(debug) \hspace{0.1cm} \{ }

\noindent \hspace{1.3cm} \texttt{return(list(exp(esp)*series,exp(esp)*out))}

\noindent \hspace{0.9cm} \texttt{\} }

\noindent \hspace{0.9cm} \texttt{else \{ }

\noindent \hspace{1.3cm} \texttt{return(exp(esp)*series)}

\noindent \hspace{0.9cm} \texttt{\} }

\noindent \hspace{0.5cm} \texttt{\} }

\noindent \hspace{0.5cm} \texttt{temp<-series}

\noindent \hspace{0.5cm} \texttt{U<-U+1}

\noindent \hspace{0.5cm} \texttt{L[1]<-L[1]+1}

\noindent \hspace{0.2cm} \texttt{\} }

\noindent \hspace{0.2cm} \texttt{if(debug) \hspace{0.1cm} \{}

\noindent \hspace{0.5cm} \texttt{return(list(exp(esp)*series,exp(esp)*out))}

\noindent \hspace{0.2cm} \texttt{\}}

\noindent \texttt{\}}
\end{funct}

\begin{funct}[Perturbation representation of the doubly non-central beta density in Eq.~(\ref{eq:c.ncb})]
\label{funct:ddncbeta}

Arguments:
\begin{itemize}
\item \textit{x}: vector of quantiles
\item \textit{shape1}, \textit{shape2}: shape parameters of the doubly non-central beta distribution
\item \textit{ncp1}, \textit{ncp2}: non-centrality parameters of the doubly non-central beta distribution
\end{itemize}
\noindent \texttt{ddncbeta<-function(x,shape1,shape2,ncp1,ncp2) \hspace{0.1cm} \{}

\noindent \hspace{0.2cm} \texttt{dbeta(x,shape1=shape1,shape2=shape2,ncp=0,log=FALSE)*}

\noindent \hspace{0.2cm} \texttt{dperturb(x=x,shape1=shape1,shape2=shape2,ncp1=ncp1,ncp2=ncp2,tol=0,}

\noindent \hspace{0.2cm} \texttt{maxiter=2000,debug=FALSE)}

\noindent \texttt{\}}

\noindent The cases $\mbox{\textit{ncp1}}=0$ and $\mbox{\textit{ncp2}}=0$ correspond respectively to the densities of the type 2 and the type 1 non-central beta distributions.
\end{funct}

\begin{funct}[Internal series of the doubly non-central beta distribution function in Eq.~(\ref{eq:distr.beta.dnc}) for any fixed value of the index of the external one]
\label{funct:int.pdncbeta}

Arguments:
\begin{itemize}
\item \textit{x}: vector of quantiles
\item \textit{first}: value of the index of the external series
\item \textit{shape2}: second shape parameter of the doubly non-central beta distribution
\item \textit{ncp2}: second non-centrality parameter of the doubly non-central beta distribution
\item \textit{tol}: tolerance with zero meaning to iterate until additional terms to not change the partial sum
\item \textit{maxiter}: maximum number of iterations to perform
\item \textit{debug}: Boolean, with TRUE meaning to return debugging information and FALSE meaning to return just the evaluate
\end{itemize}
\noindent \texttt{int.pdncbeta<-function(x,first,shape2,ncp2,tol,maxiter,debug) \hspace{0.1cm} \{}

\noindent \hspace{0.2cm} \texttt{temp<-dpois(0,ncp2/2)*pbeta(x,first,shape2)}

\noindent \hspace{0.2cm} \texttt{out<-NULL}

\noindent \hspace{0.2cm} \texttt{for(m in seq\_len(maxiter)) \hspace{0.1cm} \{}

\noindent \hspace{0.5cm} \texttt{fac<-dpois(m,ncp2/2)*pbeta(x,first,shape2+m)}

\noindent \hspace{0.5cm} \texttt{series<-temp+fac}

\noindent \hspace{0.5cm} \texttt{if(debug) \hspace{0.1cm} \{}

\noindent \hspace{0.9cm} \texttt{out<-c(out,fac)}

\noindent \hspace{0.5cm} \texttt{\}}

\noindent \hspace{0.5cm} \texttt{if(hypergeo::isgood(series-temp,tol)) \hspace{0.1cm} \{}

\noindent \hspace{0.9cm} \texttt{if(debug) \hspace{0.1cm} \{}

\noindent \hspace{1.3cm} \texttt{return(list(series, out))}

\noindent \hspace{0.9cm} \texttt{\}}

\noindent \hspace{0.9cm} \texttt{else \{}

\noindent \hspace{1.3cm} \texttt{return(series)}

\noindent \hspace{0.9cm} \texttt{\}}

\noindent \hspace{0.5cm} \texttt{\}}

\noindent \hspace{0.5cm} \texttt{temp<-series}

\noindent \hspace{0.2cm} \texttt{\}}

\noindent \hspace{0.2cm} \texttt{if(debug) \hspace{0.1cm} \{}

\noindent \hspace{0.5cm} \texttt{return(list(series, out))}

\noindent \hspace{0.2cm} \texttt{\}}

\noindent \texttt{\}}
\end{funct}

\begin{funct}[Doubly non-central beta distribution function]
\label{funct:pdncbeta}

Arguments:
\begin{itemize}
\item \textit{x}: vector of quantiles
\item \textit{shape1}, \textit{shape2}: shape parameters of the doubly non-central beta distribution
\item \textit{ncp1}, \textit{ncp2}: non-centrality parameters of the doubly non-central beta distribution
\item \textit{lower.tail}: logical, if TRUE, probabilities are $\Pr\left(X \leq x\right)$, otherwise, $\Pr\left(X > x\right)$.
\item \textit{tol}: tolerance with zero meaning to iterate until additional terms to not change the partial sum
\item \textit{maxiter}: maximum number of iterations to perform
\item \textit{debug}: Boolean, with TRUE meaning to return debugging information and FALSE meaning to return just the evaluate
\end{itemize}
\noindent \texttt{pdncbeta<-function(x,shape1,shape2,ncp1,ncp2,lower.tail,tol,maxiter,debug) \hspace{0.1cm} \{}

\noindent \hspace{0.2cm} \texttt{temp<-dpois(0,ncp1/2)*int.pdncbeta(x=x,first=shape1,shape2=shape2,}

\noindent \hspace{1.3cm} \texttt{ncp2=ncp2,tol=0,maxiter=2000,debug=FALSE)}

\noindent \hspace{0.2cm} \texttt{out<-NULL}

\noindent \hspace{0.2cm} \texttt{for(n in seq\_len(maxiter)) \hspace{0.1cm} \{}

\noindent \hspace{0.5cm} \texttt{fac<-dpois(n,ncp1/2)*int.pdncbeta(x=x,first=shape1+n,shape2=shape2,}

\noindent \hspace{1.4cm} \texttt{ncp2=ncp2,tol=0,maxiter=2000,debug=FALSE)}

\noindent \hspace{0.5cm} \texttt{series<-temp+fac}

\noindent \hspace{0.5cm} \texttt{if(debug) \hspace{0.1cm} \{}

\noindent \hspace{0.9cm} \texttt{out<-c(out,fac)}

\noindent \hspace{0.5cm} \texttt{\}}

\noindent \hspace{0.5cm} \texttt{if(hypergeo::isgood(series-temp,tol)) \hspace{0.1cm} \{}

\noindent \hspace{0.9cm} \texttt{if(debug) \hspace{0.1cm} \{}

\noindent \hspace{1.3cm} \texttt{if(lower.tail) \hspace{0.1cm} \{}

\noindent \hspace{1.7cm} \texttt{return(list(series, out))}

\noindent \hspace{1.3cm} \texttt{\}}

\noindent \hspace{1.3cm} \texttt{else \hspace{0.1cm} \{}

\noindent \hspace{1.7cm} \texttt{return(list(1-series, out))}

\noindent \hspace{1.3cm} \texttt{\}}

\noindent \hspace{0.9cm} \texttt{\}}

\noindent \hspace{0.9cm} \texttt{else \hspace{0.1cm} \{}

\noindent \hspace{1.3cm} \texttt{if(lower.tail) \hspace{0.1cm} \{}

\noindent \hspace{1.7cm} \texttt{return(series)}

\noindent \hspace{1.3cm} \texttt{\}}

\noindent \hspace{1.3cm} \texttt{else \hspace{0.1cm} \{}

\noindent \hspace{1.7cm} \texttt{return(1-series)}

\noindent \hspace{1.3cm} \texttt{\}}

\noindent \hspace{0.9cm} \texttt{\}}

\noindent \hspace{0.5cm} \texttt{\}}

\noindent \hspace{0.5cm} \texttt{temp<-series}

\noindent \hspace{0.2cm} \texttt{\}}

\noindent \hspace{0.2cm} \texttt{if(debug) \hspace{0.1cm} \{}

\noindent \hspace{0.5cm} \texttt{if(lower.tail) \hspace{0.1cm} \{}

\noindent \hspace{0.9cm} \texttt{return(list(series, out))}

\noindent \hspace{0.5cm} \texttt{\}}

\noindent \hspace{0.5cm} \texttt{else \hspace{0.1cm} \{}

\noindent \hspace{0.9cm} \texttt{return(list(1-series, out))}

\noindent \hspace{0.5cm} \texttt{\}}

\noindent \hspace{0.2cm} \texttt{\}}

\noindent \texttt{\}}

\noindent The cases $\mbox{\textit{ncp1}}=0$ and $\mbox{\textit{ncp2}}=0$ correspond respectively to the type 2 and the type 1 non-central beta distribution functions.
\end{funct}

\begin{funct}[Generating a doubly non-central beta random variable by means of its definition]
\label{funct:rdncbeta}

Arguments:
\begin{itemize}
\item \textit{n}: number of determinations 
\item \textit{shape1}, \textit{shape2}: shape parameters of the doubly non-central beta distribution
\item \textit{ncp1}, \textit{ncp2}: non-centrality parameters of the doubly non-central beta distribution
\end{itemize}
\noindent \texttt{rdncbeta<-function(n,shape1,shape2,ncp1,ncp2) \hspace{0.1cm} \{}

\noindent \hspace{0.2cm} \texttt{y1<-rchisq(n=n,df=2*shape1,ncp=ncp1)}

\noindent \hspace{0.2cm} \texttt{y2<-rchisq(n=n,df=2*shape2,ncp=ncp2)}

\noindent \hspace{0.2cm} \texttt{x<-y1$/$(y1+y2)}

\noindent \texttt{\}}
\end{funct}

\begin{funct}[Generating a doubly non-central beta random variable by means of its representation as a convex linear combination in Eq.~(\ref{eq:rappr.clc.beta.dnc})]
\label{funct:rndncbeta}

Arguments:
\begin{itemize}
\item \textit{n}: number of determinations 
\item \textit{shape1}, \textit{shape2}: shape parameters of the doubly non-central beta distribution
\item \textit{ncp1}, \textit{ncp2}: non-centrality parameters of the doubly non-central beta distribution
\end{itemize}
\noindent \texttt{rndncbeta<-function(n,shape1,shape2,ncp1,ncp2) \hspace{0.1cm} \{}

\noindent \hspace{0.2cm} \texttt{x<-rbeta(n=n,shape1=shape1,shape2=shape2)}

\noindent \hspace{0.2cm} \texttt{msum<-vector(mode='numeric',length=n)}

\noindent \hspace{0.2cm} \texttt{xp2<-vector(mode='numeric',length=n)}

\noindent \hspace{0.2cm} \texttt{xbin<-vector(mode='numeric',length=n)}

\noindent \hspace{0.2cm} \texttt{xppnc<-vector(mode='numeric',length=n)}

\noindent \hspace{0.2cm} \texttt{for(i in 1:n) \hspace{0.1cm} \{}

\noindent \hspace{0.5cm} \texttt{msum[i]<-rpois(n=1,lambda=((ncp1+ncp2)$/$2))}

\noindent \hspace{0.5cm} \texttt{xp2[i]<-rbeta(n=1,shape1=shape1+shape2,shape2=msum[i])}

\noindent \hspace{0.5cm} \texttt{xbin[i]<-rbinom(n=1,size=msum[i],prob=ncp1$/$(ncp1+ncp2))}

\noindent \hspace{0.5cm} \texttt{xppnc[i]<-rbeta(n=1,shape1=xbin[i],shape2=msum[i]-xbin[i])}

\noindent \hspace{0.2cm} \texttt{\}}

\noindent \hspace{0.2cm} \texttt{xp<-xp2*x+(1-xp2)*xppnc}

\noindent \texttt{\}}
\end{funct}

\begin{funct}[Libby and Novick's generalized beta density in Eq.~(\ref{eq:g3b.dens})]
\label{funct:dlng3beta}

Arguments:
\begin{itemize}
\item \textit{x}: vector of quantiles 
\item \textit{shape1}, \textit{shape2}: shape parameters of the Libby and Novick's generalized beta distribution
\item \textit{gamma}: additional parameter of the Libby and Novick's generalized beta distribution
\end{itemize}
\noindent \texttt{dlng3beta<-function(x,shape1,shape2,gamma) \hspace{0.1cm} \{}

\noindent \hspace{0.2cm} \texttt{dbeta(x=x,shape1=shape1,shape2=shape2)*gamma\^{}(shape1)/}

\noindent \hspace{0.2cm} \texttt{((1-(1-gamma)*x)\^{}(shape1+shape2))}

\noindent \texttt{\}}
\end{funct}

\begin{funct}[Libby and Novick's generalized beta distribution function in Eq.~(\ref{eq:distr.beta.dnc.approx})]
\label{funct:plng3beta}

Arguments:
\begin{itemize}
\item \textit{x}: vector of quantiles 
\item \textit{shape1}, \textit{shape2}: shape parameters of the Libby and Novick's generalized beta distribution
\item \textit{gamma}: additional parameter of the Libby and Novick's generalized beta distribution
\item \textit{lower.tail}: logical, if TRUE, probabilities are $\Pr\left(X \leq x\right)$, otherwise, $\Pr\left(X > x\right)$.
\end{itemize}
\noindent \texttt{plng3beta<-function(x,shape1,shape2,gamma,lower.tail) \hspace{0.1cm} \{}

\noindent \hspace{0.2cm} \texttt{if(lower.tail) \hspace{0.1cm} \{}

\noindent \hspace{0.5cm} \texttt{pbeta(q=(gamma*x)/(gamma*x+1-x),shape1=shape1,shape2=shape2)}

\noindent \hspace{0.2cm} \texttt{\}}

\noindent \hspace{0.2cm} \texttt{else \hspace{0.1cm} \{}

\noindent \hspace{0.5cm} \texttt{1-pbeta(q=(gamma*x)/(gamma*x+1-x),shape1=shape1,shape2=shape2)}

\noindent \hspace{0.2cm} \texttt{\}}

\noindent \texttt{\}}
\end{funct}

\begin{funct}[Moments formula of the doubly non-central beta distribution in Eq.~(\ref{eq:momr.beta.dnc})]
\label{funct:mdncbeta}

Arguments:
\begin{itemize}
\item \textit{order}: vector of integers 
\item \textit{shape1}, \textit{shape2}: shape parameters of the doubly non-central beta distribution
\item \textit{ncp1}, \textit{ncp2}: non-centrality parameters of the doubly non-central beta distribution
\end{itemize}
\noindent \texttt{mdncbeta<-function(order,shape1,shape2,ncp1,ncp2) \hspace{0.1cm} \{}

\noindent \hspace{0.2cm} \texttt{shapesum<-shape1+shape2}

\noindent \hspace{0.2cm} \texttt{ncpsum<-ncp1+ncp2}

\noindent \hspace{0.2cm} \texttt{listvectors<-list(length=length(order))}

\noindent \hspace{0.2cm} \texttt{sumvector<-vector(mode='numeric',length=length(order))}

\noindent \hspace{0.2cm} \texttt{momvector<-vector(mode='numeric',length=length(order))}

\noindent \hspace{0.2cm} \texttt{for(j in seq\_len(length(order))) \hspace{0.1cm} \{}

\noindent \hspace{0.5cm} \texttt{listvectors[[j]]<-vector(mode='numeric',length=order[j]+1)}

\noindent \hspace{0.5cm} \texttt{for(i in 0:order[j]) \hspace{0.1cm} \{}

\noindent \hspace{0.9cm} \texttt{listvectors[[j]][i+1]<-}

\noindent \hspace{1.3cm} \texttt{choose(order[j],i)*orthopolynom::pochhammer(shapesum,i)*}

\noindent \hspace{1.3cm} \texttt{(ncp1$/$2)\^{}i/(orthopolynom::pochhammer(shape1,i)*}

\noindent \hspace{1.3cm} \texttt{orthopolynom::pochhammer(shapesum+order[j],i))*}

\noindent \hspace{1.3cm} \texttt{hypergeo::genhypergeo(U=shapesum+i,L=shapesum+order[j]+i,}

\noindent \hspace{1.3cm} \texttt{z=ncpsum$/$2,tol=0,maxiter=2000,check\_mod=TRUE,polynomial=FALSE,}

\noindent \hspace{1.3cm} \texttt{debug=FALSE)}

\noindent \hspace{0.5cm} \texttt{\}}

\noindent \hspace{0.5cm} \texttt{sumvector[j]<-sum(listvectors[[j]])}

\noindent \hspace{0.5cm} \texttt{momvector[j]<-(orthopolynom::pochhammer(shape1,order[j])/}

\noindent \hspace{2.5cm} \texttt{orthopolynom::pochhammer(shapesum,order[j]))*}

\noindent \hspace{2.5cm} \texttt{exp(-ncpsum/2)*sumvector[j]}

\noindent \hspace{0.2cm} \texttt{\}}
 
\noindent \hspace{0.2cm} \texttt{return(momvector)}

\noindent \texttt{\}}

\noindent The cases $\mbox{\textit{ncp1}}=0$ and $\mbox{\textit{ncp2}}=0$ correspond respectively to the moments formulas of the type 2 and the type 1 non-central beta distributions.
\end{funct}

\end{document}